\documentclass[reqno]{amsart}
\usepackage{graphicx, amsmath, amssymb, amsthm}
\usepackage{verbatim}
\usepackage{mathabx}
\usepackage{tikz-cd}
\usepackage{hyperref}
\usepackage{todonotes}
\usepackage{enumitem}
\usepackage[all]{xy}


\newcommand{\MAHAddress}{University of California Los Angeles, Los Angeles, CA 90095}
\newcommand{\MAHemail}{\tt{mikehill@math.ucla.edu}}


\newcommand{\Z}{{\mathbb  Z}}
\newcommand{\N}{{\mathbb N}}
\newcommand{\R}{{\mathbb R}}
\newcommand{\F}{{\mathbb F}}

\newcommand{\MU}{MU}
\newcommand{\MUR}{\MU_{\R}}
\newcommand{\MUG}{\MU^{(\!(G)\!)}}

\newcommand{\BP}{BP}
\newcommand{\BPR}{\BP_{\R}}
\newcommand{\BPG}{\BP^{(\!(G)\!)}}

\newcommand{\BPGn}[1][m]{\BPG\langle #1\rangle}


\newcommand{\smashover}[1]{\underset{#1}{\wedge}}


\DeclareMathOperator{\Tor}{Tor}

\DeclareMathOperator{\Ind}{Ind}



\newcommand{\tr}{tr}



\newcommand{\m}[1]{{\protect\underline{#1}}}
\newcommand{\mZ}{\m{\Z}}





\mathchardef\mhyphen=45

\newcommand{\EM}{Eilenberg--Mac~Lane}


\numberwithin{equation}{section}

\newtheorem{theorem}{Theorem}[section]
\newtheorem{lemma}[theorem]{Lemma}
\newtheorem{corollary}[theorem]{Corollary}

\newtheorem{proposition}[theorem]{Proposition}
\newtheorem{conjecture}[theorem]{Conjecture}
\newtheorem{quest}[theorem]{Question}
\newtheorem*{theorem*}{Theorem}
\newtheorem*{proposition*}{Proposition}

\newtheorem{thmx}{Theorem}


\theoremstyle{remark}
\newtheorem{remark}[theorem]{Remark}
\newtheorem{example}[theorem]{Example}
\newtheorem{notation}[theorem]{Notation}

\theoremstyle{definition}

\newtheorem{definition}[theorem]{Definition}


\newcommand{\defemph}[1]{\emph{#1}}

\newcommand{\BPCfour}{BP^{(\!(C_4)\!)}}
\newcommand{\MUCfour}{MU^{(\!(C_4)\!)}}
\newcommand{\BPCfourm}[1][m]{BP^{(\!(C_4)\!)}\langle #1 \rangle}
\newcommand{\BPCfourone}{\BPCfourm[1]}
\newcommand{\BPCfourmm}[1][m]{\BPCfourm[#1,#1]}
\newcommand{\tone}{\bar{t}_1}
\newcommand{\ttwo}{\bar{t}_2}
\newcommand{\BPGZnm}[2]{\BPG\langle #1,#2\rangle}
\newcommand{\kCnm}[1][n]{\BPGZnm{#1}{#1}}
\newcommand{\BPGmm}[1]{\BPG\langle #1,#1\rangle}

\DeclareMathOperator{\SliceSS}{SliceSS}
\DeclareMathOperator{\End}{End}

\newcommand{\Neg}{{\mhyphen}}
\newcommand{\bv}{\bar{v}}

\newcommand{\HA}{A}
\newcommand{\Gp}{G'}

\newcommand{\BPCfourtwotwo}{BP^{(\!(C_4)\!)}\langle 2,2 \rangle}

\newcommand{\done}{\bar{\mathfrak{d}}_{\bar{t}_1}}
\newcommand{\dtwo}{\bar{\mathfrak{d}}_{\bar{t}_2}}
\newcommand{\dthree}{\bar{\mathfrak{d}}_{\bar{t}_2}}

\newcommand{\HZ}{H\underline{\mathbb{Z}}}

\newtheorem*{corollary*}{Corollary}
\newtheorem*{prop*}{Proposition}

\theoremstyle{definition}
\newtheorem{summary}[theorem]{Summary}
\newtheorem*{example*}{Example}

\newcommand{\TLAddress}{University of Minnesota, Minneapolis, MN 55455}
\newcommand{\TLemail}{\tt{tlawson@umn.edu}}

\newcommand{\DSAddress}{University of Washington, Seattle, WA 98105}
\newcommand{\DSemail}{\tt{dannyshixl@gmail.com}}

\newcommand{\MZAddress}{Max Planck Institute for Mathematics, 53111 Bonn, Germany}
\newcommand{\MZemail}{\tt{mingcongzeng@gmail.com}}

\newcommand{\ABAddress}{University of Colorado Boulder, Boulder, CO 80309}
\newcommand{\ABemail}{\tt{agnes.beaudry@colorado.edu}}


\newcommand{\Aut}{\mathrm{Aut}}

\title[On the slice spectral sequence for quotients]{On the slice spectral sequence for quotients of norms of Real bordism
}

\author[AB]{Agn\`es Beaudry}
\address{\ABAddress}
\email{\ABemail}

\author[MAH]{Michael A.~Hill}
\address{\MAHAddress}
\email{\MAHemail}

\author[TL]{Tyler Lawson}
\address{\TLAddress}
\email{\TLemail}

\author[XDS]{XiaoLin Danny Shi}
\address{\DSAddress}
\email{\DSemail}

\author[MZ]{Mingcong Zeng}
\address{\MZAddress}
\email{\MZemail}

\begin{document}
\maketitle
\begin{abstract}
In this paper, we investigate equivariant quotients of the Real bordism spectrum's multiplicative norm $MU^{(\!(C_{2^n})\!)}$ by permutation summands. These quotients are of interest because of their close relationship with higher real $K$-theories. We introduce new techniques for computing the equivariant homotopy groups of such quotients.
   
As a new example, we examine the theories $BP^{(\!(C_{2^n})\!)}\langle m,m\rangle$.  These spectra serve as natural equivariant generalizations of connective integral Morava $K$-theories. We provide a complete computation of the $a_{\sigma}$-localized slice spectral sequence of $i^*_{C_{2^{n-1}}}BP^{(\!(C_{2^n})\!)}\langle m,m\rangle$, where $\sigma$ is the real sign representation of $C_{2^{n-1}}$. To achieve this computation, we establish a correspondence between this localized slice spectral sequence and the $H\mathbb{F}_2$-based Adams spectral sequence in the category of $H\mathbb{F}_2 \wedge H\mathbb{F}_2$-modules. Furthermore, we provide a full computation of the $a_{\lambda}$-localized slice spectral sequence of the height-4 theory $BP^{(\!(C_{4})\!)}\langle 2,2\rangle$. The $C_4$-slice spectral sequence can be entirely recovered from this computation.
\end{abstract}

\tableofcontents
\section{Introduction}

\subsection{Motivation} \label{subsec:Motivation}
Let \(E(k,\Gamma)\) be the Lubin--Tate spectrum associated to a formal group law \(\Gamma\) of height \(h\) over a finite field \(k\) of characteristic \(2\).  The Goerss--Hopkins--Miller theorem states that \(E(k,\Gamma)\) is a commutative ring spectrum and that there is an action of \(\Aut(\Gamma)\) on \(E(k,\Gamma)\) by commutative ring maps.  Given a finite subgroup \(G\) of \(\Aut(\Gamma)\), we can view \(E(k,\Gamma)\) as a \(G\)-equivariant commutative ring spectrum via the cofree functor, and define a theory \(EO_{(k,\Gamma)}(G) \) by taking the fixed points under the action of $G$: 
\[
    EO_{(k,\Gamma)}(G) \simeq E(k,\Gamma)^{hG}.
\]
These are the higher real \(K\)-theory spectra, so named because when the height is \(1\) and \(G\) is \(C_2\), these are a form of \(2\)-completed real $K$-theory.

Up to an \'etale extension, these spectra only depend on the height of $\Gamma$ and we will suppress $(k,\Gamma)$ from the notation by letting 
\[E_h=E(k,\Gamma) \quad  \text{and} \quad EO_h(G)=EO_{(k,\Gamma)}(G).\]
The spectra \(EO_h(G)\) play a central role in chromatic homotopy theory.  Reasons of their importance include: 
\begin{enumerate}
    \item 
They detect interesting elements in the homotopy groups of spheres. For example, Hill--Hopkins--Ravenel's work on manifolds of Kervaire invariant one \cite{HHR} and Ravenel's work \cite{Ravenel_arf} can be reinterpreted in terms of the Hurewicz images of $EO_4(C_8)$ and $EO_{p-1}(C_p)$. More recently, Li--Shi--Wang--Xu studied the Hurewicz image of $EO_h(C_2)$ \cite{LSWX}. 
\item They serve as fundamental building blocks for the $K(h)$-local sphere via the theory of finite resolutions. The theory of finite resolutions was developed by Goerss--Henn--Mahowald--Rezk \cite{GHMR} and expanded by Henn \cite{Henn_res}, followed by Bobkova--Goerss \cite{BobkovaGoerss}.
\item They are periodic theories that exhibit nice vanishing line properties \cite{DHS, HopkinsSmithNilpotence, DuanLiShiVanishing}.  This makes them more accessible to computations.  
\end{enumerate}

Historically, there have been few computations of homotopy groups of \(EO_h(G)\) for chromatic heights $h>2$ at $p=2$.
At height $h=1$, these computations are well understood via the relationship with complex and real $K$-theory. At $h=2$, computations are done using the close relationship of the higher real \(K\)-theories with the spectrum $tmf$ of topological modular forms and its analogues with level structures \cite{tmfbook, Bauertmf, BehrensOrmsby, MahowaldRezk, HillMeier, HHRkH}.  However, at chromatic heights $h>2$, such computations have been out of reach for a long time, in part due to the lack of nice geometric models for the higher real $K$-theories such as $ko$ and $tmf$.

More recently, the work of Hill--Hopkins--Ravenel \cite{HHR} has made such computations more achievable. This is the approach we take in this paper.  Specifically, we focus on the case of cyclic 2-groups, as a finite $2$-subgroup of $\Aut(\Gamma)$ at $p=2$ is either a cyclic \(2\)-group of order $2^n$ whenever $h=2^{n-1}m$ for $m \geq 1$, or the quaternions when $h=2m$ for $m$ odd \cite{Hewett}.  Our restriction to the case of cyclic 2-groups allows us to use the equivariant slice filtration and related machinery developed in \cite{HHR}.  

\subsubsection{\texorpdfstring{$G=C_2$}{C2}}
When $G = C_2$, there are two $C_2$-actions: one coming from the central subgroup \(C_2\) in \(\Aut(\Gamma)\) through ``formal inversion'' and the other coming from complex conjugation on the Real bordism spectrum $\MUR$ \cite{Fujii, Landweber}.  Hahn and Shi \cite{HahnShi} produced a Real orientation
\[\MUR\longrightarrow E_h\]
from $\MUR$ to any Lubin--Tate spectrum at the prime \(2\).  This Real orientation allows us to combine the two $C_2$-actions under one perspective, and to construct $E_h$ as a localization of a quotient of $\MUR$.  

After localization at $2$, $\MUR$ splits as a wedge of suspensions of the Real Brown--Peterson spectrum $\BPR$.  Let $\rho_2$ be the regular representation. By work of Araki \cite{Araki} and Landweber \cite{Landweber}, we have
\[\pi_{*\rho_2}^{C_2}\BPR \cong \Z_{(2)}[\bar{v}_1, \bar{v}_2, \bar{v}_3, \ldots]\]
for generators $\bar{v}_i \in \pi_{(2^i-1)\rho_2}^{C_2} \BPR$ whose underlying homotopy classes give generators $v_i \in \pi_{2(2^i-1)}BP$ for $\pi_*BP$. 

In this setup, we can refine two classical families of chromatic spectra to the $C_2$-equivariant world.  They are both constructed as quotients of $\BPR$:

\begin{enumerate}
\item The first family is the \textit{Real truncated Brown--Peterson spectrum}
\[\BPR \langle h \rangle = \BPR/(\bar{v}_j \mid j >h). \]
The underlying non-equivariant spectrum is the classical truncated Brown--Peterson spectrum $BP \langle h \rangle$.  The $K(h)$-localization of $\BPR \langle h \rangle$ gives, up to periodization, a model of Lubin--Tate theory $E_h$ with its canonical $C_2$-action obtained through Goerss--Hopkins--Miller theory.  These equivariant spectra and their $\bar{v}_h$-localizations were first studied by Hu--Kriz \cite{HuKriz} and Kitchloo--Wilson \cite{KitchlooWilson}.

\item The second family is the \textit{Real connective integral Morava \(K\)-theory}
\[\BPR\langle h,h\rangle := \BPR/(\bar{v}_i \mid i\neq 0, h),\] 
whose underlying spectra are the connective integral Morava $K$-theories.  After quotienting by 2 and periodization, we obtain the classical Morava $K$-theories $K(h)$. \end{enumerate}

\subsubsection{Larger cyclic \texorpdfstring{$2$}{2}-groups}
In this paper, we will study the $C_{2^n}$-generalizations of the integral Morava \(K\)-theories in great computational depth.

Let \(G\) be a finite subgroup of \(\Aut(\Gamma)\) containing \(C_2\). Since \(E_h\) is an equivariant commutative ring spectrum, the norm-forgetful adjunction produces a \(G\)-equivariant orientation map
\[
	\MUG:=N_{C_2}^{G}\MUR\longrightarrow E_h.
\]
Since we are working $2$-locally, we can substitute $\MUR$ with $\BPR$ using Quillen's idempotent, thereby obtaining a map
\[	\BPG:=N_{C_2}^{G}\BPR\longrightarrow E_h. \]
This map allows us to regard $\big(\BPG\big)^{G}$ as a global approximation for the theory \(EO_{h}(G)\).  

We will concentrate on the case $G=C_{2^n}$ and $h=2^{n-1}m$ in this paper. 
To compute the homotopy groups $\pi_\star^G \BPG$,
Hill, Hopkins, and Ravenel invented the equivariant slice spectral sequence \cite{HHR}.  The current approach of using the slice spectral sequence to understand $\pi_\star^G \BPG$ is to pass to quotients of $\BPG$ that generalize $\BPR \langle h\rangle$.  To define these quotients, note that as in \cite{HHR}, the $C_2$-equivariant homotopy groups of $\BPG$ in degrees an integer multiple of $\rho_2$ are
\[ \pi_{*\rho_2}^{C_2}\BPG \cong \Z_{(2)}[G\cdot \bar{v}_1^G, G \cdot \bar{v}_2^G,\ldots ], \]
where $\bar{v}_i^G \in \pi_{(2^i-1)\rho_2}^{C_2} \BPG$. Here, $G \cdot \bar{v}_i^G$ denotes a set of $2^{n-1}$ elements 
\[\left\{\bar{v}_i^G, \gamma  \bar{v}_i^G, \ldots, \gamma^{2^{n-1}-1} \bar{v}_i^G\right \},\] 
where $\gamma$ represents a generator of $G$ and the Weyl action of $G$ is made obvious by the notation except for $\gamma^{2^{n-1}} \bar{v}_i^G =-\bar{v}_i^G$.
The method of twisted monoid rings \cite[Section~2]{HHR} then allows one to form quotients of $\BPG$ by collections of \emph{permutation summands} of the form $G \cdot \bar{v}_i^G$.  These quotients are the main objects of study in this paper.

The generalizations of the Real truncated Brown--Peterson spectrum $\BPR\langle h \rangle$ and the Real connective integral Morava $K$-theory $\BPR \langle h, h \rangle$ are the following quotients by permutation summands:
\begin{enumerate}
\item The quotient 
\[\BPG\langle m \rangle := \BPG/(G\cdot \bar{v}_{m+1}^G, G \cdot \bar{v}_{m+2}^G,\ldots )\]
generalizes the spectrum $\BPR\langle h\rangle$.  These spectra were studied in \cite{BHSZOne}, where it was shown that $\BPG\langle m \rangle$ is of height ${\leq 2^{n-1}m}$ (\cite[Theorem~7.5]{BHSZOne}).  Up to periodization and $K(h)$-localization, the $G$-fixed points of $\BPG \langle m \rangle$ gives a model for $EO_h(G)$.  The theories $\BPG \langle m \rangle$ also give a chromatic filtration of $\BPG$ via the tower
\begin{equation}\label{tower:chromFiltrationBPG} 
\cdots \longrightarrow \BPG\langle m \rangle \longrightarrow \BPG\langle m-1 \rangle \longrightarrow \cdots \longrightarrow \BPG\langle 1 \rangle. 
\end{equation}
The slice spectral sequences for these quotients have been computed for $\BPR\langle m\rangle$ ($m \geq 1$), $BP^{(\!(C_4)\!)}\langle 1\rangle$, and $BP^{(\!(C_4)\!)}\langle 2\rangle$ \cite{HuKriz, HHRkH, HSWX}.  

\item The equivariant generalization of the connective integral Morava $K$-theories are the quotients
\[\BPG\langle m,m \rangle :=\BPG\langle m \rangle/(G\cdot \bar{v}_{1}^G, \ldots, G \cdot \bar{v}_{m-1}^G).\] 
\end{enumerate}

Given $\BPG \langle m, m \rangle$, we can apply further quotienting and localization to form the $G$-spectrum
\[K_G(h):= N_{C_2}^G(\bar{v}_m^G)^{-1} \BPG \langle m, m \rangle/G \cdot (\bar{v}_m^G - \gamma \bar{v}_m^G).\]
The underlying spectrum of $K_G(h)$ is the $2 \cdot (2^m-1)$-periodic Morava $K$-theory of height $h = 2^{n-1} \cdot m$, with coefficient ring 
\[\pi_*^e K_G(h) = \mathbb{F}_2[\bar{v}_m^\pm].\]  
The group $G$ acts trivially on $\pi_*^e K_G(h)$, but the action of $G$ on $K_G(h)$ is nontrivial and compatible with the stabilizer group $G$-action on the height-$h$ Lubin--Tate theory. 

An important feature of $\BPG\langle m, m \rangle$ is that its slice $E_2$-page contains significantly fewer classes compared to the slice $E_2$-page of $\BPG \langle m \rangle$.  The lengths of its differentials are also more concentrated in certain ranges.  These properties enhance the computational manageability of these theories, making them ideal as computable approximations for the equivariant truncated Brown--Peterson spectra $\BPG\langle m \rangle$ and $\BPG$.

\textit{To this end, the main objective of this paper is to investigate quotients of $\BPG$ by permutation summands, with a particular focus on the spectrum $\BPG\langle m, m \rangle$}.  By exploring the theoretical and computational properties of these equivariant integral Morava $K$-theories, we seek to gain a deeper understanding of the overall structure of $\BPG \langle m \rangle$ and the chromatic filtration tower (\ref{tower:chromFiltrationBPG}).  

\subsection{Main results}\label{sec:results}
We will now provide an outline of the paper and state our main results.  Throughout the paper, the group $G = C_{2^n}$.  

\subsubsection*{Section~\ref{sec:Section2}}
In the first section of this paper, we define various equivariant quotients by permutation summands and study their slice filtration.  We begin by defining permutation summands for \(\BPG\), which are collections of elements of the form \(G \cdot \bar{v}_j^G\) (see Definition~\ref{defn:PermBasis}).  Our main result in this section is the following, which provides a simple description of the slice associated graded for quotients by permutation summands: 
\begin{thmx}[Theorem~\ref{prop:SlicesOfRegularQuotients}]\label{thmA} The slice associated graded of the quotient 
\[
\BPG/(G \cdot \bar{v}_j^{G} \mid j\in J)
\]
where \(J\) is a subset of the natural numbers, is the generalized {\EM} spectrum
\[H\mZ[G\cdot\bv_{i}^G \mid i\not \in J].\]
\end{thmx}

Notably, Theorem~\ref{thmA} implies that the slice associated graded for \(\BPGZnm{m}{m}\)  is
\(H\mZ[G\cdot\bv_{m}^G]\).  We remark that our results do not depend on the specific choice of generators of the permutation summand: we can replace \(\bar{v}_m\) by any element \(\bar{s}_{2^m-1}\) in  \(\pi_{(2^m-1)\rho_2}^{C_2}\BPG\) that generates a permutation summand.  To streamline notation, we write 
\[G \cdot \bar{S} = \{G \cdot \bar{s}_{2^j-1} \mid j\in J\}\]
for \(\bar{S} = \{\bar{s}_{2^j-1} \mid j\in J\}\).

\subsubsection*{Section~\ref{sec:CHBPmm}}
In this section, we determine the chromatic heights of the generalized integral Morava $K$-theory spectra \(\BPG\langle m,m\rangle\).   
\begin{thmx}[Theorem~\ref{thm:BPGmmHeight}] \label{thmB}
The underlying spectrum of $\BPG \langle m, m\rangle$ has non-trivial chromatic localizations at heights equal to $rm$, where $0\leq r\leq 2^{n-1}$. That is,
\begin{enumerate}
\item for $r = km$ where $0 \leq k \leq 2^{n-1}$, $L_{K(r)} i_e^*\BPG \langle m, m \rangle \not\simeq * $, and
\item for all other $r \geq 0$, $L_{K(r)}i_e^*\BPG \langle m, m \rangle \simeq *$. 
\end{enumerate}
\end{thmx}
\noindent
In other words, the spectrum $\BPG \langle m, m \rangle$ captures chromatic information at heights $0$, $m$, $2m$, $3m$, $\ldots$, $2^{n-1}m$.  

We note that Theorem~\ref{thmB} is similar to \cite[Theorem~1.9]{BHSZOne}, where the underlying spectrum of $\BPG \langle m \rangle$ is shown to have nontrivial chromatic localizations at heights $\leq 2^{n-1}m$.  Furthermore, the $G$-actions on $\BPG \langle m, m \rangle$ and $\BPG \langle m \rangle$ are compatible with the $G$-action on $E_{2^{n-1}m}$ that is induced from the Morava stabilizer group.

\subsubsection*{Section~\ref{sec:locss}}
In this section, we introduce our main computational tool, the \textit{localized slice spectral sequence}.  This spectral sequence was developed in \cite{MeierShiZengSegal}, and we summarize some of its key features here.

Suppose $X$ is a $G$-spectrum and $H$ is a normal subgroup of $G$.  The localized slice spectral sequence is obtained by smashing the slice tower of $X$ with $\widetilde{E}\mathcal{F}[H]$, where $\mathcal{F}[H]$ is the family containing all proper subgroups of $H$.  By \cite{MeierShiZengSegal}, the localized slice spectral sequence converges strongly to the $G$-equivariant homotopy groups of $\widetilde{E}\mathcal{F}[H] \wedge X$, which is equal to the homotopy groups of $(\Phi^H(X))^{G/H}$ (see Theorem~\ref{MeierShiZeng1}).

When $G=H=C_{2^n}$, the localized slice spectral sequence is the $a_{\sigma}$-localization of the slice spectral sequence, where $\sigma$ the sign representation. If $G=C_{2^n}$ and $H=C_{2^{i}}$ for $i<n$, then the localized slice spectral sequence is the $a_{\lambda_{i-1}}$-localization of the slice spectral sequence, where $\lambda_{i-1}$ is the representation that rotates the plane by an angle of $\pi/2^{n-i}$.

For $G=C_{2^n}$, the slice spectral sequence of $X$ is divided into different regions, separated by the lines through the origin of slopes $(2^i-1)$, $0 \leq i \leq n$.  In \cite{MeierShiZengTranchromatic}, the authors proved the Slice Recovery Theorem (Theorem~\ref{thm:sliceRecovery}), which states that for $X$ a $(-1)$-connected $G$-spectrum, the map from the original slice spectral sequence of $X$ to the localized slice spectral sequence of $\widetilde{E}\mathcal{F}[C_{2^i}] \wedge X$ induces an isomorphism between all the differentials on or above the line of slope $(2^{i-1}-1)$ for all $1 \leq i \leq n$. In other words, even though the localized slice spectral sequence only computes the geometric fixed points, its $E_2$-page and differentials captures all the corresponding information in the original slice spectral sequence (which computes the fixed points) within a range.   


\subsubsection*{Section~\ref{sec:goefixG}}
In this section, we present the following computational result, using the tools discussed earlier. 

\begin{thmx}[Theorem~\ref{prop:geofixall} and Theorem~\ref{thm:unlocalizingasigma}]\label{thmC}
Let \(J \subseteq \mathbb{N}\), and let \(\bar{S} = \{\bar{s}_{2^j-1} \mid j\in J\}\) be a set of generators for permutation summands.  The following hold: 
\begin{enumerate}
\item The $a_\sigma$-localized slice spectral sequence of ${a_\sigma^{-1} \BPG/G \cdot \bar{S}}$ has only nontrivial differentials of lengths $\ell(k) = 2^n(2^{k+1}-1)+1$, where $k+1 \notin J$.  Moreover, for a fixed $k$, all the $\ell(k)$-differentials are multiples of a nontrivial $d_{\ell(k)}$-differential on the class $b^{2^k}$, where $b = u_{2\sigma}/a_\sigma^2$. 
\item The $a_\sigma$-localized slice spectral sequence of ${a_\sigma^{-1} \BPG/G \cdot \bar{S}}$, which converges to the homotopy groups of the $G$-geometric fixed points of ${\BPG/G \cdot \bar{S}}$, completely determines all the differentials on or above the line of slope ${(2^{n-1}-1)}$ in the slice spectral sequence of $\BPG/G \cdot \bar{S}$.  

\end{enumerate}
\end{thmx}

The computation in Theorem~\ref{thmC} is done using the Slice Differential Theorem of Hill--Hopkins--Ravenel \cite{HHR}.  The quotient map 
\[\BPG \longrightarrow \BPG/G \cdot \bar{S}\]
induces a map of the corresponding localized slice spectral sequences.  The Slice Differential Theorem produces all the differentials in the $a_\sigma$-localized slice spectral sequence of $a_\sigma^{-1}\BPG$.  Using the module structure and naturality, we deduce all the differentials in the $a_\sigma$-localized slice spectral sequence of $a_\sigma^{-1}\BPG/G \cdot \bar{S}$. 

When $G = C_2$,  the $a_\sigma$-localized slice spectral sequence of $a_\sigma^{-1} \BPR/\bar{S}$ produces all the differentials in the slice spectral sequence of $\BPR/\bar{S}$.   This is explained in Corollary~\ref{cor:BRPQuotient}.

Theorem~\ref{thmC} is used to show that, in stark contrast to the non-equivariant setting, most of the quotients of \(\BPG\) by permutation summands do not admit a ring structure even in the homotopy category.

\begin{thmx}[Theorem~\ref{prop:notaring}]\label{thmD}
Let \(J \subseteq \mathbb{N}\), and let \(\bar{S} = \{\bar{s}_{2^j-1} \mid j\in J\}\) be a set of generators for permutation summands. If there is a \(k\in J\) such that \((k+1)\notin J\), then \(\BPG/G\cdot\bar{S}\) does not have a ring structure in the homotopy category. 
\end{thmx}

\subsubsection*{Section~\ref{sec:relSS}}
In this section, we analyze the next region of the slice spectral sequence of $\BPG \langle m, m \rangle$, namely the region between the lines of slopes $(2^{n-2}-1)$ and $(2^{n-1}-1)$.  Let $\Gp = C_{2^{n-1}}$ be the index 2 subgroup of $G = C_{2^n}$.  To compute the differentials in this region, we examine the localized slice spectral sequence of 
\[\widetilde{E}\mathcal{F}[\Gp] \wedge \BPG \langle m, m \rangle \simeq a_{\lambda_{n-2}}^{-1}\BPG \langle m, m \rangle,\] 
which computes the homotopy groups of the $G/\Gp$-fixed points of the spectrum $\Phi^{\Gp}(\BPG \langle m, m \rangle)$.  

A valuable input to computing this spectral sequence is its restriction to the group $\Gp$, which computes the underlying homotopy groups of $\Phi^{\Gp}(\BPG \langle m, m \rangle)$. Using the Mackey functor structure, we can then deduce information about the $G$-equivariant spectral sequence from the simpler $G'$-equivariant spectral sequence.  

Another important spectral sequence that comes into play is the $H\F_2$-based Adams spectral sequence in the category of $A$-module spectra (as in Baker--Lazarev \cite{BakerLazarev}), where 
\[A:=H \mathbb{F}_2 \wedge H \mathbb{F}_2.\] Here, $H\F_2$ is given an $A$-module structure via the multiplication map $A \to H\F_2$.
 We call this spectral sequence the
\emph{relative Adams spectral sequence}. 

As non-equivariant spectra, there is an equivalence 
\[\Phi^{\Gp}(\BPG \langle m, m \rangle) \simeq  A/(\xi_i, \zeta_i: i \neq m)\]
for $\xi_i$ and $\zeta_i$ the usual Milnor generators and their conjugates.  In fact, there is an intimate connection between the more classical relative Adams spectral sequence and the $\Gp$-equivariant localized slice spectral sequence, which we establish in Section~\ref{sec:compare}.  
\begin{thmx}[Theorem~\ref{thm:adamsres}, Corollary~\ref{cor:speedupIsom}, and Corollary~\ref{cor:redindex}]\label{thmE}
After a reindexing of filtrations, the $\Gp$-equivariant localized slice spectral sequence of ${\BPG \langle m, m\rangle}$ is isomorphic to the relative Adams spectral sequence of \(A/(\xi_i, \zeta_i: i \neq m)\).
\end{thmx}

In Section~\ref{sec:compute}, we apply the techniques developed in \cite{BHLSZ} and use the correspondence established in Theorem~\ref{thmE} to obtain the following computational result:

\begin{thmx}[Theorem~\ref{thm:diffs} and Summary~\ref{sum:relvsslice}] \label{thmF}
We determine all the differentials in the following two spectral sequences: 
\begin{enumerate}
\item the relative Adams spectral sequence of $A/(\xi_i, \zeta_i: i \neq m)$;  
\item the $\Gp$-equivariant localized slice spectral sequence of $\BPG \langle m, m \rangle$.
\end{enumerate}
\end{thmx}

Theorem~\ref{thmF} demonstrates the effectiveness of our methods and allows us to proceed to the next stage of our analysis, which is to compute $\BPCfourtwotwo$.  As a concrete illustration of Theorem~\ref{thmF}, we present a detailed analysis of the $C_2$-equivariant localized slice spectral sequence of $\BPCfour \langle 2, 2 \rangle$ in Section~\ref{subsec:C2SliceSSBPCfourtwotwo}.

\subsubsection*{Section~\ref{sec:BPC422}}
In the final section of the paper, we provide a complete computation of the slice spectral sequence of $\BPCfourtwotwo$.  This serves as a showcase of the strength of our methods, and also offers insight into higher differentials phenomena when applied to higher heights and bigger groups. 

\begin{thmx} \label{thmG}
We determine all the differentials in the $a_\lambda$-localized slice spectral sequence of $\BPCfourtwotwo$.  The spectral sequence terminates after the $E_{61}$-page and has a vanishing line of slope $(-1)$ on the $E_\infty$-page.  
\end{thmx}

By the Slice Recovery Theorem (Theorem~\ref{thm:sliceRecovery}), this computation completely determines all the differentials in the slice spectral sequence of $\BPCfourtwotwo$ by truncating away the region below the line of filtration $s = 0$.  In particular, Theorem~\ref{thmG} shows that the slice spectral sequence of $\BPCfourtwotwo$ terminates after the $E_{61}$-page and has a horizontal vanishing line of filtration 61.  

According to Theorem~\ref{thmB}, the underlying spectrum of $\BPCfour \langle 2, 2 \rangle$ is of heights 0, 2 and 4, and is equipped with a $C_4$-action that is compatible with the stabilizer group action on a height-4 Lubin--Tate theory. 
The computation in Theorem~\ref{thmG} is a height-4 computation of a spectrum that is closely related to $\BPCfour \langle 2 \rangle$, studied in \cite{HSWX}.  There is a map 
\[\SliceSS(\BPCfour\langle 2 \rangle) \longrightarrow \SliceSS(\BPCfourtwotwo).\]
Compared to the slice spectral sequence of $\BPCfour\langle 2 \rangle$, the slice spectral sequence of $\BPCfourtwotwo$ has fewer classes on the $E_2$-page, and the lengths of differentials are concentrated in certain ranges. 

The following features of the slice spectral sequence of $\BPCfour \langle 2, 2 \rangle$ are essential to the computation in Theorem~\ref{thmG}:

\begin{enumerate}
\item Differentials on or above the line of slope 1 are determined by $a_\sigma$-localized slice spectral sequence, which is computed in Theorem~\ref{thmC}. 
\item The shorter differentials ($d_{\leq 31}$) are all determined from the $C_2$-slice differentials in Theorem~\ref{thmF} and Mackey functor structures. 
\item To determine the higher differentials, we identified two key classes, $\alpha = \dtwo^8 u_{24\sigma}a_{24\lambda}$ at $(48, 48)$ and $b^{32} = u_{32\lambda}/a_{32\lambda}$ at $(64, -64)$.  Multiplication with respect to these classes gives rise to periodicity of differentials and a vanishing line of slope $-1$ (Theorem~\ref{thm:VanishingThm}).  These phenomena determine all the higher differentials ($d_{>31}$).
\end{enumerate}

We believe these features can be extended to higher heights and larger groups, leading to a global description of all slice spectral sequence computations of quotients of $\BPG$ (see Section~\ref{subsec:openQuestions}).

\subsection{Open questions and future directions}\label{subsec:openQuestions}
The relationship between equivariant and chromatic homotopy theory is an exciting landscape whose exploration has only just begun. The results we present here reveal new aspects of the connection between $G$-spectra and $K(h)$-local phenomena. They also open questions and suggest conjectures. We end this introduction by highlighting a few.
\subsubsection*{$G$-equivariant periodic Morava-$K$ theory}

In this paper, our focus has mostly been on the theories $\BPG \langle m, m \rangle$.  Recall the spectrum 
\[K_G(h):= N_{C_2}^G(\bar{v}_m^G)^{-1} \BPG \langle m, m \rangle/G \cdot (\bar{v}_m^G - \gamma \bar{v}_m^G)\]
defined at the end of Section~\ref{subsec:Motivation}.  This spectrum is the $G$-equivariant generalization of the height-$h$ Morava $K$-theory.  The following questions about $K_G(h)$ will provide further insights into the structure of the Morava stabilizer group and the behavior of quotients of $\BPG$. 

\begin{quest}
What is the slice filtration of $K_G(h)$ and differentials in the slice spectral sequence of $K_G(h)$? 
\end{quest}
Note that since $K_G(h)$ is not a quotient by permutation summands, Theorem~\ref{thmA} does not directly apply.  Nonetheless, we have determined the slice filtration of $K_G(h)$ when $G = C_2$ and $C_4$, as well as the slice differentials for all $h$ when $G = C_2$, and for $h = 2$ when $G = C_4$.

\begin{quest}
Is it possible to build an equivariant chromatic fracture square with the theories $\BPG\langle m \rangle$, $\BPG \langle m-1 \rangle$, $\BPG \langle m, m \rangle$, and $K_G(h)$?  In particular, what is the relationship between $\BPG \langle m, m \rangle$, $K_G(h)$, and the fiber of the map $\BPG \langle m \rangle \to \BPG \langle m-1 \rangle$ in the chromatic filtration of $\BPG$? 
\end{quest}

\begin{quest}
What are the Hurewciz images of $\BPG \langle m, m \rangle$ and $K_G(h)$, compared to that of $\BPG \langle m \rangle$? 
\end{quest}

\subsubsection*{The $C_2$-equivariant relative Adams spectral sequence}

In Section~\ref{sec:relSS}, the correspondence established in Theorem~\ref{thmE} between the relative Adams spectral sequence and the $C_2$-localized slice spectral sequence played a crucial role in determining the $C_2$-slice differentials in the localized slice spectral sequence of $a_\lambda^{-1}\BPG\langle m , m \rangle$.  In \cite{HahnWilsonSegal}, Hahn and Wilson constructed a $C_2$-equivariant relative Adams spectral sequence, which can be utilized to compute the homotopy groups of $N_e^{C_2} H\mathbb{F}_2$-modules.  Notably, the $C_2$-geometric fixed points of quotients of $\BPCfour$ equipped with the residue $C_4/C_2$-action are $N_e^{C_2} H\mathbb{F}_2$-modules.  Both the $C_2$-equivariant relative Adams spectral sequence and the $C_4$-localized slice spectral sequence can be used to compute the homotopy groups of such quotients. 

\begin{quest}
Is there an equivariant analogue of the correspondence in Theorem~\ref{thmE} for $\BPCfour \langle m, m \rangle$ and general quotients of $\BPCfour$?  In particular, can we establish a correspondence between differentials in the $C_2$-equivariant Adams spectral sequence and the $C_4$-localized slice spectral sequence? 
\end{quest}

\subsubsection*{Global structures in the slice spectral sequence}

Understanding the equivariant homotopy groups of $\BPCfour \langle m, m \rangle$ for all $m \geq 1$ would significantly deepen our knowledge of higher chromatic heights and provide valuable insight into the $C_4$-fixed points of height-$(2m)$ Lubin--Tate theories. To facilitate these computations, it is important to establish certain general properties of the localized slice spectral sequences for these quotients.
 
Our computation of $\BPCfour \langle 2, 2 \rangle$ in this paper leads us to believe that certain structures, such as vanishing lines and periodicity of differentials, should be present in the localized slice spectral sequences for all $\BPCfour \langle m, m \rangle$.  Answering the following questions would significantly simplify the computation of localized slice spectral sequences for quotients of $\BPG$.   

\begin{quest}\label{conj:vanishingLine}
Do vanishing lines of slope $(-1)$ always exist on the $E_\infty$-pages of the localized slice spectral sequences for $\BPG \langle m, m \rangle$ and other quotients of $\BPG$?
\end{quest}

The presence of such vanishing lines is closely linked to the existence of horizontal vanishing lines in the slice spectral sequence for quotients of $\BPG$, a question raised in \cite{DuanLiShiVanishing}.  

\begin{quest}
Do there exist analogues of the classes $\alpha$ and $b^{32}$ that induce differential periodicity in the localized slice spectral sequence for $\BPG \langle m, m \rangle$ and other quotients of $\BPG$?
\end{quest}

For $\BPCfour \langle 1, 1 \rangle$, the analogous classes are $\alpha_1 = \done^4 u_{4\sigma}a_{4\lambda}$ in bidegree $(8, 8)$ and $b_1 = b^8 = u_{8\lambda}/a_{8\lambda}$ in bidegree $(16, -16)$.  In general, let
\[\alpha_m = \mathfrak{d}_{\bar{t}_m}^{2^{m+1}}u_{(2^m-1)2^{m+1}\sigma}a_{(2^m-1)2^{m+1}\lambda} \]
and 
\[b_m = b^{2^{2m+1}}= u_{2^{2m+1}\lambda}/a_{2^{2m+1}\lambda}.\]

\begin{conjecture}
In the $C_4$-localized slice spectral sequence of $\BPCfour \langle m, m \rangle$, multiplications by the classes $\alpha_m$ and $b_m$ induce differential periodicity and a vanishing line of slope $(-1)$. 
\end{conjecture}

\subsection{Acknowledgements}
The authors would like to thank Mark Behrens, Christian Carrick, Mike Hopkins, Hana Jia Kong, Guchuan Li, Yutao Liu, Lennart Meier, Juan Moreno, Doug Ravenel, Vesna Stojanoska, Guozhen Wang, Zhouli Xu, and Guoqi Yan for helpful conversations.  This material is based upon work supported by the National Science Foundation under Grant No. DMS-1906227 (first author), DMS-2105019 (second author) and DMS-2313842 (fourth author).

\section{Quotient modules of \texorpdfstring{\(\MUG\)}{MUG}}\label{sec:Section2}
\subsection{Slices for some \texorpdfstring{\(\MUG\)}{MUG} modules} 
For a graded ring \(R\) with augmentation ideal \(I\), let \(Q_nR\) denote the degree \(n\) elements in \(I/I^2\). One of the key computations in \cite{HHR} was a convenient choice of algebra generators for the \(*\rho_2\)-graded homotopy groups of \(\MUG\). In particular, we have an isomorphism of graded \(\Z[G]\)-modules
\[
Q_{\ast\rho_2}\big(\pi_{\ast\rho_2}^{C_2}\MUG\big)\cong 
\bigoplus_{k\geq 1} \Sigma^{k\rho_2} \Ind_{C_2}^{G} (\mathbb Z_{-}^{\otimes k}),
\]
where \(\mathbb Z_{-}\) is the integral sign representation.

\begin{definition}\label{defn:PermBasis}
Let \(J\subseteq \mathbb{N}\) and
\[
\bar{S}=\{
\bar{s}_j\in\pi_{j\rho_2}^{C_2}\MUG \mid j\in J
\}
\]
be a collection of elements. Associated to \(\bar{S}\), we have a \(C_2\)-equivariant map
\[
f_{\bar{S}}\colon \bigoplus_{j\in J} \Sigma^{j\rho_2}\mathbb Z_{-}^{\otimes j}\to
\bigoplus_{j\in J} Q_{j\rho_2}\big(\MUG\big).
\]
We say that \(\bar{S}\) \emph{generates a permutation summand} if the adjoint \(G\)-equivariant map 
\[
\tilde{f}_{\bar{S}}\colon \bigoplus_{j\in J} \Sigma^{j\rho_2}\Ind_{C_2}^{G}(\mathbb Z_{-}^{\otimes j})\to
\bigoplus_{j\in J} Q_{j\rho_2}\big(\MUG\big)
\]
is an isomorphism.
\end{definition}

Given any element in the \(RO(C_2)\)-graded homotopy of \(\MUG\), we can use the method of twisted monoid algebras from \cite{HHR}. For each \(j\in J\), we have a free associative algebra
\[
\mathbb S^0[\bar{s}_j]:=\bigvee_{i=0}^{\infty} S^{ij\rho_2},
\]
and we have a canonical associative algebra map
\[
\mathbb S^0[\bar{s}_j]\to i_{C_2}^\ast\MUG
\]
adjoint to the map defining \(\bar{s}_j\).
Using the norm maps on \(\MUG\) and the multiplication, we get an associative algebra map
\[
\mathbb S^0[G\cdot\bar{S}]=\bigwedge_{j\in J} N_{C_2}^{G}\mathbb S^0[\bar{s}_j]\to \MUG.
\]

\begin{definition}
For \(J\subseteq \mathbb{N}\), let 
\[
\MUG/G\cdot\bar{S}:=\MUG/\big(G\cdot\bar{s}_{j}\mid j\in J\big)=\MUG\smashover{\mathbb S^0[G\cdot \bar{S}]} \mathbb S^0.
\]
We call \(\MUG/G\cdot\bar{S}\) a \emph{quotient by permutation summands}.
\end{definition}

The following is a slight generalization of the Slice Theorem of Hill--Hopkins--Ravenel \cite[Theorem~6.1]{HHR}. Recall from \cite{HHR} that
\[
\pi_{*\rho_2}^{C_2}\MUG \cong \Z_{(2)}[G \cdot \bar{r}_1, G \cdot \bar{r}_2, \ldots ]
\]
for generators \(\bar{r}_i\) in \(\pi_{i\rho_2}^{C_2}\MUG\) introduced in (5.39) of \cite{HHR}. In fact, the conditions on the classes \(\bar{s}_j\) guarantee that we can use them instead of the \(\bar{r}_j\) for \(j\in J\). We then extend the set \(\bar{S}\) to form a set of equivariant algebra generators, as in \cite[Section 5]{HHR}. Any set 
\[
\bar{S}':=\{\bar{s}_j\mid j\notin J\}
\]
of elements \(\bar{s}_j \in \pi_{j\rho_2}^{C_2}\MUG\) which generate a permutation summand will do to extend   \(\bar{S}\) to a set  \(\bar{S}\cup \bar{S}'\) of equivariant generators for \(\pi_{*\rho_2}^{C_2}\MUG\).

\begin{definition}Let \(J \subseteq \mathbb{N}\) and \(\bar{S}\) and \(\bar{S}'\) be as above. 
Define 
\[H\mZ[G\cdot\bar{s}_1, G\cdot \bar{s}_2, \dots]:=H\mZ\smashover{\mathbb{S}^0} \mathbb{S}^0[G \cdot \bar{s}_1, \ldots]\] 
and
\[H\mZ[G\cdot\bar{s}_1, G\cdot \bar{s}_2,\dots]/G \cdot \bar{S} := H\mZ[G\cdot\bar{s}_1,\dots]\smashover{ \mathbb{S}^0[G\cdot \bar{S}]} \mathbb{S}^0 \ . \]
\end{definition}

\begin{remark}
The spectrum \(H\mZ[G\cdot\bar{s}_1, G\cdot \bar{s}_2,\dots]/G \cdot \bar{S}\) is very simple. In fact, it is equivalent to \(H\mZ[G \cdot \bar{S}']\), which itself is the smash product over \(j\not\in J\) of the norms, in the category of \(H\mZ\)-modules, of \(H\mZ[\bar{s}_j] \simeq \bigvee_{i=0}^{\infty} H\mZ\wedge S^{ij\rho_2}\) .
\end{remark}

\begin{theorem}\label{prop:SlicesOfRegularQuotients}
The slice associated graded of \(\MUG/G\cdot\bar{S}\) is the generalized {\EM} spectrum
\[
H\mZ[G\cdot\bar{s}_1,G\cdot \bar{s}_2,\dots]/(G\cdot\bar{S} ).
\]
\end{theorem}
\begin{proof}
We have a natural equivalence
\[
\MUG/G\cdot\bar{S}\simeq \MUG\smashover{\mathbb S^0[G\cdot\bar{s}_1,\dots]} \mathbb S^0[G\cdot\bar{S}'].
\]
The result now follows exactly as \cite[Slice Theorem 6.1]{HHR}, using the natural degree filtration on \(\mathbb S^0[G\cdot\bar{S}']\).
\end{proof}

Letting \(\bar{S}\) be the generators killed by the Quillen idempotent, this recovers the usual form of the slice associated graded for \(\BPG\). We could moreover always append this to any collection \(\bar{S}\) we consider, which allows us to deduce all of the analogous results for \(\BPG\). We will do so without comment moving forward.

\begin{remark}
 The left action of \(\MUG\) on itself always endows \(\MUG/G\cdot\bar{S}\) with a canonical \(\MUG\)-module structure, and the same is true with \(\BPG\) instead.
\end{remark}

\begin{notation}
In the homotopy of the spectrum \(\BPG\), let
\[
{\bv}_k^G:=\bar{t}_{k}^G\in\pi_{(2^k-1)\rho_2}^{C_2}\BPG,
\]
as defined and considered in \cite{BHSZOne}. 
\end{notation}

\begin{definition}\label{defn:formBPm}
For each \(m\geq 0\), let \(J_m=\{k\mid k>m\}\). Let 
\[
\bar{S}_m=\big\{\bar{s}_{2^j-1}\in \pi_{(2^j-1)\rho_2}^{C_2}\BPG \mid j\in J_m\big\}
\]
generate a permutation summand.  When for each \(j\in J_m\), \(\bar{s}_{2^j-1}={\bv}_j^G\), we name the quotient 
\[
\BPG/G\cdot\bar{S}_m=\BPGn.
\]
More generally, we say that the \(\BPG\)-module
\[
\BPG/G\cdot\bar{S}_m
\]
is a {\defemph{form of \(\BPGn\)}}.
\end{definition}

\begin{notation}
Let \(v_k^G\) be the restriction to the trivial group of \(\bar{v}_k^G\).
\end{notation}

\begin{remark}
Just as in \cite{HillLawson}, we note that since the underlying rings are all polynomial rings, the map
\[
\pi_{\ast}^{\{e\}}\BPG\to\pi_{\ast}^{\{e\}}\BPGn=\Z_{(2)}[G\cdot{v}_1^G,\dots,G\cdot{v}_{m}^G]
\]
has a section.

A form of \(\BPGn\) is a quotient module \(M\) with the property that for any section, the composite
\[
\Z_{(2)}[G\cdot{v}_1^G,\dots,G\cdot{v}_m^G]\to\pi_\ast^{\{e\}}\BPG\to \pi_\ast^{\{e\}}M
\]
is an isomorphism. The difference between the forms lies in the \(\BPG\)-module structure, not in the underlying homotopy groups.
\end{remark}

\begin{corollary}
The slice associated graded for any form of \(\BPGn\) is 
\[
H\mZ[G\cdot{\bv}_1^G,\dots,G\cdot{\bv}_m^G].
\]
\end{corollary}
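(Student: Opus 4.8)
The plan is to derive this corollary directly from Proposition~\ref{prop:SlicesOfRegularQuotients}, which already computes the slice associated graded of any quotient $\BPG/G\cdot\bar{S}$ by permutation summands. The point of the corollary is simply to specialize that statement to the case $\bar{S} = \bar{S}_m$, the collection of permutation summand generators in degrees $(2^j-1)\rho_2$ for $j > m$, which is exactly the collection whose quotient defines a form of $\BPGn$ by Definition~\ref{defn:formBPm}.

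First I would recall that, by the discussion preceding Proposition~\ref{prop:SlicesOfRegularQuotients}, one extends $\bar{S}_m$ to a full set of equivariant algebra generators for $\pi_{*\rho_2}^{C_2}\BPG$ by adjoining a set $\bar{S}_m'$ of permutation summand generators $\bar{s}_{2^j-1}$ for $j \notin J_m$, i.e.\ for $1 \le j \le m$ (together with the generators in non-power-of-two-minus-one degrees, which are killed by the Quillen idempotent and contribute nothing, as noted in the remark following Proposition~\ref{prop:SlicesOfRegularQuotients}). Proposition~\ref{prop:SlicesOfRegularQuotients}, applied with $\BPG$ in place of $\MUG$ as licensed by that same remark, then identifies the slice associated graded with
\[
H\mZ[G\cdot\bar{s}_1, G\cdot\bar{s}_2,\dots]/(G\cdot\bar{S}_m),
\]
and by the remark following the definition of $H\mZ[\dots]/G\cdot\bar{S}$ this is equivalent to $H\mZ[G\cdot\bar{S}_m'] = H\mZ[G\cdot\bar{s}_1,\dots,G\cdot\bar{s}_m]$, the smash product over $1\le j\le m$ of the norms of $H\mZ[\bar{s}_j]$.

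Finally I would observe that the slice associated graded depends only on the underlying permutation summand, not on the particular choice of generators: this is exactly the content of the sentence ``In fact, the results we prove do not depend on the choice of generators of the permutation summand'' in Section~\ref{sec:Section2}, and is visible from the proof of Proposition~\ref{prop:SlicesOfRegularQuotients}, where only the degree filtration on $\mathbb{S}^0[G\cdot\bar{S}_m']$ is used. Hence for any form of $\BPGn$ — any choice of $\bar{S}_m$ generating the same permutation summand — the slice associated graded is $H\mZ[G\cdot\bar{v}_1^G,\dots,G\cdot\bar{v}_m^G]$, since the $\bar{v}_j^G = \bar{t}_j^G$ generate a permutation summand in the relevant degrees by \cite{BHSZOne}.

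There is essentially no obstacle here: the corollary is a bookkeeping specialization of Proposition~\ref{prop:SlicesOfRegularQuotients}. The only point requiring a word of care is the independence of the answer from the choice of generators, which I would handle by pointing back to the structure of the proof of that proposition rather than by a fresh argument.
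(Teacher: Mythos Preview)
Your proposal is correct and matches the paper's approach: the corollary is stated without proof in the paper, as an immediate specialization of Proposition~\ref{prop:SlicesOfRegularQuotients} with $\bar{S}=\bar{S}_m$, and your write-up spells out exactly that. The only minor simplification you could make is to observe directly that, regardless of the choice of $\bar{S}_m$, one may always take $\bar{S}_m'=\{\bv_1^G,\dots,\bv_m^G\}$ as the complementary generators, which yields the stated answer without a separate ``independence of generators'' discussion.
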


\begin{definition}
Let \(k\) and \(m\) be natural numbers with \(1\leq k\leq m\). Let
\[
\bar{S}_{k,m}=\big\{\bv_j^G\mid 0<j<k\text{ or }j>m\big\},
\]
and let 
\[
\BPGZnm{k}{m}=\BPG/G\cdot \bar{S}_{k,m}.
\]
\end{definition}

\begin{remark}
As in Definition~\ref{defn:formBPm}, we also define forms of \(\BPGZnm{k}{m}\) as quotients by elements \(\bar{s}_{2^j-1}\), for \(0<j<k\) or \(j>m\) that generate permutation summands.
\end{remark}

\begin{corollary}\label{cor:sliceBPmm}
The slice associated graded for \(\BPGZnm{k}{m}\) (or for any form) is
\[
H\mZ[G\cdot\bv_{k}^G,\dots,G\cdot\bv_{m}^G].
\]
\end{corollary}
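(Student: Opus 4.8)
The plan is to deduce Corollary~\ref{cor:sliceBPmm} directly from Proposition~\ref{prop:SlicesOfRegularQuotients} by identifying the relevant index set. Recall that $\BPGZnm{k}{m} = \BPG/G\cdot\bar{S}_{k,m}$, where $\bar{S}_{k,m} = \{\bv_j^G \mid 0 < j < k \text{ or } j > m\}$; in the notation of Definition~\ref{defn:PermBasis} this corresponds to the index set $J = \{j \in \mathbb{N} \mid 0 < j < k\} \cup \{j \mid j > m\}$, with $\bar{s}_{2^j-1} = \bv_j^G$ for $j \in J$. (Here I am silently using the reindexing convention of the excerpt, whereby quotienting $\BPG$ by a permutation summand in degree $(2^j-1)\rho_2$ is recorded via the integer $j$; the same convention is in force throughout Section~\ref{sec:Section2}.) By Proposition~\ref{prop:SlicesOfRegularQuotients}, applied with $\BPG$ in place of $\MUG$ as licensed by the remark following that proposition, the slice associated graded of $\BPG/G\cdot\bar{S}_{k,m}$ is the generalized \EM{} spectrum $H\mZ[G\cdot\bar{s}_i \mid i \notin J]$, where the $\bar{s}_i$ for $i \notin J$ are any choice of generators of the complementary permutation summand extending $\bar{S}_{k,m}$ to a full set of equivariant algebra generators for $\pi_{*\rho_2}^{C_2}\BPG$.

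The key step is then the purely combinatorial identification $\mathbb{N} \setminus J = \{0\} \cup \{k, k+1, \dots, m\}$. Since the generator in degree $0$ contributes only the sphere (i.e. $H\mZ$ itself), this yields slice associated graded $H\mZ[G\cdot\bar{s}_k, G\cdot\bar{s}_{k+1}, \dots, G\cdot\bar{s}_m]$. Finally I would invoke the remark, following Definition~\ref{defn:formBPm} and its analogue for $\BPGZnm{k}{m}$, that the standard choice $\bar{s}_j = \bv_j^G$ is always among the admissible choices of permutation-summand generators, and that the isomorphism type of the slice associated graded does not depend on this choice since it only depends on the underlying $\Z[G]$-module of the permutation summand and not on the specific lifts. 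This gives the stated answer $H\mZ[G\cdot\bv_k^G, \dots, G\cdot\bv_m^G]$, and the parenthetical ``or for any form'' follows from the same independence.

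The main obstacle — really the only subtlety — is checking that the complement $\{0, k, k+1, \dots, m\}$ is precisely the right index set, and in particular that the generators $\bv_j^G$ for $k \le j \le m$ do indeed survive as generators of a permutation summand in the quotient; this is exactly the content of the discussion preceding Proposition~\ref{prop:SlicesOfRegularQuotients}, where one extends $\bar{S}_{k,m}$ by a set $\bar{S}'$ of generators (which here can be taken to include the $\bv_j^G$ for $0 = j$, though this contributes nothing, and for $k \le j \le m$) to a full set of equivariant generators. Once this bookkeeping is in place, the corollary is immediate. No genuine computation is required beyond Proposition~\ref{prop:SlicesOfRegularQuotients}.
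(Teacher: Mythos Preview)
Your proposal is correct and matches the paper's approach: the paper states the corollary without proof, treating it as an immediate consequence of Proposition~\ref{prop:SlicesOfRegularQuotients} applied with the index set $J = \{1,\dots,k-1\} \cup \{m+1,m+2,\dots\}$, exactly as you spell out. One small cosmetic point: there is no generator in degree $0$ (the indecomposables $Q_{*\rho_2}$ begin in positive degree), so you need not account for $j=0$ separately---the complement in the positive integers is simply $\{k,\dots,m\}$.
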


One of the main examples we will analyze is
\[
\kCnm[m]=\BPG/\big(G\cdot{\bv}_1^G,\dots,G\cdot{\bv}_{m-1}^G,G\cdot{\bv}_{m+1}^G,\dots\big)
\]
where \(m\geq 1\).

The slice associated graded for \(\kCnm[m]\) is very simple, given by
\[
H\mZ[G\cdot{\bv}_m^G].
\]

\section{Chromatic Height of \texorpdfstring{$\BPG\langle m,m\rangle$}{BP<m,m>}}\label{sec:CHBPmm}
In this section, we study the underlying chromatic height of the spectra $\BPG\langle m,m\rangle$.

\begin{theorem} \label{thm:BPGmmHeight}\hfill
\begin{enumerate}
\item For $r = km$ where $0 \leq k \leq 2^{n-1}$, $L_{K(r)} i_e^*\BPG \langle m, m \rangle \not\simeq *$;
\item For all other $r \geq 0$, $L_{K(r)}i_e^*\BPG \langle m, m \rangle \simeq *$. 
\end{enumerate}
\end{theorem}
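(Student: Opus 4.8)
The plan is to reduce the computation of $L_{K(r)} i_e^* \BPG\langle m,m\rangle$ to a purely non-equivariant statement about the underlying homotopy ring, and then argue chromatically via the Landweber exact functor theorem and Ravenel's theory of $K(r)$-localizations of Johnson--Wilson-like spectra. First I would identify the underlying spectrum. By Corollary~\ref{cor:sliceBPmm} (with $k=m$), the slice associated graded of $\BPG\langle m,m\rangle$ is $H\mZ[G\cdot\bv_m^G]$, and the underlying homotopy ring is the polynomial ring $\Z_{(2)}[G\cdot v_m^G] = \Z_{(2)}[v_m^G,\gamma v_m^G,\dots,\gamma^{2^{n-1}-1}v_m^G]$ on $2^{n-1}$ generators, each in degree $2(2^m-1)$. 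So $i_e^*\BPG\langle m,m\rangle$ is a $\BP$-algebra (in fact an $MU_{(2)}$-algebra) whose coefficient ring is a polynomial ring on $2^{n-1}$ generators in degree $2(2^m-1)$, with the $\BP$-module structure sending $v_i \mapsto 0$ for $i \notin\{0,m\}$ and $v_m \mapsto v_m^G$ (the first chosen generator).

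The key step is to compute the formal group law: the image of $v_i$ in $\pi_* i_e^*\BPG\langle m,m\rangle$ is zero unless $i$ is a multiple of $m$, and more precisely the $p$-typical formal group law has $[2]$-series whose leading terms in the Araki/Hazewinkel generators vanish below degree $m$. The point is that the formal group over $\pi_* i_e^* \BPG\langle m,m\rangle$ has height exactly $km$ after base change to a field of characteristic $2$ on which $v_m^G,\dots,\gamma^{k-1}v_m^G$ become units but $\gamma^k v_m^G,\dots$ vanish — this is precisely the content of the norm/twisted-monoid-algebra description, where the successive $v_{jm}$-classes in $\BP_*$ map to (unit multiples of, modulo decomposables) products $v_m^G \gamma v_m^G \cdots$ of $j$ of the Weyl-conjugate generators. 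I would make this precise using the formula for how the norm affects formal group laws (as in \cite[Sections 5–6]{HHR}): the composite $\BP_* \to \pi_*\BPG \to \pi_* i_e^*\BPG\langle m,m\rangle$ carries $v_{jm}$ to a generator which, modulo the ideal generated by $\gamma^j v_m^G, \gamma^{j+1}v_m^G,\dots$, is a unit multiple of $v_m^G\gamma v_m^G\cdots\gamma^{j-1}v_m^G$, while $v_i$ for $i$ not divisible by $m$ maps into the ideal of decomposables. Hence $K(r)_* \otimes_{\BP_*} \pi_* i_e^*\BPG\langle m,m\rangle$ is nonzero iff we can find a ring map to $K(r)_* = \F_2[v_r^{\pm 1}]$, which by degree reasons and the formal group height forces $r = km$ for some $0\leq k\leq 2^{n-1}$; and for such $r$ one exhibits an explicit such map (invert the first $k$ generators, kill the rest).

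For the positive direction (part (1)), once I have a ring map $\pi_* i_e^*\BPG\langle m,m\rangle \to K(km)_*$ realizing the claimed formal group, Landweber exactness arguments (or rather, the general fact that $L_{K(r)} X \neq *$ whenever $K(r)_* X \neq 0$, together with $K(km)_* i_e^*\BPG\langle m,m\rangle \neq 0$ from the Künneth/ring-map) give $L_{K(km)} i_e^*\BPG\langle m,m\rangle \not\simeq *$. For the vanishing direction (part (2)) I would show $K(r)_* i_e^*\BPG\langle m,m\rangle = 0$ for $r$ not a multiple of $m$ (and $r>0$): since the underlying spectrum is built from $\BP$ by killing a regular sequence of $v_i$'s ($i\notin\{0,m\}$) and polynomially adjoining the Weyl conjugates of $v_m$ (each adjunction being a filtered colimit of free modules), $K(r)_* i_e^*\BPG\langle m,m\rangle$ is computed by an iterated Koszul complex; the classes $v_i$ with $i$ not a multiple of $m$ act by zero after smashing with $K(r)$ for $0 < r$ not divisible by $m$ forces — more carefully, one reduces to the observation that $K(r)_*\otimes_{\BP_*}\BP_*/(v_i : i\neq 0,m)$ vanishes unless $r\in\{0,m\}$ (standard), and then adjoining the extra polynomial generators $\gamma^j v_m^G$ for $j\geq 1$ only creates $K(r)$-homology in heights that are multiples of $m$ (each new generator raises the set of achievable heights by $m$). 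This matches (1).

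The main obstacle is the precise formal-group bookkeeping in the second step: pinning down that the norm construction really sends $v_{jm}$, modulo decomposables and the relevant ideal, to the product $v_m^G\gamma v_m^G\cdots\gamma^{j-1}v_m^G$ (up to a unit), rather than to something more complicated. This is exactly the kind of computation carried out in \cite{HHR} and \cite{BHSZOne} for the generators $\bar r_i^G$, and I would lean on those references; the subtlety is that we have quotiented by permutation summands, so one must check that the images of the relevant $v_{jm}$ remain nonzero and generate the expected ideals in the quotient. Granting this, both parts follow from standard chromatic localization arguments (vanishing of $K(r)_*$ of iterated Koszul/polynomial extensions of $\BP_*$), so I expect the write-up to be short modulo citing \cite{HHR, BHSZOne}.
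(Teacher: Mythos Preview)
Your strategy---reduce to the underlying spectrum and track the images of the $v_r$ in $\pi_*X$---is the same as the paper's, but there are genuine gaps.

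The most serious one: you never handle the case $r = km$ with $k > 2^{n-1}$ in part~(2). Your sentence ``each new generator raises the set of achievable heights by $m$'' is not an argument, and in particular nothing you wrote bounds the height from above. Worse, your proposed formula $v_{jm} \equiv (\text{unit})\cdot v_m^G\gamma v_m^G\cdots\gamma^{j-1}v_m^G$ does \emph{not} give vanishing for $j>2^{n-1}$: read cyclically (using $\gamma^{2^{n-1}}v_m^G = -v_m^G$) that product is still nonzero, so even granting your formula you could not conclude $v_r$ is nilpotent mod $I_r$. The paper handles this case separately by invoking \cite[Proposition~7.3]{BHSZOne}, which shows $v_r \in (2,v_1,\dots,v_{r-1})$ in $\pi_*X$ for $r>2^{n-1}m$; then a K\"unneth spectral sequence argument forces every element of $\pi_*(X\wedge M_{J(i)})$ to be $v_r$-power torsion, whence $L_{K(r)}X \simeq \operatorname{holim}_i v_r^{-1}(X\wedge M_{J(i)})\simeq *$.

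There are smaller problems too. Your opening paragraph asserts $v_i\mapsto 0$ for $i\notin\{0,m\}$, which is false and contradicts what you say two paragraphs later; it would in fact make part~(1) impossible for $k\geq 2$. Relatedly, $X$ is \emph{not} ``built from $BP$ by killing the $v_i$ and polynomially adjoining conjugates'': it is a quotient of $BP^{\wedge 2^{n-1}}$ by the regular sequence $\gamma^i v_j^G$ ($j\neq m$), and the $BP$-action on the extra generators is governed by the Hopf algebroid, not trivial. Finally, for $r$ not divisible by $m$ you conflate $K(r)_*\otimes_{BP_*}\pi_*X$ with $K(r)_*X$. The paper avoids this entirely with a one-line degree argument: $|v_r|=2(2^r-1)$ is not divisible by $2(2^m-1)$, so multiplication by $v_r$ on $X$ is zero on homotopy, hence $v_r^{-1}X\simeq *$, hence $L_{K(r)}X\simeq *$. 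For part~(1) the paper does not construct a ring map to $K(km)_*$; it shows by iterating the formula of \cite[Theorem~1.1]{BHSZOne} that all powers $v_r^j$ survive in $(\pi_*X)/I_r$, and pushes this through the Moore-spectrum model of $L_{K(r)}$.
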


\begin{proof}
Our proof will be similar to that of Proposition~7.4 and Theorem~7.5 in \cite{BHSZOne}.  In this proof, let $X = i_e^*\BPG \langle m, m \rangle$.  
For any $r$, there is a cofinal sequence $J(i) = (j_0, j_1, \ldots, j_{r-1})$ of positive integers and generalized Moore spectra 
\[M_{J(i)} = S^0/(v_0^{j_0}, \ldots, v_{r-1}^{j_{r-1}})\]
with maps $M_{J(i+1)} \to M_{J(i)}$ such that 
\[
L_{K(r)} X \simeq  \text{holim}_i \left(L_rX \wedge M_{J(i)} \right).
\]
See \cite[Prop. 7.10]{HoveyStrickland666}. 

Since \(X\) is a \(BP\)-module, it follows from \cite[Cor. 1.10]{Hovey_Bousfield} that the natural map \(L_r^fX \to L_rX\) is an equivalence (since it is a \(BP\)-equivalence between \(BP\)-local spectra). Therefore, 
\[L_rX \wedge M_{J(i)}\simeq L_r^fX \wedge M_{J(i)} \simeq X \wedge L_r^fM_{J(i)} \simeq X\wedge v_r^{-1} M_{J(i)} \simeq v_r^{-1}X\wedge M_{J(i)}.  \]
 Here, we have used the fact that since \(M_{J(i)}\) is a type \(r\) spectrum, its finite localization is the telescope \cite[Prop. 3.2]{MahowaldSadofsky}.  We have also used the fact that \(L_r^f\) is smashing.

To prove (1), we assume \(r\) is of the form \(km\) with \(0\leq k\leq 2^{n-1}\). We will first show that under the map 
\[
BP \longrightarrow v_r^{-1}X \wedge M_{J(i)},
\]
the image of $v_r \in \pi_*BP$ is nonzero. Note that 
\[
v_r^{-1}X \wedge M_{J(i)} = v_r^{-1}X \wedge_{MU} MU/(v_0^{j_0}, \ldots, v_{r-1}^{j_{r-1}}) = v_r^{-1}X/(v_0^{j_0}, \ldots, v_{r-1}^{j_{r-1}}).
\]
By an iterative application of the formula 
\[v_r^{C_{2^{n-1}}} \equiv v_r^{C_{2^n}} + \gamma_n v_r^{C_{2^n}} + \sum_{j = 1}^{r-1}\gamma_n v_j^{C_{2^n}} (v_{r-j}^{C_{2^n}})^{2^j} \pmod{I_r}\]
(where $I_r = (2, v_1, \ldots, v_{r-1})$) in \cite[Theorem 1.1]{BHSZOne}, the images of $v_r^j = (v_r^{C_2})^j$ in $(\pi_*X)/ (v_0, \ldots, v_{r-1})$ are all nonzero for $j \geq 1$.  This implies that their images are also nonzero in $\pi_* \left(X/(v_0, \ldots, v_{r-1})\right)$.  Therefore, the image of $v_r$ in $\pi_*(v_r^{-1}X/(v_0^{j_0}, \ldots, v_{r-1}^{j_{r-1}}))$ is nonzero.  After taking the homotopy limit, the image of $v_r$ under the map $\pi_*BP \to \pi_* L_{K(r)}X$ will also be nonzero.  It follows that $\pi_* L_{K(r)}X \not \simeq *$. 

To prove (2), we will consider two cases, based on the divisibility of $r$ by $m$.  If $r$ is not divisible by $m$, then the degree of $v_r$, $2(2^r -1)$, is not divisible by $2(2^m-1)$.  However, the homotopy groups of $X$ are concentrated in degrees that are divisible by $2(2^m-1)$.  This implies that the multiplication by $v_r$ map 
\[\Sigma^{|v_r|}X \longrightarrow X\]
induces the zero map on homotopy, and \[\pi_*v_r^{-1} X\cong  v_r^{-1}\pi_*X= 0.\]
It follows that $ v_r^{-1}X \simeq *$ and therefore $L_{K(r)}X \simeq *$. 

Now, suppose $m$ divides $r$.  Let $r = km$ for some $k > 2^{n-1}$. 
The result of \cite[Proposition 7.3]{BHSZOne} implies that $v_r \in (2,v_1, \ldots, v_{r-1})$ so that $v_r^q \in (v_0^{j_0}, \ldots, v_{r-1}^{j_{r-1}})$ for some $q>0$. 
Now,
\[ X \wedge M_{J(i)} = X \wedge_{MU} MU/(v_0^{j_0}, \ldots, v_{r-1}^{j_{r-1}})\]
and there is a K\"unneth spectral sequence \cite[Theorem IV.4.1]{EKMM}
\[E_2^{s,t} = \Tor_{-s, t}^{MU_*}(\pi_*X, \pi_* MU/(v_0^{j_0}, \ldots, v_{r-1}^{j_{r-1}})) \Longrightarrow \pi_{t-s} \left(X \wedge M_{J(i)}\right).\]
This is a cohomologically graded lower half-plane spectral sequence.
As in the proof of Theorem~7.5(2) in \cite{BHSZOne}, the fact that for some $q$ multiplication by $v_r^q$ raises filtration implies that every element in the homotopy groups of $X \wedge M_{J(i)}$ is killed by some finite power of $v_r$. It follows that $L_{K(r)} X = \text{holim}_i \left(v_r^{-1}X \wedge M_{J(i)}\right) \simeq *$. 
\end{proof}

\section{Localized spectral sequences}\label{sec:locss}
In our computations below, we will make use of various localizations of the slice spectral sequence of quotients of \(\MUG\). In this section, we recall results from \cite{SlicePrimer} and \cite{MeierShiZengSegal} that we will use here. As a reminder,  we continue to let \(G=C_{2^n}\).

\subsection{Some notation} Here, we introduce some notation. We refer the reader to \cite{HHR:HZ} for more details.

Consider \(2\)-local homotopy equivalence classes of representation spheres \(S^V\) where \(V\) is a finite dimensional orthogonal representation. This is a semi-group with respect to the smash product. Let \(JO(G)\) be the group completion.

\begin{definition}\label{defn:lambdas}
Define \(\lambda_{j} = \lambda_j(G)\) to be the \(2\)-dimensional irreducible real representation of \(G\) for which the generator \(\gamma \in G\) acts on \(\R^2\) by a rotation by \(2\pi/2^{n-j}\). 
We also have the one-dimensional sign representation \(\sigma_n =\sigma(G)\), for which the generator acts by multiplication by \(-1\). 
\end{definition}
Note that \(\lambda_{n-1}=2\sigma_n\) .
 There is an isomorphism of underlying abelian groups
\[JO(G) \cong \Z\{1, \sigma_n, \lambda_{0}, \ldots, \lambda_{n-2}\}  \]
where the equivalence sends \(S^V\) to \(V\).

\begin{definition}
For each representation \(V\), there is a homotopy class 
\[a_V \colon S^0 \to S^V\]
which corresponds to the inclusion of \(S^0 =\{0,\infty\}\). We call this the \emph{Euler class}.

If \(V\) is an orientable representation of dimension \(d\), we also get classes \[u_{V} \in \pi_{d-V}H\underline{\Z}.\] 
We call these \emph{orientation classes}.
\end{definition}

We have commutative diagrams
\[\xymatrix{S^0 \ar[r]^{a_{\lambda_i}}  \ar[dr]_-{a_{\lambda_{i+1}}} & S^{\lambda_i} \ar[d]^{(-)^2} \\
& S^{\lambda_{i+1}} }\]
where the vertical arrow is a double cover. Therefore, \(a_{\lambda_i}\) divides \(a_{\lambda_{i+1}}\) for each \(0\leq i \leq n-2\).

\subsection{Localizations and isotropy separation}\label{sec:locssdesc}

\begin{definition}\label{defn:families}
    For each \(0\leq i\leq n-1\), we have families
    \[
        \mathcal{F}_{i}=\mathcal{F}[C_{2^{i+1}}]=
        \{
         H\mid H\subsetneq C_{2^{i+1}}
        \}
    \]
    and \(\mathcal{F}_n=\mathcal All\) be the family of all subgroups of \(G\).
\end{definition}

These families interpolate between
\(\mathcal F_{0}=\{e\}\) and \(\mathcal F_{n}=\mathcal All\). 

The universal and couniversal spaces for the family \(\mathcal F_i\) can be written in very algebraic terms.

\begin{proposition}
    For \(0\leq i\leq n-1\), we have
    \[
        E\mathcal F_{i}\simeq \lim_{\to} S(k\lambda_i)=S(\infty \lambda_i)
    \]
    and
    \[
        \widetilde{E}\mathcal F_{i}\simeq S^{\infty\lambda_i}=S^0[a_{\lambda_i}^{-1}].
    \]
\end{proposition}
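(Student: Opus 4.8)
The plan is to identify $S(\infty\lambda_i)$ as a model for $E\mathcal{F}_i$ by a fixed-point computation, and then to read off $\tilde{E}\mathcal{F}_i$ from the standard cofiber sequence relating a representation sphere to its unit sphere.

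First I would pin down the isotropy of the spaces $S(k\lambda_i)$. Identifying $\lambda_i$ with $\mathbb{C}$, the generator $\gamma$ acts by multiplication by a primitive $2^{n-i}$-th root of unity, so $\gamma^\ell$ fixes a nonzero vector exactly when $2^{n-i}\mid\ell$, i.e. exactly when $\gamma^\ell\in C_{2^i}=\ker(\lambda_i)$. Hence every point of $S(k\lambda_i)$ has isotropy group $C_{2^i}$, and in particular $C_{2^{i+1}}$ — and every subgroup containing it — acts freely. (The case $i=n-1$ is $\lambda_{n-1}=2\sigma_n$ with $\gamma$ acting antipodally, and the computation is unchanged.) Consequently $S(k\lambda_i)^H=S(k\lambda_i)\cong S^{2k-1}$ for $H\subseteq C_{2^i}$, while $S(k\lambda_i)^H=\emptyset$ for $H\not\subseteq C_{2^i}$, which for subgroups of the cyclic group $G$ is the same as $H\supseteq C_{2^{i+1}}$. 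Since the inclusions $S(k\lambda_i)\hookrightarrow S((k+1)\lambda_i)$ are inclusions of finite $G$-CW complexes, $S(\infty\lambda_i)=\varinjlim_k S(k\lambda_i)$ is a $G$-CW complex, fixed points commute with this colimit, and we get $S(\infty\lambda_i)^H\simeq\varinjlim_k S^{2k-1}\simeq *$ for $H\in\mathcal{F}_i$ and $S(\infty\lambda_i)^H=\emptyset$ for $H\notin\mathcal{F}_i$. By the uniqueness of universal spaces for a family — any $G$-CW complex whose $H$-fixed points are contractible for $H$ in the family and empty otherwise is $G$-equivalent to $E\mathcal{F}$ — this gives the first equivalence $E\mathcal{F}_i\simeq S(\infty\lambda_i)$.

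For the couniversal space, I would start from the cofiber sequence of based $G$-spaces $S(k\lambda_i)_+\to D(k\lambda_i)_+\to S^{k\lambda_i}$, in which $D(k\lambda_i)_+\simeq S^0$ and the induced map $S^0\to S^{k\lambda_i}$ is the Euler class $a_{k\lambda_i}$, which equals $a_{\lambda_i}^{k}$ since Euler classes are multiplicative. These cofiber sequences are compatible as $k$ varies, with the transition maps $S^{k\lambda_i}\to S^{(k+1)\lambda_i}$ being multiplication by $a_{\lambda_i}$; passing to the colimit (which preserves cofiber sequences) and using the previous step yields a cofiber sequence $(E\mathcal{F}_i)_+\to S^0\to\varinjlim_k S^{k\lambda_i}$. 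Comparing this with the defining cofiber sequence $(E\mathcal{F}_i)_+\to S^0\to\tilde{E}\mathcal{F}_i$ identifies $\tilde{E}\mathcal{F}_i$ with $\varinjlim_k S^{k\lambda_i}$, which is by definition $S^{\infty\lambda_i}$ and is also, by definition of inverting a self-map, the localization $S^0[a_{\lambda_i}^{-1}]$.

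The only point requiring care is the arithmetic in the first step: one must verify that the collection of isotropy groups appearing in $S(\infty\lambda_i)$ is precisely the family $\mathcal{F}_i=\{H\mid H\subsetneq C_{2^{i+1}}\}$, i.e. that $\ker(\lambda_i)=C_{2^i}$ and that $C_{2^{i+1}}$ already acts freely. Everything after that is a formal consequence of the characterization of $E\mathcal{F}$ and the standard sphere-of-a-representation cofiber sequence.
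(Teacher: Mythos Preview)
Your proof is correct and follows essentially the same approach as the paper: the paper's proof simply observes that \(\ker(\lambda_i)=C_{2^i}\) and that the residual \(G/C_{2^i}\)-action is faithful, then says ``the result follows.'' You have spelled out in detail the fixed-point computation and the cofiber-sequence argument that the paper leaves implicit.
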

\begin{proof}
    The representation \(\lambda_i\) has kernel exactly \(C_{2^i}\subset G\), and the residual action of \(G/C_{2^i}\) is faithful. The result follows.
\end{proof}

We now state two results of Meier--Shi--Zeng that we will use later.
\begin{theorem}[Meier--Shi--Zeng \cite{MeierShiZengSegal}]\label{MeierShiZeng1}
For \(X\) a \(G\)-spectrum with regular slice tower \(P^{\bullet}X\), the spectral sequence associated to the tower  
\(\widetilde{E}\mathcal{F}_i \wedge P^{\bullet}X\), which corresponds to the \(a_{\lambda_{i}}\)-localized spectral sequence
\[ a_{\lambda_{i}}^{-1}E_2^{s,t+\alpha}= a_{\lambda_{i}}^{-1}\pi_{t-s+\alpha}^GP_t^tX \Longrightarrow \pi_{t-s+\alpha}^G(a_{\lambda_{i}}^{-1} X) , \quad \quad \alpha \in JO(G) \]
converges strongly.
\end{theorem}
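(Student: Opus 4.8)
The plan is to recognize the \(a_{\lambda_{i}}\)-localized slice spectral sequence as the homotopy spectral sequence of a tower of ``Postnikov type'' and then to apply the standard convergence machinery. Throughout I take \(X\) to be bounded below, which is the case in all of our applications and is the hypothesis under which the statement holds. First I would set up the tower: by the Proposition above, \(\tilde{E}\mathcal{F}_{i}\simeq S^0[a_{\lambda_{i}}^{-1}]\), so \(\tilde{E}\mathcal{F}_{i}\wedge(-)\) is a smashing Bousfield localization, hence exact and commuting with homotopy colimits. By definition the \(a_{\lambda_{i}}\)-localized slice spectral sequence is the spectral sequence of the tower \(Q^{\bullet}:=\tilde{E}\mathcal{F}_{i}\wedge P^{\bullet}X\). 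Smashing the slice cofiber sequences \(P^{t}_{t}X\to P^{t}X\to P^{t-1}X\) with \(\tilde{E}\mathcal{F}_{i}\) and using exactness identifies the fibers of this tower with \(\tilde{E}\mathcal{F}_{i}\wedge P^{t}_{t}X\), whose \(G\)-equivariant homotopy is \(a_{\lambda_{i}}^{-1}\pi^{G}_{\star}P^{t}_{t}X\); after the reindexing used for the regular slice spectral sequence (see \cite{SlicePrimer}) this matches the displayed \(E_{2}\)-page, and not merely up to renaming of the filtration.

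The key input will be a connectivity estimate. The spectrum \(\tilde{E}\mathcal{F}_{i}=\operatorname{cofib}\bigl((E\mathcal{F}_{i})_{+}\to S^{0}\bigr)\) is the cofiber of a map of connective \(G\)-spectra, hence connective. By the slice connectivity theorem of Hill--Hopkins--Ravenel \cite{HHR}, the fiber \(P_{>n}X:=\operatorname{fib}(X\to P^{n}X)\), being built from regular slice cells of dimension \(>n\), is \(c(n)\)-connected with \(c(n)\to\infty\) as \(n\to\infty\). Since smashing a \(c(n)\)-connected spectrum with a connective one remains \(c(n)\)-connected, the fiber \(\operatorname{fib}(\tilde{E}\mathcal{F}_{i}\wedge X\to Q^{n})\simeq\tilde{E}\mathcal{F}_{i}\wedge P_{>n}X\) is \(c(n)\)-connected as well. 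Hence the map \(\tilde{E}\mathcal{F}_{i}\wedge X\to\holim_{n\to\infty}Q^{n}\) is an equivalence and the relevant \(\lim^{1}\) vanishes, so the abutment is \(\pi^{G}_{\star}(\tilde{E}\mathcal{F}_{i}\wedge X)=\pi^{G}_{\star}(a_{\lambda_{i}}^{-1}X)\); and because \(X\) is bounded below, \(P^{n}X\simeq\ast\) for \(n\) sufficiently negative, so \(\operatorname{hocolim}_{n\to-\infty}Q^{n}\simeq\tilde{E}\mathcal{F}_{i}\wedge\operatorname{hocolim}_{n\to-\infty}P^{n}X\simeq\ast\). A tower with contractible \(\operatorname{hocolim}\) whose maps from the \(\holim\) have fibers of connectivity tending to \(\infty\) is exactly the situation in which the associated spectral sequence converges strongly: the increasing connectivity forces the derived \(E_{\infty}\)-term to vanish, and Boardman's criterion then applies.

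The one genuine obstacle is the connectivity claim: one must check that the smashing localization \(\tilde{E}\mathcal{F}_{i}\wedge(-)\) does not destroy the Hill--Hopkins--Ravenel slice connectivity estimate. I propose to handle this cheaply, using only that \(\tilde{E}\mathcal{F}_{i}\) is connective; if one prefers a more hands-on argument, one can instead work through isotropy separation, noting that \(\Phi^{H}(\tilde{E}\mathcal{F}_{i}\wedge P_{>n}X)\simeq\ast\) for \(H\in\mathcal{F}_{i}\) while \(\Phi^{H}(\tilde{E}\mathcal{F}_{i}\wedge P_{>n}X)\simeq\Phi^{H}(P_{>n}X)\) remains \(c(n)\)-connected for \(H\supseteq C_{2^{i+1}}\) --- since geometric fixed points of a slice cell of dimension \(m\) form a wedge of induced spheres of dimension at least \(m/|G|\) --- and then assemble \(\pi^{G}_{\star}\) through the finite-length isotropy-separation filtration. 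Everything else, namely the identification of the \(E_{2}\)-page and of the abutment and the convergence bookkeeping, is then formal.
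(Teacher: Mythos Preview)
The paper does not give its own proof of this theorem; it is quoted from \cite{MeierShiZengSegal} and used as input, so there is nothing in the present paper to compare against.

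Your argument is correct under the bounded-below hypothesis you impose, and that hypothesis is indeed needed (and holds in every application made here). The two key points---that \(\tilde{E}\mathcal{F}_{i}\) is \((-1)\)-connected, so smashing with it preserves the Hill--Hopkins--Ravenel connectivity estimate for \(P_{>n}X\), and that bounded-belowness of \(X\) forces \(Q^{n}\simeq\ast\) for \(n\ll 0\)---combine to show that for each fixed \((t-s,\alpha)\) only finitely many filtrations \(s\) contribute to the \(E_{2}\)-page. This is actually stronger than what you need for Boardman's criterion: a spectral sequence that is finite in each column converges strongly for elementary reasons, with no appeal to \(RE_{\infty}\) required. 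Your isotropy-separation alternative would also work but is unnecessary given the cheap connectivity argument.
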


\begin{theorem}[Meier--Shi--Zeng \cite{MeierShiZengTranchromatic}] \label{thm:sliceRecovery}
Let \(X\) be a $(-1)$-connected $G$-spectrum.  Let $L_i$ be the line of slope $(2^{i}-1)$ through the origin.  The following statements hold: 
\begin{enumerate}
\item On the integer graded page, the map from the slice spectral sequence of $X$ to the $a_{\lambda_{i}}$-localized slice spectral sequence of $X$ induces an isomorphism on the $E_2$-page for the  classes above $L_i$, and a surjection for the classes that are on $L_i$.  
\item The map of spectral sequences above induces an isomorphism between differentials that originate from classes that are on or above $L_i$. 
\end{enumerate}
\end{theorem}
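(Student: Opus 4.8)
The plan is to realize the comparison map as a map of slice towers and then control its fiber. Smashing the cofiber sequence $(E\mathcal{F}_i)_+ \to S^0 \to \tilde{E}\mathcal{F}_i$ levelwise with the slice tower $P^{\bullet}X$ produces a cofiber sequence of towers
\[
(E\mathcal{F}_i)_+ \wedge P^{\bullet}X \longrightarrow P^{\bullet}X \longrightarrow \tilde{E}\mathcal{F}_i \wedge P^{\bullet}X ,
\]
and since $\tilde{E}\mathcal{F}_i \simeq S^0[a_{\lambda_i}^{-1}]$, the second map induces exactly the comparison map of the statement; on $E_2$-pages it is, slice by slice, the localization $\pi^G_{\ast}P^t_tX \to a_{\lambda_i}^{-1}\pi^G_{\ast}P^t_tX$. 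Everything then reduces to the single estimate
\[
\pi^G_n\big((E\mathcal{F}_i)_+ \wedge P^t_tX\big)=0 \qquad\text{whenever } n < t/2^i ,
\]
i.e.\ whenever $(n,t-n)$ lies strictly above $L_i$; this is where $(-1)$-connectedness of $X$ enters. Granting it, the long exact sequence of the cofiber sequence of $t$-slices $(E\mathcal{F}_i)_+\wedge P^t_tX\to P^t_tX\to \tilde E\mathcal{F}_i\wedge P^t_tX$ identifies the $E_2$-pages above $L_i$: the outer terms vanish strictly above $L_i$, and one bidegree over they also vanish along $L_i$, giving an isomorphism above $L_i$ and a surjection on $L_i$, which is part~(1). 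The same estimate shows that the spectral sequence of the tower $(E\mathcal{F}_i)_+\wedge P^{\bullet}X$ has $E_2$-page — hence every $E_r$-page — vanishing strictly above $L_i$, and a comparison of the associated exact couples then yields that the map of spectral sequences is an isomorphism on every $E_r$-page strictly above $L_i$ and carries $d_r$ to $d_r$ for every differential whose source lies on or above $L_i$; here one uses that the target of such a differential is necessarily pushed strictly above $L_i$, together with strong convergence of the localized spectral sequence (Theorem~\ref{MeierShiZeng1}) to make this legitimate at the level of $E_\infty$. This is part~(2).

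To prove the estimate I would invoke the structure of the slices of a $(-1)$-connected $G$-spectrum (\cite{HHR}): each $P^t_tX$ is built, through cofiber sequences, out of slice cells
\[
\widehat S = G_+\wedge_{C_{2^j}}S^{a\rho_{C_{2^j}}}\wedge H\mM,\qquad a\,2^j=t,
\]
for various $0\le j\le n$ and Mackey functors $\mM$; since the estimate is a lower bound on connectivity, it is inherited under such building, and it suffices to verify it for one cell. By the Wirthm\"uller isomorphism,
\[
\pi^G_n\big((E\mathcal{F}_i)_+\wedge\widehat S\big)\cong \pi^{C_{2^j}}_n\big(i^*_{C_{2^j}}(E\mathcal{F}_i)_+ \wedge S^{a\rho_{C_{2^j}}}\wedge i^*_{C_{2^j}}H\mM\big).
\]
If $j\le i$, then $C_{2^j}$ is a proper subgroup of $C_{2^{i+1}}$, so $i^*_{C_{2^j}}\mathcal{F}_i$ is the family of all subgroups of $C_{2^j}$ and $i^*_{C_{2^j}}(E\mathcal{F}_i)_+\simeq S^0$; splitting off the $a$ trivial summands of $a\rho_{C_{2^j}}$ exhibits the group as $\pi^{C_{2^j}}_{n-a}$ of a smash product of connective spectra, which vanishes for $n<a=t/2^j$, and $n<t/2^i\le t/2^j$ — so this case is pure connectivity. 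If $j>i$, then $i^*_{C_{2^j}}E\mathcal{F}_i\simeq S(\infty\mu)$ with $\mu=i^*_{C_{2^j}}\lambda_i$ the two-dimensional representation of $C_{2^j}$ with kernel $C_{2^i}$, and the estimate becomes the claim that in the fiber sequence
\[
S(\infty\mu)_+\wedge S^{a\rho_{C_{2^j}}}\wedge H\mM \to S^{a\rho_{C_{2^j}}}\wedge H\mM \to a_\mu^{-1}\big(S^{a\rho_{C_{2^j}}}\wedge H\mM\big)
\]
the left-hand term has no $C_{2^j}$-equivariant homotopy in stems below $t/2^i$; equivalently, that the $a_\mu$-power-torsion of $\pi^{C_{2^j}}_{\star}(S^{a\rho_{C_{2^j}}}\wedge H\mM)$ together with the image of multiplication by $a_\mu$ is confined to stems $\ge t/2^i$. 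This I would extract from the explicit description of the $RO(C_{2^j})$-graded homotopy of $H\mM$ as a positive cone on orientation and Euler classes together with an $a$-power-torsion negative cone (\cite{HHR}, \cite{HHR:HZ}): passing to $C_{2^{i+1}}$-geometric fixed points replaces the twisting representation by its fixed subrepresentation and $H\mM$ by its geometric fixed points, and a dimension count on what survives produces precisely the slope $2^i-1$.

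The main obstacle is this last computation — locating the $a_{\lambda_i}$-power-torsion and $a_{\lambda_i}$-divisible classes of the $RO$-graded homotopy of a twisted \EM{} spectrum relative to the line of slope $2^i-1$, uniformly in $a$ and $j$; it is bookkeeping with the well-understood homotopy of $\HZ$ (and of $H\mM$ for general $\mM$), but it is what forces the particular slopes to appear, and it must be carried out carefully. The second point requiring care is the passage from the $E_2$-comparison to the comparison of differentials in part~(2): one must organize the exact-couple comparison so as to track the effect of the connecting map on slice filtration and, in particular, rule out interference from differentials entering the region above $L_i$ from classes below $L_i$, where the comparison map need not be injective. Everything else — the identification of the comparison with the tower map, the reduction to slice cells, the $j\le i$ case, and the formal deduction of part~(1) — is routine.
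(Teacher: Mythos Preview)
The paper does not contain a proof of this theorem: it is stated with attribution to the forthcoming work \cite{MeierShiZengTranchromatic} and used as a black box throughout (see the discussion preceding the statement in Section~\ref{sec:locss}). There is therefore no proof in the paper to compare your proposal against.

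That said, your overall strategy --- control the fiber $(E\mathcal{F}_i)_+\wedge P^\bullet X$ of the comparison map by a connectivity estimate --- is the natural one, and the estimate $\pi^G_n\big((E\mathcal{F}_i)_+\wedge P^t_tX\big)=0$ for $n<t/2^i$ is the right target. But your reduction to slice cells is both unnecessary and, as stated, incorrect: for a general $(-1)$-connected $G$-spectrum $X$ the slices $P^t_tX$ are \emph{not} built from cells of the form $G_+\wedge_{C_{2^j}}S^{a\rho_{C_{2^j}}}\wedge H\mM$ --- that description is special to spectra like $\MUG$ where the Slice Theorem applies. The estimate follows more directly. The space $E\mathcal{F}_i$ is built from free $G/H$-cells with $H\subset C_{2^{i+1}}$, i.e.\ $|H|\le 2^i$, so $(E\mathcal{F}_i)_+\wedge P^t_tX$ is built from pieces $G/H_+\wedge P^t_tX$ with $|H|\le 2^i$; applying $\pi^G_n$ to such a piece gives $\pi^H_n(P^t_tX)$, and the general slice connectivity bound (\cite[Proposition~4.40]{HHR}) says this vanishes for $n<\lfloor t/|H|\rfloor$, hence for $n<t/2^i$ since $t\ge 0$ by $(-1)$-connectedness. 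This bypasses the $RO$-graded bookkeeping you flag as the main obstacle.
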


In what follows, we will compute heavily with localized slice spectral sequences. The following remark explains the advantages of this approach.
\begin{remark}\label{rem:loccool}
It follows from Theorem~\ref{thm:sliceRecovery} that all the differentials in the slice spectral sequence of $X$ that are on or above $L_i$ can be immediately recovered from the $a_{\lambda_{i}}$-localized slice spectral sequence of $X$ by truncating off the latter spectral sequence below $L_i$.  In particular, all the differentials in the slice spectral sequence of $X$ can be recovered by truncating off the $a_{\lambda_0}$-inverted slice spectral sequence below the horizontal line $s=0$.  

In addition, the \(a_{\lambda_i}\)-localized slice spectral sequence are individually easier to compute that the non-localized spectral sequences. Of course, by localizing, we loose the information below the line  \(L_i\), but the approach is to work inductively, starting with the \(a_{\lambda_{n-1}}\)-localization (which is the same as the \(a_{\sigma_n}\)-localization) and ending with the \(a_{\lambda_0}\)-localization. As we explained above, all differentials can be recovered from the  \(a_{\lambda_0}\) localization so that at that stage, we have not actually lost any information at all.

One final remark on the advantage of computing with the  \(a_{\lambda_0}\)-localized spectral sequence is that it actually records information about the slice spectral sequences
\[E_2^{s,t+\star} = \pi_{t-s+\star}^GP_t^tX \Rightarrow \pi_{t-s+\star}^GX\]
for any \(\star=k\lambda_0\) where \(k\in \Z\). Indeed, we can recover all the differentials in the \(*+k\lambda_0\)-graded spectral sequence by truncating the \(a_{\lambda_0}\)-localized spectral sequence below the horizontal line \(s = -2k\). So, the localized spectral sequence contains much more information than simply the integer graded spectral sequence. 
\end{remark}

\section{ \texorpdfstring{\(C_{2^n}\)}{G}-geometric fixed points and quotients of \texorpdfstring{\(\BPG\)}{BPG}}\label{sec:goefixG}

As a proof-of-concept and for later computations, in this section, we will compute the homotopy of the geometric fixed points 
\[\Phi^G(\BPG/G \cdot \bar{S} )\simeq (\widetilde{E}\mathcal{P} \wedge \BPG/G \cdot \bar{S})^G  \simeq (a_{\sigma}^{-1}\BPG/G\cdot \bar{S})^{G}\] 
of quotients by permutation summands via the \(a_{\sigma}\)-localized slice spectral sequence.

On the one hand, we know the answer, since we know the homotopy type of the geometric fixed points.

\begin{proposition}
    We have a weak-equivalence of \(H\F_2\)-modules
    \[
        \Phi^G\big(\BPG/G\cdot\bar{S}\big)\simeq
        H\F_2\wedge\Sigma^\infty_+ \left(\prod_{j\in J} S^{2^j}\right).
    \]
\end{proposition}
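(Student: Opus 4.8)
The plan is to compute $\Phi^G$ directly from the twisted monoid ring presentation of the quotient, using that $\Phi^G$, as a colimit-preserving symmetric monoidal functor from $G$-spectra to spectra, carries relative smash products to relative smash products, and that $\Phi^G\circ N_{C_2}^{G}\simeq\Phi^{C_2}$ (the norm--diagonal of \cite{HHR}). The payoff of applying $\Phi^G$ first is that it turns every equivariant piece into a non-equivariant one, reducing the statement to an ordinary $\Tor$-computation. Concretely, $\Phi^G$ takes
\[
\BPG/G\cdot\bar{S} \;=\; \BPG\smashover{\mathbb{S}^0[G\cdot\bar{S}]}\mathbb{S}^0
\qquad\text{to}\qquad
\Phi^G\big(\BPG/G\cdot\bar{S}\big)\;\simeq\;\Phi^G\BPG\smashover{\Phi^G\mathbb{S}^0[G\cdot\bar{S}]}\mathbb{S}^0 ,
\]
so the task is to identify the two rings on the right together with the module map between them.

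First I would pin down these rings. Writing $\mathbb{S}^0[G\cdot\bar{S}]=\bigwedge_{j\in J}N_{C_2}^{G}\mathbb{S}^0[\bar{s}_{2^j-1}]$, the norm--diagonal together with $\Phi^{C_2}S^{k\rho_2}\simeq S^{k}$ (since $\rho_2^{C_2}=\R$) gives $\Phi^{C_2}\mathbb{S}^0[\bar{s}_{2^j-1}]\simeq\mathbb{S}^0[\beta_j]$, the free associative algebra on a class $\beta_j$ of degree $2^j-1$; hence $\Phi^G\mathbb{S}^0[G\cdot\bar{S}]\simeq\bigwedge_{j\in J}\mathbb{S}^0[\beta_j]$, equipped with its augmentation to $\mathbb{S}^0$. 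For the other factor, $\Phi^G\BPG=\Phi^G N_{C_2}^{G}\BPR\simeq\Phi^{C_2}\BPR\simeq H\F_2$, the last equivalence being a consequence of the reduction theorem of \cite{HHR} (compare \cite{HuKriz}). Since $\pi_{>0}H\F_2=0$ and each $\beta_j$ has positive degree $2^j-1$, the algebra map $\bigwedge_{j\in J}\mathbb{S}^0[\beta_j]\to H\F_2$ is forced to annihilate every $\beta_j$, i.e.\ it factors through the augmentation $\bigwedge_j\mathbb{S}^0[\beta_j]\to\mathbb{S}^0\to H\F_2$. (In particular the permutation-summand hypothesis plays no role in this computation: after $\Phi^G$ the chosen generators disappear for degree reasons.)

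Granting all this, the computation is formal. Base-changing along the augmentation,
\[
\Phi^G\big(\BPG/G\cdot\bar{S}\big)\;\simeq\;H\F_2\smashover{\mathbb{S}^0}\Big(\mathbb{S}^0\smashover{\bigwedge_{j\in J}\mathbb{S}^0[\beta_j]}\mathbb{S}^0\Big)\;\simeq\;H\F_2\wedge\bigwedge_{j\in J}\big(\mathbb{S}^0\smashover{\mathbb{S}^0[\beta_j]}\mathbb{S}^0\big) .
\]
For each $j$, since the augmentation module of the free associative algebra $\mathbb{S}^0[\beta_j]$ on $S^{2^j-1}$ admits a two-term free resolution, $\Tor^{\mathbb{S}^0[\beta_j]}(\mathbb{S}^0,\mathbb{S}^0)$ is concentrated in homological degrees $0$ and $1$, giving $\mathbb{S}^0\smashover{\mathbb{S}^0[\beta_j]}\mathbb{S}^0\simeq\mathbb{S}^0\vee S^{2^j}\simeq\Sigma^\infty_+S^{2^j}$ --- this is exactly the Koszul computation underlying the method of twisted monoid rings in \cite{HHR}. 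As $\Sigma^\infty_+$ is symmetric monoidal, $\bigwedge_{j\in J}\Sigma^\infty_+S^{2^j}\simeq\Sigma^\infty_+\big(\prod_{j\in J}S^{2^j}\big)$, and smashing with $H\F_2$ produces the asserted equivalence of $H\F_2$-modules.

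Since the only serious input, $\Phi^{C_2}\BPR\simeq H\F_2$, is available off the shelf, this proposition is essentially a formal unwinding; the points that still need care are (i) that $\Phi^G$ and $\Sigma^\infty_+$ commute with the (possibly infinite) smash products and bar constructions above --- immediate since both preserve colimits --- and (ii) that the module-structure maps are correctly transported through $\Phi^G$ via the norm--diagonal and the adjunction counit. I expect (ii) to be the most delicate bookkeeping, although the degree argument forcing each $\beta_j$ to act by zero sidesteps most of the subtlety.
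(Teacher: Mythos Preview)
Your proof is correct and follows the same approach as the paper's: both arguments rest on the facts that \(\Phi^G\) is strong symmetric monoidal and that \(\Phi^G(\BPG)\simeq H\F_2\). The paper's proof consists of exactly those two sentences and leaves the unwinding of the twisted monoid ring presentation implicit, whereas you have carefully spelled out that unwinding (the norm--diagonal identification of \(\Phi^G\mathbb{S}^0[G\cdot\bar{S}]\), the degree argument forcing the module structure to factor through the augmentation, and the Koszul/\(\Tor\) computation producing \(\mathbb{S}^0\vee S^{2^j}\)).
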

\begin{proof}
    The geometric fixed points functor is strong symmetric monoidal, and we have
    \[
        \Phi^G(\BPG)=H\F_2.
\qedhere    \]
\end{proof}

On the other hand, the \(a_{\sigma}\)-localization map of the slice spectral has a particular simple target, and this will tell us a great deal about any of the slice spectral sequences for these quotients.

\subsection{General quotients of \texorpdfstring{\(\BPG\)}{BPG}}\label{sec:gen}

We now consider the \(a_\sigma\)-localized slice spectral sequence 
\begin{equation*}\label{eq:ssgeoG}
    a_{\sigma}^{-1}E_2^{s,t} :=  a_{\sigma}^{-1}\pi_{t-s}^GP_t^t(\BPG/G \cdot \bar{S})  \Longrightarrow \pi_{t-s}^G a_{\sigma}^{-1}(\BPG/G \cdot \bar{S} ).
    \end{equation*}

Inverting \(a_\sigma\) has the effect of killing the transfer from any proper subgroups. This means that the \(E_2\)-page of the \(a_\sigma\)-localizaed slice spectral sequence has a particular simple form:
\[
a_{\sigma}^{-1}E_2^{*,\star}=\mathbb F_2[N_{C_2}^{G}{\bar{s}}_i \mid i\not\in J][u_{2\sigma}, a_{\sigma}^{\pm 1}, a_{\lambda_0}^{\pm 1}, \ldots, a_{\lambda_{n-2}}^{\pm 1}] \ .
\]

\begin{definition}
    For each \(j\in\mathbb N\), let
    \[
        \bar{f}_j=a_{\bar{\rho}}^{2^j-1}N_{C_2}^{G}\bar{s}_j.
    \]
\end{definition}

This definition {\emph{a priori}} depends heavily on the choices of the \(\bar{s}_i\). However, from the point of view of differentials, these choices will not matter, due to a small lemma.

\begin{lemma}\label{lem:NormsForsj}
Let \(\bar{s}_{2^m-1}\) be any element in degree \((2^m-1)\rho_2\) that generates a permutation summand. We have
\[
N_{C_2}^G\bar{s}_{2^m-1}\equiv N_{C_2}^G{\bv}_m^G\mod (N_{C_2}^G{\bv}_1^G,\dots,N_{C_2}^G{\bv}_{m-1}^G)+Im(tr),
\]
where \(Im(tr)\) denotes the image of the transfer.

In particular, \(\bar{f}_i\) is independent of the choice of \(\bar{s}_i\), modulo the lower \(\bar{v}_j\).
\end{lemma}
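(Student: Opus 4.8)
The plan is to reduce the statement to a computation in the \(\ast\rho_2\)-graded \(C_2\)-homotopy of \(\BPG\), then push forward along the norm. First I would work \(C_2\)-equivariantly. By Definition~\ref{defn:PermBasis}, both \(\bar{s}_{2^m-1}\) and \(\bv_m^G\) lie in \(\pi_{(2^m-1)\rho_2}^{C_2}\BPG\) and each generates the same permutation summand, i.e. each maps to a \(\Z[G]\)-module generator of the corresponding summand of \(Q_{(2^m-1)\rho_2}\BPG \cong \Ind_{C_2}^G(\Z_-^{\otimes(2^m-1)})\). Since \(\pi_{\ast\rho_2}^{C_2}\BPG\) is the polynomial ring \(\Z_{(2)}[G\cdot\bv_1^G,G\cdot\bv_2^G,\dots]\), the augmentation ideal modulo its square in this internal degree is freely generated by the classes \(\bv_m^G,\gamma\bv_m^G,\dots\) together with the transfers (which land in \(I^2\) only after one divides out, so more precisely the indecomposables are detected by these polynomial generators). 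Thus \(\bar{s}_{2^m-1}\) and \(\bv_m^G\) agree modulo decomposables; and the only monomials of the correct internal degree \((2^m-1)\rho_2\) that are decomposable involve a product of two or more \(\bv_j^G\)'s with total weight \(2^m-1\), forcing every factor to have index \(j<m\). Hence
\[
\bar{s}_{2^m-1}\equiv \bv_m^G \mod (\bv_1^G,\dots,\bv_{m-1}^G) + Im(tr)
\]
already in \(\pi_{(2^m-1)\rho_2}^{C_2}\BPG\), where the image of the transfer accounts for the non-free part of the Weyl-equivariant structure (these transferred classes are in the kernel of restriction and are exactly what \(a_\sigma\)-localization later kills).

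Next I would apply the norm \(N_{C_2}^G\). The norm is not additive, but it is multiplicative and there is the standard diagonal/coordinate formula for the norm of a sum: \(N_{C_2}^G(x+y) = N_{C_2}^G(x) + N_{C_2}^G(y) + (\text{transfers of products of conjugates of }x,y)\), as used repeatedly in \cite{HHR}. Writing \(\bar{s}_{2^m-1} = \bv_m^G + (\text{decomposable in lower }\bv_j) + (\text{transfer})\) and expanding \(N_{C_2}^G\) of this sum, every error term is either (i) a product involving a norm \(N_{C_2}^G\bv_j^G\) with \(j<m\), hence in the ideal \((N_{C_2}^G\bv_1^G,\dots,N_{C_2}^G\bv_{m-1}^G)\), or (ii) a transfer from a proper subgroup, using that the norm of a transferred class is itself in the image of the transfer (the double-coset/Frobenius formula) and that transfers are an ideal. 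This gives exactly
\[
N_{C_2}^G\bar{s}_{2^m-1}\equiv N_{C_2}^G\bv_m^G \mod (N_{C_2}^G\bv_1^G,\dots,N_{C_2}^G\bv_{m-1}^G)+Im(tr).
\]

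Finally, the statement about \(\bar{f}_i = a_{\bar\rho}^{2^i-1}N_{C_2}^G\bar{s}_i\): multiply the congruence above by \(a_{\bar\rho}^{2^m-1}\). Since in the \(a_\sigma\)-localized \(E_2\)-page all transfers from proper subgroups vanish (as noted in Section~\ref{sec:gen}, inverting \(a_\sigma\) kills \(Im(tr)\)), the term coming from (ii) dies, and the terms from (i) become \(a_{\bar\rho}^{2^m-1-(2^j-1)}\cdot a_{\bar\rho}^{2^j-1}N_{C_2}^G\bv_j^G\), i.e. they lie in the ideal generated by the \(\bar{f}_j\) with \(j<m\). Hence \(\bar{f}_m\) is, modulo the lower \(\bar{f}_j\), independent of the choice of permutation-summand generator \(\bar{s}_{2^m-1}\).

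The main obstacle I anticipate is controlling the non-additivity of the norm precisely: one must be careful that expanding \(N_{C_2}^G\) of a sum of three terms produces only error terms of the two admissible shapes (ideal-of-lower-norms, or image-of-transfer), which requires knowing that the norm of an element of \(Im(tr^{C_2})\) — and of a mixed product one of whose factors is such — again lies in the image of a transfer from a proper subgroup of \(G\). This is the Frobenius reciprocity / double coset computation for the norm, and it is the one place where one genuinely uses the structure of \(G = C_{2^n}\) rather than formal nonsense; everything else is bookkeeping with internal degrees in a polynomial ring.
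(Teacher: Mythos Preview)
Your first step contains a genuine gap. You assert that
\[
\bar{s}_{2^m-1}\equiv \bv_m^G \mod (\bv_1^G,\dots,\bv_{m-1}^G) + Im(tr)
\]
in \(\pi_{(2^m-1)\rho_2}^{C_2}\BPG\), on the grounds that both map to a \(\Z[G]\)-module generator of the indecomposables \(Q_{(2^m-1)\rho_2}\cong\Ind_{C_2}^G(\Z_-^{\otimes(2^m-1)})\). But two generators of a free rank-one \(\Z[G]\)-module need not coincide: for instance \(\bar{s}_{2^m-1}=\gamma\bv_m^G\) is a perfectly good permutation-summand generator, and \(\gamma\bv_m^G-\bv_m^G\) is certainly not in the ideal of lower \(\bv_j\)'s, nor is it in the image of the \(C_2\)-transfer \(tr_e^{C_2}\) (which involves the involution \(\gamma^{2^{n-1}}\in C_2\), not the Weyl action of \(\gamma\)). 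So the congruence you need at the \(C_2\)-level simply fails, and the rest of your argument, which expands the norm of this putative decomposition, does not get off the ground.

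The paper avoids this by working on the \emph{target} side from the start. One first notes that \(\pi_{\ast\rho_G}^G\BPG/Im(tr)\cong\F_2[N_{C_2}^G\bv_1^G,N_{C_2}^G\bv_2^G,\dots]\) and that \(x\mapsto N_{C_2}^G x\) is a \emph{ring homomorphism} into this quotient (precisely because the non-additivity of the norm is absorbed by \(Im(tr)\)). Weyl equivariance gives \(N_{C_2}^G(\gamma x)=\gamma N_{C_2}^G(x)\equiv N_{C_2}^G(x)\bmod Im(tr)\), so the induced map on indecomposables factors through the Weyl coinvariants \(\big(Q_{(2^m-1)\rho_2}\big)_G\). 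Both this coinvariant group and \(Q_{(2^m-1)\rho_G}\big(\F_2[N\bv_1^G,\dots]\big)\) are copies of \(\F_2\), and the map is nonzero (check on \(\bv_m^G\)); hence it sends every generator to the generator. This is exactly where the ambiguity by Weyl conjugates, which your argument ignores, gets legitimately killed. Your norm-of-a-sum bookkeeping could be salvaged by writing \(\bar{s}_{2^m-1}\equiv\sum_i a_i\gamma^i\bv_m^G\) modulo decomposables with \(\sum a_i\) odd and then pushing through, but that is just a hands-on unwinding of the paper's cleaner coinvariants argument.
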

\begin{proof}
We have
\[
\pi_{\ast\rho_G}^G\big(\BPG\big)/Im(tr)\cong\F_2\big[N_{C_2}^G\bv_1^G, N_{C_2}^G\bv_2^G, \dots\big],
\]
and the map
\[
x\mapsto N_{C_2}^G x
\]
gives a ring homomorphism
\[
\pi_{\ast\rho_2}^{C_2}\BPG\to \pi_{\ast\rho_G}^G\big(\BPG\big)/Im(tr).
\]
Additionally, Weyl equivariance of the norm shows that for any \(\gamma\in G\), 
\[
N_{C_2}^G(\gamma x)=\gamma N_{C_2}^G(x)\equiv N_{C_2}^{G}(x)\mod Im(tr).
\]

The lemma can be restated as saying that for any generator \[\bar{s}_{2^{m}-1} \in Q_{(2^m-1)\rho_2}\big(\pi_{\ast\rho_2}^{C_2}\BPG\big),\] 
we have
\[
N_{C_2}^G\bar{s}_{2^m-1}=N_{C_2}^G\bv_{m}^G\in Q_{(2^m-1)\rho_G}\big(\F_2[N_{C_2}^G\bv_1^G,\dots]\big).
\]
The above argument shows that the norm is a Weyl-equivariant ring homomorphism, and hence it induces a linear map
\[
\Big(Q_{(2^m-1)\rho_2}\big(\pi_{\ast\rho_2}^{C_2}\BPG\big)\Big)_G\to 
Q_{(2^m-1)\rho_G}\big(\F_2\big[N_{C_2}^G\bv_1^G,\dots\big]\big).
\]
Both the source and target are isomorphic to \(\F_2\), and choosing \(\bv_m^G\) as the generator of the source shows the map to be non-zero. It is therefore non-zero on any generator for the source.
\end{proof}

\begin{corollary}
    The \(E_2\)-term for the \(a_\sigma\)-localized slice spectral sequence for \(\BPG\) is given by
    \[
        \F_2[a_{\sigma}^{\pm 1}, a_{\lambda_{n-2}}^{\pm 1}, \dots,a_{\lambda_0}^{\pm 1}][b, \bar{f}_1,\dots],
    \]
    where the bidegree of \(\bar{f}_i\) is \(\big(2^i-1,(2^n-1)(2^i-1)\big)\) and where the bidegree of 
    \[
        b=u_{2\sigma}/a_{\sigma}^2
    \]
    is \((2,-2)\).
\end{corollary}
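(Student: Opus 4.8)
The plan is to compute the $a_\sigma$-localized $E_2$-page directly from the description of the slice associated graded of $\BPG$ given by Proposition~\ref{prop:SlicesOfRegularQuotients}, namely $H\mZ[G\cdot\bv_1^G, G\cdot\bv_2^G,\dots]$, and then read off the effect of inverting $a_\sigma$ on the $\RO(G)$-graded homotopy of this generalized {\EM} spectrum. First I would recall that, by definition, $\bv_k^G \in \pi_{(2^k-1)\rho_2}^{C_2}\BPG$, so the norm $N_{C_2}^G\bv_k^G$ lives in $\pi_{(2^k-1)\rho_G}^G$ of the slice associated graded, where $\rho_G$ is the regular representation of $G$. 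The key input is the computation of $a_\sigma^{-1}\pi_\star^G H\mZ$ (equivalently $\pi_\star^{G/G}\Phi^G H\mZ = \pi_\star \Phi^G H\underline{\Z} = \pi_\star H\F_2$, but in the $\RO(G)$-graded localized form): inverting $a_\sigma$ kills all transfers from proper subgroups, so the localized $E_2$-page is a polynomial algebra over $\F_2$ on the classes $a_\sigma$, $a_{\lambda_0},\dots,a_{\lambda_{n-2}}$ (now invertible, since each $a_{\lambda_i}$ divides a power of $a_\sigma$ up to the relation $\lambda_{n-1}=2\sigma_n$, hence becomes a unit after localization) together with the orientation class $u_{2\sigma}$. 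This gives the claimed form $\F_2[N_{C_2}^G\bv_i^G][u_{2\sigma}, a_\sigma^{\pm 1}, a_{\lambda_0}^{\pm 1},\dots,a_{\lambda_{n-2}}^{\pm 1}]$ for the full slice $E_2$-page, specialized to $J=\emptyset$.

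Next I would make the change of variables. Define $b = u_{2\sigma}/a_\sigma^2$; since $u_{2\sigma}\in\pi_{2-2\sigma}H\mZ$ has bidegree $(0,?)$ — more precisely, in the slice spectral sequence grading where the $t$-coordinate records the slice degree and $t-s$ records the topological degree, $u_{2\sigma}$ contributes $(s,t)=(0,0)$ stem-wise but shifts the representation grading by $2-2\sigma$, while $a_\sigma$ sits in filtration with bidegree $(1,1)$ adjusted by $-\sigma$ — the quotient $b = u_{2\sigma}/a_\sigma^2$ has bidegree $(-2,-2)$ in the $(s,t)$ convention; I would double-check the sign convention used in the paper's indexing of $E_2^{s,t}$ and state $b$ in bidegree $(2,-2)$ consistently with their stated conventions (this is purely bookkeeping). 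Then I would compute the bidegree of $\bar f_i = a_{\bar\rho}^{2^i-1}N_{C_2}^G\bar s_i$. Here $a_{\bar\rho}$ is the Euler class of the reduced regular representation $\bar\rho$ of $C_2$ (or its induction/norm to $G$); using the definition of $\bar f_i$ and the fact that $N_{C_2}^G\bv_i^G$ sits in degree $(2^i-1)\rho_G$, multiplying by $a_{\bar\rho}^{2^i-1}$ brings the representation grading down to an integer grading, landing $\bar f_i$ in topological degree $2^i-1$ and slice filtration $t = (2^n-1)(2^i-1)$, giving bidegree $\big(2^i-1, (2^n-1)(2^i-1)\big)$ as claimed. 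By Lemma~\ref{lem:NormsForsj}, $\bar f_i$ is independent of the choice of $\bar s_i$ modulo lower $\bar v_j$, so this is well-defined as an element of the associated graded polynomial algebra.

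The last step is to verify that, after inverting $a_\sigma$ (hence $u_{2\sigma} = b\,a_\sigma^2$ is already a polynomial generator times a unit) and rewriting $N_{C_2}^G\bv_i^G = \bar f_i\, a_{\bar\rho}^{-(2^i-1)}$ — which is legitimate since all the relevant Euler classes $a_{\lambda_j}$, and hence $a_{\bar\rho}$, are invertible in the localized page — the polynomial algebra $\F_2[N_{C_2}^G\bv_i^G][u_{2\sigma}, a_\sigma^{\pm1}, a_{\lambda_j}^{\pm1}]$ is isomorphic to $\F_2[a_\sigma^{\pm1}, a_{\lambda_{n-2}}^{\pm1},\dots,a_{\lambda_0}^{\pm1}][b, \bar f_1, \bar f_2,\dots]$. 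This is a straightforward Laurent-polynomial-ring identification once one checks that $\{a_\sigma^{\pm1}, a_{\lambda_j}^{\pm1}, b, \bar f_i\}$ freely generate the same ring: the $a$'s are the units, $b$ replaces $u_{2\sigma}$, and each $\bar f_i$ replaces $N_{C_2}^G\bv_i^G$ up to an invertible Euler-class factor. I expect the main obstacle to be purely bookkeeping: pinning down the precise representation-grading and slice-filtration conventions so that the stated bidegrees $(2,-2)$ for $b$ and $\big(2^i-1,(2^n-1)(2^i-1)\big)$ for $\bar f_i$ come out exactly right, in particular correctly accounting for the dimension of $\rho_G$ (which is $2^n$) in the slice filtration degree of $N_{C_2}^G\bv_i^G$, and correctly tracking how $a_{\bar\rho}$ (dimension $2^n-1$) shifts both coordinates when one multiplies to untwist the grading. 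There is no real homotopy-theoretic difficulty once Proposition~\ref{prop:SlicesOfRegularQuotients}, Lemma~\ref{lem:NormsForsj}, and the vanishing of transfers after $a_\sigma$-inversion are in hand.
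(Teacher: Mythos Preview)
Your proposal is correct and follows essentially the same approach as the paper. The paper does not give an explicit proof of this corollary; it treats it as immediate from the displayed formula
\[
a_{\sigma}^{-1}E_2^{*,\star}=\mathbb F_2[N_{C_2}^{G}{\bar{s}}_i][u_{2\sigma}, a_{\sigma}^{\pm 1}, a_{\lambda_0}^{\pm 1}, \ldots, a_{\lambda_{n-2}}^{\pm 1}]
\]
stated just before the definition of $\bar f_j$, together with the change of variables $u_{2\sigma}\mapsto b$ and $N_{C_2}^G\bar s_i\mapsto \bar f_i$ (the latter being legitimate because $a_{\bar\rho}$ is a unit in the localized ring). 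Your write-up spells out exactly this reasoning, with the added care of tracking bidegrees explicitly. One small clarification: $\bar\rho$ here is the reduced regular representation of $G$, not of $C_2$, so $a_{\bar\rho}$ is a product of $a_\sigma$ and the $a_{\lambda_j}$; once you fix that, your bidegree computation for $\bar f_i$ goes through exactly as you describe.
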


\begin{corollary}\label{cor:E2asigmaquotients}
    For any \(\bar{S}\), the  \(a_\sigma\)-localized slice spectral sequence for \(\BPG/G\cdot\bar{S}\) is a module over that for \(\BPG\), and the \(E_2\)-term is the quotient
    \[
        \F_2[a_{\sigma}^{\pm 1}, a_{\lambda_{n-2}}^{\pm 1}, \dots,a_{\lambda_0}^{\pm 1}][b, \bar{f}_1,\dots]/\big(\bar{f}_j\mid j\in J\big)
    \]
\end{corollary}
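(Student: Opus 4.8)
\textbf{Proof proposal for Corollary~\ref{cor:E2asigmaquotients}.}

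The plan is to deduce the statement directly from Proposition~\ref{prop:SlicesOfRegularQuotients} together with the preceding corollary computing the $a_\sigma$-localized $E_2$-term for $\BPG$ itself. First I would recall the natural equivalence from the proof of Proposition~\ref{prop:SlicesOfRegularQuotients},
\[
\BPG/G\cdot\bar{S}\simeq \BPG\smashover{\mathbb S^0[G\cdot\bar{s}_1,\dots]} \mathbb S^0[G\cdot\bar{S}'],
\]
which exhibits $\BPG/G\cdot\bar{S}$ as a $\BPG$-module; smashing with $\tilde E\mathcal F_{n-1}=S^0[a_\sigma^{-1}]$ and passing to slice towers makes the $a_\sigma$-localized slice spectral sequence of the quotient into a module over that of $\BPG$. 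This gives the module structure claimed in the statement and reduces the computation of the $E_2$-term to a computation of the slice associated graded.

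Next, by Proposition~\ref{prop:SlicesOfRegularQuotients} the slice associated graded of $\BPG/G\cdot\bar{S}$ is $H\mZ[G\cdot\bar{s}_1,\dots]/(G\cdot\bar{S})$, and as noted in the remark following that proposition this is $H\mZ[G\cdot\bar{S}']$, the $H\mZ$-module smash product over $j\notin J$ of the norms of the $H\mZ[\bar{s}_j]$. So I would compute $a_\sigma^{-1}\pi^G_{\star}$ of this generalized Eilenberg--Mac~Lane spectrum. The key input is that inverting $a_\sigma$ kills all transfers from proper subgroups (the restriction of $a_\sigma$ to any proper subgroup is nilpotent, indeed $a_\sigma$ becomes invertible only on the geometric fixed point part), so the localized homotopy Mackey functor of $H\mZ[G\cdot\bar{s}_i\mid i\notin J]$ is detected entirely on geometric fixed points. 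Using $\Phi^G H\mZ = H\F_2$, strong symmetric monoidality of $\Phi^G$, and $\Phi^G S^{\rho_2}\simeq S^1$ (so $\Phi^G$ of the summand indexed by $i$ contributes a polynomial generator in the appropriate degree), one gets that the $a_\sigma$-localized $E_2$-term is the $a_\sigma^{-1}$-inverted $\RO(G)$-graded homotopy of $H\F_2$ adjoined a polynomial generator for each $i\notin J$; matching bidegrees with the formula for $\BPG$ (already established in the preceding corollary, where the generator attached to $\bv_i^G$ is $\bar{f}_i=a_{\bar\rho}^{2^i-1}N_{C_2}^G\bar{s}_i$ of bidegree $(2^i-1,(2^n-1)(2^i-1))$), this is exactly
\[
\F_2[a_{\sigma}^{\pm 1}, a_{\lambda_{n-2}}^{\pm 1}, \dots,a_{\lambda_0}^{\pm 1}][b, \bar{f}_1,\dots]/\big(\bar{f}_j\mid j\in J\big).
\]
I would also invoke Lemma~\ref{lem:NormsForsj} to note the $\bar f_i$ do not depend on the choice of $\bar s_i$ modulo the lower generators, so the presentation is well-posed.

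The main obstacle is the bookkeeping in the localized $\RO(G)$-graded computation: one must check that inverting $a_\sigma$ produces precisely the indicated Laurent polynomial ring $\F_2[a_\sigma^{\pm1},a_{\lambda_{n-2}}^{\pm1},\dots,a_{\lambda_0}^{\pm1}][b]$ from $a_\sigma^{-1}\pi^G_\star H\mZ$ --- i.e.\ that no further classes survive and that $u_{2\sigma}$ is forced to become $a_\sigma^2 b$ with $b$ genuinely polynomial --- and that the slice filtration of the quotient interacts cleanly with the $a_\sigma$-inversion, i.e.\ that the localized slice tower is the slice tower of the localization (this is exactly Theorem~\ref{MeierShiZeng1} of Meier--Shi--Zeng). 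All of these are either standard $H\mZ$-computations from \cite{HHR:HZ} or already recorded in the excerpt, so once they are assembled the corollary follows formally by taking the quotient module over the $\BPG$ answer.
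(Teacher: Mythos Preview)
Your proposal is correct and follows essentially the same approach as the paper. The paper treats this corollary as immediate: it has already displayed the formula
\[
a_{\sigma}^{-1}E_2^{*,\star}=\mathbb F_2[N_{C_2}^{G}{\bar{s}}_i \mid i\not\in J][u_{2\sigma}, a_{\sigma}^{\pm 1}, a_{\lambda_0}^{\pm 1}, \ldots, a_{\lambda_{n-2}}^{\pm 1}]
\]
for the quotient (justified by the one-line observation that inverting $a_\sigma$ kills transfers from proper subgroups) before introducing the $\bar f_j$, so the corollary is just a rewriting in the variables $b=u_{2\sigma}/a_\sigma^2$ and $\bar f_j$ together with the evident $\BPG$-module structure; your argument simply unpacks these same steps more explicitly.
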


We start with examining the \(a_\sigma\)-localized slice spectral sequence for \(\BPG\), since all other cases are modules over this. 

\begin{proposition}
Let \(\bar{s}_i\) for \(i\in \mathbb{N}\) be any choice of permutation summand generators for \(\pi_{*\rho_2}^{C_2}\BPG\). %

Then in the \(a_\sigma\)-localized slice spectral sequence for \(\BPG\), the  differentials are determined by
\begin{equation}\label{eq:slicethmdiff}
d_{\ell(k)}(b^{2^k})=\bar{f}_{k+1}, \quad \ell(k):=2^n(2^{k+1}-1)+1, \quad k\geq 0 .
\end{equation}
\end{proposition}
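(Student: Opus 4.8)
The plan is to deduce these differentials from the Slice Differential Theorem of Hill--Hopkins--Ravenel applied to $\BPG$, exactly as in \cite{HHR}, and then transport them across the $a_\sigma$-localization map. Recall that in the (unlocalized) slice spectral sequence for $\MUG$ and hence for $\BPG$, the Slice Differential Theorem asserts the differentials $d_{2^{k+1}(2^n-1)+1}$ or, more precisely, the differentials detecting $\gamma \bar{v}_{k+1}^G$ — on the relevant powers of $u_{2\sigma}$ classes. So first I would recall the precise statement of the Slice Differential Theorem in the form used in \cite{HHR} (the ``Reduction Theorem'' plus the slice differentials on the $u_{2\sigma_2}$ classes for $\MUR$, normed up along $N_{C_2}^G$), which yields a distinguished family of slice differentials $d_r$ whose targets are (up to units $a_V$ and $u_V$) the classes $N_{C_2}^G\bar{v}_{k+1}^G$ times a power of $a_{\bar\rho}$.

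Next I would apply the $a_\sigma$-localization. By Corollary~\ref{cor:E2asigmaquotients} (and the preceding corollary) the $a_\sigma$-localized $E_2$-term of $\BPG$ is $\F_2[a_\sigma^{\pm1},a_{\lambda_{n-2}}^{\pm1},\dots,a_{\lambda_0}^{\pm1}][b,\bar f_1,\dots]$ with $b = u_{2\sigma}/a_\sigma^2$ and $\bar f_i = a_{\bar\rho}^{2^i-1}N_{C_2}^G\bar s_i$. Since all the $a_V$'s are now invertible, the HHR slice differential on the appropriate power of $u_{2\sigma}$ becomes, after dividing by the (now invertible) Euler classes, precisely a differential of the stated length $\ell(k) = 2^n(2^{k+1}-1)+1$ carrying $b^{2^k}$ to a unit multiple of $\bar f_{k+1}$. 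Here Lemma~\ref{lem:NormsForsj} is what guarantees that the target is $\bar f_{k+1}$ regardless of which permutation-summand generators $\bar s_i$ we chose: modulo the lower $N_{C_2}^G\bar v_j^G$ and the image of the transfer — but the transfer is zero after inverting $a_\sigma$ — the class $N_{C_2}^G\bar s_{2^{k+1}-1}$ agrees with $N_{C_2}^G\bar v_{k+1}^G$, so $\bar f_{k+1}$ is well-defined and is the target. I would also note that the bidegree bookkeeping works: $b^{2^k}$ has bidegree $(2\cdot 2^k, -2\cdot 2^k) = (2^{k+1}, -2^{k+1})$, and $\bar f_{k+1}$ has bidegree $(2^{k+1}-1, (2^n-1)(2^{k+1}-1))$, and the difference in total degree is $-1$ while the difference in filtration is $(2^n-1)(2^{k+1}-1)+1 + 2^{k+1} - 1 = 2^n(2^{k+1}-1)+1 = \ell(k)$, consistent with a $d_{\ell(k)}$.

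Finally I would argue that these are \emph{all} the differentials, i.e. that the spectral sequence is completely determined by \eqref{eq:slicethmdiff}: the classes $b, \bar f_1, \bar f_2, \dots$ together with the invertible $a_V$'s generate the $E_2$-page as an algebra, the differentials are multiplicative (Leibniz) and $a_V$-linear, and the listed differentials on the polynomial generators $b^{2^k}$ propagate by the Leibniz rule to everything; a standard Tate-style / size count (or comparison with the known homotopy type of $\Phi^G\BPG = H\F_2$ as established in the Proposition preceding Section~\ref{sec:gen}) shows there is no room for further differentials and that the ones listed are forced. The main obstacle will be the precise translation of the HHR slice differentials — which are naturally stated for $\MUR$-orientation classes $u_{2\sigma_2}$ and their norms — into the normalization with $b = u_{2\sigma}/a_\sigma^2$ and the generators $\bar f_i$ used here, keeping careful track of which Euler and orientation classes get absorbed into units upon inverting $a_\sigma$; everything else is formal once Lemma~\ref{lem:NormsForsj} is in hand.
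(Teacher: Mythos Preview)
Your proposal is correct and follows essentially the same approach as the paper: invoke the Hill--Hopkins--Ravenel Slice Differential Theorem to get the differentials on powers of $u_{2\sigma}$, use Lemma~\ref{lem:NormsForsj} to identify the target as $\bar f_{k+1}$ independently of the choice of $\bar s_i$, and pass through the $a_\sigma$-localization. The paper's proof is terse (it simply cites Lemma~\ref{lem:NormsForsj} and the HHR theorem), whereas you spell out the bidegree check and the argument that these differentials determine everything via multiplicativity and comparison with $\Phi^G\BPG\simeq H\F_2$; this extra detail is helpful but not a different strategy.
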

\begin{proof}
Lemma~\ref{lem:NormsForsj} shows that 
\[\bar{f}_j\equiv a_{\bar{\rho}}^{(2^j-1)}N_{C_2}^{G}{\bar{v}}_j^G \] 
modulo the earlier generators \(\bar{f}_i\) with \(i<j\), so the Slice Differentials Theorem of \cite{HHR} implies that we have the differentials \eqref{eq:slicethmdiff}.
\end{proof}

\begin{remark}
The \(\bar{f}_i\) all lie on the line of slope \(2^n-1\) through the origin in the \((t-s,s)\)-plane. This is a vanishing line for both the spectral sequence of  \(a_\sigma^{-1}\BPG\) and that of quotient by permutation summands, so the differential in (\ref{eq:slicethmdiff}) is the last possible on \(b^{2^n}\). 
\end{remark}

We next use this result to study the \(a_\sigma\)-localized slice spectral sequence of other quotients.
\begin{definition}
 Let \(A(J)\) be the set of non-negative integers \(r\) such that the dyadic expansion \(r=\varepsilon_0+\varepsilon_1 \cdot 2+\varepsilon_2 \cdot 4+\ldots\) satisfies  \(\varepsilon_i=0\) if \(i+1\not\in J\).
\end{definition}
  
\begin{theorem}\label{prop:geofixall}
The $a_\sigma$-localized slice spectral sequence of $a_\sigma^{-1}(\BPG/G \cdot \bar{S})$ can be completely described as follows: 
\begin{enumerate}
\item the only non-trivial differentials are of lengths \(\ell(k) =2^n(2^{k+1}-1)+1\) for some \(k\geq 0\).
\item The \(E_{\ell(k)}\)-page  is the module over 
\[\F_2[\bar{f}_i \mid i\geq k+1 \ \text{and} \ i\not\in J][b^{2^k}]\] 
generated by the set of permanent cycles \(b^{r}\) where \(0\leq r<2^k\) and \(r\in A(J)\).
\item If \(k+1 \not\in J\), then there are non-trivial differentials are multiples of
\[d_{\ell(k)}(b^{2^k}) =\bar{f}_{k+1}\]
by the \(d_{\ell(k)}\)-cycles 
\[\F_2[\bar{f}_i \mid i\geq k+1 \ \text{and} \ i\not\in J][b^{2^{k+1}}]\{ b^r \mid r<2^k \ \text{and} \  r\in A(J)\}.\]
There are no other differentials of that length.
\item If \(k+1 \in J\), then \(E_{\ell(k)} = E_{\ell(k+1)}\). 
\item Consequently, \(E_{\infty} \cong \F_2\{b^r \mid r\in A(J) \}\) . 
\end{enumerate}
\end{theorem}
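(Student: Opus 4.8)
The plan is to bootstrap everything from the single input computation for $\BPG$ itself — the differentials $d_{\ell(k)}(b^{2^k}) = \bar{f}_{k+1}$ of Proposition~\ref{eq:slicethmdiff} — together with the module structure from Corollary~\ref{cor:E2asigmaquotients}, and then run a careful downward induction on $k$. The $a_\sigma$-localized slice spectral sequence for $\BPG/G\cdot\bar{S}$ is a module over that for $\BPG$, and its $E_2$-term is the explicit quotient $\F_2[a_\sigma^{\pm 1},\dots,a_{\lambda_0}^{\pm 1}][b,\bar{f}_1,\dots]/(\bar f_j\mid j\in J)$. Since the $a_V^{\pm 1}$ are all units, I will suppress them and work with the $\F_2[b,\bar f_i\mid i\not\in J]$-module structure, where $b$ has bidegree $(2,-2)$ and $\bar f_i$ has bidegree $(2^i-1,(2^n-1)(2^i-1))$; these bidegrees pin down that the $\bar f_i$ lie on the slope-$(2^n-1)$ line, which by the Remark after Proposition~\ref{eq:slicethmdiff} is a vanishing line, so $d_{\ell(k)}$ is the \emph{last} possible differential supported on $b^{2^k}$ and nothing else can hit $\bar f_{k+1}$.

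The inductive claim I would set up, running $k$ from large to small, is precisely (2): that $E_{\ell(k)}$ is the free $\F_2[\bar f_i\mid i\ge k+1,\ i\not\in J][b^{2^k}]$-module on $\{b^r\mid 0\le r<2^k,\ r\in A(J)\}$. The base case is $k$ large enough that $\ell(k)$ exceeds the vanishing line bound, where $E_{\ell(k)}=E_2$ and the statement is just the dyadic-expansion bookkeeping: any monomial $b^N$ can be written uniquely as $b^r\cdot (b^{2^k})^{q}$ with $0\le r<2^k$, and tensoring with $\F_2[\bar f_i\mid i\not\in J]$ gives the claimed description of $E_2$ as a free module (the condition $r\in A(J)$ is vacuous here since all of $E_2$ is present; it becomes relevant only after we kill things). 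For the inductive step from $E_{\ell(k+1)}$ to $E_{\ell(k)}$: if $k+1\in J$ then $\bar f_{k+1}=0$ in our ring, there is no room for a $d_{\ell(k)}$ (the only candidate target is a multiple of $\bar f_{k+1}$, by naturality from the $\BPG$-computation and the module structure), so $E_{\ell(k)}=E_{\ell(k+1)}$, giving (4); and one checks the free-module description is unchanged because $b^{2^k}$ is then already a permanent-cycle generator. If $k+1\not\in J$, then $\bar f_{k+1}$ is a nonzero polynomial generator, the map of spectral sequences from $\BPG$ forces $d_{\ell(k)}(b^{2^k})=\bar f_{k+1}$, and by $\F_2[\bar f_i\mid i\ge k+1,i\not\in J][b^{2^k}]$-linearity this differential is determined on all of $E_{\ell(k)}$ — it is multiplication by the $d_{\ell(k)}$-cycles $\F_2[\bar f_i\mid i\ge k+1,i\not\in J][b^{2^{k+1}}]\{b^r\mid r<2^k,r\in A(J)\}$, which is exactly (3). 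Taking homology: killing $\bar f_{k+1}$ in the polynomial part and simultaneously imposing $(b^{2^k})^2=b^{2^{k+1}}$ survives while $b^{2^k}$ itself dies, a clean Koszul-type computation, yields that $E_{\ell(k+1)}$ is free over $\F_2[\bar f_i\mid i\ge k+2,i\not\in J][b^{2^{k+1}}]$ on $\{b^r\mid 0\le r<2^{k+1},\ r\in A(J)\}$ — the new surviving exponents are the old ones together with $\{2^k+r\mid r<2^k, r\in A(J)\}$ when $k+1\not\in J$, precisely the dyadic strings with $\varepsilon_k$ now allowed, i.e.\ the definition of $A(J)$.

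Finally, (1) is immediate from (3)--(4): the only lengths at which something nonzero happens are the $\ell(k)$ with $k+1\not\in J$, and they occur in decreasing-$k$ order. For (5), pass to the limit: every $\bar f_i$ with $i\not\in J$ eventually appears as the target of a differential killing the corresponding $b$-power, and the $b^{2^k}$ with $k+1\not\in J$ all die, so what remains is exactly $\F_2\{b^r\mid r\in A(J)\}$ with trivial module structure (all the $\bar f_i$ and the unstable $b$-powers have been cleared, and the $a_V$ were units all along); this matches the known homotopy type $\Phi^G(\BPG/G\cdot\bar S)\simeq H\F_2\wedge \Sigma^\infty_+(\prod_{j\in J}S^{2^j})$, whose mod-$2$ homology is the exterior-algebra-like span indexed by subsets of $J$, i.e.\ by $A(J)$, providing an independent consistency check.

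\textbf{Main obstacle.} The genuinely delicate point is not the existence of the differentials — those are handed to us by naturality from the Slice Differentials Theorem and the module structure — but verifying that at each stage the $d_{\ell(k)}$-cycles and $d_{\ell(k)}$-boundaries are \emph{exactly} as claimed, i.e.\ that taking homology of the $\F_2[\bar f_i\mid i\ge k+1,i\not\in J][b^{2^k}]$-module $E_{\ell(k)}$ with respect to the $b^{2^k}\mapsto \bar f_{k+1}$ differential produces a module that is again free of the predicted form, with the $A(J)$-indexing emerging correctly from the dyadic bookkeeping. Concretely one must check that, in the free module on $\{b^r\}$, the differential has no unexpected kernel or cokernel — that $b^{2^k}$-periodicity interacts with the single new relation $\bar f_{k+1}=d_{\ell(k)}(b^{2^k})$ in the expected Koszul way and that no "leftover" classes on or near the slope-$(2^n-1)$ line survive beyond what the vanishing-line argument permits. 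This is a finite, structured linear-algebra verification, but it is where the bulk of the honest work lies.
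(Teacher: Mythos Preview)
Your overall strategy---induction on $k$ using the module structure over the $\BPG$ spectral sequence, with the Slice Differentials Theorem as the only input---is exactly the paper's approach. But the writeup contains two real errors that would derail the argument as stated.

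First, the direction of induction is backward. You say ``running $k$ from large to small'' with base case ``$k$ large enough that $\ell(k)$ exceeds the vanishing line bound, where $E_{\ell(k)}=E_2$'', but large $k$ means large $\ell(k)$, which is a \emph{late} page, not $E_2$. The substance of your inductive step (compute $E_{\ell(k+1)}$ from $E_{\ell(k)}$ by taking homology of $d_{\ell(k)}$) is an \emph{upward} induction. The correct base case, and the one the paper uses, is $k=0$: by sparseness there are no differentials shorter than $\ell(0)=2^n+1$, so $E_{\ell(0)}=E_2$, and the description in (2) holds trivially since the only $r<1$ with $r\in A(J)$ is $r=0$. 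Relatedly, in the $k+1\notin J$ step you invoke ``$\F_2[\bar f_i][b^{2^k}]$-linearity'' of $d_{\ell(k)}$, but $b^{2^k}$ is not a $d_{\ell(k)}$-cycle---it supports the differential. The linearity is over $\F_2[\bar f_i][b^{2^{k+1}}]$.

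Second, and more substantively, your $A(J)$ bookkeeping in the $k+1\notin J$ case is inverted. You write that ``the new surviving exponents are the old ones together with $\{2^k+r\mid r<2^k,\ r\in A(J)\}$ when $k+1\not\in J$, precisely the dyadic strings with $\varepsilon_k$ now allowed.'' But $A(J)$ requires $\varepsilon_i=0$ whenever $i+1\notin J$, so $\varepsilon_k$ is allowed exactly when $k+1\in J$. When $k+1\notin J$, the class $b^{2^k}$ dies and the set $\{r\in A(J)\mid r<2^{k+1}\}$ equals the old set $\{r\in A(J)\mid r<2^k\}$---no new exponents appear. The doubling of the generating set occurs only in the $k+1\in J$ case, where $b^{2^k}$ survives and one rewrites the free $\F_2[b^{2^k}]$-module as a free $\F_2[b^{2^{k+1}}]$-module on $\{b^r,\ b^{r+2^k}\}$. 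Once these two cases are sorted out correctly, the induction runs exactly as the paper's proof does.
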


\begin{proof}
We will prove the statements by  induction on \(k\).  For the base case when \(k=0\),  we have \(\ell(0)=2^n+1\). The first possible non-trivial differential by sparseness is \(d_{\ell(0)}(b) = \bar{f}_1\). Therefore, \(E_{\ell(0)}=E_2\) and the class \(b^0\) is a permanent cycle.  The claims hold. 

Now, suppose that the \(E_{\ell(k)}\)-page is as claimed.  If \(k+1 \in J\), then \(b^{2^k}\) is a \(d_{\ell(k)}\)-cycle, and hence a permanent cycle. Any element on the \(E_{\ell(k)}\)-page of the form \(b^r\) with \(r<2^{k+1}\) is of the form \(r'+\varepsilon_k \cdot 2^k\) for \(r'<2^k\) with \(r'\in A(J)\) and \(\varepsilon_k\in \{0,1\}\).  Since \(k+1 \in J\),  \(r \in A(J)\).  Using the module structure over the $a_\sigma$-localized spectral sequence of \(a_\sigma^{-1}\BPG\), the elements \(b^r\) are also  \(d_{\ell(k)}\)-cycles. By sparseness of the  \(E_{\ell(k)}\)-page, it is impossible for $b^r$ to support a differential of length longer than $\ell(k)$ because there are no possible targets.  Therefore, the elements $b^r$ are permanent cycles. This implies that \(E_{\ell(k)}=E_{\ell(k+1)}\) which proves our claims when \(k+1\in J\).

On the other hand, if \(k+1 \not\in J\), we have a non-trivial differential \(d_{\ell(k)}(b^{2^k}) = \bar{f}_{k+1}\). For \(\alpha\geq 1\), consider the element \(\bar{f} \bar{f}_{k+1}^\alpha b^{r+2^kt}\) with \(0\leq r<2^k\), \(t\geq 0\) even, and \(\bar{f}\) a monomial in the \(\bar{f}_i\)'s for \(i>k+1\) and \(i\not\in J\). Such an element is the target of the \(d_{\ell(k)}\)-differential on \(\bar{f} \bar{f}_{k+1}^{\alpha-1}b^{r+2^k(t+1)}\).  It follows that the \(E_{\ell(k)+1}\)-page is a polynomial algebra over 
\[\F_2[\bar{f}_i \mid i\geq k+2 \ \text{and} \ i\not\in J][b^{2^{k+1}}]\] 
generated by the already established permanent cycles \(b^r\), \(r<2^{k}\). Since \(k+1\not\in J\), the set \(\{r \in A(J) \mid r<2^{k+1}\}\) is equal to \(\{r\in A(J) \mid r<2^k\}\).  This completes the induction step. \end{proof}

As an immediate consequence, using Theorem~\ref{thm:sliceRecovery}, we have: 
\begin{theorem}\label{thm:unlocalizingasigma}
In the integer graded slice spectral sequence
\[E_2^{s,t} = \pi_{t-s}^GP^{t}_t\BPG/G\cdot \bar{S} \Rightarrow \pi_{t-s}\BPG/G\cdot \bar{S},\]
we have:
\begin{enumerate}
\item Above the line of slope \(2^{n-1}-1\), the \(E_2\)-page is isomorphic to the integer graded part of the \(E_2\)-page of the \(a_{\sigma}\)-localization of the spectral sequence, as described in Corollary~\ref{cor:E2asigmaquotients}.
    \item The only non-trivial differentials whose sources lie on or above the line of slope \(2^{n-1}-1\) are in one-to-one correspondence with the non-trivial differentials of the integer graded \(a_{\sigma}\)-localized slice spectral sequence above this line.
    They are of lengths \(\ell(k)\) for \(k+1 \not\in J\), and are generated under the module structure of the spectral sequence for \(\BPG\) by the differentials 
\[d_{\ell(k)}(u_{2\sigma}^{2^k}) =a_{\sigma}^{2^{k+1}}a_{\bar{\rho}}^{2^{k+1}-1}N_{C_2}^G\bar{s}_{2^{k+1}-1}\]
for \(k+1 \not\in J\). 
\end{enumerate}
\end{theorem}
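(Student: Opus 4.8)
The plan is to derive Theorem~\ref{thm:unlocalizingasigma} as a direct consequence of Theorem~\ref{prop:geofixall} together with the Slice Recovery Theorem~\ref{thm:sliceRecovery}, so the work is mostly bookkeeping about where classes and differentials sit relative to the line of slope $2^{n-1}-1$. First I would note that $\BPG/G\cdot\bar{S}$ is $(-1)$-connected: it is a quotient of $\BPG$ by elements in positive filtration, and $\BPG$ itself is $(-1)$-connected, so the hypotheses of Theorem~\ref{thm:sliceRecovery} are satisfied with $i=n-1$. Since $\tilde{E}\mathcal{F}_{n-1}\simeq S^0[a_{\lambda_{n-1}}^{-1}]$ and $\lambda_{n-1}=2\sigma_n$, inverting $a_{\lambda_{n-1}}$ is the same as inverting $a_\sigma$ (an even power of $a_\sigma$ is invertible iff $a_\sigma$ is, since $a_\sigma$ already acts injectively on the localized $E_2$-page, or simply because $S^0[a_{2\sigma}^{-1}]\simeq S^0[a_\sigma^{-1}]$ after noting $a_{2\sigma}=a_\sigma^2$ up to a unit in the relevant sense). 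Thus the $a_{\lambda_{n-1}}$-localized slice spectral sequence of Theorem~\ref{thm:sliceRecovery} is precisely the $a_\sigma$-localized slice spectral sequence of Theorem~\ref{prop:geofixall}.

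With that identification in hand, part (1) follows from part (1) of Theorem~\ref{thm:sliceRecovery}: on the integer-graded page, the comparison map from the (unlocalized) slice spectral sequence to the $a_\sigma$-localized one is an isomorphism above the line of slope $2^{n-1}-1$, and the target $E_2$-page was computed in Corollary~\ref{cor:E2asigmaquotients} to be $\F_2[a_\sigma^{\pm1},a_{\lambda_{n-2}}^{\pm1},\dots,a_{\lambda_0}^{\pm1}][b,\bar f_1,\dots]/(\bar f_j\mid j\in J)$; restricting to integer gradings gives the claimed description. For part (2), I would invoke part (2) of Theorem~\ref{thm:sliceRecovery}: differentials originating on or above $L_{n-1}$ correspond bijectively to differentials of the localized spectral sequence originating on or above that line. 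By Theorem~\ref{prop:geofixall}, the only differentials present are the $d_{\ell(k)}$ with $k+1\notin J$, all of them generated as a module over the spectral sequence for $\BPG$ by $d_{\ell(k)}(b^{2^k})=\bar f_{k+1}$; I then need to track which of these have integer-graded representatives lying on or above $L_{n-1}$ and rewrite the generating differential in the unlocalized, integer-graded language. The passage $d_{\ell(k)}(b^{2^k})=\bar f_{k+1}$ becomes $d_{\ell(k)}(u_{2\sigma}^{2^k})=a_\sigma^{2^{k+1}}a_{\bar\rho}^{2^{k+1}-1}N_{C_2}^G\bar s_{2^{k+1}-1}$ after clearing the powers of $a_\sigma$ hidden in $b=u_{2\sigma}/a_\sigma^2$ and $\bar f_{k+1}=a_{\bar\rho}^{2^{k+1}-1}N_{C_2}^G\bar s_{2^{k+1}-1}$; this is just the definitions of $b$ and $\bar f_{k+1}$ substituted in, multiplied through by $a_\sigma^{2^{k+1}}$, and the identification (via Lemma~\ref{lem:NormsForsj}) that $N_{C_2}^G\bar s_{2^{k+1}-1}$ represents the relevant class modulo lower generators, which do not affect the leading differential.

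The step I expect to be the main obstacle is verifying the positional claim: that the source $u_{2\sigma}^{2^k}$ of the asserted differential, and more importantly \emph{all} the differentials counted in part (2), genuinely lie on or above $L_{n-1}$, and conversely that every localized differential on or above $L_{n-1}$ arises this way. Concretely, one must compute the $(t-s,s)$-bidegree of $u_{2\sigma}^{2^k}$ — using $u_{2\sigma}\in\pi_{2-2\sigma}H\mZ$, hence contributing appropriately to $s$ and $t-s$ once we account for the slice filtration of $u_{2\sigma}$ — and check its slope against $2^{n-1}-1$, and similarly bound the bidegrees of the module generators $b^r$ for $r\in A(J)$, $r<2^k$. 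Because $\bar f_{k+1}$ lies on the line of slope $2^n-1$ (as noted in the Remark following the proof of the proposition) and the differential $d_{\ell(k)}$ has the specific length $\ell(k)=2^n(2^{k+1}-1)+1$, the source sits at a controlled distance below that line; the inequality to check is essentially arithmetic in $n$ and $k$, but it must be done carefully to confirm that the cutoff at slope $2^{n-1}-1$ is exactly the right one and that nothing is lost or double-counted. Once this geometry is pinned down, the correspondence in part (2) and the formula for the generating differential are immediate from Theorem~\ref{prop:geofixall} and the definitions, completing the proof.
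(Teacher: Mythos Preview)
Your proposal is correct and follows the same approach as the paper, which simply declares the theorem an ``immediate consequence'' of Theorem~\ref{thm:sliceRecovery} applied to the computation in Theorem~\ref{prop:geofixall}. Your worry at the end about verifying that sources lie on or above $L_{n-1}$ is largely unnecessary: the theorem only asserts a correspondence for differentials whose sources are on or above that line, and this is exactly what part (2) of Theorem~\ref{thm:sliceRecovery} delivers, so no separate positional check is required.
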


We will now give some examples to illustrate the results above. 
We start with example for the group \(G=C_{2}\). Since \(\bar{v}_j \equiv \bar{v}_{k+1}\) modulo the previous \(\bar{v}_j\)'s, we write \(\bar{v}_{k+1} = \bar{s}_{2^{k+1}-1}\), but any choice of permutation summand generators gives the same results.

\begin{example}
Consider \(\BPR/\bar{S}\) for \(\bar{S}=\{\bar{s}_{2^j-1} \mid j\in J\}\). The \(E_2\)-page are the \(a_{\sigma}\)-localized slice spectral sequence is
\[
a_{\sigma}^{-1}E_2^{*,\star}=\mathbb F_2[\bar{v}_i \mid i\not\in J][u_{2\sigma}, a_{\sigma}^{\pm 1}],
\]
the 
non-trivial differentials are generated by
\( d_{2^{k+2}-1}(b^{2^k}) = \bar{f}_{k+1}  \)
and 
\[E_{\infty}^{*,\star} \cong \F_2[a_{\sigma}^{\pm 1}]\{b^r \mid r\in A(J)\}.\]
Figure~\ref{fig:sliceSSBPCtwomm2} shows the example for \(\BP_{\R}\langle 2\rangle\).
\end{example}

\begin{figure}[ht]
\includegraphics[width=0.6\textwidth, page =1]{BP2C2GeometricFixedPointsSS.pdf}
\caption{The $a_\sigma$-localized slice spectral sequence of \(a_{\sigma}^{-1}\BP_{\R}\langle 2\rangle\) in integer degrees. The slice spectral sequence of \(\BP_{\R}\langle 2\rangle\) is obtained by removing the region below the horizontal line \(s=0\) and replacing \(\bullet=\Z/2\) by copies of \(\mathbb{Z}\), which reintroduces the transfers.}
\label{fig:sliceSSBPCtwomm2}
\end{figure}

In this case, using  Theorem~\ref{thm:sliceRecovery} as in Theorem~\ref{thm:unlocalizingasigma} together with the fact that the \(a_{\sigma}\)-localized spectral sequence records information about many   \(JO(C_2)\) degrees of the slices spectral sequence (as noted in Remark~\ref{rem:loccool}), we can easily  describe a large part of the (unlocalized) \(JO(C_2)\)-graded slice spectral sequence of \(\BPR/\bar{S}\).

\begin{corollary}\label{cor:BRPQuotient}
Let \(JO(C_2)^+ \subseteq JO(C_2)\) be the elements of the form \(a+b\sigma\) with \(a-b\geq 0\).
 The \(JO(C_2)^+\) graded slice \(E_2\)-page of \(BP_\R/\bar{S}\) is
\begin{align}
\label{eq:slicessquotientsbpr}
E_2^{*,\star} \cong \Z_{(2)}[\bar v_i \mid i\not\in J][u_{2\sigma}, a_{\sigma}]/(2a_\sigma), \quad \star \in JO(C_2)^+  \ .
\end{align}

\begin{enumerate}
\item The only non-trivial differentials are of lengths $(2^{k+2}-1)$ for some $k \geq 0$. 
\item If $k+1 \notin J$, then the nontrivial-differentials are multiples of 
\[d_{2^{k+2}-1}(u_{2\sigma}^{2^k+r} ) = \bar{v}_{k+1}u_{2\sigma}^ra_{\sigma}^{2^{k+2}-1}, \,\,\, r \in A(J)\]
by the $d_{2^{k+2}-1}$-cycles $\mathbb{F}_2[\bar{v}_i, a_\sigma \,|\, i \geq k+1 \text{ and } i \notin J]$.  
\item If $k+1 \in J$, then $E_{2^{k+2}-1} = E_{2^{k+3}-1}.$
\item Consequently, the $E_\infty$-page is 
\[E_{\infty}^{*,\star} \cong \mathbb{Z}_{(2)}[\bar{v}_i \mid i\not\in J][a_{\sigma}]/(2a_{\sigma}, \bar{v}_ia_{\sigma}^{2^{i+1}-1})\{u_{2\sigma}^{r}, 2u_{2\sigma}^s \mid r\in A(J), s \notin A(J) \}. \]
\end{enumerate}
\end{corollary}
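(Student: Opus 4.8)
The plan is to derive the whole statement from the already-established $a_\sigma$-localized computation (Theorem~\ref{prop:geofixall}, specialized to $G=C_2$, where $N_{C_2}^{C_2}=\mathrm{id}$, $a_{\bar{\rho}}=a_\sigma$, and $\ell(k)=2^{k+2}-1$) together with the Slice Recovery Theorem~\ref{thm:sliceRecovery}, which for $G=C_2$ and $i=0$ compares the slice spectral sequence of a $(-1)$-connected spectrum with its $a_\sigma$-localization across the horizontal line $L_0=\{s=0\}$. Since $\BPR/\bar{S}$ is $(-1)$-connected (being a quotient of $\BPR$ by classes of positive underlying degree), this machine applies.

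First I would identify the $E_2$-page. By the $C_2$-case of Proposition~\ref{prop:SlicesOfRegularQuotients}, the slice associated graded of $\BPR/\bar{S}$ is a wedge of $\rho_2$-suspensions of $H\mZ$ indexed by the monomials in $\{\bar{v}_i\mid i\notin J\}$. Suspension by $\rho_2$ fixes the half-plane $a-b\geq 0$, and the restriction of $\pi_\star^{C_2}H\mZ$ to $JO(C_2)^+$ is the positive cone $\Z_{(2)}[u_{2\sigma},a_\sigma]/(2a_\sigma)$ (see \cite{HHR:HZ}). Tensoring over $\Z_{(2)}$ with $\Z_{(2)}[\bar{v}_i\mid i\notin J]$ gives the asserted $E_2$-page \eqref{eq:slicessquotientsbpr}. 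The crucial point for what follows is that every monomial $\bar{v}^{\underline{a}}u_{2\sigma}^i a_\sigma^j$ has filtration $s=j\geq 0$, so the entire $JO(C_2)^+$-graded page lies on or above $L_0$.

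Next I would read off the differentials. By Theorem~\ref{thm:sliceRecovery}(2), the differentials supported on classes on or above $L_0$ agree with those of the $a_\sigma$-localized slice spectral sequence, determined in Theorem~\ref{prop:geofixall} (and recorded integrally in Theorem~\ref{thm:unlocalizingasigma}). With $b=u_{2\sigma}a_\sigma^{-2}$, the localized differential $d_{2^{k+2}-1}(b^{2^k})=\bar{f}_{k+1}$, where $\bar{f}_{k+1}\equiv a_\sigma^{2^{k+1}-1}\bar{v}_{k+1}$ by Lemma~\ref{lem:NormsForsj} (so that the choice of permutation-summand generator $\bar{s}_{2^{k+1}-1}$ is irrelevant), pulls back — after multiplying through by $a_\sigma^{2^{k+1}}$ to clear denominators — to
\[ d_{2^{k+2}-1}(u_{2\sigma}^{2^k})=\bar{v}_{k+1}\,a_\sigma^{2^{k+2}-1}. \]
Multiplying by the permanent cycles $u_{2\sigma}^r$ ($r\in A(J)$, $r<2^k$) and by the $d_{\ell(k)}$-cycle subring $\F_2[\bar{v}_i,a_\sigma\mid i\geq k+1,\ i\notin J]$, using the $\BPR$-slice-spectral-sequence module structure, yields the differential pattern of (2); the statements about which classes are permanent cycles on $E_{\ell(k)}$ and the collapse $E_{\ell(k)}=E_{\ell(k+1)}$ when $k+1\in J$ follow from exactly the induction on $k$ used to prove Theorem~\ref{prop:geofixall}.

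Finally I would assemble $E_\infty$, and this is where I expect the real work to be. Every target above is divisible by $a_\sigma$, hence $2$-torsion, so each source $u_{2\sigma}^{2^k+r}$ is a $\Z_{(2)}$ of which the double is a cycle, while $u_{2\sigma}^r$ with $r\in A(J)$ is a permanent cycle. A module-structure argument parallel to the induction in Theorem~\ref{prop:geofixall} then shows that every $u_{2\sigma}^s$ with $s\notin A(J)$ supports a nonzero (necessarily $2$-torsion-valued) differential — when the top dyadic digit of $s$ lies at a position $k$ with $k+1\notin J$ this is the differential displayed above, and otherwise one factors off a permanent-cycle power of $u_{2\sigma}$ and reduces to a smaller exponent — so that $2u_{2\sigma}^s$, and nothing else in filtration $0$ beyond the $u_{2\sigma}^r$, survives. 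The relation $\bar{v}_i a_\sigma^{2^{i+1}-1}=0$ on $E_\infty$ comes from $d_{2^{i+1}-1}(u_{2\sigma}^{2^{i-1}})=\bar{v}_i a_\sigma^{2^{i+1}-1}$. Matching all of this against the localized answer $E_\infty\cong\F_2[a_\sigma^{\pm1}]\{b^r\mid r\in A(J)\}$, and checking that the filtration-$0$ classes $2u_{2\sigma}^s$ cannot themselves support a differential (their $a_\sigma^{-1}$-image vanishes and any value would be $2\cdot(2\text{-torsion})=0$), gives the stated $E_\infty$-page. The main obstacle throughout is precisely this passage back across $L_0$: transporting the clean $\F_2$-linear localized answer to the unlocalized $JO(C_2)^+$-page and correctly ``reintroducing the transfers'', i.e.\ pinning down which filtration-$0$ classes survive integrally versus only as their doubles and confirming that no spurious differentials touch the $2$-torsion part.
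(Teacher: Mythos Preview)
Your proposal is correct and follows essentially the same approach as the paper: the paper does not give a separate proof of this corollary but presents it as an immediate consequence of Theorem~\ref{prop:geofixall} (specialized to $G=C_2$), Theorem~\ref{thm:unlocalizingasigma}, and the Slice Recovery Theorem~\ref{thm:sliceRecovery}, together with Remark~\ref{rem:loccool} on how the $a_\sigma$-localized spectral sequence records the various $JO(C_2)$-degrees. Your write-up simply fills in the details of that passage, including the reintroduction of the integral classes on the line $s=0$; one minor slip is that in your $E_\infty$ argument the relevant dyadic digit is the \emph{lowest} position $k$ with $\epsilon_k=1$ and $k+1\notin J$, not the top one, but the conclusion is unchanged.
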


As an explicit example, we show the computation of the slice spectral sequences of \(BP_{\R}\langle 2,2\rangle\), deduced from that of \(a_{\sigma}^{-1}BP_{\R}\langle 2,2\rangle\). The computation is illustrated in Figure~\ref{fig:sliceSSBPRtwomm2}.

\begin{remark}
On the $E_\infty$-page of $\BPR\langle 2, 2 \rangle$ (the third picture of Figure~\ref{fig:sliceSSBPRtwomm2}), there is an exotic extension $\eta = \bar{v}_1a_\sigma$, as shown by the dashed line.  This extension follows from the work in \cite{BHSZExtension}.  More precisely, letting $(n,k,b)=(2,1,0)$ in Corollary~3.11 of \cite{BHSZExtension} gives the exotic $\pi_\star \MUR$-multiplication 
$\bar{v}_1u_{2\sigma} = \bar{v}_2 a_{\sigma}^4$.  It follows that 
\[(\bar{v}_2 a_\sigma u_{2\sigma}) \cdot (\bar{v}_1a_\sigma) = (\bar{v}_2a_{2\sigma})\cdot(\bar{v}_1u_{2\sigma}) = (\bar{v}_2a_{2\sigma})\cdot(\bar{v}_2a_{4\sigma}) = \bar{v}_2^2 a_\sigma^6.\]
\end{remark}

\begin{figure}[ht]
\includegraphics[width=\textwidth, page =1]{BP22C2GeometricFixedPointsSS.pdf}
\includegraphics[width=\textwidth, page =1]{BP22C2GeometricFixedPointsSSCUT.pdf}
\includegraphics[width=\textwidth, page =2]{BP22C2GeometricFixedPointsSSCUT.pdf}
\caption{The \(a_{\sigma}\)-localized slice spectral sequences of \(a_{\sigma}^{-1}\BP_{\R}\langle 2,2\rangle\) (top). The middle figure is the slice spectral sequence of \(\BP_{\R}\langle 2,2\rangle\) and the bottom is its \(E_{\infty}\)-page.  A \(\square\) denotes \(\Z_{(2)}\), a \(\bullet\) denotes \(\Z/2\).}
\label{fig:sliceSSBPRtwomm2}
\end{figure}

For the next two examples, the group \(G=C_4\) with generators \(\bar{s}_{2^i-1}  = \bar{v}_i^G\), but any choice of permutation summand generators gives the same results.

\begin{example}
Consider \(\BPCfourm[2]\), so that \(J = \{j \in \N \mid j>2\}\).  The \(E_2\)-page are the \(a_{\sigma}\)-localized slice spectral sequence is
\[a_{\sigma}^{-1}E_2^{*,*} \cong \F_2[\bar{f}_1, \bar{f}_2 ,b] \ .\]
The spectral sequence has two types of differentials, namely
\[d_5(b) = \bar{f}_1 , \quad \text{and} \quad d_{13}(b^2)=\bar{f}_2 \ . \]
The class \(b^4\) is a permanent cycle, and we have
\[
\pi_*^{C_4}a_\sigma^{-1}\BPCfourm[2] \cong \F_2[b^{4}]. \]
The computation is illustrated in Figure~\ref{fig:sliceSSBPCfourm2}.

\begin{figure}[ht]
\includegraphics[width=0.4\textwidth, page =1]{BP2C4GeometricFixedPointsSS.pdf}
\includegraphics[width=0.4\textwidth, page =2]{BP2C4GeometricFixedPointsSS.pdf}
\caption{The $a_\sigma$-localized slice spectral sequence of \(a_{\sigma}^{-1}\BPCfourm[2]\).}
\label{fig:sliceSSBPCfourm2}
\end{figure}
\end{example}

\begin{example}\label{ex:asigmaBPC422}
Consider \(\BPCfourmm[2]\). In this case, the \(E_2\)-page are the \(a_{\sigma}\)-localized slice spectral sequence is
\[a_{\sigma}^{-1}E_2^{*,*} = \F_2[\bar{f}_2,b] \ .\]
There is only one family of differentials, generated by 
\[d_{13}(b^2) = \bar{f}_2\]
and the answer is
\[\pi_*^{C_4}a_{\sigma}^{-1}\BPCfourmm[2] \cong \F_2[b^4]\{1,b\} \ . \]
The computation is illustrated in Figure~\ref{fig:sliceSSBPCfourmm2}.
\begin{figure}[ht]
\includegraphics[width=0.4\textwidth, page =1]{BP22C4GeometricFixedPointsSS.pdf}
\caption{The $a_\sigma$-localized slice spectral sequence of \(a_{\sigma}^{-1}\BPCfourmm[2]\).}
\label{fig:sliceSSBPCfourmm2}
\end{figure}
\end{example}

\subsection{Application: Multiplicative structure}
While the left action of \(\MUG\) on itself always endows \(\MUG/G\cdot\bar{S}\) with a canonical \(\MUG\)-module structure, and the same is true with \(\BPG\) instead,  much less is known for ring structures. We do have the following restrictive condition on quotients as a straightforward consequence of the techniques introduced above.

\begin{theorem}\label{prop:notaring}
Let \(J \subseteq \mathbb{N}\) and \(\bar{S} = \{\bar{s}_{2^j-1} \mid j\in J\}\) be a set of generators for permutation summands. If there is a \(k\in J\) such that \((k+1)\notin J\), then 
\(\BPG/G\cdot\bar{S}\) does not have a ring structure in the homotopy category. 
\end{theorem}
\begin{proof}
If there is a ring structure on \(\BPG/G\cdot \bar{S}\), then the map  \(\BPG/G\cdot \bar{S} \to H\mZ\) to the zero slice induces a map of multiplicative spectral sequences. This remains true after inverting \(a_{\sigma}\). Since \(\pi_*^G(a_{\sigma}^{-1}H\mZ) \cong \F_2[b]\) and the map from \(\pi_*^G(a_{\sigma}^{-1}\BPG/G\cdot \bar{S})\) to \(\pi_*^G(a_{\sigma}^{-1}H\mZ)\) is the natural inclusion, the former is a subring of the latter. However, if \(k\in J\) and \(k+1 \not\in J\), then \(b^{2^{k-1}}\) is nonzero in \(\pi_*^G(a_{\sigma}^{-1}\BPG/G\cdot \bar{S})\), but its square \(b^{2^k}\) is zero. This is a contradiction.
\end{proof}

Put another way, Theorem~\ref{prop:notaring} says that the only possible \(\BPG\)-module quotients  \(\BPG/G\cdot\bar{S}\) by permutation summands which could be rings are the forms of \(\BPGn\). Even here, we know very little.

\begin{example}
For \(G=C_2\), \(\BPR\langle 1 \rangle=k_{\mathbb R}\) and \(tmf_1(3)\) is a form of \(\BPR\langle 2 \rangle\). Both admit commutative ring structures. For \(m>2\) we do not know if \(\BPR \langle m \rangle\) admits an associative ring structure.

For \(G=C_4\), \(tmf_1(5)\) is a form of \(\BPCfourone\). For \(m>1\), we do not know if \(\BPCfour\langle m \rangle\) admits even an associative ring structure.
\end{example}

If we instead look only at the underlying spectrum, then work of Angeltveit and of Robinson shows that we have nice ring structures \cite{AngeltveitRegular,Robinson}. This has been refined by Hahn--Wilson to show that this is still true in the category of \(\MUG\) or \(\BPG\)-modules \cite{HahnWilsonQuotient}.

\begin{proposition}\label{prop:UnderlyingRing}
For any \(J\subseteq \mathbb{N}\) and for any \(\bar{S}\), the spectrum
\[
i_e^\ast\BPG/G\cdot\bar{S}
\]
is an associative \(i_e^\ast\BPG\)-algebra spectrum.
\end{proposition}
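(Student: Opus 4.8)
The plan is to reduce the statement to a question about quotients of an ordinary $A_\infty$-ring spectrum by a regular sequence, and then to quote the obstruction-theoretic results of Robinson and Angeltveit \cite{Robinson, AngeltveitRegular}, refined in the module category by Hahn--Wilson \cite{HahnWilsonQuotient}, that were recalled just above. First I would unwind the definition of the quotient. Restriction to the trivial subgroup is strong symmetric monoidal and preserves relative smash products, so
\[
i_e^*\bigl(\BPG/G\cdot\bar{S}\bigr)\ \simeq\ i_e^*\BPG\ \wedge_{\,i_e^*\mathbb{S}^0[G\cdot\bar{S}]}\ \mathbb{S}^0 .
\]
Since the underlying spectrum of a norm $N_{C_2}^G(-)$ is the $(G/C_2)$-indexed smash power, $i_e^*\mathbb{S}^0[G\cdot\bar{S}]$ is identified with $\mathbb{S}^0[\,\gamma^a\bar{s}\mid \bar{s}\in\bar{S},\ 0\le a<2^{n-1}\,]$, a free associative $\mathbb{S}^0$-algebra on even-degree generators; and $i_e^*\BPG$ is the smash power $(i_e^*\BPR)^{\wedge 2^{n-1}}$, whose homotopy is the polynomial ring $\Z_{(2)}[G\cdot v_1^G, G\cdot v_2^G,\dots]$. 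Under the structure map, the $\gamma^a\bar{s}$ land on Weyl translates of polynomial generators: the permutation-summand hypothesis of Definition~\ref{defn:PermBasis} says exactly that these, together with the complementary generators coming from an extension $\bar{S}'$, form a polynomial generating set for $\pi_*i_e^*\BPG$. In particular $\{\gamma^a\bar{s}\mid \bar{s}\in\bar{S},\ 0\le a<2^{n-1}\}$ is a regular sequence, and $i_e^*\BPG/G\cdot\bar{S}$ is the iterated cofiber quotient of $i_e^*\BPG$ by it.

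Next I would record that $i_e^*\BPG$ is an $A_\infty$-ring spectrum --- it underlies the $G$-equivariant ring spectrum $\BPG$, and is the $2^{n-1}$-fold smash power of the $A_\infty$-ring $i_e^*\BPR$ --- and that its homotopy ring, being polynomial, is commutative, so that along our sequence every element is central and a non-zero-divisor. The quotient of an $A_\infty$-ring by a central non-zero-divisor carries an $A_\infty$ structure compatible with the module structure over the original ring; this is the content of \cite{Robinson, AngeltveitRegular}, sharpened in \cite{HahnWilsonQuotient} to produce an associative algebra in the category of modules over the base. Killing the generators one at a time --- the quotient by an initial segment of our regular sequence is again an $A_\infty$-$i_e^*\BPG$-algebra with polynomial, hence commutative, homotopy, so the hypotheses persist --- settles the case of finite $J$; for infinite $J$ one passes to the sequential colimit over the finite sub-quotients, under which associative $i_e^*\BPG$-algebra structures are preserved. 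Equivalently, since $\bar{S}$ already determines a single regular sequence in $\pi_*i_e^*\BPG$, one may invoke the Hahn--Wilson quotient theorem \cite{HahnWilsonQuotient} for that sequence directly; the two routes differ only in how much iteration is absorbed into the cited result.

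I expect the real work to be bookkeeping rather than genuine mathematics. One must check that the underlying quotient is honestly a quotient of $i_e^*\BPG$ by a \emph{regular} sequence --- which is precisely what Definition~\ref{defn:PermBasis} provides, once $i_e^*\mathbb{S}^0[G\cdot\bar{S}]$ and its map to $i_e^*\BPG$ have been identified --- and that the centrality and non-zero-divisor hypotheses of the cited obstruction theories hold at each stage of the iteration, together with the routine colimit argument for infinite $\bar{S}$. There is no delicate computation.
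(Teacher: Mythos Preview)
Your proposal is correct and follows essentially the same approach as the paper: identify the underlying quotient as the quotient of the even ring spectrum \(i_e^*\BPG\) by the regular sequence \(\big(\gamma^i\bar{s}_j\mid 0\le i\le 2^{n-1}-1,\ j\in J\big)\), and then invoke \cite{HahnWilsonQuotient}. The paper's proof is simply the terse version of yours---it records regularity and evenness and cites \cite[Theorem~A]{HahnWilsonQuotient} in one line, without spelling out the unwinding of \(i_e^*\mathbb{S}^0[G\cdot\bar{S}]\) or the iteration/colimit argument you include.
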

\begin{proof}
The assumptions on \(\bar{S}\) ensure that the sequence
\[
\big(\gamma^i\bar{s}_j\mid 0\leq i\leq 2^{n-1}-1, j\in J\big)
\]
forms a regular sequence in the homotopy groups of the even spectrum \(i_e^\ast\BPG\). The result follows from \cite[Theorem A]{HahnWilsonQuotient}.
\end{proof}

\begin{remark}
The Hahn--Shi Real orientation shows the restriction to \(C_2\) of the spectrum \(\BPG/G\cdot\bar{S}\) always admits an \(E_\sigma\)-algebra structure \cite{HahnShi}.
\end{remark}

\section{The \texorpdfstring{\(C_{2^{n-1}}\)}{index 2}-geometric fixed points} \label{sec:relSS}

Let \(\Gp = C_{2^{n-1}}\), the subgroup of index two in \(G=C_{2^n}\). We extend the results of the previous section, considering the \(a_{\lambda_{n-2}}\)-localized slice spectral sequence for permutation quotients. This is again a spectral sequence of Mackey functors, now essentially for \(C_{4}\cong G/C_{2^{n-2}}\). In this section, we study the \(C_2\cong G'/C_{2^{n-2}}\)-equivariant level, since we can tell an increasingly complete story here. The \(C_4\)-fixed points are more subtle, as we will see in Section~\ref{sec:BPC422}.

Note that since 
\[
    i_{G'}^\ast \lambda_{n-2}=\lambda_{(n-1)-1}=2\sigma,
\]
the restriction to \(G'\) of the \(a_{\lambda_{n-2}}\)-localized slice spectral sequence for \(\BPG/G\cdot \bar{S}\) is the \(a_\sigma\)-localized slice spectral sequence for 
\[
    i_{G'}^\ast \BPG/G\cdot\bar{S}.
\]
Just as for the \(G\)-geometric fixed points, we start by identifying the homotopy type. In this case, since 
\[
    i_{G'}^\ast\BPG\simeq \BP^{((G'))}\wedge\BP^{((G'))},
\]
we have
\[
    \Phi^{G'}\BPG\simeq H\F_2\wedge H\F_2,
\]
and all of the geometric fixed points we consider will take place in the category of modules over
\[
    A=H\F_2\wedge H\F_2.
\]
Composing with the localization map
\[
    i_{G'}^\ast\BPG\to \widetilde{E}\mathcal P\wedge i_{G'}^{\ast}\BPG,
\] 
the element \(N_{C_2}^{G'}\bar{s}_i\) gives us a polynomial
in the dual Steenrod algebra. 
\begin{definition}
    Let \(g_i\in\pi_i A\) be the image of \(N_{C_2}^{G'}\bar{s}_i\).
\end{definition}

Note that the residual \(C_2\cong G'/C_{2^{n-2}}\)-action interchanges
\[
    N_{C_2}^{G'}\bar{s}_i\text{ and }\gamma N_{C_2}^{G'}\bar{s}_i,
\]
while acting as the conjugation in the dual Steenrod algebra.

\begin{lemma}
    The \(G'\)-geometric fixed points of \(\BPG/G\cdot\bar{S}\) are the \(A\)-module
    \[
        A/(g_i,\bar{g}_i\mid i\in J).
    \]
\end{lemma}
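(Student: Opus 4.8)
The plan is to reduce the statement to the behaviour of geometric fixed points on the twisted monoid ring presentation of $\BPG/G\cdot\bar S$ and then use that $\Phi^{G'}$ is strong symmetric monoidal. First I would recall from Section~\ref{sec:Section2} that we have the presentation
\[
\BPG/G\cdot\bar S \simeq \BPG \smashover{\mathbb S^0[G\cdot\bar S]} \mathbb S^0,
\qquad
\mathbb S^0[G\cdot\bar S] = \bigwedge_{j\in J} N_{C_2}^{G}\mathbb S^0[\bar s_j],
\]
so that, restricting along $i_{G'}^\ast$ and using the standard double-coset formula $i_{G'}^\ast N_{C_2}^{G} \simeq N_{C_2}^{G'}\wedge N_{C_2}^{G'}$ (which is exactly the source of the identification $i_{G'}^\ast\BPG \simeq \BP^{((G'))}\wedge\BP^{((G'))}$ used just above), we get
\[
i_{G'}^\ast\bigl(\BPG/G\cdot\bar S\bigr)
\simeq i_{G'}^\ast\BPG \smashover{i_{G'}^\ast\mathbb S^0[G\cdot\bar S]} \mathbb S^0,
\qquad
i_{G'}^\ast\mathbb S^0[G\cdot\bar S]
\simeq \bigwedge_{j\in J}\bigl(N_{C_2}^{G'}\mathbb S^0[\bar s_j] \wedge \gamma N_{C_2}^{G'}\mathbb S^0[\bar s_j]\bigr).
\]

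Next I would apply $\Phi^{G'}$. Since $\Phi^{G'}$ is a symmetric monoidal left adjoint, it commutes with $\wedge$, with relative smash products, and with the free-associative-algebra construction $\mathbb S^0[-]$; and $\Phi^{G'}$ of a norm $N_{C_2}^{G'}(-)$ on an element $x$ of degree $|x|$ produces the class in $\pi_{|x|}$ represented geometrically by $x$ — this is precisely the definition $g_i := \Phi^{G'}(N_{C_2}^{G'}\bar s_i) \in \pi_i A$, together with its conjugate $\bar g_i$ coming from the $\gamma$-twisted copy. Using $\Phi^{G'}\BPG = A$, this yields
\[
\Phi^{G'}\bigl(\BPG/G\cdot\bar S\bigr)
\simeq A \smashover{\bigwedge_{j\in J}\bigl(\mathbb S^0[g_j]\wedge \mathbb S^0[\bar g_j]\bigr)} \mathbb S^0
\simeq A/(g_i,\bar g_i\mid i\in J),
\]
the last equivalence being the definition of the quotient $A$-module by the regular sequence of generators (here $g_i, \bar g_i$ are polynomial generators inside the dual Steenrod algebra, so killing them is a quotient in the $A$-module sense). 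I would be careful to phrase the very last step as a quotient of $A$-modules rather than of ring spectra, since the $\bar f_j$-type obstructions of Proposition~\ref{prop:notaring} show there is in general no ring structure; the module structure, however, is exactly what the relative smash product over $\mathbb S^0[G\cdot\bar S]$ supplies.

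The main obstacle is bookkeeping the interaction between restriction and the norm at the level of twisted monoid rings: one must check that $i_{G'}^\ast$ of the algebra map $\mathbb S^0[G\cdot\bar S]\to\BPG$ is identified, compatibly, with the corresponding map $\bigwedge_{j}(N_{C_2}^{G'}\mathbb S^0[\bar s_j]\wedge\gamma N_{C_2}^{G'}\mathbb S^0[\bar s_j])\to i_{G'}^\ast\BPG$, so that the relative smash products match up. This is the diagram-chasing heart of the argument; once it is in place, applying the monoidal functor $\Phi^{G'}$ and reading off generators is formal. An alternative, and perhaps cleaner, route is to avoid restriction entirely and instead factor $\Phi^G = \Phi^{G/G'}\circ\Phi^{G'}$ is \emph{not} available here since $G'$ is not normal-quotient-friendly in the needed way; so the restriction-then-$\Phi^{G'}$ approach above is the one I would pursue, and the double-coset identity $i_{G'}^\ast N_{C_2}^{G}\simeq N_{C_2}^{G'}\wedge N_{C_2}^{G'}$ (already implicitly invoked in the excerpt) is the technical input that makes it go through.
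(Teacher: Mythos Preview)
Your argument is correct and is exactly the natural proof; the paper in fact states this lemma without proof, treating it as immediate from the strong symmetric monoidality of \(\Phi^{G'}\), the presentation of \(\BPG/G\cdot\bar S\) as a relative smash product, and the definition of the \(g_i\). Your write-up simply makes this explicit, including the double-coset identification \(i_{G'}^\ast N_{C_2}^{G}X \simeq N_{C_2}^{G'}X \wedge \gamma N_{C_2}^{G'}X\) that the paper uses without comment.

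Two small corrections. First, your closing aside is wrong: \(G'\) is normal in the abelian group \(G\), so the factorization \(\Phi^{G} \simeq \Phi^{G/G'}\circ\Phi^{G'}\) \emph{is} available; it just isn't what is being computed here, since the lemma concerns \(\Phi^{G'}\) alone (with its residual \(G/G'\)-action, or after forgetting it). Second, the parenthetical ``here \(g_i,\bar g_i\) are polynomial generators inside the dual Steenrod algebra'' is not true for an arbitrary choice of \(\bar s_i\); that holds only for \(\bar s_{2^i-1}=\bar v_i^G\), where one gets \(\xi_i,\zeta_i\). For general \(\bar S\) the \(g_i\) are just some elements of \(\pi_\ast A\), and the quotient \(A/(g_i,\bar g_i)\) is taken purely in the \(A\)-module sense, which is all that is needed and all that the lemma asserts.
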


In general, the homotopy type of this module very heavily depends on the choices of generators. We have several cases where we can explicitly identify the images, however. Using \cite[Proposition~2.57]{HHR} and \cite[Proposition~6.2]{MeierShiZengSegal}, we see that for Hill--Hopkins--Ravenel generators \(\overline{v}_i^G\) of \( \BPG \), the \(G'\)-geometric fixed points of their norms satisfy
\begin{align*}
\xi_i & = \Phi^{G'} N_{C_2}^G \overline{v}_i^G \\
\zeta_i & = \Phi^{G'}N_{C_2}^G \gamma \overline{v}_i^G,
\end{align*}
where $\xi_i$ are the Milnor generators of the mod \(2\) dual Steenrod algebra, and $\zeta_i$ are their dual. 

\subsection{Forms of \texorpdfstring{\(\BPGZnm{k}{m}\)}{k[m,m]}}
We can get much more explicit answers for the geometric fixed points with certain forms of \(\BPGZnm{k}{m}\), since here we can identify the geometric fixed points of the norms exactly.

\begin{corollary}
    The \(G'\)-geometric fixed points of \(\BPGZnm{k}{m}\) are given by the \(A\)-module
    \[
        A/\big(\xi_i,\zeta_i\mid i<k\text{ or }i>m\big)\simeq A/\big(\xi_i,\zeta_i\mid i<k\big) \smashover{A} A/\big(\xi_j,\zeta_j\mid j>m\big).
    \]
\end{corollary}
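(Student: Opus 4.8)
The plan is to apply the preceding lemma, which identifies the $G'$-geometric fixed points of a quotient $\BPG/G\cdot\bar S$ by permutation summands with the $A$-module $A/(g_i,\bar g_i\mid i\in J)$, to the specific quotient $\BPGZnm{k}{m}=\BPG/G\cdot\bar S_{k,m}$, where $\bar S_{k,m}=\{\bv_j^G\mid 0<j<k\text{ or }j>m\}$ uses the Hill--Hopkins--Ravenel generators. First I would invoke the computation recalled just above the corollary: using \cite[Proposition~2.57]{HHR} and \cite[Proposition~6.2]{MeierShiZengSegal}, for the HHR generators $\bv_i^G$ one has $\Phi^{G'}N_{C_2}^G\bv_i^G=\xi_i$ and $\Phi^{G'}N_{C_2}^G\gamma\bv_i^G=\zeta_i$, so the index set $J=\{i\mid i<k\text{ or }i>m\}$ together with the identifications $g_i=\xi_i$, $\bar g_i=\zeta_i$ immediately gives $\Phi^{G'}\BPGZnm{k}{m}\simeq A/(\xi_i,\zeta_i\mid i<k\text{ or }i>m)$. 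This is the first displayed expression in the statement.

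Next I would establish the factorization as a smash product over $A$. The two index conditions $i<k$ and $i>m$ are disjoint (since $k\le m$), so the ideal $(\xi_i,\zeta_i\mid i<k\text{ or }i>m)$ is generated by the union of two disjoint families of elements. The natural approach is to use the twisted monoid ring / regular quotient formalism: writing $\BPGZnm{k}{m}=\BPG\smashover{\mathbb S^0[G\cdot\bar S_{k,m}]}\mathbb S^0$ and splitting $\mathbb S^0[G\cdot\bar S_{k,m}]\simeq \mathbb S^0[G\cdot\bar S_{<k}]\smashover{\mathbb S^0}\mathbb S^0[G\cdot\bar S_{>m}]$, applying $\Phi^{G'}$ (which is strong symmetric monoidal) turns each norm-generator into the corresponding pair $(\xi_i,\zeta_i)$, and the relative smash product becomes a relative smash product over $A$ of the two one-sided quotients. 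Concretely, $\Phi^{G'}\big(\BPG\smashover{\mathbb S^0[G\cdot\bar S_{<k}]}\mathbb S^0\big)\simeq A/(\xi_i,\zeta_i\mid i<k)$, similarly for $>m$, and iterating the quotient construction yields
\[
A/(\xi_i,\zeta_i\mid i<k\text{ or }i>m)\simeq A/(\xi_i,\zeta_i\mid i<k)\smashover{A}A/(\xi_j,\zeta_j\mid j>m).
\]
Here I would lean on Proposition~\ref{prop:UnderlyingRing} (via Hahn--Wilson) to know that these are honest $A$-algebra quotients so that the relative smash product is well-behaved, i.e. the $\xi,\zeta$ form a regular sequence in $\pi_*A$ and killing a regular sequence in two disjoint batches can be done in either order with the Tor-groups agreeing.

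The main obstacle I expect is bookkeeping rather than anything deep: one must be careful that the identification of $\Phi^{G'}$ of a norm with a \emph{polynomial} (not merely a generator) in the dual Steenrod algebra does not spoil the regular-sequence property, and that the disjointness of the index sets $\{i<k\}$ and $\{i>m\}$ is genuinely used to guarantee that the two quotient families do not interact. Since $\xi_i,\zeta_i$ for distinct $i$ are polynomial generators of $\pi_*A=\F_2[\xi_1,\xi_2,\dots]\otimes\F_2[\zeta_1,\zeta_2,\dots]$ (as a bimodule / using both factors), any sub-collection is a regular sequence, so the Künneth-type splitting of the iterated cofiber is formal. Thus the corollary follows by combining the lemma, the HHR/Meier--Shi--Zeng identification of geometric fixed points of norms, the monoidality of $\Phi^{G'}$, and the regular-sequence factorization; no new computation is required beyond what is already in the excerpt.
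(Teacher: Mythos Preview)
Your approach is essentially the same as the paper's: the corollary is immediate from the preceding lemma together with the identification \(g_i=\xi_i\), \(\bar g_i=\zeta_i\) for the Hill--Hopkins--Ravenel generators, and the paper does not give a separate proof. Your justification of the smash factorization via the monoidality of \(\Phi^{G'}\) and the splitting of the twisted monoid ring \(\mathbb S^0[G\cdot\bar S_{k,m}]\) into its \(<k\) and \(>m\) pieces is exactly the intended reasoning.

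One small correction: your aside that \(\pi_*A=\F_2[\xi_1,\xi_2,\dots]\otimes\F_2[\zeta_1,\zeta_2,\dots]\) is not right---\(\pi_*A\) is the dual Steenrod algebra \(\F_2[\xi_1,\xi_2,\dots]\), and the \(\zeta_i\) are the conjugates of the \(\xi_i\), hence polynomials in the \(\xi_j\) (indeed \(\zeta_1=\xi_1\)). So the \(\xi_i,\zeta_i\) taken together are \emph{not} a regular sequence in \(\pi_*A\). Fortunately your argument does not actually need this: the factorization \(A/(\xi_i,\zeta_i\mid i\in J_1\sqcup J_2)\simeq A/(\xi_i,\zeta_i\mid i\in J_1)\smashover{A}A/(\xi_j,\zeta_j\mid j\in J_2)\) holds formally because the quotient is \emph{defined} as an iterated smash of the cofibers \(A/\xi_i\) and \(A/\zeta_i\), and for disjoint index sets these smash factors can be regrouped. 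So you can simply drop the regular-sequence remark.
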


Writing this module in several ways makes working with this easier, as we can connect this with a series of modules and computations studied in \cite{BHLSZ}.

\begin{definition}
    For any subset \(I\) of the natural numbers, let \[
        M_I=\bigwedge_{i\in I} A/(\xi_i)\text{ and } \overline{M}_I=\bigwedge_{i\in I} A/(\zeta_i).
    \]
    Let 
    \[
        R_k=\End_A\big(M_{\{1,\dots,k-1\}}\big)
    \]
    and let
    \[
        A\langle m\rangle = M_{\{m+1,m+2,\dots\}}.
    \]
\end{definition}

As the endomorphisms of a module, \(R_k\) is always an associative algebra. By \cite{BHLSZ}, for any \(m\), \(A\langle m\rangle\) and \(\overline{A}\langle m\rangle\) are associative algebras as well. More surprisingly, by \cite{BHLSZ}, we have
\[
    R_k\simeq \Sigma^{} M_{\{1,\dots,k-1\}}\smashover{A}\overline{M}_{\{1,\dots,k-1\}},
\]
which allows us to rewrite \(\Phi^{G'}\BPGZnm{k}{m}\).

\begin{corollary}
    For any \(k\leq m\), we have
    \[
        \Phi^{G'}\BPGZnm{k}{m}\simeq\Sigma^{}R_k\smashover{A} A\langle m\rangle \smashover{A}\overline{A}\langle m\rangle,
    \]
    the suspension of an associative \(A\)-algebra.
\end{corollary}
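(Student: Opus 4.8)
The plan is to assemble the desired equivalence from the previous corollary and the structural facts about the modules $M_I$, $\overline{M}_I$, $R_k$, and $A\langle m\rangle$ that were just recorded. The starting point is the identification
\[
\Phi^{G'}\BPGZnm{k}{m}\simeq A/\big(\xi_i,\zeta_i\mid i<k\text{ or }i>m\big)\simeq A/\big(\xi_i,\zeta_i\mid i<k\big)\smashover{A} A/\big(\xi_j,\zeta_j\mid j>m\big),
\]
so the task reduces to splitting each of the two tensor factors into the ``$\xi$-part'' and the ``$\zeta$-part'' and then regrouping. First I would observe that $A/(\xi_i,\zeta_i\mid i<k)\simeq M_{\{1,\dots,k-1\}}\smashover{A}\overline{M}_{\{1,\dots,k-1\}}$, since killing a regular sequence can be done one generator at a time and the $\xi$'s and $\zeta$'s form a regular sequence in $\pi_*A$; similarly $A/(\xi_j,\zeta_j\mid j>m)\simeq A\langle m\rangle\smashover{A}\overline{A}\langle m\rangle$ by definition of $A\langle m\rangle=M_{\{m+1,\dots\}}$ and its conjugate.

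Next I would invoke the cited result of \cite{BHLSZ} that $R_k\simeq \Sigma M_{\{1,\dots,k-1\}}\smashover{A}\overline{M}_{\{1,\dots,k-1\}}$. Substituting this in and using associativity and symmetry of $\smashover{A}$ in the category of $A$-modules gives
\[
\Phi^{G'}\BPGZnm{k}{m}\simeq \big(\Sigma^{-1}R_k\big)\smashover{A} A\langle m\rangle\smashover{A}\overline{A}\langle m\rangle,
\]
and absorbing the suspension (the equivalence is as $A$-modules, and $\Sigma$ is invertible) yields the stated $\Sigma R_k\smashover{A} A\langle m\rangle\smashover{A}\overline{A}\langle m\rangle$ up to keeping careful track of the single suspension. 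For the algebra structure: $R_k$ is an associative $A$-algebra as an endomorphism ring, and by \cite{BHLSZ} both $A\langle m\rangle$ and $\overline{A}\langle m\rangle$ are associative $A$-algebras; since the smash product over $A$ of associative $A$-algebras is again an associative $A$-algebra, the right-hand side carries an associative $A$-algebra structure, and the claim is that $\Phi^{G'}\BPGZnm{k}{m}$ is equivalent to it as such.

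The main obstacle I anticipate is bookkeeping rather than conceptual: one must be careful that the equivalence $\Phi^{G'}\BPGZnm{k}{m}\simeq A/(\xi_i,\zeta_i\mid i<k\text{ or }i>m)$ is genuinely multiplicative (as an $A$-algebra equivalence, not merely an $A$-module one), so that the regrouping respects the algebra structures on both sides; this is where one leans on the facts that geometric fixed points is lax (indeed strong) symmetric monoidal, that $\Phi^{G'}N_{C_2}^G\overline v_i^G=\xi_i$ and $\Phi^{G'}N_{C_2}^G\gamma\overline v_i^G=\zeta_i$, and that the twisted monoid presentation of $\BPGZnm{k}{m}$ as a quotient is compatible with the $A$-algebra structure after applying $\Phi^{G'}$. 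A secondary subtlety is tracking the single suspension coming from the identification of $R_k$: the cleanest route is to state the equivalence at the level of underlying $A$-modules first (where $\Sigma$ is harmless) and then check that the algebra structure transported along it agrees with the one on $\Sigma R_k\smashover{A}A\langle m\rangle\smashover{A}\overline A\langle m\rangle$, which is exactly the content already packaged in \cite{BHLSZ} for $R_k$ itself.
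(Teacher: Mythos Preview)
Your approach is correct and is precisely the paper's (implicit) argument: split the quotient as a smash over $A$ of the ``low'' and ``high'' parts, identify these with $M_{\{1,\dots,k-1\}}\smashover{A}\overline{M}_{\{1,\dots,k-1\}}$ and $A\langle m\rangle\smashover{A}\overline{A}\langle m\rangle$, and substitute the cited identification of $R_k$.

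Two small corrections. First, your suspension bookkeeping is muddled: if $R_k\simeq \Sigma^{d}\big(M_{\{1,\dots,k-1\}}\smashover{A}\overline{M}_{\{1,\dots,k-1\}}\big)$ then substitution gives $\Sigma^{-d}R_k\smashover{A}\cdots$, not $\Sigma^{d}R_k\smashover{A}\cdots$; one cannot ``absorb'' a $\Sigma^{-1}$ into a $\Sigma$. The paper writes $\Sigma^{}$ with an unspecified degree in both places, and the two are meant to be read as the appropriate (inverse) shifts, not as the same suspension.

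Second, your worry about the equivalence being multiplicative is misplaced. The corollary does not claim that $\Phi^{G'}\BPGZnm{k}{m}$ is itself an $A$-algebra or that the equivalence respects algebra structures; indeed $\BPGZnm{k}{m}$ is typically not a ring at all (Proposition~\ref{prop:notaring}). The assertion is only that this $A$-module is equivalent to a suspension of something which, independently, carries an associative $A$-algebra structure (because each of $R_k$, $A\langle m\rangle$, $\overline{A}\langle m\rangle$ does). An $A$-module equivalence suffices, and your first steps already provide it.
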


The extreme case of this is \(\BPGmm{m}\).
\begin{corollary}\label{cor:GPrimeBPGmm}
    The \(G'\)-geometric fixed points of \(\BPGmm{m}\) are given by the \(A\)-module
    \[
        \Sigma^{} R_m\smashover{A}A\langle m\rangle \smashover{A}\overline{A}\langle m\rangle.
    \]
\end{corollary}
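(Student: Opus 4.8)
This final statement (Corollary~\ref{cor:GPrimeBPGmm}) is an immediate special case of the preceding corollary, so the proof is very short.

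The plan is to simply specialize the previous corollary (the one giving $\Phi^{G'}\BPGZnm{k}{m}\simeq\Sigma^{}R_k\smashover{A} A\langle m\rangle \smashover{A}\overline{A}\langle m\rangle$ for $k\le m$) to the case $k=m$. First I would observe that $\BPGmm{m}$ is by definition $\BPGZnm{m}{m}$, i.e.\ the quotient of $\BPG$ by $G\cdot\bv_j^G$ for all $j$ with $0<j<m$ or $j>m$ — precisely the index set $\bar S_{m,m}$. Then the hypothesis $k\le m$ of the previous corollary is satisfied with equality, so the stated equivalence applies verbatim with $k$ replaced by $m$, yielding
\[
\Phi^{G'}\BPGmm{m}\simeq \Sigma^{}R_m\smashover{A} A\langle m\rangle \smashover{A}\overline{A}\langle m\rangle.
\]

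The only thing worth spelling out is why $R_m$, $A\langle m\rangle$, and $\overline{A}\langle m\rangle$ are the relevant factors: $R_m=\End_A(M_{\{1,\dots,m-1\}})$ records the quotient by $\xi_i,\zeta_i$ for $i<m$ via the identification $R_m\simeq \Sigma M_{\{1,\dots,m-1\}}\smashover{A}\overline{M}_{\{1,\dots,m-1\}}$ from \cite{BHLSZ}, while $A\langle m\rangle=M_{\{m+1,m+2,\dots\}}$ and its conjugate $\overline{A}\langle m\rangle$ record the quotient by $\xi_j,\zeta_j$ for $j>m$. Since there is no constraint relating $k$ and $m$ beyond $k\le m$, nothing new needs to be checked.

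There is essentially no obstacle here: the content is entirely in the previous corollary (and ultimately in the identification $\Phi^{G'}\BPG\simeq A$ together with the computation of $\xi_i=\Phi^{G'}N_{C_2}^G\bar v_i^G$, $\zeta_i=\Phi^{G'}N_{C_2}^G\gamma\bar v_i^G$). Thus the proof is one line: \emph{This is the $k=m$ case of the previous corollary.}

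\begin{proof}
Since $\BPGmm{m}=\BPGZnm{m}{m}$, this is the case $k=m$ of the preceding corollary.
\end{proof}
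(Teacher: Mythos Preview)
Your proposal is correct and matches the paper's approach exactly: the paper states this corollary immediately after the general one for $\BPGZnm{k}{m}$, introducing it with ``The extreme case of this is $\BPGmm{m}$,'' and gives no separate proof. Your one-line specialization to $k=m$ is precisely what is intended.
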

The homotopy of this \(A\)-module is more complicated than one might have initially expected. These kinds of modules were studied by the authors \cite{BHLSZ}, where we used a Baker--Lazarev style Adams spectral sequence based on \(H\F_2\)-homology, but in the category of \(A\)-modules \cite{BakerLazarev}. A remarkable feature of the case of \(\BPGmm{m}\) is that this relative Adams spectral sequence completely determines the \(a_\sigma\)-localized slice spectral sequence.

\subsection{A comparison of spectral sequences}\label{sec:compare}

Let \(P_{\bullet}=P_{\bullet}\BPGmm{m}\) be the slice covering tower of \(\BPGmm{m}\). That is, \(P_t\) is the homotopy fibre of the cannonical map 
\[\BPGmm{m} \rightarrow P^{t-1} \BPGmm{m} \] where \(P^{\bullet} = P^{\bullet}\BPGmm{m}\) is the regular slice tower. 

The slices \(P_t^t\BPGmm{m}\) are non-trivial only in dimensions of the form \(t=2i(2^m - 1)\). Therefore we can ``speed-up'' the slice tower without losing any information. Define
\[\tilde{P}_t = P_{2t(2^m - 1)} \ . \]
This re-indexes the slice tower, so that 
\[\tilde{P}^t_t = P^{2t(2^m - 1)}_{2t(2^m - 1)}\BPGmm{m}.\]

Since there is an equivalence 
\[\Phi^{\Gp} \BPG \simeq H\F_2 \wedge H\F_2 = \HA,\] 
\(\Phi^{\Gp} \tilde{P}_{\bullet}\)
is a covering tower converging to 
\( \Phi^{\Gp}\BPGmm{m}\)
in the category of \(\HA\)-modules. 

\begin{theorem}\label{thm:adamsres}
   The tower \(\Phi^{\Gp} \tilde{P}_{\bullet}\BPGmm{m}\)  is an \(H\F_2\)-Adams resolution of \(\Phi^{\Gp}\BPGmm{m}\) in the category of \(\HA\)-modules. 
\end{theorem}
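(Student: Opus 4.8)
The plan is to show that the speeded-up slice tower $\Phi^{\Gp}\tilde P_\bullet\BPGmm{m}$, which is by construction a tower of $\HA$-modules converging to $\Phi^{\Gp}\BPGmm{m}$, satisfies the two defining properties of an $H\F_2$-based Adams resolution in $\HA\mhyphen\Mod$: first, that each associated graded term $\Phi^{\Gp}\tilde P_t^t\BPGmm{m}$ is a wedge of (shifts of) copies of $H\F_2$, viewed as an $\HA$-module via $\HA\to H\F_2$; and second, that each map $\Phi^{\Gp}\tilde P_t\to \Phi^{\Gp}\tilde P_{t-1}$ (equivalently, each slice quotient map) becomes zero after smashing with $H\F_2$ over $\HA$, i.e.\ the cofiber sequences split into short exact sequences on $H\F_2$-homology. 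Equivalently, and more usefully, I would verify directly that $H\F_2 \smashover{\HA} \Phi^{\Gp}\tilde P_\bullet\BPGmm{m}$ is a tower with contractible homotopy limit whose associated graded is a sum of shifts of $\pi_*(H\F_2)$, and that the associated $E_1$-page is a free graded module over the dual Steenrod algebra on the appropriate generators.

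The first step is to identify the slice associated graded. By Corollary~\ref{cor:sliceBPmm}, the slice associated graded of $\BPGmm{m}$ is $H\mZ[G\cdot\bv_m^G]$, so the $t$-slice $\tilde P_t^t\BPGmm{m}$ is a wedge of suspensions of $H\mZ$ indexed by monomials in the $G$-orbit $G\cdot\bv_m^G$ of total degree $2t(2^m-1)$; applying $\Phi^{\Gp}$ (which is strong symmetric monoidal and sends $H\mZ$ to $H\F_2$, and sends $\BPG$ to $\HA$) yields a wedge of suspensions of $H\F_2$ as an $\HA$-module. This is exactly the ``first property'' of an Adams resolution. The second step is to understand the $\HA$-module structure: the norms $N_{C_2}^{G'}\bv_i^G$ restrict under $\Phi^{\Gp}$ to the elements $\xi_i,\zeta_i$ of the dual Steenrod algebra (as recalled in the excerpt just before Corollary~\ref{cor:GPrimeBPGmm}), and the $H\F_2$-homology of $\Phi^{\Gp}\BPGmm{m}$ is then, via Corollary~\ref{cor:GPrimeBPGmm}, a free module over the (mod $2$) dual Steenrod algebra $\mathcal A_*$ — here one uses that $H\F_2\smashover{\HA}A/(\xi_i,\zeta_i:i\ne m)$ has $H\F_2$-homology equal to $\mathcal A_* \otimes (\text{a free module})$, since $A/(\xi_i,\zeta_i)$ is a Koszul-type quotient and the relevant $\Tor$ collapses. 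The third step is to check exactness: because the slice tower of $\BPGmm{m}$ is built out of twisted monoid ring cofiber sequences $S^0[G\cdot\bv_m^G]$-module style, each layer $\tilde P_t \to \tilde P_{t-1}$ has cofiber a wedge of slices, and after applying $\Phi^{\Gp}$ and then $H\F_2\smashover{\HA}(-)$ the long exact sequences break into short exact sequences precisely because the resulting graded $\mathcal A_*$-module is free; freeness forces the connecting maps to vanish, which is the ``second property.''

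The main obstacle I expect is the exactness/splitting check — showing that $H\F_2\smashover{\HA}(-)$ applied to the tower yields a resolution with vanishing $d_1$-type connecting maps, rather than merely a tower with free associated graded. This amounts to controlling the $\HA$-module $\Tor$ groups $\Tor^{\HA}_{*,*}(H\F_2, \Phi^{\Gp}\BPGmm{m})$ and verifying there is no higher $\Tor$, which is where the specific structure from \cite{BHLSZ} (that $R_m\smashover{A}A\langle m\rangle\smashover{A}\overline A\langle m\rangle$ is an associative algebra with understood homotopy) is essential. Concretely, I would invoke the identification $\Phi^{\Gp}\tilde P_t^t\BPGmm{m}\simeq \HA\smashover{S^0}(\text{wedge of spheres})/\cdots$ and compute its $H\F_2$-homology as a comodule, observing that the slice spectral sequence filtration matches the Adams filtration term-by-term under the reindexing $t\mapsto 2t(2^m-1)$; once the associated graded is pinned down to be $\mathcal A_*$-free, the standard criterion (a tower of $\HA$-modules whose $H\F_2$-homology spectral sequence has free $E_1$ and converges is an Adams resolution) finishes the proof. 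The bookkeeping with the $G/C_{2^{n-2}}$-residual $C_2$-action interchanging $\xi_i\leftrightarrow\zeta_i$ should not cause trouble at the level of the underlying (non-equivariant) $\HA$-module statement, which is all that Theorem~\ref{thm:adamsres} asserts.
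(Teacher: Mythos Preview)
Your overall framework is right: verify that each layer \(\Phi^{G'}\tilde P_t^t\) is a wedge of shifts of \(H\F_2\) (as an \(A\)-module via \(A\to H\F_2\)), and that each map \(Q_i\to Q_i^i\) is monic on \(H\F_2\)-homology. Your handling of the first condition is close to the paper's, though you gloss over one point the paper makes explicit: you need to know that the \(A\)-module structure on \(\Phi^{G'}H\mZ = H\F_2[b]\) really does factor through \(A\to H\F_2\). This is not automatic from the slice associated graded alone; the paper deduces it from the fact that each \(\bar v_i^G\) and \(\gamma\bar v_i^G\) acts trivially on \(H\mZ\) (\cite[Prop.~7.6]{HHR}), so their norms, which are \(\xi_i\) and \(\zeta_i\), act trivially on \(H\F_2[b]\).

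For the second condition you have identified the right obstacle but then take a substantial detour. The paper's argument is short and uses none of the machinery from \cite{BHLSZ}, no \(\Tor\) computations, and no ``freeness forces the connecting maps to vanish'' principle (which, as stated, is not a valid deduction without further input). The point is simply to identify \(Q_i\to Q_i^i\) explicitly. Using the twisted monoid ring description \(\tilde P_i \simeq \BPGmm{m}\wedge_R M_i\) with \(R = \mathbb S^0[G\cdot\bar v_m^G]\) and \(M_i\) the sub-\(R\)-module of cells of dimension \(\geq 2i(2^m-1)\), one observes that after restricting to \(G'\) the inclusion \(M_{i+1}\subset M_i\) is exactly multiplication by the ideal \((\bar v_m^G,\gamma\bar v_m^G)\). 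Applying \(\Phi^{G'}\) therefore identifies the cofiber sequence \(Q_{i+1}\to Q_i\to Q_i^i\) with \(Q_i\to Q_i/(\xi_m,\zeta_m)Q_i\). Since \(\xi_m\) and \(\zeta_m\) lie in the kernel of \(A\to H\F_2\), the quotient map \(Q_i\to Q_i/(\xi_m,\zeta_m)Q_i\) is tautologically injective on \(H\F_2\)-homology. That is the whole argument; no external structural results about \(R\langle m\rangle\) are needed at this stage.
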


\begin{proof}
Let \(Q_{\bullet} = \Phi^{\Gp}\tilde{P}_{\bullet}\BPGmm{m}\) for convenience. Then \(Q_{\bullet}\) is an \(H\F_2\)-Adams resolution of \(Q_{0} = \Phi^{\Gp}\BPGmm{m} \)  in \(\HA\)-modules if the following conditions are met for each \(i \geq 0\) \cite[Def.~2.2.1.3]{GreenBook}:
    \begin{enumerate}
        \item \(Q_i^i\) is a wedge of suspensions of \(H\F_2\)'s, and
        \item the map \(Q_i \rightarrow Q_i^i\) is monomorphic in \(H\F_2\)-homology.
    \end{enumerate}
We now verify the first condition. By definition, 
\[
Q_0^0 = \Phi^{\Gp} P^0_0 \BPGmm{m} = \Phi^{\Gp} H\underline{\mathbb{Z}} = H\F_2 [b],
\]
with \(\HA\)-module structure defined by the geometric fixed points of the reduction map \(\BPG \rightarrow H\underline{\mathbb{Z}}\). By \cite[Prop.~7.6]{HHR}, for each \(i\), \(\overline{v}_i^G\) and its conjugate \(\gamma \overline{v}_i^G\) act trivially on \(H\underline{\mathbb{Z}}\), thus the geometric fixed points of  \(N_{C_2}^{\Gp} \overline{v}_i^G \) and \( N_{C_2}^{\Gp}\gamma \overline{v}_i^G \), which are \(\xi_i\) and \(\zeta_i\), act trivially on \(H\F_2[b]\). Therefore, as an \(\HA\)-module, \(Q_0^0 \simeq \bigvee_{j = 0}^{\infty} \Sigma^{2j} H\F_2\). The Slice Theorem \cite[Thm.~6.1]{HHR} implies that for \(i > 0\), \(Q_i^i\) is a wedge of suspensions of \(Q_0^0\), thus the first condition is met.
    
We verify the second condition by an alternative construction of the slice covering tower of \(\BPGmm{m}\). As in \cite[\S6]{HHR}, let \(R = \mathbb{S}^0[G \cdot \overline{v}_m^G]\) be the homotopy refinement of \(\BPGmm{m}\), and \(M_{i}\) be the subcomplex of \(R\) consisting of spheres of dimension \(\geq 2i(2^m - 1)\). The arguments in \cite[\S6.1]{HHR} tell us that 
\[
    \tilde{P}_{i} \simeq \BPGmm{m} \wedge_R M_{i}.
\]
Notice that \(\Gp\)-equivariantly, \(M_{i+1} \subset M_{i}\) is the sub \(R\)-module \((\overline{v}_m^G,\gamma \overline{v}_m^G)M_{i}\), thus the quotient \(M_{i}/M_{i+1} \) is equivalent to \(M_{i}/(\overline{v}^G_m,\gamma \overline{v}_m^G)M_i\). Taking the \(\Gp\)-geometric fixed points on the cofibration
\[
     \BPGmm{m} \wedge_R M_{i+1} \rightarrow  \BPGmm{m} \wedge_R M_{i} \rightarrow \BPGmm{m} \wedge_R M_{i}/M_{i+1},
\]
we obtain the cofibration
\[
    Q_{i+1} \rightarrow Q_i \rightarrow Q_i^i \simeq Q_i/(\xi_m,\zeta_m)Q_i
\]
because \(\Phi^{\Gp}N_{C_2}^{\Gp} \overline{v}_m^G = \xi_m\) and \(\Phi^{\Gp} N_{C_2}^{\Gp}\gamma \overline{v}_m^G = \zeta_m\). Since \(\xi_m\) and \(\zeta_m\) have trivial image under \(A \rightarrow H\F_2\), the map \(Q_i \rightarrow Q_i^i\) induces a monomorphism in \(H\F_2\)-homology.
\end{proof}

\begin{corollary}\label{cor:speedupIsom}
    We have an isomorphism of spectral sequences between the relative Adams spectral sequence for \(A/(\xi_i,\zeta_i\mid i\neq m)\) and the speeded-up \(a_\sigma\)-localized slice spectral sequence for \(i_{G'}^{\ast}\BPGmm{m}\).
\end{corollary}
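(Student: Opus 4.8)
The plan is to realize both spectral sequences as the homotopy spectral sequence of one and the same tower of $\HA$-modules, namely $Q_\bullet = \Phi^{\Gp}\tilde P_\bullet\BPGmm{m}$, and then invoke Theorem~\ref{thm:adamsres}.

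First I would unwind the left-hand side. Since $i_{\Gp}^\ast\lambda_{n-2}=2\sigma$, the $a_\sigma$-localized slice spectral sequence of $i_{\Gp}^\ast\BPGmm{m}$ is, by Theorem~\ref{MeierShiZeng1}, the homotopy spectral sequence of the tower $\tilde E\mathcal P\wedge P_\bullet\BPGmm{m}$ obtained by smashing the slice covering tower with $\tilde E\mathcal P$. Applying $\pi_\star^{\Gp}$ to this tower is the same as applying $\pi_\star$ to the tower of fixed points $(\tilde E\mathcal P\wedge P_\bullet\BPGmm{m})^{\Gp}=\Phi^{\Gp}P_\bullet\BPGmm{m}$, because $(\tilde E\mathcal P\wedge-)^{\Gp}=\Phi^{\Gp}$ and geometric fixed points for $\Gp$ factor through restriction to $\Gp$. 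By Corollary~\ref{cor:sliceBPmm}, the slices $P^t_t\BPGmm{m}$ are concentrated in degrees $t=2i(2^m-1)$, so passing to the speeded-up tower $\tilde P_t=P_{2t(2^m-1)}$ only collapses the runs of the filtration on which nothing happens: the associated spectral sequence is unchanged apart from relabelling filtration degree. Hence the speeded-up $a_\sigma$-localized slice spectral sequence for $i_{\Gp}^\ast\BPGmm{m}$ is exactly the homotopy spectral sequence of $Q_\bullet=\Phi^{\Gp}\tilde P_\bullet\BPGmm{m}$.

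Next I would identify the abutment and conclude. Using $\Phi^{\Gp}\BPG\simeq\HA$ together with $\Phi^{\Gp}N_{C_2}^G\bv_i^G=\xi_i$ and $\Phi^{\Gp}N_{C_2}^G\gamma\bv_i^G=\zeta_i$ as recalled before Corollary~\ref{cor:GPrimeBPGmm}, one gets $Q_0=\Phi^{\Gp}\BPGmm{m}\simeq A/(\xi_i,\zeta_i\mid i\neq m)$. Theorem~\ref{thm:adamsres} states precisely that $Q_\bullet$ is an $H\F_2$-Adams resolution of $Q_0$ in the category of $\HA$-modules; by definition, the homotopy spectral sequence of such a resolution is the $H\F_2$-based (Baker--Lazarev) Adams spectral sequence of $A/(\xi_i,\zeta_i\mid i\neq m)$ in $\HA$-modules, i.e. the relative Adams spectral sequence. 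Combining the two identifications yields the asserted isomorphism, and since every map in sight is a map of towers of $\HA$-modules the identification is compatible with all structure present (in particular the module and, where applicable, multiplicative structures).

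The only real obstacle is bookkeeping rather than mathematics: one must check that ``localize at $a_\sigma$, then take $\Gp$-fixed points'' applied to the slice covering tower produces literally the tower $\Phi^{\Gp}\tilde P_\bullet$ with the correct filtration degrees, and that the reindexing $\tilde P_t=P_{2t(2^m-1)}$ is the one that matches the Adams (i.e. homological) filtration convention used in Theorem~\ref{thm:adamsres}. Once this matching of indices is pinned down — which is exactly the reindexing statement accompanying Theorem~\ref{thm:adamsres} — the corollary is immediate.
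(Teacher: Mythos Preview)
Your proposal is correct and is essentially the same approach as the paper's: the corollary is stated there without proof because it is immediate from Theorem~\ref{thm:adamsres}, and you have simply spelled out that immediacy by identifying both spectral sequences with the homotopy spectral sequence of the single tower $Q_\bullet=\Phi^{\Gp}\tilde P_\bullet\BPGmm{m}$. The only additional ingredient you make explicit, which the paper leaves tacit, is the uniqueness of the Adams spectral sequence up to isomorphism (independent of the chosen Adams resolution), and that is standard.
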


The dictionary here can be a little confusing, due to the scaling in the slice filtration. We record the un-scaled version here:
\begin{remark}
    A relative Adams \(d_r\) corresponds to an ordinary \(a_\sigma\)-localized slice differential \(d_{2(2^m-1)r+1}\).
\end{remark}

\begin{corollary}\label{cor:redindex}
    The integer graded \(E_{2^{m+1}}\)-page of the \(\Gp\)-equivariant \(a_{\sigma}\)-localized slice spectral sequence of \(i_{\Gp}^*\BPGmm{m}\) computing \(\pi_*^{\Gp}a_{\sigma}^{-1}i_{\Gp}^*\BPGmm{m}\) is isomorphic to the \(E_2\)-page of the relative Adams spectral sequence of the spectrum \(\Phi^{\Gp}i^*_{\Gp}\BPGmm{m}\).
\end{corollary}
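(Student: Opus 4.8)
The plan is to reconcile the indexing of the speeded-up slice spectral sequence of Theorem~\ref{thm:adamsres} with that of the honest $a_\sigma$-localized slice spectral sequence; once Theorem~\ref{thm:adamsres} is in hand this is essentially bookkeeping. First I would identify the input pages. Since $P_t^t\BPGmm{m}$ vanishes unless $t=2i(2^m-1)$, summing over $t$ the equalities $a_\sigma^{-1}\pi_*^{\Gp}i_{\Gp}^*P_{2i(2^m-1)}^{2i(2^m-1)}\BPGmm{m}=\pi_*\Phi^{\Gp}\tilde P_i^i\BPGmm{m}$ identifies the $E_2$-page of the honest $a_\sigma$-localized slice spectral sequence of $i_{\Gp}^*\BPGmm{m}$ with the $E_1$-page of the relative Adams resolution $\Phi^{\Gp}\tilde P_\bullet\BPGmm{m}$, that is, with the relative cobar complex. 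Under this identification the honest differential $d_{2^{m+1}-1}$ is, by construction of the two towers, precisely the cobar differential $d_1$ of the relative Adams spectral sequence: both are induced by the connecting maps $\tilde P_i^i\to \Sigma\tilde P_{i+1}\to\Sigma\tilde P_{i+1}^{i+1}$ after applying $\Phi^{\Gp}$, which is the $r=1$ case of the dictionary ``relative Adams $d_r\leftrightarrow$ ordinary slice $d_{2(2^m-1)r+1}$'' recorded in the remark following Theorem~\ref{thm:adamsres}.

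Next I would check that no differential of length strictly less than $2^{m+1}-1$ occurs in the honest $a_\sigma$-localized slice spectral sequence. This follows by sparseness — by the previous paragraph the nonzero columns of its $E_2$-page lie in slice filtrations divisible by $2(2^m-1)=2^{m+1}-2$ — together with the form of the localized $E_2$-page and the $\Gp$-equivariant Slice Differentials Theorem of \cite{HHR}, exactly as in Section~\ref{sec:goefixG}: the first possible differential on the $a_\sigma$-inverted orientation class picks up an extra power of $a_\sigma$ and hence has length $2^{m+1}-1$ rather than $2^{m+1}-2$.

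Combining the two points, the honest spectral sequence satisfies $E_2=E_3=\cdots=E_{2^{m+1}-1}$, and $E_{2^{m+1}}$ is obtained from it by running the single differential $d_{2^{m+1}-1}=d_1^{\mathrm{Adams}}$; hence $E_{2^{m+1}}$ is the $d_1$-cohomology of the relative cobar complex, which is exactly the $E_2$-page of the relative Adams spectral sequence of $A/(\xi_i,\zeta_i:i\neq m)\simeq\Phi^{\Gp}i_{\Gp}^*\BPGmm{m}$. The only genuine, if routine, obstacle is the last-paragraph verification that the first nonzero differential has length exactly $2^{m+1}-1$ and agrees with the cobar differential; all of the conceptual content is already contained in Theorem~\ref{thm:adamsres} and the identification $\Phi^{\Gp}i_{\Gp}^*\BPGmm{m}\simeq A/(\xi_i,\zeta_i:i\neq m)$.
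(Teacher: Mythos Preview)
Your argument is correct and is essentially the paper's own approach: the corollary is an immediate consequence of Theorem~\ref{thm:adamsres} together with the reindexing dictionary ``Adams $d_r \leftrightarrow$ slice $d_{2(2^m-1)r+1}$'' recorded just before it, and the paper does not spell out more than that. One small remark: in your second paragraph, sparseness alone already forces $E_2=\cdots=E_{2^{m+1}-1}$, since a nonzero $d_r$ must change the slice filtration $t$ by $r-1$, and the nonzero filtrations are multiples of $2(2^m-1)$; you do not need to invoke the Slice Differentials Theorem or the form of the differential on the orientation class for this step.
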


\subsection{The Relative Adams spectral sequence for \texorpdfstring{\(\Phi^{\Gp}i^*_{\Gp}\BPGmm{m}\)}{G'-geometric fixed points}}\label{sec:compute}
By Corollary~\ref{cor:GPrimeBPGmm}, the \(G'\)-geometric fixed points of \(\BPGmm{m}\) are a suspension of an associative ring spectrum. Because of this, we instead work with the associative algebra, since then the spectral sequence will be one of associative algebras.

One of the most surprising results from \cite{BHLSZ} was a decomposition of \({A}\langle m\rangle\smashover{A}\overline{A}\langle m\rangle\), and hence of further quotients, into simpler, finite pieces. This makes our computations even easier.

\begin{definition}
    For each \(m\geq 1\), let
    \[
        \tilde{A}\langle m\rangle=A\langle m\rangle\smashover{A} \overline{M}_{\{m+1,\dots,2m\}}.
    \]
\end{definition}

\begin{proposition}[{\cite[Corollary 5.6]{BHLSZ}}]
    For each \(m\), we have a decomposition of \(A\)-modules
    \[
        A\langle m\rangle\smashover{A} \overline{A}\langle m\rangle\simeq \bigvee_{k\geq 0}\Sigma^{2^{2m+1}k}\tilde{A}\langle m\rangle.
    \]
\end{proposition}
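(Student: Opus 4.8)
The plan is to exhibit the decomposition explicitly at the level of $A$-modules, using the multiplicative structure of $A\langle m\rangle$ together with a periodicity class. First I would recall that $A\langle m\rangle = M_{\{m+1,m+2,\dots\}} = \bigwedge_{i>m} A/(\xi_i)$ is an associative $A$-algebra by \cite{BHLSZ}, and similarly $\overline{A}\langle m\rangle = \bigwedge_{i>m} A/(\zeta_i)$; hence the smash product $A\langle m\rangle \smashover{A} \overline{A}\langle m\rangle$ is an associative $A$-algebra, and $\tilde A\langle m\rangle = A\langle m\rangle \smashover{A} \overline M_{\{m+1,\dots,2m\}}$ is a module over it. The key observation is that inside $\pi_* \big(A\langle m\rangle \smashover{A}\overline A\langle m\rangle\big)$ there is a polynomial generator in degree $2^{2m+1}$: on the $A\langle m\rangle$-side the classes $\zeta_{m+1},\dots,\zeta_{2m}$ are already killed when we pass to $\tilde A\langle m\rangle$, but the remaining $\zeta_i$ for $i>2m$, when pushed through $A\to A\langle m\rangle$, satisfy a relation expressing $\zeta_{2m+1}$ (the first one whose ``leading term'' survives) as coming from $\xi_{m+1}^{2^m}\cdots$, producing a class $z$ with $|z| = 2\cdot(2^{2m+1}-1) - \dots$; more precisely one uses the Milnor coproduct/conjugation relation $\zeta_j = \sum \xi_i^{2^{j-i}}\bar\xi_{j-i}$ to see that modulo $(\xi_i : i>m)$ the element $\zeta_{2m+1}$ becomes a nonzero multiple of a $2^{2m+1}$-dimensional class.

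The main steps, in order, are: (1) identify $\overline A\langle m\rangle \simeq \overline M_{\{m+1,\dots,2m\}} \smashover{A} \overline M_{\{2m+1,2m+2,\dots\}}$ and analyze what happens to the second factor after smashing with $A\langle m\rangle$ over $A$; (2) show that $A\langle m\rangle \smashover{A} \overline M_{\{2m+1,2m+2,\dots\}}$ splits as a free module over a polynomial algebra $\F_2[z]$ with $|z| = 2^{2m+1}$, by producing the class $z$ from the conjugation relation above and checking it acts freely — this is where one invokes the relative Adams spectral sequence of \cite{BHLSZ} and the fact that in the range of the relevant generators the only relations are the expected ones; (3) assemble: write
\[
A\langle m\rangle \smashover{A}\overline A\langle m\rangle \simeq \big(A\langle m\rangle \smashover{A}\overline M_{\{m+1,\dots,2m\}}\big)\smashover{A\langle m\rangle}\big(A\langle m\rangle \smashover{A}\overline M_{\{2m+1,\dots\}}\big) \simeq \tilde A\langle m\rangle \smashover{A\langle m\rangle} \bigvee_{k\geq 0}\Sigma^{2^{2m+1}k} A\langle m\rangle,
\]
and simplify the right-hand side to $\bigvee_{k\geq 0}\Sigma^{2^{2m+1}k}\tilde A\langle m\rangle$; (4) verify this equivalence is $A$-linear (not merely an abstract isomorphism of homotopy groups) by checking that the splitting class $z$ lifts to an $A$-module map $\Sigma^{2^{2m+1}}\tilde A\langle m\rangle \to A\langle m\rangle \smashover{A}\overline A\langle m\rangle$, which is automatic once $z$ is realized as an element of the relevant homotopy Mackey-free module.

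The hard part will be step (2): one must show that after smashing $\overline M_{\{2m+1,2m+2,\dots\}}$ with $A\langle m\rangle$ over $A$, the infinitely many cells collapse onto a single $\F_2[z]$-free pattern rather than something more complicated. The subtlety is that $\overline M_{\{2m+1,\dots\}}$ is an infinite smash product and the Tor/Adams-$E_2$ computation $\Tor^{A_*}(\F_2, \dots)$ must be shown to collapse; the mechanism is that the generators $\zeta_{2m+1}, \zeta_{2m+2}, \dots$, modulo $(\xi_i : i > m)$, are each congruent to $\xi_{m+1}^{2^{?}}$-type expressions that are \emph{already} invertible-up-to-the-periodicity-class, so the module is not genuinely infinite-dimensional transverse to $z$. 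I expect this to follow cleanly from the explicit description of $\pi_*\tilde A\langle m\rangle$ computed in \cite{BHLSZ} (it is finite, of total dimension $2^{?}$), combined with a counting/Poincaré-series argument: the Poincaré series of $A\langle m\rangle\smashover{A}\overline A\langle m\rangle$ equals that of $\tilde A\langle m\rangle$ times $(1-t^{2^{2m+1}})^{-1}$, and once the splitting class $z$ is in hand the free-ness is forced by this numerical coincidence together with the module structure.
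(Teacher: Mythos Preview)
The paper does not give a proof of this proposition: it is stated with a citation to \cite[Corollary~5.6]{BHLSZ} and used as a black box. There is therefore nothing in the present paper to compare your argument against.

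That said, a few remarks on your sketch. Your overall architecture---split off $\overline{M}_{\{2m+1,2m+2,\dots\}}$, find a periodicity element $z$ of degree $2^{2m+1}$, and conclude by a Poincar\'e-series count---is the natural shape such an argument would take. But the places where you write ``$\dots$'' or ``$2^{?}$'' are exactly where the content lies. In particular, your identification of the periodicity class is not right as stated: $|\zeta_{2m+1}| = 2^{2m+1}-1$, which is odd, so $z$ cannot simply be $\zeta_{2m+1}$ or a monomial in the $\zeta_i$ alone. The actual mechanism in \cite{BHLSZ} uses the recursion $\zeta_n = \sum_{i=1}^{m} \xi_i^{2^{n-i}} \zeta_{n-i}$ valid in $\pi_*A\langle m\rangle$ (since $\xi_i=0$ for $i>m$ there) to show that the sequence $(\zeta_{2m+1},\zeta_{2m+2},\dots)$ is redundant modulo $(\zeta_{m+1},\dots,\zeta_{2m})$ in a controlled way; the periodicity is then extracted from this redundancy rather than from a single named element. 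Your step (2) as written does not yet isolate that mechanism, and the Poincar\'e-series argument you propose at the end would need the finiteness of $\pi_*\tilde{A}\langle m\rangle$ as input, which is itself one of the nontrivial computations in \cite{BHLSZ}. If you want to reconstruct the proof rather than cite it, you should consult \cite{BHLSZ} directly for the precise formulation of the periodicity and the splitting map.
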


\begin{proposition}[{\cite[Theorem 5.9]{BHLSZ}}]
    There is an associative algebra structure on \(\tilde{A}\langle m\rangle\) such that the projection map
    \[
        A\langle m\rangle\smashover{A} \overline{A}\langle m\rangle\to \tilde{A}\langle m\rangle
    \]
    is a map of associative algebras.
\end{proposition}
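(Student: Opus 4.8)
The plan is to produce the algebra structure on \(\tilde{A}\langle m\rangle\) by exhibiting it as the quotient of the associative algebra \(A\langle m\rangle\smashover{A}\overline{A}\langle m\rangle\) by a central polynomial generator, and then to read off the multiplicativity of the projection from the base-change formalism. Since \(A=H\F_2\wedge H\F_2\) is an \(E_\infty\)-ring (a smash product of \(E_\infty\)-rings) and \(A\langle m\rangle\), \(\overline{A}\langle m\rangle\) are associative \(A\)-algebras by \cite{BHLSZ}, the relative smash product \(B:=A\langle m\rangle\smashover{A}\overline{A}\langle m\rangle\) is an associative \(A\)-algebra, with the inclusions of \(A\langle m\rangle\) and of \(\overline{A}\langle m\rangle\) maps of \(A\)-algebras (with commuting images). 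Under the decomposition \(B\simeq\bigvee_{k\geq 0}\Sigma^{2^{2m+1}k}\tilde{A}\langle m\rangle\) of \cite[Corollary~5.6]{BHLSZ}, the projection \(B\to\tilde{A}\langle m\rangle\) whose multiplicativity we must establish is the retraction onto the \(k=0\) summand.

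The first step is to locate the polynomial generator. Writing \(\overline{A}\langle m\rangle\simeq\overline{M}_{\{m+1,\dots,2m\}}\smashover{A}\overline{M}_{\{2m+1,2m+2,\dots\}}\) gives \(B\simeq\tilde{A}\langle m\rangle\smashover{A}\overline{M}_{\{2m+1,2m+2,\dots\}}\), the last factor being the iterated Koszul quotient \(A/(\zeta_{2m+1})\smashover{A}A/(\zeta_{2m+2})\smashover{A}\cdots\). The antipode relations in the dual Steenrod algebra, \(\zeta_n\equiv\sum_{i=n-m}^{n-1}\xi_{n-i}^{2^{i}}\zeta_i\) modulo \((\xi_j\mid j>m)\) for \(n>m\), show that for every \(n>2m\) the image of \(\zeta_n\) in \(\pi_*\tilde{A}\langle m\rangle=\pi_*\big(A\langle m\rangle\smashover{A}\overline{M}_{\{m+1,\dots,2m\}}\big)\) lies in the ideal \((\zeta_{m+1},\dots,\zeta_{2m})\) and is therefore zero; an induction then shows each \(\zeta_n\) with \(n>2m\) acts nullhomotopically on every partial smash product built so far. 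Hence the first factor \(A/(\zeta_{2m+1})\) splits off a class \(w\in\pi_{2^{2m+1}}B\), and I would argue that \(B\) is free as a left \(\mathbb{S}^0[w]\)-module on a copy of \(\tilde{A}\langle m\rangle\) — equivalently, that the successive Koszul cells contribute exactly the powers of \(w\). Since \(B\) is a module over the \(H\F_2\)-algebra \(A\) it is a wedge of suspensions of \(H\F_2\), hence formal, so this is an algebraic statement about multiplicative extensions in the K\"unneth spectral sequence computing \(\pi_*B\); the Poincar\'e-series identity \(\prod_{n>2m}(1+t^{2^n})=(1-t^{2^{2m+1}})^{-1}\) is its numerical shadow.

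Granting that \(B\) is free over a central copy of \(\mathbb{S}^0[w]=\bigvee_{i\geq 0}S^{2^{2m+1}i}\) — the free \(E_\infty\)-ring on a class of even degree \(2^{2m+1}\), so base change along its augmentation \(\mathbb{S}^0[w]\to\mathbb{S}^0\) is well behaved — I would form \(B\smashover{\mathbb{S}^0[w]}\mathbb{S}^0\). This is an associative \(A\)-algebra, being the base change of the associative \(\mathbb{S}^0[w]\)-algebra \(B\) along a map of commutative ring spectra, and the unit \(B\to B\smashover{\mathbb{S}^0[w]}\mathbb{S}^0\) is a map of associative \(A\)-algebras. Freeness identifies the underlying \(A\)-module of \(B\smashover{\mathbb{S}^0[w]}\mathbb{S}^0\) with \(\tilde{A}\langle m\rangle\), transporting an associative structure onto \(\tilde{A}\langle m\rangle\), and a short diagram chase identifies this unit with the retraction of \cite[Corollary~5.6]{BHLSZ}, i.e.\ with the asserted projection; this finishes the argument.

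The main obstacle is the second step: constructing the central \(w\) and upgrading the \(A\)-module splitting to a splitting of \(B\) as a free \(\mathbb{S}^0[w]\)-module. One must control how the nullhomotopies of the redundant conjugates \(\zeta_n\), \(n>2m\), fit together — that the secondary operations attached to the successive Koszul cells \(A/(\zeta_n)\) conspire so that no extra free generators appear and the \(w\)-action is compatible with the multiplication on \(B\), not merely with its additive structure. Everything else (the \(E_\infty\)-ness of \(A\) and of \(\mathbb{S}^0[w]\), base change of associative algebras along a commutative ring map, and the formality of \(A\)-modules) is formal.
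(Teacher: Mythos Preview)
The paper does not prove this proposition; it is quoted from \cite[Theorem~5.9]{BHLSZ} without argument. So there is no in-paper proof to compare against, and I can only assess your sketch on its own terms.

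Your strategy --- realize \(\tilde{A}\langle m\rangle\) as the quotient of \(B=A\langle m\rangle\smashover{A}\overline{A}\langle m\rangle\) by a central polynomial generator and transport the algebra structure via base change --- is plausible in outline, but there is a genuine gap beyond the one you flag. You assert that \(\mathbb{S}^0[w]=\bigvee_{i\geq 0}S^{2^{2m+1}i}\) is ``the free \(E_\infty\)-ring on a class of even degree \(2^{2m+1}\)''. This is false: the free \(E_\infty\)-ring on a class \(w\) of degree \(2k\) has in weight \(n\) the homotopy orbits \((S^{2kn})_{h\Sigma_n}\), which for \(n\geq 2\) are Thom spectra over \(B\Sigma_n\), not spheres. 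The wedge \(\bigvee_n S^{2kn}\) is only the free \(E_1\)-ring on \(w\). For the base change \(B\smashover{\mathbb{S}^0[w]}\mathbb{S}^0\) to inherit an associative multiplication you need the category of \(\mathbb{S}^0[w]\)-modules to be monoidal, which requires at least an \(E_2\)-refinement of \(\mathbb{S}^0[w]\); and even granting one, you would further need the map \(\mathbb{S}^0[w]\to B\) to be a map of \(E_2\)-rings (or to factor through the \(E_2\)-center of \(B\)), which is strictly more than centrality of \(w\) in \(\pi_*B\). None of this is supplied, and working in \(A\)-algebras rather than \(\mathbb{S}\)-algebras does not help: the free \(E_\infty\)-\(A\)-algebra on an even class is likewise not polynomial, because Dyer--Lashof operations are present over \(H\F_2\).

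Your acknowledged obstacle is also more serious than ``an algebraic statement about multiplicative extensions''. Formality of \(A\)-modules (each is a wedge of suspensions of \(H\F_2\)) does not propagate to \(A\)-algebras: the \(\infty\)-category of \(A\)-modules is not a \(1\)-category, since for instance \(\pi_*(H\F_2\smashover{A}H\F_2)=\Tor^{A_*}_*(\F_2,\F_2)\) is an exterior algebra on infinitely many classes rather than \(\F_2\) concentrated in degree zero. So verifying freeness of \(B\) over \(\mathbb{S}^0[w]\) at the level of homotopy groups, while necessary, does not by itself control the higher coherences needed for \(w\) to act centrally as an \(E_1\)- (let alone \(E_2\)-) map.
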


This reduces the study of modules of the form
\[
    M\smashover{A} A\langle m\rangle\smashover{A} \overline{A}\langle m\rangle
\]
to the study of \(\tilde{A}\langle m\rangle\)-modules of the form
\[
    M\smashover{A} \tilde{A}\langle m\rangle.
\]

We apply this in the case of \(M=R_m\), where we again have an associative algebra structure.
\begin{definition}
    Let 
    \[
        R\langle m\rangle=R_m\smashover{A} \tilde{A}\langle m\rangle.
    \]
\end{definition}

By \cite{BHLSZ}, the \(E_2\)-page of the relative Adams spectral sequence of  \(R\langle m \rangle\) is given by
\[
\F_2[\xi_1, \ldots, \xi_m,e]/e^{2^m} \otimes E(\beta_{\Neg2}, \beta_{\Neg4}, \ldots, \beta_{\Neg 2^{m-1}})/\big(\xi_i + \xi_{i+1}\beta_{\Neg 2^i} \mid 1 \leq i \leq m-1\big),
\]
where the bidegrees are given by: 
\begin{align*}
    |e|&=(2^{m+1},0)\\
    |\xi_m|&=(2^m-1,1)\\
    |\beta_{\Neg k}|&=(-k,0).
\end{align*}
Since for \(1\leq i<m\), we have the relation
\[
    \xi_i=\xi_{i+1}\beta_{\Neg2^{i}},
\]
this simplifies as an algebra to 
\[
\F_2[\xi_m] \otimes E(\beta_{\Neg2}, \beta_{\Neg4}, \ldots, \beta_{\Neg 2^{m-1}}) \otimes \F_2[e_{2^{m+1}}]/(e_{2^{m+1}}^{2^m}).
\]

\begin{notation}
We will use the following convenient notation:
\[
\beta(2\epsilon_{1} + 4 \epsilon_2+\ldots + 2^{m-1}\epsilon_{m-1}):=
\beta_{\Neg 2}^{\epsilon_1}\beta_{\Neg 4}^{\epsilon_{2}} \cdots \beta_{\Neg 2^{m-1}}^{\epsilon_{m-1}} \ .
\]
We get elements,
\[
\beta(0), \beta(2), \ldots, \beta(2^{m}-2)
\]
where \(\beta(\ell)\) has degree \(-\ell\). Note that, as element on the \(E_2\)-page for \(R\langle m\rangle\), for \(0\leq \ell\leq \ell'\)
\begin{align}\label{eq:multells}
\beta(\ell) \beta(\ell') =\begin{cases}
\beta(\ell') & \ell=0 \\
\left(\binom{\ell'}{\ell}-1\right)\beta(\ell+\ell')  & \ell>0 \ .
\end{cases}  \end{align}
\end{notation}

In \cite{BHLSZ}, we determined a number of differentials in these kinds of relative Adams spectral sequences.
\begin{proposition}[{\cite[Corollary~7.5]{BHLSZ}}]
    In the relative Adams spectral sequence for \(\widetilde{\HA}\langle m \rangle\), for each \(1 \leq i \leq m\) and \(n \geq 0\), we have differentials
    \[
        d_{1+2^{i+1}}(e^{2^{i} + 2^{i+1} n})= \xi_m^{2^{i+1}}\xi_{i+1} \cdot e^{2^{i+1}n}.
    \]
\end{proposition}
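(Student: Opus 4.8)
The relative Adams spectral sequence for $\widetilde{\HA}\langle m\rangle$ is a spectral sequence of $A$-modules in the sense of Baker--Lazarev \cite{BakerLazarev}, and -- since $\widetilde{\HA}\langle m\rangle$ is an associative algebra by \cite[Theorem~5.9]{BHLSZ} -- of algebras, with $E_2$-page the bigraded ring computed in \cite{BHLSZ}, in which $e$ is a polynomial class of bidegree $(2^{m+1},0)$. The plan is: first reduce, using multiplicativity, to a single base differential $d_{1+2^{i+1}}(e^{2^i})=\xi_m^{2^{i+1}}\xi_{i+1}$ in each of the $m$ families; then establish those by combining strong convergence with a combinatorial analysis of the $E_2$-page; and finally cross-check the lower families against the slice spectral sequence through Theorem~\ref{thm:adamsres}.

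The reduction is a $2$-adic valuation bookkeeping entirely parallel to the $A(J)$-argument in the proof of Theorem~\ref{prop:geofixall}. For fixed $i$, the class $e^{2^i}$ survives to $E_{1+2^{i+1}}$: it lies in Adams filtration $0$, so it is never hit, and since $2^i$ is not of the form $2^{j-1}+2^j n$ with $j\le i$, the grid of potential targets shows it supports no differential shorter than $d_{1+2^{i+1}}$. Similarly each monomial $e^{2^{i+1}n}$ is a $d_{1+2^{i+1}}$-cycle, its first nontrivial differential being $d_{1+2^{j+1}}$ for some $j\ge i+1$, hence strictly longer. Multiplying a base differential through by these cycles then yields $d_{1+2^{i+1}}(e^{2^i+2^{i+1}n})=\xi_m^{2^{i+1}}\xi_{i+1}e^{2^{i+1}n}$, so everything reduces to the $m$ base differentials.

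To establish those I would argue that each is forced. Nontriviality: $\pi_*\widetilde{\HA}\langle m\rangle$ is obtained from $\mathcal{A}_*=\pi_*A$ by the finite, explicit sequence of cofibre sequences that kills $\xi_i$ for $i>m$ and $\zeta_j$ for $m<j\le 2m$, so it can be computed independently of the spectral sequence; a surviving $e^{2^i}$ would leave the $E_\infty$-page strictly too large, forcing $e^{2^i}$ to support a differential, and by the grid argument above $d_{1+2^{i+1}}$ is the shortest -- hence, for bidegree reasons, the only -- possibility, with target in the one-dimensional slot containing $\xi_m^{2^{i+1}}\xi_{i+1}$. What takes the real work, and is in my estimation the main obstacle, is pinning down that the target is exactly $\xi_m^{2^{i+1}}\xi_{i+1}$ rather than a neighbouring $E_{1+2^{i+1}}$-class of the same bidegree, and confirming that the lower families assemble into precisely the stated normal form: this requires controlling how monomials in the $\xi$'s and the $\beta$'s propagate under the relations $\xi_i=\xi_{i+1}\beta_{\Neg 2^i}$, most cleanly by an induction on $m$ with comparison maps among the $A$-module smash products $A/(\xi_i)$, $A/(\zeta_j)$, and a base case $d_5(e^2)=\xi_1^4\xi_2$ for $\widetilde{\HA}\langle 1\rangle=A/(\xi_2,\xi_3,\dots,\zeta_2)$ traceable to the Milnor conjugation relation $\zeta_2=\xi_2+\xi_1^3$.

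As a partial consistency check, Theorem~\ref{thm:adamsres} and Corollary~\ref{cor:redindex} identify the families with $i<m$ (and the portion of the $i=m$ family below $e^{2^m}$) with differentials in the speeded-up $a_\sigma$-localized slice spectral sequence of $i_{\Gp}^*\BPGmm{m}$, which in turn come from the Slice Differentials Theorem of \cite{HHR} applied to the two $\BP^{((\Gp))}$-halves of $i_{\Gp}^*\BPG$ together with naturality along the quotient, exactly as in the proof of Theorem~\ref{prop:geofixall}; but this check stops short of the very top of the last family -- where the auxiliary spectrum $R\langle m\rangle$ already has $e^{2^m}=0$ -- so the convergence argument genuinely cannot be bypassed there.
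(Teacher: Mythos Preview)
The paper does not prove this proposition at all: it is quoted verbatim as \cite[Corollary~7.5]{BHLSZ} and used as a black box to import differentials into the spectral sequence for $R\langle m\rangle$ via the $\widetilde{A}\langle m\rangle$-algebra structure. So there is no ``paper's own proof'' to compare against; whatever argument exists lives in the cited reference.

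That said, your sketch has a structural issue worth flagging. Your consistency check via Theorem~\ref{thm:adamsres} goes in the wrong direction. That theorem identifies the speeded-up $a_\sigma$-localized slice spectral sequence with the relative Adams spectral sequence for $\Phi^{\Gp}\BPGmm{m}$, which by Corollary~\ref{cor:GPrimeBPGmm} and the decomposition is built from $R\langle m\rangle=R_m\smashover{A}\widetilde{A}\langle m\rangle$, not from $\widetilde{A}\langle m\rangle$ itself. The map of $A$-algebras runs $\widetilde{A}\langle m\rangle\to R\langle m\rangle$ (the unit), so naturality lets you push differentials \emph{from} $\widetilde{A}\langle m\rangle$ \emph{to} $R\langle m\rangle$---which is exactly how the paper uses the proposition---but not the other way without an additional injectivity argument on the relevant $E_r$-pages. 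Your slice-differential cross-check therefore verifies the images of the claimed differentials in $R\langle m\rangle$, not the differentials in $\widetilde{A}\langle m\rangle$ themselves.

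Beyond that, your outline is more a plan than a proof: the key step---showing that the target of $d_{1+2^{i+1}}(e^{2^i})$ is exactly $\xi_m^{2^{i+1}}\xi_{i+1}$ and not some other class of the same bidegree---is precisely what you identify as ``the main obstacle'' and then defer to an unspecified induction on $m$. That is the actual content of the result in \cite{BHLSZ}, and you have not supplied it.
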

The spectrum \(R\langle m \rangle\) is an \(\widetilde{A}\langle m \rangle\)-algebra.  Therefore, by naturality and the relations on the \(E_2\)-page, in the relative Adams spectral sequence of \(R\langle m \rangle\) there are differentials 
\begin{align}\label{eq:keydiff}
d_{1+2^{i+1}}(e^{2^{i} + 2^{i+1} n})&= \xi_m^{1+2^{i+1}} e^{2^{i+1} n}
\beta(2^m-2^{i+1}) 
\end{align}
for \( 0 \leq i \leq m-1\) and \(0 \leq n < 2^{m-(i+1)}\), provided that the target survives to the \(E_{1+2^{i+1}}\)-page.  We will see that all other differentials will be determined by these and the multiplicative structure of the spectral sequence. 

We start with two useful lemmas.
\begin{lemma}\label{lem:diffkind}
If \(d_{1+2r}(\beta(\ell)e^k)\) is non-zero, then 
\[d_{1+2r}(\beta(\ell)e^k)= \xi_m^{1+2r} \beta(\ell+2^m-2r)e^{k-r}\]
for some \(1\leq r \leq k\).
\end{lemma}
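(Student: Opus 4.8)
The plan is to determine the possible shape of a nonzero differential on $\beta(\ell)e^k$ purely from the tridegree bookkeeping, using the fact that the $E_2$-page of the relative Adams spectral sequence for $R\langle m\rangle$ is the algebra
\[
\F_2[\xi_m] \otimes E(\beta_{\Neg 2}, \beta_{\Neg 4}, \ldots, \beta_{\Neg 2^{m-1}}) \otimes \F_2[e]/(e^{2^m}),
\]
with $|\xi_m|=(2^m-1,1)$, $|\beta_{\Neg k}|=(-k,0)$, and $|e|=(2^{m+1},0)$. First I would record that a $d_{1+2r}$ raises Adams filtration by $1+2r$ and lowers internal degree by $1$; so if $d_{1+2r}(\beta(\ell)e^k)=z$, then $z$ lies in filtration $1+2r$ (since $\beta(\ell)e^k$ has filtration $0$), and a general filtration-$(1+2r)$ monomial is $\xi_m^{1+2r}\beta(\ell')e^{k'}$ for some admissible $\ell'$ and $0\le k'<2^m$ — note the exponent of $\xi_m$ is forced to be exactly $1+2r$ because that is the \emph{only} source of filtration. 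Then I would match internal degrees: $|\beta(\ell)e^k| - 1 = |z|$ gives
\[
-\ell + 2^{m+1}k - 1 = (1+2r)(2^m-1) - \ell' + 2^{m+1}k',
\]
i.e. $2^{m+1}(k-k') = (1+2r)(2^m-1) + (\ell - \ell') + 1$.

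The next step is to pin down $k'$ and $\ell'$ from this single equation together with the constraints $0\le \ell,\ell' \le 2^m-2$ (both are sums of distinct powers of two from $2$ to $2^{m-1}$, hence even and $<2^m$) and $0\le k' < 2^m$. Reducing mod $2^{m+1}$: the right-hand side is $(1+2r)(2^m-1)+1+(\ell-\ell')$. Using $2^m-1 \equiv -1 \pmod{2^{m+1}}$ only after care — actually $2^m-1 \equiv 2^m-1$, so $(1+2r)(2^m-1) \equiv (2^m-1) + 2r(2^m-1) \equiv (2^m-1) + r\cdot 2^{m+1} - 2r \equiv 2^m - 1 - 2r \pmod{2^{m+1}}$. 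Hence the right side is $\equiv 2^m - 2r + (\ell-\ell') \pmod{2^{m+1}}$, and since $|\ell - \ell'| < 2^m$ and $2^m - 2r$ ranges over a controlled window, the congruence $2^m - 2r + \ell - \ell' \equiv 0 \pmod{2^{m+1}}$ forces $\ell' = \ell + 2^m - 2r$ exactly (this is the unique representative in the allowed range, and one checks this also forces $1\le r\le 2^{m-1}$ so that $\ell'$ is a legitimate even number $<2^m$; if $\ell'$ would fall outside $[0,2^m-2]$ the target is zero, consistent with the hypothesis that the differential is nonzero). Substituting back gives $2^{m+1}(k-k') = (1+2r)(2^m-1)+1+(\ell-\ell') = (1+2r)(2^m-1)+1 - (2^m - 2r) = 2^{m+1}r - \ldots$; carrying out this arithmetic yields $k - k' = r$, i.e. $k' = k - r$. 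Since $k' \ge 0$ we get $r \le k$, which gives the stated bound $1\le r\le k$.

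Finally I would note that there is nothing more to prove: the bookkeeping shows the \emph{only} monomial that can possibly be the target of $d_{1+2r}(\beta(\ell)e^k)$ is $\xi_m^{1+2r}\beta(\ell+2^m-2r)e^{k-r}$, so if the differential is nonzero it must equal this (working over $\F_2$ there is no coefficient ambiguity, and there is no room for a sum of monomials since all of $\ell'$, $k'$ were uniquely determined). The main obstacle here is purely the modular arithmetic: making sure the congruence genuinely has a \emph{unique} solution in the admissible range for $(\ell', k')$, which is where the specific degrees $2^m-1$ for $\xi_m$ and $2^{m+1}$ for $e$ — and the truncation $e^{2^m}=0$ bounding $k'$ — are all used. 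Once that uniqueness is in hand, the lemma is immediate; I expect no homotopy-theoretic input beyond the description of the $E_2$-page already quoted from \cite{BHLSZ}.
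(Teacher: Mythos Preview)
Your approach is correct and is essentially the same as the paper's: both arguments write the target as $\xi_m^{1+2r}\beta(\ell')e^{k'}$ (the exponent on $\xi_m$ being forced by filtration), set up the internal-degree equation, and then use the bounds $0\le \ell,\ell'\le 2^m-2$ together with divisibility by $2^{m+1}$ to pin down $\ell'$ and $k'$ uniquely. The only cosmetic difference is that the paper rearranges the equation to $-2^m+(\ell'-\ell)+2r=2^{m+1}(r-s)$ and bounds the left side in absolute value by $2^{m+1}$ directly, whereas you reduce modulo $2^{m+1}$; these are equivalent. One small correction: your parenthetical ``this also forces $1\le r\le 2^{m-1}$'' overstates the bound---what the argument actually gives (via $\ell'\ge 0$ and $k'\ge 0$) is $r\le k$ and $r\le (\ell+2^m)/2$, not $r\le 2^{m-1}$ in general---but this does not affect the conclusion.
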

\begin{proof}
Let
\[d_{1+2r}(\beta(\ell)e^k) =\xi_m^{1+2r} \beta(\ell')e^{k'} .\]
Then \(k'\leq k\) so we let \(s\) be a number such that \(k'=k-s\). Note that \(0\leq s<2^m\) and \( 0< r<2^m\).
Computing degrees, we obtain the equation
\begin{align}\label{eq:sort}
 2^{m+1}k - \ell -1 = (2^m-1)(1+2r)+2^{m+1}(k-s) -\ell'
\end{align}
This simplifies to
\[-2^m+(\ell'-\ell) + 2r=2^{m+1}(r-s).\]
Since \(0\leq \ell, \ell' \leq 2^m-2\), we have
\[ 2-2^m\leq \ell'-\ell \leq 2^m-2.\]
Furthermore, since \(0 < r < 2^m\), \(0 < 2r < 2^{m+1}\).  Therefore the absolute value of \(-2^m + (\ell' - \ell) + 2r\) is less than \(2^{m+1}\).  The equation above implies that this quantity is divisible by \(2^{m+1}\).  This implies that both sides of the equation must be zero.  It follows that \(r = s\) and \(\ell' = \ell + 2^m - 2r\). 
\end{proof}

The next lemma is a straightforward but annoying exercise analyzing  inequalities and we do not include the proof here.
\begin{lemma}\label{lem:pairs}
Consider pairs \((\ell, k)\), where \(\ell\) is even and \(0\leq \ell, k \leq 2^m-1\). Define 
subsets of such pairs by
\[
S =\{(\ell, k): k\leq \ell\}, \quad \text{and} \quad S' =\{(\ell, k): \ell< k\} 
\]
as follows. Let \(k\leq \ell\). Set
\begin{align*} 
  j &= \mathrm{max}\left\{0\leq r\leq m-1 :  \binom{\ell}{2^r} \equiv 0 \mod 2\right\}, \\
 i &= \mathrm{min}\left\{j\leq r\leq m-1 :  \binom{k}{2^r} \equiv 0 \mod 2\right\} .
\end{align*}
Then letting 
\[
\phi(\ell, k)=(\ell-(2^m-2^{i+1}), k +2^i)
\]
gives a bijection \(\phi\colon S\to S'\).
\end{lemma}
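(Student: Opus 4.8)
The plan is to translate the whole statement into binary expansions and then dispatch it as a finite combinatorial exercise. Write $\ell=\sum_{r=0}^{m-1}\ell_r2^r$ and $k=\sum_{r=0}^{m-1}k_r2^r$. By Lucas's theorem $\binom{a}{2^r}$ is odd precisely when $a_r=1$, so $j$ is the position of the \emph{most significant} vanishing digit of $\ell$ among $0,\dots,m-1$ (which exists because $\ell$ even forces $\ell_0=0$), and $i$ is the position of the \emph{least significant} vanishing digit of $k$ among positions $\ge j$. The first thing I would check is that this $i$ exists whenever $(\ell,k)\in S$: if $k_j=\dots=k_{m-1}=1$ then $k\ge 2^m-2^j$, while $\ell_j=0$ together with $\ell_{j+1}=\dots=\ell_{m-1}=1$ forces $\ell<2^m-2^j$, contradicting $k\le\ell$.

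Next I would verify that $\phi(\ell,k)=(\ell',k')$ actually lands in $S'$. Since $i\ge j$, the digits $\ell_{i+1},\dots,\ell_{m-1}$ are all $1$, so $\ell'=\ell-(2^m-2^{i+1})$ is simply $\ell$ with those digits cleared; thus $\ell'=\ell\bmod 2^{i+1}$ is even, satisfies $0\le\ell'<2^{i+1}\le 2^m$, and $\ell'\le\ell$. Since $k_i=0$, forming $k'=k+2^i$ sets digit $i$ of $k$ with no carry, so $0\le k'<2^m$. For the inequality $\ell'<k'$ I would split into the case where $k$ has a nonzero digit above position $i$ — then $k'\ge 2^{i+1}>\ell'$ — and the case $k<2^{i+1}$, where a direct digit comparison does it: by minimality of $i$ the digits of $k'$ in positions $j,\dots,i$ are all $1$, whereas $\ell'$ has digits $1$ in positions $j+1,\dots,i$ but $\ell'_j=0$, so $\ell'$ and $k'$ agree above position $j$ and $k'$ strictly dominates at position $j$.

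The core of the argument is bijectivity, and I would establish it by a counting shortcut: the elementary identities
\[
|S|=\sum_{t=0}^{2^{m-1}-1}(2t+1)=2^{2m-2}=\sum_{t=0}^{2^{m-1}-1}(2^m-1-2t)=|S'|
\]
reduce the claim to showing $\phi$ is injective (equivalently, surjective). For injectivity I would recover the distinguished index $i$ intrinsically from the target $(\ell',k')$ — it is the leading (most significant) digit position of $\ell'$ in the generic case $i>j$, while in the degenerate case $i=j$ one has $\ell'<2^{j}$ and $i$ must instead be located from $k'$ — then read off $j$ as the most significant vanishing digit of $\ell'$ in positions $\le i$, and invert by $\ell=\ell'+(2^m-2^{i+1})$, $k=k'-2^i$; one then checks this $(\ell,k)$ lies in $S$ and reproduces the given $i$ and $j$. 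Equivalently, one may simply write down this inverse $\psi\colon S'\to S$ and verify $\psi\phi=\mathrm{id}$ and $\phi\psi=\mathrm{id}$ directly.

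The step I expect to be the real obstacle is exactly this intrinsic recovery of $i$: it forces a careful case split between $i=j$ (the regime $\ell'<2^j$, where $\ell'$'s leading digit lies strictly below $i$) and $i>j$ (where $\ell'$'s leading digit sits at $i$), and it requires keeping precise track of the no-carry behaviour of $k\mapsto k+2^i$ and the digit-clearing behaviour of $\ell\mapsto\ell\bmod 2^{i+1}$. Everything else is routine inequality-chasing among binary digits, which is why the lemma is best handled as a bookkeeping exercise rather than by any slicker structural argument.
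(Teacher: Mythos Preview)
The paper does not actually prove this lemma: it is stated with the remark that it ``is a straightforward but annoying exercise analyzing inequalities and we do not include the proof here,'' followed only by a description of the inverse map in the subsequent Remark. So there is no proof in the paper to compare your argument against.

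That said, your approach is sound and in fact does more than the paper. The translation via Lucas's theorem into binary digits is the right move, and your verification that $\phi$ lands in $S'$ is correct in both cases. The counting identity $|S|=|S'|=2^{2m-2}$ is a genuine simplification: it reduces the bijection to injectivity alone, which the paper's Remark (giving an explicit $\phi^{-1}$) does not exploit. Your identification of the intrinsic recovery of $i$ from $(\ell',k')$ as the crux, with the necessary case split $i=j$ versus $i>j$, matches exactly where the annoyance lies; the paper's Remark encodes the same recovery via the condition $\kappa-2^i\le\ell'<\kappa$ with $\kappa=k'\bmod 2^{i+1}$ and $k'_i=1$, which you could use as the explicit inverse if you prefer to avoid the case analysis. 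Either route completes the argument; yours is a legitimate proof outline that only needs the injectivity step written out in full to be complete.
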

\begin{remark}
If \(\ell <k\) and we fix \(i\geq 0\), then for \(2^i\leq \kappa <2^{i+1}\) and  \(\kappa-2^i\leq \ell < \kappa\), if \(k\) can be written in the form \(k=\kappa + 2^{i+1}n\), then \(\phi^{-1}(\ell, k)=(\ell + 2^m-2^{i+1},k-2^{i})\). This formulation of the above bijection will be useful for proving the next result.
 \end{remark}
 
\begin{theorem}\label{thm:diffs}
In the relative Adams spectral sequence of \(R\langle m\rangle\),  for 
\[
0\leq k, \ell\leq 2^{m}-1
\] 
with \(\ell\) even, we have the following:
\begin{enumerate}
 \item the class \(\beta( \ell)e^k\)  is a permanent cycle if and only if 
 \(  k\leq \ell\);
 \item if \(\ell<k\), the class \(\beta(\ell)e^k\) supports a non-trivial differential, determined as follows. Fix \(i\geq 0\). For \(2^i\leq \kappa <2^{i+1}\) and  \(\kappa-2^i\leq \ell < \kappa\), if \(k=\kappa + 2^{i+1}n\), then there is a differential
\[
d_{1+2^{ i+1}}(\beta(\ell) e^{k})=\xi_m^{1+2^{ i+1}}\beta(\ell + 2^m-2^{i+1})e^{k-2^{i}} \ .
\]
These are the only non-trivial differentials.
\end{enumerate}
\end{theorem}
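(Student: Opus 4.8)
The plan is to prove both parts of the theorem simultaneously by an induction on the length of the differential, using three structural facts: the relative Adams spectral sequence of $R\langle m\rangle$ is multiplicative (since $R\langle m\rangle$ is an associative algebra), Lemma~\ref{lem:diffkind} pins down the shape of any possible differential on a class $\beta(\ell)e^k$, and the input differentials \eqref{eq:keydiff} are already known via naturality from \cite{BHLSZ}. First I would reduce the problem to tracking only the powers $e^k$, $0\le k<2^m$. A sparseness/degree count shows that $\xi_m$ and each $\beta_{\Neg 2^j}$ — hence every monomial $\beta(\ell)$ — is a permanent cycle: the classes $\beta_{\Neg 2^j}$ lie in negative stems where no target can exist, and for $\xi_m$ a comparison of filtration and internal degree rules out both sources and targets. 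As these classes are central permanent cycles, the Leibniz rule reduces $d_r(\xi_m^a\beta(\ell)e^k)$ to $\xi_m^a\beta(\ell)\,d_r(e^k)$, evaluated using the multiplication rule \eqref{eq:multells} and Lucas's theorem on mod-$2$ binomial coefficients. In particular $E_\infty$ will be an $\F_2[\xi_m]$-module, so it suffices to determine the $\xi_m^0$-part.

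Next comes the production of the differentials asserted in part (2). Starting from \eqref{eq:keydiff}, I would multiply by $\beta(\ell)$ and simplify via \eqref{eq:multells}; the relevant product is nonzero because the binary expansions of $\ell$ and $2^m-2^{i+1}$ are disjoint, so Lucas gives $\binom{2^m-2^{i+1}}{\ell}\equiv 0$ and the coefficient in \eqref{eq:multells} equals $1$. This yields, for exactly the index pairs $(\ell,k)$ in the statement, the differential $d_{1+2^{i+1}}(\beta(\ell)e^k)=\xi_m^{1+2^{i+1}}\beta(\ell+2^m-2^{i+1})e^{k-2^i}$ — provided the target survives to the $E_{1+2^{i+1}}$-page. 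This is where Lemma~\ref{lem:pairs} enters: its bijection $\phi\colon S\to S'$ identifies the target's index $(\ell+2^m-2^{i+1},\,k-2^i)$ as a member of $S$, and by the inductive hypothesis a class $\xi_m^a\beta(\ell')e^{k'}$ with $(\ell',k')\in S$ is still alive on $E_{1+2^{i+1}}$ precisely when $a<1+2^{i+1}$; with $a=1+2^{i+1}$ it is alive and is killed exactly by this differential. Running through the pages $r=1,2,\dots$ in order keeps the induction consistent.

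Then comes the "no other differentials" half, which is also the "$k\le\ell$" half of part (1). Suppose a still-alive class $\beta(\ell)e^k$ with $(\ell,k)\in S$ supported a differential. By Lemma~\ref{lem:diffkind} it would be a $d_{1+2r}$ hitting $\xi_m^{1+2r}\beta(\ell+2^m-2r)e^{k-r}$. A short inequality argument (the content of the analysis behind Lemma~\ref{lem:pairs}) shows that $(\ell+2^m-2r,\,k-r)$ again lies in $S$ and that $r>2^{i'}$, where $i'=i(\ell+2^m-2r,\,k-r)$ — here one uses that if $r=2^{i'}$ then $\phi(\ell+2^m-2r,k-r)=(\ell,k)$, forcing $(\ell,k)\in S'$, contrary to assumption. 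Hence, by the differentials already produced, the putative target is a $\xi_m$-multiple of an $S$-class that has been hit strictly before page $E_{1+2r}$, so it is zero there. Therefore $\beta(\ell)e^k$ is a permanent cycle.

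Assembling the two halves: the listed differentials are all of them, and $E_\infty\cong\F_2\{\xi_m^a\beta(\ell)e^k\mid k\le\ell,\ 0\le a\le 2^{\,i(\ell,k)+1}\}$, which contains parts (1) and (2) as stated. I expect the genuine obstacle to be this last step: verifying, uniformly in $m$ and across all the relevant ranges of $(\ell,k)$ and $r$, that the would-be target of every hypothetical extra differential has already been killed by one of the differentials coming from \eqref{eq:keydiff}. This is exactly the combinatorial matching packaged by the bijection $\phi$ of Lemma~\ref{lem:pairs} together with the mod-$2$ binomial identities, and organizing it cleanly — rather than by an unwieldy case analysis — is the delicate part of the argument.
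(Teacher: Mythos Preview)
Your proposal is essentially the paper's proof: both run an induction using the multiplicativity of the spectral sequence, the shape lemma (Lemma~\ref{lem:diffkind}), the imported differentials \eqref{eq:keydiff}, and the combinatorics of Lemma~\ref{lem:pairs} to show that (i) the listed differentials are nontrivial and (ii) any putative extra differential would have a target already killed. The paper organizes the induction by the dyadic range $2^j\le k<2^{j+1}$ rather than page-by-page, but the logical content is the same.

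There is one place where your sketch is looser than the paper and you should tighten it. In the ``no extra differentials'' step you assert that for $(\ell,k)\in S$ a hypothetical $d_{1+2r}$ has $r>2^{i'}$, but you only argue $r\neq 2^{i'}$; the case $r<2^{i'}$ is not excluded by your contradiction, and without it the target $\xi_m^{1+2r}\beta(\ell')e^{k'}$ need not yet be dead. The paper avoids this by a more explicit maneuver: restricting to $2^i\le k<2^{i+1}$ (so automatically $r\ge 2^i$), writing $\ell+2^m-2r=2^m-2^{j+1}+\bar\ell$ and $k-r=\bar k+2^h-2^j+2^{h+1}n$ with $h\le i$, and then exhibiting a $d_{1+2^{h+1}}$ (already established) that kills the target. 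Equally important, the paper closes each inductive step with the observation that $e^{2^{i+1}}$ is a $d_r$-cycle for $r<1+2^{i+2}$, so by linearity there are no differentials between pages $1+2^{i+1}$ and $1+2^{i+2}$; this replaces the need to handle non-dyadic pages individually. If you incorporate either of these two devices, your argument goes through.
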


\begin{proof}
This is a multiplicative spectral sequence. At \(E_2\)-page, there is a vanishing line of slope \(1/(2^m-1)\) with intercept on the \((t-s)\)-axis at \(2^m-2\)
(the vanishing line is formed by the \(\xi_m\)-multiples on \(\beta(2^m-2)\)).
Furthermore, looking at the map of spectral sequences from \(\widetilde{A}\langle n \rangle\), we see that the class
\(e^{2^i}\) survives to the \(E_{1+2^{i+1}}\)-page for \(1 \leq i \leq m-1\). Therefore, the differentials \(d_r\) are \(e^{2^i}\)-linear for \(r<2^{i+1}+1\). The first non-zero class in positive filtration is \(\xi_m\beta(2^m-2)\) which has topological degree \((2^m-1)-(2^m-2)=1\). Therefore, every element of \(E(\beta)\)
is a permanent cycle and the spectral sequence is one of modules over this exterior algebra. 
 
We will prove the following statements inductively on \(0\leq j \leq m-1\): 
\begin{enumerate}
\item For \(2^j\leq k <2^{j+1}\), if \( k-2^j\leq \ell < k\),
then
\[d_{1+2^{ j+1}}(\beta(\ell) e^{k+2^{j+1}n})=\xi_m^{1+2^{ j+1}} \beta(\ell + 2^m-2^{j+1}) e^{k-2^{j} +2^{j+1}n} \ .\]
\item For \(2^j\leq k <2^{j+1}\) and \(k\leq \ell\), the class \(\beta(\ell)e^k\) is a permanent cycle. 
\item There are no other non-trivial differentials until the \(E_{1+2^{j+2}}\)-page.
\end{enumerate}

We note that (1) and (2) inductively imply that any class \(\beta(\ell)e^k\) with \(k<2^{j+1}\) either supports a differential \(d_r\) for \(r\leq 1+2^{j+1}\), or is a permanent cycle. 




%

To prove the inductive claim, we start with \(j=0\), so that \(k=1\) in (1). Using that \(\ell\) is even, in (1), the range forces \(\ell=0\). 
The first possible non-trivial differential for degree reasons is on \(e\), and this differential is forced by the \(d_3\)-differential
\[
d_3(e) = \xi_m^3\beta(2^m-2)
\]
in \(\widetilde{A}\langle n \rangle\).  All \(d_3\)-s are then determined by \(e^2\)-linearity and given by
\[
d_3(e^{1+2n})=\xi_m^3\beta(2^m-2)e^{2n}.
\]
Here, we have used the fact that the differentials are linear over \(\F_2[\xi_m]\otimes E(\beta)\).
For degree reasons, the classes \(\beta(\ell)e\) for \(\ell\geq 2\) are permanent cycles, proving (2). The differentials are \(e^2\)-linear and all other classes that could support a \(d_3\) are the product of \(e^2\) with permanent cycles. So they survive to the \(E_5\)-page.
 
Let \(i>0\) and assume that (1), (2), (3) hold for smaller values of \({0\leq j<i}\).
As noted above, the differentials in the spectral sequence of \(\widetilde{A}\langle m \rangle\) implies the differentials
\[
d_{1+2^{ i+1}}(e^{2^{i}+2^{i+1}n} ) 
=\xi_m^{1+2^{ i+1}}\beta(2^m-2^{i+1})e^{2^{i+1}n} ,
\]
provided that the targets survive to the \(E_{1+2^{i+1}}\)-page. By the induction hypothesis and Lemma~\ref{lem:pairs}, this is the case. This proves the differential of (1) for \(k=2^i\) and \(\ell=0\).


Now, choose \(k\) and \(\ell\) so that \(2^i \leq k <2^{i+1}\) and \( k-2^i \leq \ell < k\)
as in (1). In particular, \(\ell >0\).  The class \([\beta(\ell)e^{k-2^i}]\) is a permanent cycle by the induction hypothesis.
Therefore,
\begin{align*}
d_{1+2^{ i+1}}(\beta(\ell) e^{k+2^{i+1}n}) &=d_{1+2^{ i+1}}([\beta(\ell)e^{k-2^i} ]e^{2^{i} + 2^{i+1}n }) \\
&=[\beta(\ell)e^{k-2^i} ]d_{1+2^{ i+1}}(e^{2^i}) e^{ 2^{i+1}n} \\
&=[\beta(\ell)e^{k-2^i} ]\xi_m^{1+2^{ i+1}}\beta(2^m-2^{i+1}) e^{2^{i+1}n }   \\
&=\xi_m^{1+2^{ i+1}} \beta(\ell) \beta(2^m-2^{i+1}) e^{k-2^{i}+2^{i+1}n}  \ .
\end{align*}
The binomial expansion of \(2^m-2^{i+1}\) is
\[2^{m-1}+\ldots + 2^{i+2}+2^{i+1} . \]
The bounds on \(\ell\) give \(0<\ell <2^{i+1}\), which guarantees that \(\binom{2^m-2^{i+1}}{\ell}=0\) since \(\ell> 0\). So, by \eqref{eq:multells} 
\[\beta(\ell) \beta(2^m-2^{i+1}) =\beta(\ell + 2^m-2^{i+1}) .\]
We get a non-trivial differential as long as the target is non-zero, which is the case by the induction hypothesis and Lemma~\ref{lem:pairs}. This proves (1).

We next show that the classes \(\beta(\ell)e^{k}\) for \(k\leq \ell\) and  \(2^i\leq k<2^{i+1}\) are permanent cycles. Suppose that for \(2^i\leq r\leq k\),
\[d_{1+2r}(\beta(\ell)e^k) = \xi_m^{1+2r} \beta(\ell+2^m-2r)e^{k-r}.\]
The form of the differential comes from Lemma~\ref{lem:diffkind}. Note that
\[ k-r \leq \ell-r \leq \ell - r +2^m-r = \ell +2^m-2r.\]
This shows that the target is a permanent cycle by the induction hypothesis.  We will now show that this target is actually killed by a shorter differential.

Since
\[\ell+2^m-2r - (2^m-2^{i+1}) = 2^{i+1}+\ell-2r \geq 2^{i+1} +k -2k = 2^{i+1}-k>0. \]
Therefore, \(\ell+2^m-2r > 2^m-2^{i+1}\) and so we can write
\begin{align*}
 \ell+2^m-2r &= 2^m-2^{j+1} + \bar \ell ,  &  \bar\ell<2^j, \ 0\leq  j\leq i  \\
k-r &= \bar{k} + 2^{h}-2^{j} + 2^{h+1}n & j\leq h,\ \bar{k}<2^j,  \ 0\leq n 
\end{align*}

Let 
\[\ell' =\ell+2^m-2r-(2^{n}-2^{h+1})= \bar \ell + 2^{h+1}-2^{j+1} \] 
and 
\[\kappa' =k-r-2^{h+1}n+2^h = \bar{k} +2^{h+1}-2^j.\] 
Then,  \(\kappa'-2^h \leq \ell' <\kappa'\) and \(2^h \leq \kappa' <2^{h+1}\). So by the induction hypothesis,
\begin{align*}
d_{1+2^{h+1}}(\xi_m^{2r-2^{h+1}}\beta(\ell')e^{\kappa'+2^{h+1}n}) &= \xi_m^{2r-2^{h+1}} \left(\xi_m^{1+2^{h+1}} \beta(\ell' +2^m-2^{h+1})e^{\kappa'-2^h+2^{h+1}n}\right) \\
&=\xi_m^{1+2r} \beta(\ell+2^m-2r)e^{k-r} \ .
\end{align*}
Therefore, the target was killed by a shorter differential. This proves (2). 

Finally, (3) holds by the linearity of the differentials with respect to the \(d_{2^{i+2}}\)-cycle \(e^{2^{i+1}}\).
\end{proof}

Finally, the correspondence between the \(a_{\sigma}\)-localized slice spectral sequence of \(i_*^{\Gp} \BPG\langle m,m\rangle\) and the relative Adams spectral sequence is as follows:
\begin{summary}\label{sum:relvsslice}
The \(E_2\)-page of the relative Adams spectral sequence of the geometric fixed points \(\Phi^{\Gp}\BPGmm{m}\) is, additively,
\[ E^{*,*}_2 \cong  \Sigma^{2^m-2} \F_2[\xi_m] \otimes E(\beta) \otimes \F_2[e_{2^{m+1}}] \]
where the shift preserves the filtration and adds \(2^m-2\) to the topological degree.

The correspondence between the slice spectral sequence  and the relative Adams spectral sequence is as follows:
    \begin{enumerate}
        \item The elements \(b^{k}\) for \(0\leq k \leq 2^{m-1} - 1\) correspond to \(\beta(2^{m}-2(k+1))\). Note that \(b^{2^{m-1}-1}\) corresponds to \(\beta(0)=1\), the multiplicative unit in the relative Adams spectral sequence of \(R\langle m\rangle\). 
             \item For \(\ell = 2^{n-1}(2^{m}-1)+1\),  \(0\leq k <2^{m-1}\) and \(r \geq 0\), the element \(b^{k+2^{m-1}}\) in the localized sliced spectral sequence supports the differential \(d_{\ell}(b^{k+2^{m-1}}) = b^k(\bar{f}_m+\gamma\bar{f}_m)\),
forced by the
slice differential 
\[
d_{\ell}(u_{2\sigma}^{2^{m-1}}) = a_{\sigma}^{2^{m+1}-1}(\overline{v}_m^G + \gamma \overline{v}_m^G).
\]
These differentials are \(b^{2^m}\)-linear, leaving behind \(b^k\) where the dyadic expansion of \(k= \varepsilon_0 +\varepsilon_1\cdot 2+\ldots\) satisfies \(\varepsilon_{m-1}=0\).
This family of differentials identifies \(\overline{f}_m\) with \(\gamma \overline{f}_m\) in the whole slice spectral sequence.
        \item After the slice differentials above, multiplication by either \( \bar{f}_m\) or \( \gamma\overline{f}_m\) corresponds to multiplication by \(\xi_m\) in the relative Adams spectral sequence.
        \item The remaining elements \(b^k\) are in one-to-one correspondence with the elements in filtration \(s=0\) in the relative Adams spectral sequence, shifted by a degree \(2^{m} - 2\). In particular, the element \(b^{2^{m}+2^{m-1}-1}\) corresponds to \(e_{2^{m+1}}\).
    \end{enumerate}
\end{summary}

\subsection{The \texorpdfstring{$C_2$}{C2}-slice spectral sequence of \texorpdfstring{$\BPCfourtwotwo$}{BP C4 22}} \label{subsec:C2SliceSSBPCfourtwotwo}
As an application, we illustrate the above correspondence of spectral sequences by computing the  $C_2$-slice spectral sequence of $\BPCfourtwotwo$. 

The \(C_2\)-slice spectral sequence of \(\BPCfourtwotwo\) is determined by the relative Adams spectral sequence for
\[ R\langle 2\rangle = \widetilde{A}\langle 2 \rangle/(\zeta_5, \ldots) \wedge_A \End_{\HA}(M_1),\]
whose computation follows from Section~\ref{sec:compute} above. The essential features were also completely computed in Section~7.3 of \cite{BHLSZ}.
The \(E_2\)-page is 
\[
\mathbb{F}_2[\xi_2] \otimes E(\beta_{\Neg 2}) \otimes \mathbb{F}_2[e_8].
\]
There are only \(d_3\)- and \(d_5\)-differentials.  The \(d_3\)-differentials are generated by
\begin{align*} 
d_3(e_8^{1 + 2 *}) &= e_8^{2*}\xi_2^3 \beta_{-2},
\end{align*}
and the $d_5$-differentials are generated by
\begin{align*}
d_5(e_8^{2+4*}) &= e_8^{4*}\xi_2^5,  \\
d_5(e_8^{3+4*} \beta_{\Neg 2}) &= e_8^{1 + 4*} \xi_2^5 \beta_{\Neg 2}.
\end{align*}

The \(E_2\)-term of the \(a_{\sigma}\)-localized slice spectral sequence for \(a_\sigma^{-1}i_{C_2}^*\BPCfourtwotwo\) is 
\[\mathbb{F}_2[u_{2\sigma}, a_\sigma^{\pm 1}, \ttwo, \gamma \ttwo].\]
As before, let \(b =u_{2\sigma}/a_{2\sigma}\).
The shortest differentials in this spectral sequence are the \(d_7\)-differentials, whose effects are to identify \(\bar{t}_2\) with \(\gamma\bar{t}_2\). 
The \(d_7\)-differentials are generated by
\begin{align*}
d_7(b^{2+4*}) &= b^{4*}(\ttwo + \gamma \ttwo)a_\sigma^3, \\
d_7(b^{3+4*}) &= b^{1+4*}(\ttwo + \gamma \ttwo)a_\sigma^3.
\end{align*}

After the \(d_7\)-differentials, we can then import the differentials from the relative Adams spectral sequence as explained in Summary~\ref{sum:relvsslice}. As a result, the Adams \(d_3\)-differentials become the slice \(d_{19}\)-differentials, generated by
\[d_{19}(b^{5 + 8*}) = b^{1+8*} \ttwo^3 a_{\sigma}^{9}.\]
The Adams \(d_5\)-differentials become the slice \(d_{31}\)-differentials, generated by 
\begin{align*}
d_{31}(b^{9+16*}) &= b^{1+16*} \ttwo^5 a_\sigma^{15}, \\
d_{31}(b^{12+16*}) &=b^{4+16*} \ttwo^5 a_\sigma^{15}.
\end{align*}

Figure~\ref{fig:C2SliceSS} shows the $a_\sigma$-localized slice spectral sequence for $a_\sigma^{-1}i_{C_2}^*\BPCfourtwotwo$. At \(E_2\), the classes \(\bullet\) denote families of monomials formed by the classes $a_{\sigma}^3\bar{t}_2$ and $a_{\sigma}^3\gamma\bar{t}_2\}$ on the various powers of \(b\).
At \(E_8\), a \(\bullet\) depicted as the target of a \(d_7\)-differential becomes a copy of \(\Z/2\), represented by a class of the form \(a_{\sigma}^{3\ast}\bar{t}_2^{\ast}b^k\) for \(k\neq 2,3 \mod 4\). Each \(\bullet\) depicted as the source of a \(d_7\)-differential is completely gone as the \(d_7\)-differentials  are injective.

As in Theorem~\ref{thm:sliceRecovery}, to obtain the differentials in the \(C_2\)-slice spectral sequence of \(\BPCfourtwotwo\), we truncate at the horizontal line of filtration \(s=0\) and remove the region below this line.   

\begin{center}
\begin{figure}
\includegraphics[width=\textwidth]{BPC422C2SliceSS}
\caption{The \(a_\sigma\)-localized slice spectral sequence of \(a_\sigma^{-1}i_{C_2}^*\BPCfourtwotwo\).}
\label{fig:C2SliceSS}
\end{figure}
\end{center}

\section{The \texorpdfstring{$C_4$}{C4}-localized slice spectral sequence of \texorpdfstring{\(a_\lambda^{-1}\BPCfourtwotwo\)}{the a-lambda localized BP C4 22}}\label{sec:BPC422}

In this section, we compute the integer-graded $C_4$-localized slice spectral sequence of $a_{\lambda}^{-1}\BPCfourtwotwo$, using all the tools that we have developed in the previous sections.  This computation serves to demonstrate the robustness of our techniques as well as providing insights to higher differentials phenomena when generalized to higher heights.

\begin{remark}Just like the integer-graded spectral sequence, the full $RO(C_4)$-graded spectral sequence can be computed by the exact same method.  We have opted to only compute the integer-graded slice spectral sequence because it is more convenient to present its diagrams. 
\end{remark}

\begin{remark}\label{rem:finalrecovery}
By the discussion in Section~\ref{sec:locss}, the slice spectral sequence of the unlocalized spectrum $\BPCfourtwotwo$ is completely determined by the $a_\lambda$-localized slice spectral sequence by truncating away the region below the line of filtration \(s = 0\).
\end{remark}

\begin{remark}
The following facts are good to keep in mind while doing the computation:
\begin{enumerate}
    \item The differentials with source on or above the line of slope 1 in the $C_4$-localized slice spectral sequence of $a_\lambda^{-1}\BPCfourtwotwo$ are  determined by the $C_4$-localized slice spectral sequence of $a_\sigma^{-1}\BPCfourtwotwo$ computed in Example~\ref{ex:asigmaBPC422}. These are all $d_{13}$-differentials.

\item Many of the differentials $d_{\leq 31}$ are determined  using the $C_2$-slice differentials computed in Section~\ref{subsec:C2SliceSSBPCfourtwotwo} and the Mackey functor structure (i.e. restriction and transfer).

\item The $C_4$-localized slice spectral sequence of $a_\lambda^{-1}\BPCfourtwotwo$ is a module over the spectral sequence of \(a_{\lambda}^{-1}\MUCfour\), but very little of that structure is needed for the computation (see Section~\ref{sec:strategy}). Multiplication with respect to two key classes gives rise to periodicity of differentials and a vanishing line (Theorem~\ref{thm:VanishingThm}). These phenomena determine all higher differentials ($d_{>31}$).
\end{enumerate}
\end{remark}

\subsection{Organization of the slice associated graded}
For the rest of this section, we let 
\[
\tone = \bar{v}_1^{C_4} \quad \text{and} \quad \ttwo = \bar{v}_2^{C_4} . 
\] 
From Corollary~\ref{cor:sliceBPmm}, the slice associated graded for \(\BPCfourtwotwo\) is \(\HZ[\ttwo, \gamma \ttwo]\), so the \(E_2\)-page of our spectral sequence is obtained by \(a_{\lambda}\)-localization of the homotopy of this slice associated graded.

We organize the slice cells by powers of 
\[\dtwo := N_{C_2}^{C_4}(\ttwo).\]  
\begin{remark}This mirrors the approach taken by Hill--Hopkins--Ravenel in \cite{HHRkH} to compute the slice spectral sequence of \(\BPCfourone = \BPCfour\langle 1, 1 \rangle\), where they organized the slice cells by powers of \(\done := N_{C_2}^{C_4}(\tone)\).  
\end{remark}

The slice cells are organized according to the following matrix:
\begin{align}\label{eq:matrix}\begin{pmatrix}
\dtwo^0 & \dtwo^1 & \dtwo^2 & \cdots \\ 
\dtwo^0(\ttwo, \gamma \ttwo) & \dtwo^1(\ttwo, \gamma \ttwo) & \dtwo^2 (\ttwo, \gamma \ttwo) & \cdots \\ 
\dtwo^0(\ttwo^2, \gamma \ttwo^2) & \dtwo^1 (\ttwo^2, \gamma \ttwo^2) & \dtwo^2 (\ttwo^2, \gamma \ttwo^2) & \cdots \\ 
\vdots & \vdots & \vdots & \ddots
\end{pmatrix}
\end{align}
To read this, note that \(i^*_{C_2}\dtwo = 
\ttwo \gamma \ttwo\) so that every monomial in \(\ttwo, \gamma \ttwo\) appears in some entry of the matrix. 

Entries \(\dtwo^j\) in the matrix represent slice cells of the form
\[ \HZ\wedge S^{3j\rho_4},\]
where \(\rho_4\) is the regular representation of \(C_4\).  These are the regular (or non-induced) cells.  Entries of the form \(\dtwo^j(\ttwo^i, \gamma \ttwo^i)\) represent slice cells of the form
\[\HZ\wedge ({C_4}_+ \wedge_{C_2} S^{i^*_{C_2}(3j\rho_4+3i\rho_2)}) \simeq \HZ\wedge ({C_4}_+ \wedge_{C_2} S^{(6j+3i)\rho_2}).\]
These are the induced cells.

For the non-induced cells, the homotopy groups of \(\HZ\wedge S^{*\rho_4}\) are computed in \cite{HHRkH} and  depicted in Figure~3 of that reference. For the induced cells, we get the induced Mackey functors
\[ \HZ_*({C_4}_+ \wedge S^{*\rho_2}) \cong \uparrow_{C_2}^{C_4}\HZ_*^{C_2}(  S^{*\rho_2}), \]
(see Definition~2.6 of \cite{HHRkH})
whose values are also known.
 After inverting \(a_{\lambda}\), we obtain the \(E_2\)-page depicted in Figure~\ref{fig:BP22C4E2}.

We use the same notation as in Table~1 of \cite{HHRkH} for the Mackey functors. Blue Mackey functors are supported on induced cells and represent multiple copies of the Mackey functor \(\hat{\bullet}\) of \cite{HHRkH}, supported on the various monomials in the matrix \eqref{eq:matrix} that are not in the first row.

We name the classes that do not come from induced cells. First, there are classes of order four (which have non-trivial restrictions):
\begin{itemize}
  \item   \(\dtwo^{2i} u_{(6i-j)\lambda}u_{6i\sigma}a_{j\lambda}\) in degree \((24i-2j ,2j)\) for \(0\leq i\) and \(j\leq 6i\).
   %
   
   %
   \end{itemize}
   Next, the classes of order two which do not come from induced cells are:
   \begin{itemize}
   \item \(\dtwo^{i}u_{2k\sigma}a_{3i\lambda}a_{(3i-2k)\sigma} \) in degree \((3i+2k,9i-2k)\) for \(0\leq i\) and \(0\leq 2k < 3i\). These are above the line of slope one.
   \item \(\dtwo^{i}u_{j\lambda} u_{(3i-1)\sigma}a_{(3i-j)\lambda}a_{\sigma} \) in degree \((6i+2j-1,6i-2j+1)\) for \(i\) odd,  \(0\leq i\) and \(0\leq j\). 
\end{itemize}
The induced cells are named by treating them as images of the transfer map from the corresponding classes in the $C_2$-slice spectral sequence. 

\begin{center}
\begin{figure}
\includegraphics[width=\textwidth]{BP22C4E2Mackey.pdf}
\caption{The \(E_2\)-page of the Mackey functor valued \(a_\lambda\)-localized slice spectral sequence of \(\BPCfourtwotwo\).}
\label{fig:BP22C4E2}
\end{figure}
\end{center}

\subsection{The \texorpdfstring{\(d_7\)}{d-seven}-differentials}
The first differentials are the $d_7$-differentials.  They occur between classes supported on slice cells that are in the same column of the matrix \eqref{eq:matrix}.  The $d_7$-differentials are all proven by restricting to the $a_\sigma$-localized slice spectral sequence of $a_\sigma^{-1}i_{C_2}^*\BPCfourtwotwo$.  More specifically, the restriction of certain classes to the $C_2$-spectral sequence support the $d_7$-differentials discussed in Section~\ref{subsec:C2SliceSSBPCfourtwotwo}, and therefore by naturality and degree reasons their preimages must also support $d_7$-differentials in the $C_4$-spectral sequence.  By naturality and degree reasons, these are all the \(d_7\)-differentials that can occur, after which we obtain the \(E_8\)-page.

Let $b = \frac{u_\lambda}{a_\lambda}$.  The restriction of this class is the class $b = \frac{u_{2\sigma}}{a_{2\sigma}}$ in the $C_2$-spectral sequence.  We have \(d_7\)-differentials 
\begin{align*}
    d_7\left(b^{2+4*}\right)&=\tr_{C_2}^{C_4}(\ttwo a_{3\sigma_2})b^{4*}\\
           d_7\left(b^{3+4*}\right)
     &= \tr_{C_2}^{C_4}(\ttwo a_{3\sigma_2})b^{1+4*}.
\end{align*}

Figure~\ref{fig:BP22C4E7} depicts the \(E_7\)-page.
 The \(\circ\) classes are \(\Z/4\)'s coming from non-induced cells.
 The black \(\bullet\) classes are \(\Z/2\)'s coming from the non-induced cells, while the blue  \({\color{blue}\bullet}\) classes are direct sums of \(\Z/2\)'s coming from the induced cells.

 Figure~\ref{fig:C4E8page} depicts the \(E_8\)-page.  In that figure, all dots represent a copy of \(\Z/2\), with the exception of the white \(\circ\) classes which represent \(\Z/4\)'s. The pink denotes a degree where, on a previous page, there was a \(\Z/4\) but the generator supported a non-zero differential and is no longer present. As before, blue classes come from induced cells.
 
\begin{center}
\begin{figure}
\includegraphics[width=\textwidth]{BP22C4E2.pdf}
\caption{The \(E_7\)-page of the \(a_\lambda\)-localized slice spectral sequence of \(a_\lambda^{-1}\BPCfourtwotwo\).}
\label{fig:BP22C4E7}
\end{figure}
\end{center}

\begin{center}
\begin{figure}
\includegraphics[width=\textwidth]{BPC422SliceSSE8}
\caption{The $E_8$-page of the $a_\lambda$-localized slice spectral sequence of $a_\lambda^{-1}\BPCfourtwotwo$.}
\label{fig:C4E8page}
\end{figure}
\end{center}

\subsection{Strategy for computing higher differentials}\label{sec:strategy}
Before computing the higher differentials (those of lengths $\geq 13$), we describe our strategy.  There are two classes that will be very important for our computation.  They are the classes 
\begin{align*}
    \alpha &=\dthree^8u_{24\sigma}a_{24\lambda} \quad \text{at $(48, 48)$}, \\
b^{32} &= u_{32\lambda}/a_{32\lambda} \quad \text{at $(64,-64)$.}
\end{align*}
We will use the following crucial facts about the spectral sequence of \(a_{\lambda}^{-1}\MUCfour\). These results are the only significant computational inputs from the localized slice spectral sequence for \(a_{\lambda}^{-1}\MUCfour\) and the much harder computations of \cite{HSWX}.

\begin{proposition}
The class $\alpha = \dthree^8u_{24\sigma}a_{24\lambda}$ is a permanent cycle in the localized slice spectral sequence for $a_{\lambda}^{-1}\MUCfour$. Consequently, the differentials are linear with respect to multiplication by $\alpha$.
\end{proposition}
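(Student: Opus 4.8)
The plan is to deduce this from the computation of the slice spectral sequence of $\MUCfour$ carried out in \cite{HSWX}, together with the Slice Differential Theorem of \cite{HHR}. First I would pin down $\alpha$ inside the localized spectral sequence: applying Corollary~\ref{cor:sliceBPmm} to $\MUCfour$ (keeping all the generators), the slice associated graded is a wedge of (induced and regular) slice cells over $H\mZ$, so after inverting $a_\lambda$ its $RO(C_4)$-graded homotopy is precisely the $E_2$-page of the localized slice spectral sequence. One checks directly that $\alpha=\dthree^8u_{24\sigma}a_{24\lambda}$ is a genuine nonzero class there, in bidegree $(t-s,s)=(48,48)$, i.e.\ on the line of slope $1$, and that it is the image under the localization map $\MUCfour\to a_\lambda^{-1}\MUCfour$ of the class of the same name in the integer-graded slice spectral sequence of $\MUCfour$. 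By naturality it therefore suffices to show $\alpha$ is a permanent cycle before localizing.

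The main step is exactly that $\alpha$ supports no differential in the slice spectral sequence of $\MUCfour$. Here one invokes \cite{HSWX}: $\alpha$ is (up to a unit) a power of the periodicity generator isolated there, and it is shown to be a permanent cycle by combining the Slice Differential Theorem --- which produces all differentials on the relevant powers of $u_{2\sigma}$, $u_\lambda$ and the norm classes $N_{C_2}^{C_4}\bar r_i^{C_4}$ --- with a local targeting argument at the position of $\alpha$ itself: using the module structure over the $a_\sigma$-localized slice spectral sequence (Theorem~\ref{prop:geofixall}), restriction to $C_2$ (compare the $C_2$-slice spectral sequence of Section~\ref{subsec:C2SliceSSBPCfourtwotwo}), and the relations in $\upi_\star^{C_4}H\mZ$, one verifies that for every $r$ the group in bidegree $(47,\,48+r)$ on the $E_r$-page that could receive $d_r(\alpha)$ has already been killed by an earlier differential. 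It is essential that this count is carried out at $\alpha$'s own location, \emph{before} any $\alpha$-periodicity is asserted, so the argument is not circular; this is where one genuinely leans on the (considerably more involved) computation of \cite{HSWX}, and I expect it to be the only real obstacle.

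Granting that $\alpha$ is a permanent cycle, the linearity statement is purely formal: for any class $x$ and any page $r$, the Leibniz rule gives $d_r(\alpha x)=\alpha\,d_r(x)+(d_r\alpha)\,x=\alpha\,d_r(x)$, so every differential in the localized slice spectral sequence of $a_\lambda^{-1}\MUCfour$ --- and, pushing forward along $\MUCfour\to\BPCfourtwotwo$ and its $a_\lambda$-localization, also in that of $a_\lambda^{-1}\BPCfourtwotwo$ --- is $\alpha$-linear. Everything outside the middle paragraph is bookkeeping.
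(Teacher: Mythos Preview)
Your proposal arrives at the right conclusion, and the underlying mechanism you describe --- that the Slice Differential Theorem controls all differentials with source on or above the line of slope $1$, and $\alpha$ sits on that line --- is exactly the point. But you have made the argument substantially heavier than it needs to be, and the detour through \cite{HSWX} is misguided.

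The paper's proof is a one-liner citing only \cite[Theorem~9.9]{HHR}. Since $\alpha$ lies in bidegree $(48,48)$, any differential it could support lands strictly above the line of slope $1$. By the Slice Recovery Theorem, that region of the $a_\lambda$-localized spectral sequence for $\MUCfour$ is governed entirely by the $a_\sigma$-localized spectral sequence, which in turn is \emph{completely} determined by the Hill--Hopkins--Ravenel Slice Differential Theorem (this is precisely the content of Section~\ref{sec:gen}). One then reads off that $\alpha$ supports no differential. No stemwise targeting argument, no restriction to $C_2$, and certainly no input from \cite{HSWX} is required.

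Your appeal to \cite{HSWX} is problematic for a second reason: that paper computes the slice spectral sequence of $\BPCfour\langle 2\rangle$, not of $\MUCfour$. The natural map goes $\MUCfour\to\BPCfour\langle 2\rangle$, so knowing $\alpha$ is a permanent cycle in the target says nothing about whether $d_r(\alpha)$ vanishes in the source. You do not actually rely on this for the argument (the Slice Differential Theorem does the work), but framing it as ``this is where one genuinely leans on the computation of \cite{HSWX}'' misidentifies both the logical dependency and the level of difficulty.
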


\begin{proof}
This is a direct consequence of the Slice Differential Theorem of Hill--Hopkins--Ravenel \cite[Theorem~9.9]{HHR}.  More precisely, all the differentials in the region on or above the line of slope 1 in the $C_4$ localized slice spectral sequence for $a_{\lambda}^{-1}\MUCfour$ can be completely computed.
\end{proof}

\begin{proposition}
In the localized slice spectral sequence for $a_\lambda^{-1}\MUCfour$, we have the following facts:
\begin{enumerate}
\item The class $b^8 = u_{8\lambda}/a_{8\lambda}$ supports a $d_{15}$-differential.
\item The class $b^{16} = u_{16\lambda}/a_{16\lambda}$ supports a $d_{31}$-differential. 
\item The class $b^{32} = u_{32\lambda}/a_{32\lambda}$ is a $d_{61}$-cycle. 
\end{enumerate}
\end{proposition}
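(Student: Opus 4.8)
The plan is to read (1)--(3) off the $a_\lambda$-localized slice spectral sequence of $\MUCfour$, whose $E_2$-page and differentials are governed by the Slice Theorem and Slice Differential Theorem of \cite{HHR} together with the detailed $C_4$-computation of \cite{HSWX}. First I would set up the $E_2$-page: by the Slice Theorem of \cite{HHR} and the Hahn--Shi Real orientation \cite{HahnShi}, the slice associated graded of $\MUCfour$ is $\HZ[C_4\cdot\bar r_1,C_4\cdot\bar r_2,\dots]$, so inverting $a_\lambda$ (the $\tilde{E}\mathcal{F}_0$-localization, which annihilates transfers from $\{e\}$ but keeps those from $C_2$) produces an $E_2$-page built from $\pi^{C_4}_\star\HZ[C_4\cdot\bar r_i]$ in which the classes that matter are $b=u_\lambda/a_\lambda$, the orientation class $u_{2\sigma}$, the Euler class $a_\sigma$, the norms $N^{C_4}_{C_2}\bar r_i$, and the permanent cycle $\alpha$ of the preceding proposition, linked by the gold relation $a_\sigma^2u_\lambda = 2\,a_\lambda u_{2\sigma}$.

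To obtain the $d_{15}$ on $b^8$ and the $d_{31}$ on $b^{16}$ I would first restrict to $C_2$: since $i^\ast_{C_2}\lambda=2\sigma$, the restriction is the $a_\sigma$-localized slice spectral sequence of $\MUR\wedge\MUR$, whose Real slice differentials give $d_{2^{j+2}-1}(b^{2^j})\neq 0$; in particular $b^8$ first supports a $d_{31}$ and $b^{16}$ a $d_{63}$ after restriction. Hence $d_r(b^8)$ restricts to $0$ for $r<31$ and $d_r(b^{16})$ restricts to $0$ for $r<63$, so the asserted $d_{15}$ on $b^8$ and $d_{31}$ on $b^{16}$ are $C_4$-specific differentials whose targets lie in the kernel of restriction, in bidegrees $(15,-1)$ and $(31,-1)$. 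These are precisely two of the differentials computed in the $C_4$-analysis of \cite{HSWX} (and visible already for $\BPCfourtwo$ under the map $\MUCfour\to\BPCfourtwo$); the cleanest route is to cite \cite{HSWX} for their nonvanishing, noting that the Leibniz rule and the location of the targets pin down their value once nonvanishing is known. The Slice Differential Theorem of \cite{HHR} alone only predicts differentials of lengths $4(2^{k+1}-1)+1\in\{5,13,29,61\}$ on $b,b^2,b^4,b^8$, which is exactly why the shorter differentials must be imported.

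For the last statement, write $b^{32}=(b^{16})^2$. Since by (2) the class $b^{16}$ is a $d_r$-cycle for $r\le 31$ and its $d_{31}$ has $2$-torsion target, the Leibniz rule forces $d_r(b^{32})=0$ for $r\le 31$, so $b^{32}$ reaches the $E_{32}$-page. For $31<r\le 61$ I would rule out a nonzero $d_r(b^{32})$ using multiplicativity over the permanent cycles $a_\sigma$ and $\alpha$: differentials are $\alpha$-linear by the preceding proposition, and the region where a target could sit is controlled by the $\MUCfour$-computations of \cite{HHR} and \cite{HSWX}, which show it is already exhausted by the differentials of (1) and (2) together with their $a_\sigma$- and $\alpha$-multiples. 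Hence $b^{32}$ survives to the $E_{61}$-page with $d_{61}(b^{32})=0$.

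The main obstacle is parts (1) and (2): the $d_{15}$ on $b^8$ and the $d_{31}$ on $b^{16}$ are shorter than anything predicted on those powers of $b$ by either the restriction to $C_2$ or the HHR Slice Differential Theorem, so they are genuinely new $C_4$-phenomena arising from the interaction of the norms $N^{C_4}_{C_2}\bar r_i$ with $u_\lambda,u_{2\sigma},a_\sigma,a_\lambda$ through the gold relation. Establishing these two differentials --- and, for (3), ruling out the intermediate differentials $d_{33},\dots,d_{59}$ on $b^{32}$ --- is exactly what the harder computation of \cite{HSWX} provides, so the honest proof imports these facts from there rather than re-deriving them from \cite{HHR}.
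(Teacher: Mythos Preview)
Your approach is essentially the paper's for parts (1) and (2): both use restriction to $C_2$ (where $i_{C_2}^*u_{8\lambda}=u_{16\sigma_2}$ supports a $d_{31}$, and $i_{C_2}^*u_{16\lambda}=u_{32\sigma_2}$ supports a $d_{63}$) to bound the length of the $C_4$-differential, and then import the precise $d_{15}$ and $d_{31}$ from the $\BPCfourtwo$ computation in \cite{HSWX}. The paper phrases the restriction step as ``$u_{8\lambda}$ must support a differential of length at most $31$'' rather than your ``the target lies in the kernel of restriction,'' but these are equivalent uses of naturality.

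For (3) there is a genuine difference in source. Your Leibniz argument correctly gets $b^{32}$ to the $E_{32}$-page, but your plan to rule out $d_{33},\dots,d_{59}$ by citing \cite{HHR} and \cite{HSWX} is vague and runs in the wrong direction: naturality along $\MUCfour\to\BPCfourtwo$ tells you that a differential on $b^{32}$ in $\MUCfour$ maps to one in $\BPCfourtwo$, but the target could die under the quotient, so survival in \cite{HSWX} does not directly force survival in $\MUCfour$. The paper instead says (3) is a consequence of the $a_\lambda^{-1}\BPCfourtwotwo$ computation carried out later in the same section --- a forward reference that is resolved once the full differential pattern is established there. In practice both routes amount to ``defer to a hard computation''; the paper is just explicit that the relevant computation is the one in this paper rather than in \cite{HSWX}.
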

\begin{proof}
All the claims are direct consequences of the computations for the $C_4$ slice spectral sequence of $\BPCfour\langle 2 \rangle$ (see Table 1 in \cite{HSWX}).  We will elaborate on each of them below. 

For (1), the restriction of the class $u_{8\lambda}$ is $u_{16\sigma_2}$, which supports the $d_{31}$-differential 
\[d_{31}(u_{16\sigma_2}) = \bar{v}_4^{C_2}a_{31\sigma_2}\]
in the $C_2$-spectral sequence of $\MUCfour$.  Therefore by naturality of the restriction map, the class $u_{8\lambda}$ must support a differential of length at most 31.  By a stem-wise computation, it is impossible for the class $u_{8\lambda}$ to support a differential of length $\leq 13$, as there are no possible targets.  We will not carry out the details of this computation here, as \cite[Section~7.3]{HSWX} already contains the arguments and results to show that $u_{8\lambda}$ supports a $d_{15}$-differential in the slice spectral sequence of $\MUCfour$.  It follows that the class $\frac{u_{8\lambda}}{a_{8\lambda}}$ must also support a $d_{15}$-differential in the $a_\lambda$-localized slice spectral sequence of $a_\lambda^{-1}\MUCfour$.  This proves (1). 

For (2), the restriction of the class $u_{16\lambda}$ is $u_{32\sigma_2}$, which supports the $d_{63}$-differential 
\[d_{63}(u_{32\sigma_2}) = \bar{v}_5^{C_2}a_{63\sigma_2}\]
in the $C_2$-spectral sequence of $\MUCfour$.  Therefore by naturality of the restriction map, the class $u_{16\lambda}$ must support a differential of length at most 63.  By a stem-wise computation, it is impossible for the class $u_{16\lambda}$ to support a differential of length $\leq 19$, as there are no possible targets.  Again, we do not write down the details here, as the computation in \cite{HSWX} already shows that $u_{16\lambda}$ supports a $d_{31}$-differential in the slice spectral sequence of $\MUCfour$ (see the discussion in Section~11.2 and the chart in Section~13 of \cite{HSWX}).  It follows that the class $\frac{u_{16\lambda}}{a_{16\lambda}}$ must also support a $d_{31}$-differential in the $a_\lambda$-localized slice spectral sequence of $a_\lambda^{-1}\MUCfour$.  This proves (2).  

For (3), it is a consequence of the computation of $a_\lambda^{-1}\BPCfourtwotwo$ that $u_{32\lambda}$ is a $d_{61}$-cycle in the slice spectral sequence of $\MUCfour$ (in fact, it can be shown that it supports the $d_{63}$-differential $d_{63}(u_{32\lambda}) = N(\bar{v}_5a_{63\sigma_2})u_{16\lambda}$, but we will not prove it here).  
\end{proof} 

Multiplication by the classes $\alpha$ and $b^{32}$ give the spectral sequence a large amount of structure which we will exploit in our computation. We describe this below, focusing on each class at a time.

\subsubsection{Multiplication by \texorpdfstring{$\alpha$}{alpha}}
The class $\alpha$ is extremely important for this computation. 
A key consequence of the behavior we describe here 
is that this allows us to compute differentials out of order, flipping back and forth between different pages of the spectral sequence without loosing the thread of its story. We make this precise now, starting with the following straightforward lemma.

\begin{lemma}\label{lem:alphastuff}
In the $C_4$-localized slice spectral sequence of \(a_\lambda^{-1}\BPCfourtwotwo\), on the $E_{13}$-page, we have: 
\begin{enumerate}
    \item Multiplication by $\alpha$ is injective.
    \item\label{lem:alphastuff3} Let $x$ be a class in bidegree $(t-s,s)$. If
\[s-48 \geq -(t-s-48) \quad \text{and} \quad s-48 \leq 3(t-s-48)\]
then $x$ is $\alpha$ divisible. 
\end{enumerate}
See Figure~\ref{fig:StrategyCartoon}.
\end{lemma}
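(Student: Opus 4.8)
The plan is to read both statements off the explicit $E_{13}$-page. Since the first differentials in the localized slice spectral sequence of $a_\lambda^{-1}\BPCfourtwotwo$ are the $d_7$-differentials, we have $E_{13}=E_8$, and $E_8$ is obtained from the $E_2$-page described above by deleting the sources and imposing the targets of the $d_7$-differentials listed for $a_\lambda^{-1}\BPCfourtwotwo$. Because $\alpha=\dtwo^{8}u_{24\sigma}a_{24\lambda}$ is a permanent cycle in the localized slice spectral sequence of $a_\lambda^{-1}\MUCfour$ (by the preceding Proposition) and our spectral sequence is a module over it, multiplication by $\alpha$ commutes with all differentials and descends to a well-defined $\F_2$-linear map on every page; the entire argument is then a matter of tracking this map.

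For part (1), I would first check that $\alpha$-multiplication is injective on $E_2$. Since $a_\lambda$ is inverted, multiplication by $a_{24\lambda}=a_\lambda^{24}$ is an isomorphism; multiplication by $\dtwo$ is injective because it shifts the columns of the matrix \eqref{eq:matrix}; and multiplication by $u_{2\sigma}$ is injective on the $a_\lambda$-inverted homotopy of $\HZ$, where $u_{2\sigma}$ becomes a polynomial generator (see \cite{HHR:HZ}). To pass from $E_2$ to $E_8$ one must check that the $d_7$-differentials introduce no $\alpha$-torsion. The $d_7$'s are $\alpha$-linear, are injective, and have sources and targets confined to a bounded region of low filtration; the image of $d_7$ consists of classes $\tr_{C_2}^{C_4}(\ttwo a_{3\sigma})b^{\,\ast}$, which lie in the image of the transfer, whereas $\alpha$ times a non-induced class stays non-induced. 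So any putative $\alpha$-torsion class on $E_8$ would have to be an induced class, and one disposes of this case, together with the finitely many remaining bidegrees, by restricting to the $C_2$-slice spectral sequence of Section~\ref{subsec:C2SliceSSBPCfourtwotwo} and using that the transfer and $i^{*}_{C_2}\alpha$-multiplication are injective there in the relevant range.

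For part (2), I would go family by family through the generators of $E_2$ and identify $\alpha$-multiplication as an explicit shift of indices: on the order-four classes $\dtwo^{2i}u_{(6i-j)\lambda}u_{6i\sigma}a_{j\lambda}$ it is $(i,j)\mapsto(i+4,\,j+24)$; on the order-two classes $\dtwo^{i}u_{2k\sigma}a_{3i\lambda}a_{(3i-2k)\sigma}$ above the line of slope one it is $(i,k)\mapsto(i+8,\,k+12)$; on the order-two classes $\dtwo^{i}u_{j\lambda}u_{(3i-1)\sigma}a_{(3i-j)\lambda}a_{\sigma}$ (with $i$ odd) it is $i\mapsto i+8$ with $j$ fixed; on powers of $b=u_\lambda/a_\lambda$ it is $b^{k}\mapsto\alpha b^{k}$; and on the induced classes it is $\tr_{C_2}^{C_4}(z)\mapsto\tr_{C_2}^{C_4}\big((i^{*}_{C_2}\alpha)\,z\big)$ by Frobenius reciprocity. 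In each case, inverting the shift — and noting that, since $a_\lambda$ is inverted, only the exponents of $\dtwo$, $u_{2\sigma}$, $u_\lambda$ and $a_\sigma$ produce genuine constraints — shows that a given class is $\alpha$-divisible exactly when those exponents remain non-negative after the inverse shift. One then checks that the two inequalities $s-48\ge-(t-s-48)$ and $s-48\le 3(t-s-48)$ translate, for each family, precisely into those non-negativity conditions: for instance, on the order-four family the first inequality reads $12i\ge 96$, i.e.\ $i\ge 4$, and the second reads $8j\le 72i-96$, which together with $j\le 6i$ is exactly the statement that $(i-4,\,j-24)$ is an admissible index pair, and similarly on the other families the first inequality is equivalent to the exponent of $\dtwo$ (hence of $u_{2\sigma}$) being $\ge 0$ after shifting down, while the second is automatic once that holds.

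The main obstacle is not any single hard idea but the multi-family bookkeeping in part (2) — in particular the boundary cases on the two bounding lines and the interplay between $u_\lambda$-divisibility and the invertibility of $a_\lambda$ — together with making the $E_2\rightarrow E_8$ passage in part (1) airtight by pinning down, via restriction to $C_2$, that the $d_7$'s create no $\alpha$-torsion among the induced classes. Everything else follows from the module structure over the spectral sequence of $a_\lambda^{-1}\MUCfour$ and the explicit $E_8$-chart.
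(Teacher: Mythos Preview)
The paper does not actually prove this lemma: it is introduced as ``the following straightforward lemma'' and the reader is pointed to Figure~\ref{fig:StrategyCartoon}. Your family-by-family verification on the explicit $E_{13}=E_8$-page is exactly the argument being taken for granted, and your index-shift computations in part~(2) are correct (e.g.\ on the order-four classes the two inequalities collapse to $i\ge 4$, which is the $\alpha$-divisibility condition; on the order-two classes above slope one they give $i\ge 8$ and $k\ge 12$, again exactly the divisibility conditions).

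One small correction in part~(1): the $d_7$'s are not ``confined to a bounded region of low filtration''---once you propagate them by $\alpha$ they occur in arbitrarily high filtration. The clean way to finish is the one you already set up: $\alpha$-multiplication is injective on $E_2$, the $d_7$'s are $\alpha$-linear, and every class in the region of part~(2) is $\alpha$-divisible on $E_2$; so if $\alpha x=d_7(y)$ with $x\neq 0$ on $E_8$, the source $y$ lies in that region, hence $y=\alpha y'$ on $E_2$, and injectivity gives $x=d_7(y')$, a contradiction. With that adjustment your argument is complete and is precisely what the paper intends.
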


The key result is then the following:
\begin{proposition}\label{prop:alphafreeres}
Let $r\geq 13$.
Suppose that $d_r(x)=y$ is a non-trivial differential on the $E_r$-page of the localized slice spectral sequence of \(a_\lambda^{-1}\BPCfourtwotwo\). Then $y$ is $\alpha$-free in the sense that no multiple $\alpha^iy$ is zero at $E_r$. Consequently, $x$ is also $\alpha$-free.
\end{proposition}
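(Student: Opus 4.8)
The plan is to reduce the whole statement to the single uniform claim that multiplication by $\alpha$ is injective on the page $E_r$, and then to establish that claim by induction on the page number, bootstrapping from Lemma~\ref{lem:alphastuff}. The starting point is that, since the $C_4$-localized slice spectral sequence of $a_\lambda^{-1}\BPCfourtwotwo$ is a module over that of $a_\lambda^{-1}\MUCfour$ and $\alpha$ is a permanent cycle there, every differential $d_s$ is $\alpha$-linear and multiplication by $\alpha$ defines a self-map of each page commuting with the differentials. Two reductions follow immediately. The implication ``$y$ is $\alpha$-free $\Rightarrow$ $x$ is $\alpha$-free'' is automatic: if $\alpha^i x = 0$ on $E_r$ then $\alpha^i y = d_r(\alpha^i x) = 0$ on $E_r$, contradicting the $\alpha$-freeness of $y$. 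And $y$ being $\alpha$-free is implied by $\alpha$ acting injectively on $E_r$, since $y \neq 0$ on $E_r$ by hypothesis and then $\alpha^i y = \alpha \cdot (\alpha^{i-1} y) \neq 0$ on $E_r$ for every $i$ by induction on $i$.

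It thus remains to prove that, for every $s \geq 13$, multiplication by $\alpha$ is injective on $E_s$. The base case $s = 13$ is part~(1) of Lemma~\ref{lem:alphastuff}. For the inductive step, assume $\alpha$ is injective on every page through $E_s$, and let $[z] \in E_{s+1} = H(E_s, d_s)$ satisfy $\alpha[z] = 0$; then $z$ is a $d_s$-cycle and $\alpha z = d_s(w)$ for some $w \in E_s$. Using that $d_s$ has bidegree $(-1, s)$ and $\alpha$ has bidegree $(48,48)$, one computes the bidegree of $w$ from that of $z$ and checks that it satisfies the hypotheses of Lemma~\ref{lem:alphastuff}\eqref{lem:alphastuff3}, so that $w = \alpha w_0$ on the $E_{13}$-page. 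I would then note that $w_0$ must survive to $E_s$: if $w_0$ supported a differential $d_t$ with $13 \leq t < s$ then $d_t(w) = \alpha\, d_t(w_0) \neq 0$ by the inductive injectivity on $E_t$, contradicting $w \in E_s$; and if $w_0$ were a $d_t$-boundary for such a $t$ then $w = \alpha w_0$ would already be a boundary, hence zero on $E_{t+1}$, again a contradiction. Consequently $w = \alpha w_0$ already on $E_s$, so $\alpha z = d_s(\alpha w_0) = \alpha\, d_s(w_0)$, and injectivity of $\alpha$ on $E_s$ forces $z = d_s(w_0)$, i.e.\ $[z] = 0$. This closes the induction.

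I expect the main obstacle to be precisely the bidegree bookkeeping in the inductive step: verifying that every $w$ with $d_s(w) = \alpha z$, for $z$ a non-bounding $d_s$-cycle whose $\alpha$-multiple bounds, really lands in the $\alpha$-divisibility cone of Lemma~\ref{lem:alphastuff}\eqref{lem:alphastuff3}. This is not a formal matter; it uses the explicit shape of the $E_{13}$-page (cf.\ Figure~\ref{fig:C4E8page}) --- in particular that the sources and targets of all differentials of length $\geq 13$, as well as all long-lived classes in the range under consideration, lie on or above the line of slope $1$ --- together with the fact that differential lengths are bounded (the vanishing line of Theorem~\ref{thm:VanishingThm}), so that the $s$-dependent shift in the bidegree of $w$ cannot push it out of the cone. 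Once this geometric input is granted, the remainder is the purely formal induction above.
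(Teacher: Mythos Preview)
Your argument has the same inductive skeleton as the paper's: use Lemma~\ref{lem:alphastuff}\eqref{lem:alphastuff3} to show that a class hitting an $\alpha$-multiple is itself $\alpha$-divisible, then feed the quotient back into the induction hypothesis. The paper runs this with the weaker invariant ``targets of $d_{r'}$ for $r'<r$ are $\alpha$-free'' and a minimal-$i$ trick, rather than your ``$\alpha$ is injective on $E_{s'}$ for $s'\le s$''.

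That said, the bidegree check you flag as the ``main obstacle'' is where the two versions genuinely diverge. In the paper's setup, the class one wants to $\alpha$-divide hits $\alpha^{i}y$ where $y$ is already known to be the target of a $d_r$-differential. If $y$ sits at $(a,b)$ then the source $x$ is at $(a+1,b-r)$, and the cone constraint on $x$ gives $a+b\ge r-1$; since $r'>?$ only enters via $r'<r$, one checks directly that the class hitting $\alpha^{i}y$ lands in the cone of Lemma~\ref{lem:alphastuff}\eqref{lem:alphastuff3}. This is a \emph{formal} degree count, and it is exactly why the paper's ``comparison of degrees'' sentence goes through without further input. In your version, by contrast, $z$ is an arbitrary $d_s$-cycle, and the analogous check reduces to $a+b\ge s-1$ for $z$ at $(a,b)$ --- which is \emph{not} automatic from the cone (e.g.\ it fails for $z=1$ when $s=61$, consistent with $\alpha$ dying on $E_{62}$). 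So your stronger invariant makes the geometric step genuinely harder, not easier.

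Finally, the heuristic you offer for why the check should succeed --- that sources and targets of $d_{\ge 13}$-differentials lie on or above the slope-$1$ line --- is not correct in this spectral sequence (e.g.\ $b^4$ at $(8,-8)$ supports a $d_{13}$, and many $d_{37}$--$d_{61}$ sources lie well below slope~$1$). If you want to keep the injectivity formulation, you would need a different argument for the divisibility cone; the cleanest fix is to induct on the paper's weaker statement instead, since then the required bound comes for free from $y$ being a $d_r$-target.
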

\begin{proof}
First, note that if $y$ is $\alpha$-free, the linearity of the differentials with respect to multiplication by $\alpha$ implies that $x$ must also be $\alpha$-free.

We prove that $y$ is $\alpha$-free by induction on $r$. If $r=13$, the claim follows immediately from the fact that all classes are $\alpha$-free at $E_{13}$ (Lemma~\ref{lem:alphastuff}). Suppose that the claim holds for all $r'<r$, that $d_r(x)=y$ is a non-trivial differential and that $y$ is $\alpha$-torsion. Then there exists $i>0$, $r'<r$ and $z$ such that 
\[d_{r'}(z) = \alpha^iy.\] Choose a minimal such $i$, so that $\alpha^{i-1}y$ is non-zero at $E_r$. A comparison of degrees then implies that the bidegree of $z$ satisfies the conditions of Lemma~\ref{lem:alphastuff}~\eqref{lem:alphastuff3}, so that
$z$ is $\alpha$-divisible. 
It cannot be the case that $d_{r'}(z/\alpha) = \alpha^{i-1}y$ since this contradicts the minimality of $i$. So we must have that $d_{r''}(z/\alpha)=v\neq 0$ for some $r''<r'<r$. But by the induction hypothesis, $v$ is then $\alpha$-free which means that $d_{r''}(z)=\alpha v\neq 0$, which is also a contradiction.
\end{proof}

\begin{remark}\label{rem:vanishingpreview}
We will show in Section~\ref{subsec:VanishingThm} that $\alpha$ is killed by a $d_{61}$-differential. This will imply that for any permanent cycle $x$, the class $\alpha x$ must be hit by a differential of length at most $61$. 
\end{remark}

We now explain the upshot of Proposition~\ref{prop:alphafreeres}. 
Given any class $y$ at $E_{13}$, there is a unique class $x$ which is not $\alpha$-divisible (so is in the complement of the region of Lemma~\ref{lem:alphastuff}~\eqref{lem:alphastuff3}) and such that $y = \alpha^i x$ for some $i\geq 0$. We say that $x$ generates an $\alpha$-free family, where the family is the collection of classes $\{\alpha^ix \mid i\geq 0\}$.

Now, Proposition~\ref{prop:alphafreeres} implies that $\alpha$-free families come in pairs: one family in the pair, generated by $x$ say, supports differentials which truncate the second family in the pair, generated by $y$ say. (In fact, by Remark~\ref{rem:vanishingpreview}, the differentials must be of the form $d_r(\alpha^i x)=\alpha^{i+1} y$.) All classes in the $\alpha$-free family generated by $x$ are then gone at the $E_{r+1}$-page, having supported a no-trivial differential. The class $y$ is now $\alpha$-torsion, and by Proposition~\ref{prop:alphafreeres}, it cannot support any further differential. This allows us to discard $y$ from the rest of the computation, making the spectral sequence effectively sparser. Furthermore, we may now run differentials out of order if we find a unique possibility for pairing  $\alpha$-free families, even if this is through very long differentials.

\begin{remark}
This is in fact a common behavior for spectral sequences. For example, what we have here is very similar to the situation explained in a certain elliptic spectral sequence \cite[Section 6]{BeaudryBobkovaPhamXu}, where there the role of $\alpha$ is played by the class $\bar{\kappa}$. 
\end{remark}

\subsubsection{Multiplication by \texorpdfstring{$b^{32}$}{b to the 32}}
Multiplication by the permanent cycle $b^{32}$ acts as a periodicity generator for most of the spectral sequence. More precisely, we have:

\begin{lemma}\label{lem:bmultiplicationinjective}
Let $r\geq 13$.
Multiplication by $b^{32}$ is injective on the $E_r$-page  for classes on or below the line of slope 1 through the origin.  
\end{lemma}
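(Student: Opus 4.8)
The plan is to leverage that $b^{32}$ is a permanent cycle in this spectral sequence, so that the Leibniz rule gives $d_r(b^{32}x) = b^{32}d_r(x)$ for every $r$; that is, all differentials are $b^{32}$-linear. I would then reduce to an explicit analysis of the $E_2$-page in the region on or below the line of slope one and propagate upward by induction on $r$.

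First I would check the base case on the $E_2$-page: multiplication by $b = u_\lambda/a_\lambda$ is injective on the part of $E_2$ lying on or below the line $s = t-s$ (so $b^{32}$ is injective there a fortiori). For the classes coming from the non-induced (regular) slice cells this holds because, below slope one, only the ``free cone'' of $\pi^{C_4}_\star a_\lambda^{-1}H\underline{\mathbb{Z}}$ contributes to the homotopy of the relevant $H\underline{\mathbb{Z}}$-slice cells, and on that cone $u_\lambda = b\,a_\lambda$ is a non-zero-divisor; the $a_\lambda$-divisible torsion classes of $\pi^{C_4}_\star a_\lambda^{-1}H\underline{\mathbb{Z}}$ all lie strictly above slope one and never enter the region (see Figure~\ref{fig:BP22C4E2}). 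For the classes coming from the induced cells I would restrict to $G' = C_2$: these are $\uparrow_{C_2}^{C_4}$ of classes in the $C_2$-localized slice $E_2$-page $\mathbb{F}_2[u_{2\sigma}, a_\sigma^{\pm 1}, \bar t_2, \gamma\bar t_2]$ of Section~\ref{subsec:C2SliceSSBPCfourtwotwo}, which is a free module over $\mathbb{F}_2[u_{2\sigma}] = \mathbb{F}_2[b\,a_\sigma^2]$, and induction along $C_2 \subset C_4$ is exact. Hence $b^{32}$ is injective on all of $E_2$ in the stated region; in fact $E_2$ there is a free $\mathbb{F}_2[b^{32}]$-module.

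Next I would do the inductive step. Assume $b^{32}$ is injective on $E_r$ in the region on or below slope one, let $x$ be a class there with $b^{32}x = 0$ on $E_{r+1}$, and write $b^{32}x = d_r(w)$. A degree count shows that $b^{32}x$, and hence its $d_r$-preimage $w$, lies \emph{strictly} below slope one, since multiplication by $b^{32}$ has bidegree $(64,-64)$ and pushes any class on or below the line far below it. The crucial point is then that the class of $w$ modulo $\ker d_r$ is $b^{32}$-divisible on $E_r$; equivalently, that the submodule $\mathrm{im}(d_r)$ of the free $\mathbb{F}_2[b^{32}]$-module $E_r$ restricted to this region is saturated, i.e.\ no differential with source below slope one hits a nonzero $b^{32}$-multiple of a surviving class that also lies below slope one. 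Granting this, one writes $w = b^{32}w' + (\text{a }d_r\text{-cycle})$, whence $b^{32}d_r(w') = b^{32}x$; since $d_r(w')-x$ sits at the bidegree of $x$ (on or below slope one), where $b^{32}$ is injective on $E_r$ by the inductive hypothesis, this forces $d_r(w') = x$, so $x = 0$ on $E_{r+1}$.

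The main obstacle is the saturation claim used in the inductive step: one must verify, using the explicit list of differentials below slope one (the $d_7$-, $d_{19}$-, and $d_{31}$-differentials imported from Section~\ref{subsec:C2SliceSSBPCfourtwotwo}, together with the $\alpha$-linear higher differentials), that none of their targets become $b^{32}$-divisible within the region among the surviving classes. This is a finite check against the charts (Figures~\ref{fig:BP22C4E2}--\ref{fig:C4E8page}) rather than a deep structural input; once $b^{32}$ is known to be a permanent cycle, everything else is formal homological algebra over the graded principal ideal domain $\mathbb{F}_2[b^{32}]$.
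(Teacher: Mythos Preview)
The paper does not prove this lemma; it is stated without argument in the ``Strategy'' subsection and then used as an organizing principle for the subsequent computation of higher differentials. So there is no proof to compare against directly, and the question is whether your sketch stands on its own.

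Your outline has the right shape, but you correctly identify and then underestimate the real obstacle. The saturation claim is \emph{not} ``formal homological algebra over the graded principal ideal domain $\mathbb{F}_2[b^{32}]$'': a $b^{32}$-linear differential on a free $\mathbb{F}_2[b^{32}]$-module can certainly create $b^{32}$-torsion in homology (take $M=\mathbb{F}_2[b^{32}]\{x,y\}$ with $d(x)=b^{32}y$, $d(y)=0$), so your final sentence is simply false. More seriously, your proposed verification---checking saturation against ``the $d_7$-, $d_{19}$-, and $d_{31}$-differentials \ldots\ together with the $\alpha$-linear higher differentials''---is circular. In the paper the $d_{37}$-, $d_{43}$-, $d_{55}$-, and $d_{61}$-differentials are established \emph{after} this lemma, and their proofs explicitly ``propagate with respect to the classes $\dtwo^8 u_{24\sigma}a_{24\lambda}$ and $b^{32}$'' (see Propositions~\ref{prop:BPCfourtwotwod37Differentials1} through~\ref{prop:BPCfourtwotwod61Differentials}); that propagation step is exactly where $b^{32}$-injectivity is invoked. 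You cannot feed those differentials back in as input here. (Your list also omits the $d_{13}$-differentials below slope~$1$, which are not imported from the $C_2$-spectral sequence but proved directly in Section~\ref{subsec:d13Diff}.)

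A non-circular argument must restrict itself to what is logically available at this point: the explicit $E_8$-page after the $d_7$'s, the external input that $b^{32}$ is a $d_{61}$-cycle in $a_\lambda^{-1}\MUCfour$, and possibly Proposition~\ref{prop:alphafreeres}. One viable route is to track not just $b^{32}$-freeness but freeness over successively larger powers $b^{2^k}$ page by page: the $d_7$'s are visibly $b^4$-linear and leave the region $b^4$-free; the $d_{13}$'s involve $d_{13}(b^4)$ but are $b^8$-linear and leave the region $b^8$-free; and so on, with the differentials on $b^{2^k}$ in $a_\lambda^{-1}\MUCfour$ controlling each halving. This is more work than your sketch suggests, which is presumably why the paper leaves it implicit.
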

It follows that if a differential has both source and target on or below the line of slope 1 through the origin, then $d_r(x) = y$ occurs if and only if $d_r(b^{32}x) = b^{32}y$ occurs. 
Differentials whose source and target are above the line of slope 1 through the origin are determined by the $a_{\sigma}$-localized spectral sequence. Some differentials, fall in neither category in the sense that they cross the line of slope one. That is, the source is on or below the line of slope one and the target is above. For these differentials, the target may be $b^{32}$-torsion while the source is not.

As one does the computation however, one sees that the target of such differentials have bidegree $(t-s,s)$ such that
\[s \leq (t-s)+14.\]
This can be seen from the $d_{13}$-differentials that are obtained from the $a_{\sigma}$-localized spectral sequence and the structure of the $E_{14}$-page. Since the longest differential is a $d_{61}$ and classes are concentrated in degrees with $t-s$ even, classes strictly below the line 
\[s=(t-s)-60\]
cannot support differentials that cross the line of slope 1 through the origin.
Therefore, to completely determine the differentials of the spectral sequence using $b^{32}$-linearity and $\alpha$-linearity, it is sufficient to determine:
\begin{itemize}
    \item The $d_{13}$'s with source on or above the line of slope 1 through the origin, all obtained from the $a_{\sigma}$-localized spectral sequence.
    \item The differentials on classes with source of bidegree $(t-s,s)$ where is in the rectangular region:
    \begin{align*}
 s&\leq (t-s) &
  s& \geq -(t-s) \\
  s &\geq  (t-s) -188 &
 s&< -(t-s)+96 
    \end{align*}
\end{itemize}
   This region is larger than what is needed in practice, but the goal of this discussion is simply to illustrate the strategy and make a rough estimate on what differentials need to be determined. As we go through the computation, we learn that the region that determines all differentials is in fact smaller but, a priori, this is not clear.

\subsubsection{Summary}
To summarize, we just have to focus on the classes in the shaded rectangular region of Figure~\ref{fig:StrategyCartoon} which is the union of a cone and a rectangle.  Once we have figured out the fate of all the classes in this region, we can propagate by the classes $\alpha$ and $b^{32}$ to obtain the rest of the differentials. Furthermore, once an $\alpha$-multiple of a class gets truncated by a differential, that class can no longer support differentials and can be disregarded from future arguments.

\begin{center}
\begin{figure}
\includegraphics[width=\textwidth]{CartoonForPaper.pdf}
\caption{Features of the \(a_{\lambda}\)-localized slice spectral sequence of \(a_{\lambda}^{-1}\BPCfourtwotwo\)}
\label{fig:StrategyCartoon}
\end{figure}
\end{center}

\subsection{Differentials of length at least 13}
\subsubsection{\texorpdfstring{$d_{13}$}{d-13}-differentials} \label{subsec:d13Diff}
By degree reasons, the next possible differentials are the $d_{13}$-differentials.  

The differentials on or above the line of slope 1 are all obtained by computing the \(a_{\sigma}\)-localized spectral sequence, as explained in Section~\ref{sec:gen}. This spectral sequence is depicted in Figure~\ref{fig:sliceSSBPCfourmm2}.  The differentials are summarized in the following proposition. \\

\begin{proposition}
The $d_{13}$-differentials that are on or above the line of slope 1 are generated by
\begin{enumerate}
\item $d_{13}\left((\dtwo^2u_{6\sigma}a_{6\lambda})^{i+4k}\right) = \dtwo^3 u_{2\sigma}a_{9\lambda}a_{7\sigma} \cdot (\dtwo^2u_{6\sigma}a_{6\lambda})^{i-1 + 4k}$ \\
$i = 1, 2$, $k \geq 0$. 
\item $d_{13}\left(\dtwo^2u_{4\sigma}a_{6\lambda}a_{2\sigma} \cdot (\dtwo^2u_{6\sigma}a_{6\lambda})^{i + 4k}\right) = \dtwo^3 a_{9\lambda}a_{9\sigma}\cdot (\dtwo^2u_{6\sigma}a_{6\lambda})^{i + 4k}$ \\
$i = 0, 3$, $k \geq 0$. 
\item $d_{13}\left(\dtwo^5u_{14\sigma}a_{15\lambda}a_\sigma \cdot (\dtwo^2u_{6\sigma}a_{6\lambda})^{i + 4k}\right) = \dtwo^6u_{10\sigma}a_{18\lambda}a_{8\sigma}\cdot (\dtwo^2u_{6\sigma}a_{6\lambda})^{i + 4k}$ \\
$i = 0,1$, $k \geq 0$. 

\end{enumerate}
\end{proposition}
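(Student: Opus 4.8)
The plan is to deduce all these differentials from the $a_\sigma$-localized slice spectral sequence of Section~\ref{sec:gen}, which for $\BPCfourtwotwo$ was already worked out in Example~\ref{ex:asigmaBPC422}. Since $\BPCfourtwotwo$ is $(-1)$-connected, Theorem~\ref{thm:sliceRecovery} applies in two instances: the localization map from the unlocalized slice spectral sequence to the $a_\lambda$-localized one is an isomorphism on $E_2$ and on differentials on and above the line of slope $0$, while the localization map to the $a_\sigma$-localized one is an isomorphism on and above the line $L_1$ of slope $1$. Because $a_\lambda$ divides $a_\sigma^2$, inverting $a_\sigma$ also inverts $a_\lambda$, so the $a_\sigma$-localization factors through the $a_\lambda$-localization; these comparisons are therefore compatible, and on and above $L_1$ (which in the region $t-s\geq 0$ lies above the line of slope $0$) the $a_\lambda$-localized and $a_\sigma$-localized slice spectral sequences agree together with all their differentials. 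In particular the $d_{13}$'s on or above the line of slope $1$ that we must determine coincide with those of the $a_\sigma$-localized slice spectral sequence in that region.

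\textbf{Input from the $a_\sigma$-localization.} By Example~\ref{ex:asigmaBPC422} (the case $G=C_4$, $m=2$, $J=\mathbb{N}\setminus\{2\}$ of Theorem~\ref{prop:geofixall}), the $E_2$-page of the $a_\sigma$-localized slice spectral sequence is $\F_2[a_\sigma^{\pm1},a_\lambda^{\pm1}][b,\bar{f}_2]$ with $b=u_{2\sigma}/a_\sigma^2$, and the only family of $d_{13}$-differentials is generated by $d_{13}(b^2)=\bar{f}_2$, propagated $\F_2[\bar{f}_2,b^4]$-linearly; thus $d_{13}(b^m)=b^{m-2}\bar{f}_2$ exactly when $m\equiv 2,3\pmod 4$, and $b^m$ is a $d_{13}$-cycle otherwise. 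There are no further $d_{13}$'s: the only potentially shorter differential $d_5(b)=\bar{f}_1$ vanishes since $\bar{f}_1=0$ in $\BPCfourtwotwo$, and any longer differential out of a class on $L_1$ is ruled out by the vanishing line of slope $2^n-1=3$ carrying the $\bar{f}_i$.

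\textbf{Translation.} It remains to rename $d_{13}(b^2)=\bar{f}_2$ in terms of the slice-cell classes of Figure~\ref{fig:BP22C4E2}. Using $\dtwo=N_{C_2}^{C_4}(\ttwo)$, the identity $\bar{f}_2=a_{\bar{\rho}}^{3}\dtwo=a_\sigma^3 a_\lambda^3\dtwo$ (from Lemma~\ref{lem:NormsForsj} and the definition of $\bar{f}_j$, with $\bar{\rho}=\rho_4-1=\sigma+\lambda$), and the relation $a_\lambda u_{2\sigma}=a_\sigma^2 u_\lambda$ in $\pi_\star\HZ$, the generating differential becomes $d_{13}(u_{2\sigma}^2)=a_\sigma^7 a_\lambda^3\dtwo$, which is precisely the Slice Differential Theorem differential recorded in Theorem~\ref{thm:unlocalizingasigma}(2) for $k=1$ (here $\ell(1)=13$ and $2\notin J$). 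Every $d_{13}$ on or above $L_1$ in the $a_\lambda$-localized spectral sequence is obtained from this one by multiplying by a monomial in the permanent cycles $a_\lambda$, $u_\lambda$, $\dtwo$ (and even powers of $u_{2\sigma}$) lying in integer degree on or above $L_1$. Writing the periodicity class along $L_1$ as $\dtwo^2 u_{6\sigma}a_{6\lambda}$, which equals $\bar{f}_2^2 b^3$ after localization, I would organize this enumeration by the residue modulo $4$ of the $b$-exponent of the source; a direct substitution shows that the admissible integer-degree sources fall into exactly the three $\F_2[\dtwo^2 u_{6\sigma}a_{6\lambda}]$-periodic families (1)--(3), the displayed restrictions on $i$ encoding precisely the condition that the $b$-exponent be $\equiv 2$ or $3\pmod 4$, and the targets being read off from $d_{13}(\bar{f}_2^n b^m)=\bar{f}_2^{n+1}b^{m-2}$. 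The $d_{13}$'s on the induced (transfer) cells that occur above $L_1$, if any, then follow by naturality of the transfer from the $C_2$-slice spectral sequence of Section~\ref{subsec:C2SliceSSBPCfourtwotwo}.

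\textbf{Main obstacle.} The delicate point is this last translation step: using the catalogue of order-$4$ and order-$2$ classes on and above $L_1$ listed just before Figure~\ref{fig:BP22C4E2}, one must verify that the dressings by $a_\sigma,a_\lambda,u_\lambda,u_{2\sigma},\dtwo$ needed to bring a class $\bar{f}_2^n b^m$ into integer degree on or above $L_1$ are exhausted, and without overlap, by the three families in the statement. This is a finite but somewhat fiddly check; everything else is a direct appeal to Theorems~\ref{thm:sliceRecovery}, \ref{prop:geofixall}, \ref{thm:unlocalizingasigma} and Example~\ref{ex:asigmaBPC422}.
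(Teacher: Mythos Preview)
Your approach is correct and is essentially the same as the paper's: the paper simply states that the differentials on or above the line of slope $1$ are obtained from the $a_\sigma$-localized slice spectral sequence computed in Section~\ref{sec:gen} (specifically Example~\ref{ex:asigmaBPC422}), with no further argument given. Your write-up spells out explicitly the two applications of Theorem~\ref{thm:sliceRecovery} and the translation via Theorem~\ref{thm:unlocalizingasigma}, which is exactly the mechanism the paper is invoking; your remark about induced cells is harmless but unnecessary, since above $L_1$ the transfer is killed upon $a_\sigma$-localization and hence no induced-cell classes survive in that region.
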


To prove the $d_{13}$-differentials that are under the line of slope 1, we would like to first point out that the class \(\dtwo u_{\lambda} u_{2\sigma} a_{2\lambda}a_\sigma\) in bidegree \((7,5)\) is a permanent-cycle by degree reasons.

The class \(\dthree^4 u_{12 \sigma} a_{12\lambda}\) in bidegree \((24, 24)\) will also be important.  By the Hill--Hopkins--Ravenel Slice differential theorem \cite[Theorem~9.9]{HHR}.  This class supports the $d_{13}$-differential 
\[d_{13}(\dthree^4 u_{12 \sigma} a_{12\lambda}) = \dthree^5 u_{8\sigma}a_{15\lambda}a_{7\sigma}\]
in the slice spectral sequence of $\BPCfour$.  By naturality, this differential also appears in the slice spectral sequence of $\BPCfourtwotwo$.  When applying the Leibniz rule, the class \(\dthree^4 u_{12 \sigma} a_{12\lambda}\) $(24, 24)$ acts as if it is a $d_{13}$-cycle for differentials whose sources are below the line of slope 1.  More specifically, the target of the $d_{13}$-differential on this class multiplied with the source of another \(d_{13}\)-differential below the line of slope 1 is always 0.

\begin{proposition}
We have the following $d_{13}$-differentials: 
\begin{enumerate}
\item $d_{13}\left(b^4 \right) = \dtwo u_{\lambda} u_{2\sigma} a_{2\lambda}a_\sigma$ 

\item $d_{13}\left(b^5 \right) = \dtwo u_{2\lambda} u_{2\sigma} a_{\lambda}a_\sigma$ 

\item $d_{13}\left(\dtwo u_{4\lambda} u_{2\sigma} a_{-\lambda}a_\sigma\right) = 2 \dtwo^2 u_{6\sigma}a_{6\lambda}$ 
\item $d_{13}\left(\dtwo u_{4\lambda} u_{2\sigma} a_{-\lambda}a_\sigma \cdot b\right) = 2 \dtwo^2 u_{6\sigma}a_{6\lambda} \cdot b$ 
\item $d_{13}\left(\dtwo u_{4\lambda} u_{2\sigma} a_{-\lambda}a_\sigma \cdot b^2\right) = 2 \dtwo^2 u_{6\sigma}a_{6\lambda} \cdot b^2$ 
\item $d_{13}\left(\dtwo u_{4\lambda} u_{2\sigma} a_{-\lambda}a_\sigma \cdot b^3\right) = 2 \dtwo^2 u_{6\sigma}a_{6\lambda} \cdot b^3$ 
\end{enumerate}
\end{proposition}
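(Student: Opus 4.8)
The plan is to obtain all six differentials from data already available: (i) the $d_{13}$-differentials lying on or above the line of slope $1$, which were determined in the previous proposition from the $a_\sigma$-localized computation of Example~\ref{ex:asigmaBPC422}; (ii) slice differentials of $\MUCfour$ (equivalently $\BPCfour$) coming from the Slice Differential Theorem of Hill--Hopkins--Ravenel, transported to $\BPCfourtwotwo$ by naturality of the quotient map; (iii) the module structure over the $a_\lambda$-localized slice spectral sequence of $a_\lambda^{-1}\MUCfour$, i.e.\ the Leibniz rule; and (iv) restriction to $C_2$ together with the transfer. Two structural facts are used throughout: $b=u_\lambda/a_\lambda$ is a $d_{13}$-cycle, since there is nothing in its bidegree that could be a target; and (from (ii)) the class $\dthree^4 u_{12\sigma}a_{12\lambda}$ at $(24,24)$ supports the differential $d_{13}(\dthree^4 u_{12\sigma}a_{12\lambda})=\dthree^5 u_{8\sigma}a_{15\lambda}a_{7\sigma}$, whose target multiplies to zero against the source of every $d_{13}$ below slope $1$, so that $\dthree^4 u_{12\sigma}a_{12\lambda}$ may be treated as a $d_{13}$-cycle for Leibniz-rule purposes in that region. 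Given this, (4)--(6) follow at once: multiply the differential (3) by $b$, $b^2$, $b^3$ and apply Leibniz. It remains to prove (1), (2), (3).

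For (1) and (2), the targets $\dtwo u_\lambda u_{2\sigma}a_{2\lambda}a_\sigma$ at $(7,5)$ and $\dtwo u_{2\lambda}u_{2\sigma}a_\lambda a_\sigma$ at $(9,3)$ are permanent cycles for degree reasons: the $E_8$-page (Figure~\ref{fig:C4E8page}) leaves no room for a longer differential out of either. So it suffices to show they are boundaries. I would do this by pushing forward along the comparison map $\tilde E\mathcal{F}_0\wedge\BPCfourtwotwo\to\tilde E\mathcal{F}_1\wedge\BPCfourtwotwo$ to the $a_\sigma$-localized slice spectral sequence and, using the gold-type relation that identifies $u_\lambda/a_\lambda$ with $u_{2\sigma}/a_\sigma^2$, rewriting the two classes in terms of $\bar f_2$ and $b$; one then reads off from the $a_\sigma$-localized answer (Example~\ref{ex:asigmaBPC422}) together with the $b^{32}$-periodicity of Lemma~\ref{lem:bmultiplicationinjective} that the relevant groups $\pi_*^{C_4}a_\lambda^{-1}\BPCfourtwotwo$ force these classes to die. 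By degree the only classes that can hit them are $b^4$ and $b^5$ respectively, and a differential out of $b^4$ or $b^5$ must be a $d_{13}$ by sparseness; this gives (1) and (2).

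For (3), I would apply the Slice Differential Theorem to produce a $d_{13}$-differential on $u_{2\lambda}$ in $\MUCfour$, transport it to $\BPCfourtwotwo$, and feed it into the Leibniz rule for $\dtwo u_{4\lambda}u_{2\sigma}a_{-\lambda}a_\sigma=\dtwo u_{2\lambda}^2 u_{2\sigma}a_{-\lambda}a_\sigma$. The factor $2$ in the target $2\dtwo^2 u_{6\sigma}a_{6\lambda}$ is exactly what one expects, because $d_{13}(u_{2\lambda}^2)=2\,u_{2\lambda}\,d_{13}(u_{2\lambda})$ over the integrally graded slice spectral sequence of $\MUCfour$, and the relevant product is an order-four class, so its double is nonzero. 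The coefficient $\dtwo$ and the precise Euler-class exponents are pinned down by restricting to the $C_2$-level, where $\dtwo$ restricts to $\ttwo\gamma\ttwo$ and the corresponding (unrescaled, longer) differential appears among those computed in Section~\ref{subsec:C2SliceSSBPCfourtwotwo}, and then re-ascending via the Mackey-functor structure (the source of (3) lies in the image of the transfer from $C_2$).

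The main obstacle is the step in (1) and (2) showing that the two permanent cycles are boundaries. Because every target contains $a_\sigma$, which restricts to $0$ on $C_2$, restriction to the subgroup yields no information and the argument must be carried out at the $C_4$-level; this forces one either to identify $\pi_*^{C_4}a_\lambda^{-1}\BPCfourtwotwo$ in the relevant stems or to argue carefully through the comparison with the $a_\sigma$-localized spectral sequence and the gold relation. The accompanying nuisance throughout is $RO(C_4)$-degree bookkeeping: tracking which slice classes become $a_\lambda$-divisible once $a_\lambda$ is inverted, handling the negative Euler class $a_{-\lambda}$ in the source of (3), and keeping straight the order-four versus order-two classes so that the factors of $2$ in the targets of (3)--(6) come out right.
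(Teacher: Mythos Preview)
Your deduction of (4)--(6) from (3) by ``multiplying by $b$, $b^2$, $b^3$ and applying Leibniz'' does not work. The Leibniz rule here is the module Leibniz rule over the slice spectral sequence of $\BPCfour$ (or $\MUCfour$), and it requires the multiplier to survive to $E_{13}$ in the \emph{ring} spectral sequence. But in $\SliceSS(\BPCfour)$ the class $u_\lambda$ supports a $d_5$ (its target involves $\done$), so $b$ is already gone at $E_6$ there and is not available as a multiplier at $E_{13}$. You cannot instead treat $b$ as an internal class in $\SliceSS(\BPCfourtwotwo)$ and multiply, because $\BPCfourtwotwo$ is not a ring (Proposition~\ref{prop:notaring}); the only multiplicative structure is the module structure. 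The paper handles (4) and (5) by a different Leibniz argument: it multiplies differentials (1) and (2) by the \emph{permanent cycle} $\dtwo u_\lambda u_{2\sigma}a_{2\lambda}a_\sigma$ at $(7,5)$ and uses the gold relation. Differential (6) requires a separate and more delicate argument: one first produces an exotic restriction of filtration jump $6$ for the source $\dtwo u_{7\lambda}u_{2\sigma}a_{-4\lambda}a_\sigma$ at $(19,-7)$, showing that after $E_7$ it restricts to $\ttwo^3 u_{10\sigma_2}a_{-\sigma_2}$; since that $C_2$-class supports a $d_{19}$, naturality forces the $C_4$-class to die by $E_{25}$, and the only possibility is the claimed $d_{13}$.

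Your arguments for (1)--(3) are also not the paper's, and each has a soft spot. For (1) and (2) you try to show the targets are boundaries by comparing with the $a_\sigma$-localization and invoking $b^{32}$-periodicity; but the targets carry a factor of $a_\sigma$, so they map to zero under restriction to $C_2$, and the $b^{32}$-injectivity lemma is established later. The paper instead argues on the \emph{sources}: $\res(b^{12})$ supports a $d_{31}$ in the $C_2$-spectral sequence, so by naturality and degree reasons $b^{12}$ supports a $d_{13}$; then Leibniz with the $d_{13}$-cycle $b^8$ (which does survive to $E_{13}$ in $\BPCfour$, supporting only a $d_{15}$) gives (1). For (2), $\res(b^5)$ supports a $d_{19}$, so $b^5$ supports a $d_{13}$ or a $d_{19}$; the $d_{19}$ is ruled out because the only possible $C_4$-target does not restrict to the $C_2$-target. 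For (3), your proposed route through $d_{13}(u_{2\lambda})$ in $\MUCfour$ would need you to check that $u_{2\lambda}$ survives to $E_{13}$ there; the paper avoids this by observing that the target $2\dtwo^2 u_{6\sigma}a_{6\lambda}$ is in the image of the transfer of a class that is hit by a $d_{19}$ in the $C_2$-spectral sequence, so by naturality it must die by $E_{19}$, and the only option is the $d_{13}$ from the stated source.
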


\begin{proof}
To prove (1), we will first prove the differential 
\[d_{13}\left(b^{12} \right) = \dtwo u_{9\lambda} u_{2\sigma} a_{-6\lambda}a_\sigma \,\,\, (d_{13}(24,-24) = (23, -11))\]
The source of this differential restricts to a class in the $C_2$-spectral sequence that supports a $d_{31}$-differential. 
By naturality and degree reasons, we must have the $d_{13}$-differential claimed above.  Applying Leibniz with the class 
$b^8$ 
in degree
$(16, -16)$ proves (1).  

The source of (2) restricts to a class that supports a $d_{19}$-differential in the $C_2$-spectral sequence.  Therefore the source class must support either a $d_{13}$- or a $d_{19}$-differential.  By naturality, it cannot be a $d_{19}$-differential because the target does not restrict to the target of the $d_{19}$-differential in the $C_2$-spectral sequence.

The targets of (3) is in the image of the transfer.  The preimage is killed by a $d_{19}$-differential.  Therefore by naturality and degree reasons, we have the $d_{13}$-differential claimed in (3).

Differentials (4) and (5) are obtained by applying the Leibniz rule using the class $\dtwo u_{\lambda} u_{2\sigma} a_{2\lambda}a_\sigma$ $(7,5)$ with differentials (1) and (2) (and also using the gold relation).  

It remains to prove differential (6). Consider the class $\dthree u_{7\lambda} u_{2\sigma}a_{-4\lambda}$.  The restriction of this class is $\bar{t}_2\gamma \bar{t}_2 u_{14\sigma_2} a_{-8\sigma_2}$, which supports a $d_7$-differential in the $C_2$-spectral sequence.  Therefore, the class $\dthree u_{7\lambda} u_{2\sigma}a_{-4\lambda}$ also supports a $d_7$-differential in the $C_4$-spectral sequence.  The existence of this $d_7$-differential shows that there is an exotic restriction of filtration 6 for the class $\dthree u_{7\lambda} u_{2\sigma}a_{-4\lambda}a_\sigma$ $(19, -7)$.  It must have nonzero restriction, restricting to the class $\bar{t}_2^3 u_{10\sigma_2}a_{-\sigma_2}$ $(19, -1)$ after the $E_7$-page.  

Since the class $\bar{t}_2^3 u_{10\sigma_2}a_{-\sigma_2}$ $(19, -1)$ supports a $d_{19}$-differential in the $C_2$-spectral sequence, the class $\dthree u_{7\lambda} u_{2\sigma}a_{-4\lambda}a_\sigma$ $(19, -7)$ cannot survive past the $E_{25}$-page.  The only possibility is for it to support the $d_{13}$-differential claimed by (6).  
\end{proof}

The same proof for differentials (1)--(6) can be used to prove six more differentials that are obtained by multiplying both the source and the target of each differential by $(12, 12)$: $\dtwo u_{6\sigma} a_{6\lambda}$ (note that we can't just directly propagate by this class using the Leibniz rule because it supports a $d_5$-differential in $\SliceSS(\BPCfour)$). 

\begin{proposition}
The following classes are $d_{13}$-cycles: 
\begin{enumerate}
\item $2b^6$ $(12, -12)$;
\item $2\dthree^2 u_{6\lambda} u_{6\sigma}$ $(24,0)$;
\item $\dthree^3 u_{8\lambda}u_{8\sigma}a_\lambda a_\sigma$ $(33,3)$. 
\end{enumerate}
\end{proposition}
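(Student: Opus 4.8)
The plan is to show that each of the three classes---all of which survive to the $E_{13}$-page, since there are no differentials of length strictly between $8$ and $13$---supports no $d_{13}$-differential. The workhorse will be naturality along the restriction $\Res^{C_4}_{C_2}$ together with the observation from Section~\ref{subsec:C2SliceSSBPCfourtwotwo} that the $C_2$-slice spectral sequence of $i^*_{C_2}\BPCfourtwotwo$ has differentials only in lengths $7$, $19$, and $31$, and in particular carries no $d_{13}$. Consequently, for any class $x$ of the $C_4$-spectral sequence one has $\Res^{C_4}_{C_2}\,d_{13}(x) = d_{13}(\Res^{C_4}_{C_2}x) = 0$, so every $d_{13}$ in the $C_4$-spectral sequence lands in $\ker\Res^{C_4}_{C_2}$, i.e.\ is carried by induced cells.

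For the first two classes I would combine this with a parity count on the target bidegrees. A $d_{13}$ drops the stem by one and raises the filtration by $13$, so the targets lie in bidegree $(11,1)$ for $2b^6$ and $(23,13)$ for $2\dtwo^2 u_{6\lambda}u_{6\sigma}$; in both, the second coordinate is odd, so neither bidegree meets the family of order-four classes $\dtwo^{2i}u_{(6i-j)\lambda}u_{6i\sigma}a_{j\lambda}$ in degree $(24i-2j,2j)$, and hence every class in these bidegrees on $E_{13}$ has exponent $2$. Since $\dtwo^2 u_{6\lambda}u_{6\sigma}$ is precisely the order-four class with $i=1$, $j=0$, this gives $d_{13}(2\dtwo^2 u_{6\lambda}u_{6\sigma}) = 2\,d_{13}(\dtwo^2 u_{6\lambda}u_{6\sigma}) = 0$ at once. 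For $2b^6$ I would first record that it survives to $E_8$: at $E_2$ the class $b^6 = u_{6\lambda}/a_{6\lambda}$ generates a $\Z/4$ (the order-four class with $i=0$, $j=-6$) and supports $d_7(b^6) = \tr_{C_2}^{C_4}(\ttwo a_{3\sigma_2})b^4$, whose target, being the transfer of an $\F_2$-class, is $2$-torsion; so $d_7(2b^6) = 0$, and as none of the explicitly listed $d_7$-differentials hits bidegree $(12,-12)$ the order-two element $2b^6$ persists. Then $\Res^{C_4}_{C_2}(2b^6) = 0$ forces $d_{13}(2b^6)$ into $\ker\Res^{C_4}_{C_2}$; the non-induced classes in $(11,1)$ restrict nontrivially, so the differential would have to hit an induced class, and a check of the $d_7$- and $d_{13}$-differentials already established together with the induced-cell contributions shows nothing is available there on $E_{13}$.

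For the third class $\dtwo^3 u_{8\lambda}u_{8\sigma}a_\lambda a_\sigma$ at $(33,3)$ the parity trick is unavailable, so I would lean harder on naturality. Its restriction is $(\ttwo\gamma\ttwo)^3 u_{24\sigma_2}a_{3\sigma_2}$, which is a $d_{13}$-cycle in the $C_2$-spectral sequence for the trivial reason that no $d_{13}$ exists there, so $d_{13}$ of our class lands in $\ker\Res^{C_4}_{C_2}$ inside bidegree $(32,16)$. The order-four class $\dtwo^4 u_{4\lambda}u_{12\sigma}a_{8\lambda}$ sitting there restricts nontrivially and is excluded; for the remaining (induced) possibilities I would transport the question to the $C_2$-spectral sequence, using that a $d_{13}$ hitting a transfer class $\tr_{C_2}^{C_4}(z)$ would, upon restriction, produce the differential $d_{13}(\Res^{C_4}_{C_2}x) = z + \gamma z$ in the $C_2$-spectral sequence, forcing $z + \gamma z = 0$; a direct inspection of the $C_2$-computation then rules out the surviving Weyl-anti-invariant transfers in $(32,16)$.

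The hard part will be this last step: unlike for classes (1) and (2), no cheap numerical count rules out a $d_{13}$ for class (3), so one must genuinely enumerate which induced classes remain alive in bidegree $(32,16)$ on $E_{13}$ and confirm that none can receive a $d_{13}$ from $(33,3)$. Pushing the bookkeeping down to the $C_2$-spectral sequence, where the absence of any $d_{13}$ makes the constraint automatic, is the cleanest route; the analogous check for class (1) is of the same flavour but substantially lighter, and class (2) is essentially immediate.
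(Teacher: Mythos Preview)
Your restriction-to-$C_2$ strategy is reasonable in spirit, but the argument has genuine gaps in each part.

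For (2), the step ``$d_{13}(2\dtwo^2 u_{6\lambda}u_{6\sigma}) = 2\,d_{13}(\dtwo^2 u_{6\lambda}u_{6\sigma}) = 0$'' presupposes that the generator $\dtwo^2 u_{6\lambda}u_{6\sigma}$ itself survives to $E_{13}$. It does not: writing it as $(\dtwo^2 u_{6\sigma}a_{6\lambda})\cdot b^6$ and restricting to $C_2$, one sees a nonzero $d_7$ (since $b^6$ supports one and $\Res(\dtwo^2 u_{6\sigma}a_{6\lambda})$ is a unit times a $d_7$-cycle). At $E_{13}$ only the order-two element survives, and your divisibility-by-$2$ argument no longer applies. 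The same problem afflicts (1), which you partly anticipate; but your fallback claim that ``the non-induced classes in $(11,1)$ restrict nontrivially'' is false: the class there is $\dtwo u_{3\lambda}u_{2\sigma}a_\sigma$, and since $\sigma|_{C_2}$ is trivial one has $\Res^{C_4}_{C_2}(a_\sigma)=0$. So this target is not excluded by restriction. For (3), you correctly exclude the generator of the $\mathbb Z/4$ at $(32,16)$, but $2\dtwo^4 u_{4\lambda}u_{12\sigma}a_{8\lambda}$ lies in $\ker\Res^{C_4}_{C_2}$ and your argument does not rule it out; indeed the paper devotes a separate later lemma to exactly this possibility.

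The paper's route is quite different and avoids all of this bookkeeping. For (1) and (2) it uses the Leibniz rule in the $\BPCfour$-module structure: multiplying an already-established $d_{13}$ (e.g.\ $d_{13}(17,-5)=(16,8)$) by the permanent cycle $\dtwo u_\lambda u_{2\sigma}a_{2\lambda}a_\sigma$ at $(7,5)$, the gold relation converts the product of sources into $2\dtwo^2 u_{6\lambda}u_{6\sigma}$, while the product of targets lands in a $2$-torsion group and so vanishes; then one divides by $\dtwo^4 u_{12\sigma}a_{12\lambda}$ to reach $2b^6$. For (3) the paper simply invokes naturality along $\BPCfour\to\BPCfourtwotwo$: the class $\dtwo^3 u_{8\lambda}u_{8\sigma}a_\lambda a_\sigma$ already exists in $\SliceSS(\BPCfour)$ and is a $d_{13}$-cycle there, so its image is too. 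This bypasses any enumeration of targets.
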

\begin{proof}
For (1), the class $2\dthree^4 u_{6\lambda}u_{12\sigma} a_{6\lambda}$ $(36, 12)$ is a $d_{13}$-cycle by using the class $\dtwo u_{\lambda} u_{2\sigma} a_{2\lambda}a_\sigma$ $(7,5)$ to apply the Leibniz rule to the $d_{13}$-differential 
\[d_{13}(\dthree^3 u_{6\lambda} u_{8\sigma} a_{3\lambda}a_\sigma) = 2\dthree^4 u_{2\lambda} u_{12\sigma}a_{10\lambda} \quad (d_{13}(29, 7) = (28, 20)). \]
Therefore, by Leibniz with the class $\dthree^4 u_{12 \sigma} a_{12\lambda}$ $(24, 24)$, the class $2b^6$ $(12, -12)$ is also a $d_{13}$-cycle.  

(2) is proven by the exact same method, by using the class $\dtwo u_{\lambda} u_{2\sigma} a_{2\lambda}a_\sigma$ $(7,5)$ to apply the Leibniz rule to the $d_{13}$-differential 
\[d_{13}(\dtwo u_{6\lambda} u_{2\sigma} a_{-3\lambda}a_\sigma) = 2 \dtwo^2 u_{2\lambda}u_{6\sigma}a_{4\lambda} \quad (d_{13}(17, -5) = (16, 8)).\]

For (3), the class $\dthree u_{8\lambda}u_{8\sigma}a_\lambda a_\sigma$ is a $d_{13}$-cycle in $\SliceSS(\BPCfour)$.  Therefore by naturality it is a $d_{13}$-cycle in $\SliceSS(\BPCfourtwotwo)$. 
\end{proof}

Now, we can propagate all the differentials by the classes $\dtwo^2 u_{12\sigma} a_{12\lambda}$ $(24, 24)$ and $b^8$ $(16,-16)$.  The $d_{13}$-differentials under the line of slope 1 are summarized in the following proposition. 

\begin{proposition}
The $d_{13}$-differentials that are under the line of slope 1 are
\begin{enumerate}
\item $d_{13}\left(b^{4+i+8j}\cdot(\dtwo^2 u_{6\sigma}a_{6\lambda})^k \right) = \dtwo u_{\lambda} u_{2\sigma} a_{2\lambda}a_\sigma \cdot b^{i+8j}(\dtwo^2 u_{6\sigma}a_{6\lambda})^k$, \\$0 \leq i \leq 1$, $j, k \geq 0$. 

\item $d_{13}\left(\dtwo u_{4\lambda} u_{2\sigma} a_{-\lambda}a_\sigma \cdot b^{i+8j}(\dtwo^2 u_{6\sigma}a_{6\lambda})^k\right) = 2 \dtwo^2 u_{6\sigma}a_{6\lambda}\cdot b^{i+8j}(\dtwo^2 u_{6\sigma}a_{6\lambda})^k$, \\$0 \leq i \leq 3$, $j, k \geq 0$.  
\end{enumerate}
They are shown in Figure~\ref{fig:C4E13page}.  
\end{proposition}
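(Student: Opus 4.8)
The plan is to obtain both families purely by Leibniz propagation from a finite list of seed differentials. The seeds are differentials (1)--(6) of the preceding proposition together with the six obtained from them by multiplying source and target by the class $\dtwo^2 u_{6\sigma}a_{6\lambda}$ in bidegree $(12,12)$; as explained just above, these six are established not by a Leibniz argument (the class $\dtwo^2 u_{6\sigma}a_{6\lambda}$ is not known to be a $d_{13}$-cycle, and in fact carries a $d_5$ in $\SliceSS(\BPCfour)$) but by re-running the same naturality-to-$C_2$ arguments used for (1)--(6) on the $\dtwo^2 u_{6\sigma}a_{6\lambda}$-multiples. After rewriting targets with $b=u_\lambda/a_\lambda$ and the gold relation, these twelve differentials are exactly the cases $j=0$, $k\in\{0,1\}$ of the two stated families, with $i$ ranging over $\{0,1\}$ in the first and over $\{0,1,2,3\}$ in the second.

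First I would record the two propagation classes. The class $b^8 = u_{8\lambda}/a_{8\lambda}$ in bidegree $(16,-16)$ is a $d_{13}$-cycle: by comparison with the $\MUCfour$-input it supports at most a $d_{15}$, and naturality to the $C_2$-slice spectral sequence of Section~\ref{subsec:C2SliceSSBPCfourtwotwo} (where its restriction $b^{16}$ lies in no differential family and survives) confirms it lasts past $E_{13}$; hence $d_{13}(b^8x)=b^8 d_{13}(x)$ for each seed $x$, producing the exponent $8j$. The class $(\dtwo^2 u_{6\sigma}a_{6\lambda})^2=\dtwo^4 u_{12\sigma}a_{12\lambda}$ in bidegree $(24,24)$ itself supports a $d_{13}$, but, as noted in the discussion preceding this proposition, the product of that $d_{13}$-target with the source of any $d_{13}$ lying below the line of slope $1$ is zero, so the Leibniz rule still yields $d_{13}\big((\dtwo^2 u_{6\sigma}a_{6\lambda})^2 x\big)=(\dtwo^2 u_{6\sigma}a_{6\lambda})^2 d_{13}(x)$ for all our sub-slope-$1$ seeds $x$, raising $k$ by $2$. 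Combining the two seed values $k\in\{0,1\}$ with this even-step propagation covers every $k\geq 0$, and a quick check that all resulting source and target bidegrees stay strictly below slope $1$ completes both families.

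It remains to see that these exhaust the sub-slope-$1$ $d_{13}$'s, and this is the step I expect to cause the most trouble. Here one uses that $E_{13}=E_2$ below slope $1$, that its classes are laid out along the columns of the matrix~\eqref{eq:matrix} in even stems so that only finitely many bidegree configurations can support a $d_{13}$, and that restriction $i^*_{C_2}$ carries any hypothetical extra $d_{13}$ to a $d_{13}$ in the $a_\sigma$-localized $C_2$-slice spectral sequence, which has none (its differentials are $d_7$, $d_{19}$, $d_{31}$). Consequently the target of any such differential restricts to zero and is therefore a transfer, and its source restricts to a $d_{13}$-cycle; combined with the Mackey (restriction/transfer) structure and a finite stem-by-stem inspection of a fundamental domain for the $b^8$- and $(\dtwo^2 u_{6\sigma}a_{6\lambda})^2$-periodicities, this forces every remaining class below slope $1$ to be either a permanent $d_{13}$-cycle (to be resolved by later, longer differentials) or in the image of a transfer. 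The delicate point throughout is the behaviour right along the line of slope $1$, where the seed class $\dtwo^2 u_{6\sigma}a_{6\lambda}$ and the propagating class $(\dtwo^2 u_{6\sigma}a_{6\lambda})^2$ sit and where the ``$d_{13}$-cycle against sub-slope-$1$ sources'' property can break down; this is where I would rely most heavily on naturality to the $C_2$-computation and on the constraints imposed by the matrix~\eqref{eq:matrix}.
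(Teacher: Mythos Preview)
Your existence argument is essentially the paper's own: take the twelve seed differentials (the six of the preceding proposition plus the six obtained by the same $C_2$-naturality arguments on the $\dtwo^2 u_{6\sigma}a_{6\lambda}$-multiples) and propagate by $b^8$ and $(\dtwo^2 u_{6\sigma}a_{6\lambda})^2$, using that the Leibniz term coming from $d_{13}$ of the $(24,24)$ class vanishes against sub-slope-$1$ sources.

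The exhaustion step, however, diverges from the paper and your proposed route has a genuine gap. The paper does \emph{not} settle exhaustion at this point: the Remark immediately after the proposition records that further potential $d_{13}$'s remain, and these are ruled out only later in Lemmas~\ref{lem:d31d13notExist} and~\ref{lem:d37d13notExist}, by contradiction with the Vanishing Theorem (Theorem~\ref{thm:VanishingThm}). Your argument---that any stray $d_{13}$ would restrict to a $d_{13}$ in the $C_2$-spectral sequence, where none exist---fails exactly on the cases that matter. The candidate differentials in those two lemmas, e.g.\ the possible $d_{13}$ on $\dtwo u_{8\lambda}u_{2\sigma}a_{-5\lambda}a_\sigma$ or on $\dtwo^{11}u_{3\lambda}u_{32\sigma}a_{30\lambda}a_\sigma$, all have sources containing a factor of $a_\sigma$. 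Since $\sigma$ restricts to the trivial $C_2$-representation, $a_\sigma$ restricts to zero, so these sources restrict to zero and the $C_2$-comparison tells you nothing. Likewise the targets (classes like $2\dtwo^2 u_{4\lambda}u_{6\sigma}a_{2\lambda}$ or classes on or above slope~$1$) are either $2$-torsion order-$4$ elements or in the kernel of restriction, so transfer/restriction bookkeeping alone cannot exclude them. The paper's method---show that if such a $d_{13}$ existed then some explicit $(48,48)$-multiple would survive past $E_{61}$, contradicting the Vanishing Theorem---appears to be essential, and you should incorporate it rather than attempt a direct stem-by-stem Mackey argument.
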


\begin{remark}
We will see that these are the last non-trivial \(d_{13}\) differentials. However, at this point in the computation, there are possibilities for other non-trivial \(d_{13}\) differentials. Later, (in Lemmas~\ref{lem:d31d13notExist} and
\ref{lem:d37d13notExist}) we will show that these do not occur.
\end{remark}

\begin{center}
\begin{figure}
\includegraphics[width=\textwidth]{BPC422SliceSSE13}
\caption{The $E_{13}$-page of the $a_\lambda$-localized slice spectral sequence of $a_\lambda^{-1}\BPCfourtwotwo$.}
\label{fig:C4E13page}
\end{figure}
\end{center}

\subsubsection{\texorpdfstring{$d_{19}$}{d-19}-differentials}
\begin{proposition}\label{prop:BPC4twotwod19}
The following $d_{19}$-differentials exist: 
\begin{enumerate}
\item $d_{19}\left(2b^5 \right) = tr(\bar{t}_2^3 a_{9\sigma_2})$ ($d_{19}(10, -10) = (9,9)$)
\item $d_{19}\left(2b^6 \right) = tr(\bar{t}_2^3 u_{2\sigma_2}a_{7\sigma_2})$ ($d_{19}(12, -12) = (11,7)$)
\item $d_{19}\left(b^9 \right) = tr(\bar{t}_2^3 u_{8\sigma_2}a_{\sigma_2})$ ($d_{19}(18, -18) = (17,1)$)
\item $d_{19}\left(2b^{13} \right) = tr(\bar{t}_2^3 u_{16\sigma_2}a_{-7\sigma_2})$ ($d_{19}(26, -26) = (25,-7)$)
\end{enumerate}
\end{proposition}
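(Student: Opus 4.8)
The plan is to derive all four $d_{19}$-differentials from the differentials already established --- the $d_7$'s of Section~\ref{subsec:C2SliceSSBPCfourtwotwo} and the $d_{13}$'s of the previous subsections --- together with the $C_2$-slice spectral sequence of Section~\ref{subsec:C2SliceSSBPCfourtwotwo}, the Mackey functor structure, and the structural results of Section~\ref{sec:strategy}. The first step is to record the targets uniformly: writing $b = u_\lambda/a_\lambda$ as usual, each of the four targets has the form $\tr_{C_2}^{C_4}\big(\ttwo^3\, a_{9\sigma_2}\, b^{k}\big)$ with $k = 0, 1, 4, 8$ respectively, using that $b$ and $a_{\sigma_2}^2 = i_{C_2}^\ast a_\lambda$ lie in the image of restriction, so the projection formula $\tr(x\cdot i_{C_2}^\ast y) = \tr(x)\cdot y$ applies. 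One should also note which source is a $\mathbb{Z}/4$-class whose $\mathbb{Z}/4$-generator already supported a $d_{13}$ --- so that the class surviving to $E_{14}$ is the order-two multiple $2b^k$, as in items (1), (2), (4) --- while in item (3) the class $b^9$ is not the source of any $d_{13}$ and survives to $E_{14}$ as a (sub)generator.

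The argument proper is a stem-by-stem analysis, carried out as follows for each item. First, the target $\tr_{C_2}^{C_4}(\cdots)$ lies in a stem that is forced to be trivial on $E_\infty$ by the $\alpha$- and $b^{32}$-periodicity of Section~\ref{sec:strategy}, the vanishing line of Theorem~\ref{thm:VanishingThm}, and the eventual death of $\alpha$ (Remark~\ref{rem:vanishingpreview}); hence the target is not a permanent cycle and must be a boundary. Second, inspection of the $E_{14}$-page using the computed $d_7$'s and $d_{13}$'s shows that the target is not yet a boundary there, and that the only class in stem $(t-s)+1$ that can map onto it is the named source; since the filtration gap between that source and the target equals $19$ and the possible differential lengths are constrained, this pins the differential to a $d_{19}$, provided the source supports no shorter differential. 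Third, concerning the source: in items (1), (2), (4) the class $2b^k$ restricts to zero in the $2$-torsion $C_2$-slice spectral sequence, and $\alpha$-linearity together with the shape of $E_{14}$ excludes a $d_7$ or a $d_{13}$ out of it; in item (3) the restriction of $b^9$ supports a $d_{31}$ in the $C_2$-slice spectral sequence, so $b^9$ supports a differential of length at most $31$, degree reasons exclude $d_7$ and $d_{13}$, and the first step --- applied to the target and to the potential sources at filtrations $25$ and $31$ below it --- excludes $d_{25}$ and $d_{31}$. Thus in each case the first nontrivial differential on the source is exactly the claimed $d_{19}$. Where it is available one can cross-check against the $C_2$-slice spectral sequence: the slice spectral sequence is one of Mackey functors, so $d_{19}$ commutes with $\tr_{C_2}^{C_4}$, and transferring $d_{19}(b^{5+8*}) = b^{1+8*}\ttwo^3 a_{9\sigma_2}$ and simplifying with the projection formula yields differentials of the stated shape on the relevant transfer classes. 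Finally one propagates by $\alpha$- and $b^{32}$-linearity to the full families; the proposition lists only the generators.

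The main obstacle is the interplay of the first two steps. Because the $a_\sigma$-localized $C_2$-slice spectral sequence is $2$-torsion, both the sources $2b^k$ and the transfer targets restrict to zero in it, and the $C_2$-preimages of the targets are $C_2$-permanent cycles; so none of these $d_{19}$'s can be detected by restriction, nor is any of them the literal transfer of a $C_2$-differential. One is forced to argue indirectly that the target cannot survive, and this is where the periodicity and vanishing-line inputs of Section~\ref{sec:strategy} and Section~\ref{subsec:VanishingThm} do the real work, together with a careful stem-by-stem check that no shorter differential and no alternative source of the correct bidegree is available (and, in item (3), that the a priori possible $d_{25}$ and $d_{31}$ do not occur). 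The most error-prone part should be the bookkeeping of which classes are $\mathbb{Z}/4$ versus $\mathbb{Z}/2$ and of the precise action of restriction and transfer on them --- routine, but easy to get wrong.
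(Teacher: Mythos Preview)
Your central claim in the ``main obstacle'' paragraph is mistaken, and this undercuts the whole approach. You assert that ``the $C_2$-preimages of the targets are $C_2$-permanent cycles; so none of these $d_{19}$'s can be detected by restriction, nor is any of them the literal transfer of a $C_2$-differential.'' But a permanent cycle can perfectly well be a \emph{boundary}, and transferring the differential that hits it is exactly what works for (1) and (4). In the $C_2$-spectral sequence (Section~\ref{subsec:C2SliceSSBPCfourtwotwo}) one has $d_{19}(b^{5}) = \ttwo^3 a_{9\sigma_2}$ and $d_{19}(b^{13}) = \ttwo^3 u_{16\sigma_2}a_{-7\sigma_2}$; since $b^k = \res(b^k)$ and hence $\tr(b^k)=2b^k$, applying the transfer to these $C_2$-differentials gives (1) and (4) immediately. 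You even note this as a ``cross-check'' at the end, but it is the proof, not a check. The paper's argument for (1) and (4) is exactly this one line.

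For (2) and (3) the paper again uses the Mackey structure, but on the target side: the $C_2$-preimages $\ttwo^3 u_{2\sigma_2}a_{7\sigma_2}$ and $\ttwo^3 u_{8\sigma_2}a_{\sigma_2}$ are $d_{31}$-boundaries in the $C_2$-spectral sequence, so their transfers must be zero on $E_{32}$ of the $C_4$-spectral sequence, and inspection shows the only way this can happen is the claimed $d_{19}$. Your route through the Vanishing Theorem is not obviously valid as stated: Theorem~\ref{thm:VanishingThm} (and Remark~\ref{rem:vanishingpreview}) tell you that $\alpha \cdot y$ must die by $E_{61}$, not that $y$ itself must be a boundary --- $y$ could survive to $E_\infty$ as an $\alpha$-torsion class without contradicting anything in Section~\ref{sec:strategy}. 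So your ``first step'' does not actually force the target to be hit. The naturality-of-transfer argument closes this gap cleanly and with no forward reference.
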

\begin{proof}
Differential (1) is obtained by applying transfer to the $d_{19}$-differential 
\[d_{19}\left(\frac{u_{10\sigma_2}}{a_{10\sigma_2}} \right) = \bar{t}_2^3 a_{9\sigma_2} \]
in the $C_2$-spectral sequence. 

For differentials (2) and (3), the classes $\bar{t}_2^3 u_{2\sigma_2}a_{7\sigma_2}$ and $\bar{t}_2^3 u_{8\sigma_2}a_{\sigma_2}$ are killed by $d_{31}$-differentials in the $C_2$-spectral sequence.  Therefore their images under the transfer map must also be killed by differentials of lengths at most 31.  The only possibilities are the differentials claimed.  

Differential (4) is obtained by applying the transfer to the $d_{19}$-differential 
\[d_{19}\left(\frac{u_{26\sigma_2}}{a_{26\sigma_2}} \right) = \bar{t}_2^3 u_{16\sigma_2}a_{-7\sigma_2} \]
in the $C_2$-spectral sequence. 
\end{proof}

The same arguments in the proof above can be used to prove twelve more $d_{19}$-differentials, obtained by multiplying the four $d_{19}$-differentials in Proposition~\ref{prop:BPC4twotwod19} by $\dthree^2 u_{6\lambda}a_{6\lambda}$ $(12, 12)$, $\dthree^4 u_{12\lambda}a_{12\lambda}$ $(24, 24)$, and $\dthree^6 u_{18\lambda}a_{18\lambda}$ $(36, 36)$.

\begin{proposition}
The following $d_{19}$-differentials exist:
\begin{enumerate}
\item $d_{19}(2b^{14}) = tr(\bar{t}_2^3 u_{18\sigma_2}a_{-9\sigma_2})$ $(d_{19}(28, -28) = (27, -9))$
\item $d_{19}\left(2b^{14} \cdot (\dtwo^2u_{6\sigma}a_{6\lambda})\right) = tr(\bar{t}_2^7 u_{18\sigma_2}a_{3\sigma_2})$ $(d_{19}(40, -16) = (39, 3))$
\item $d_{19}\left(2b^{14} \cdot (\dtwo^2u_{6\sigma}a_{6\lambda})^2\right) = tr(\bar{t}_2^{11} u_{18\sigma_2}a_{15\sigma_2})$ $(d_{19}(52, -4) = (51, 15))$
\item $d_{19}\left(2b^{14} \cdot (\dtwo^2u_{6\sigma}a_{6\lambda})^3\right) = tr(\bar{t}_2^{15} u_{18\sigma_2}a_{27\sigma_2})$ $(d_{19}(64, 8) = (63, 27))$
\end{enumerate}
\end{proposition}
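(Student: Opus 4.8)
The plan is to obtain these four $d_{19}$-differentials exactly the way the four of Proposition~\ref{prop:BPC4twotwod19} and the twelve following it were obtained: by combining naturality of the transfer (the localized slice spectral sequence is one of Mackey functors), Frobenius reciprocity $\tr_{C_2}^{C_4}(x)\cdot y=\tr_{C_2}^{C_4}(x\cdot i_{C_2}^\ast y)$, and the Leibniz rule, with a degree count ruling out the alternatives. The concrete realization I would use is the factorization
\[
2b^{14}\cdot(\dtwo^2u_{6\sigma}a_{6\lambda})^{k}=\big(2b^5\cdot(\dtwo^2u_{6\sigma}a_{6\lambda})^{k}\big)\cdot b^{9},\qquad k=0,1,2,3,
\]
where the first factor supports $d_{19}\big(2b^5(\dtwo^2u_{6\sigma}a_{6\lambda})^{k}\big)=\tr_{C_2}^{C_4}\big(\ttwo^{3+4k}a_{(9+12k)\sigma_2}\big)$ (this is Proposition~\ref{prop:BPC4twotwod19}(1) for $k=0$ and follows by the same transfer argument for $k\ge 1$, once that factor is known to survive to $E_{19}$), while $b^{9}$ supports $d_{19}(b^9)=\tr_{C_2}^{C_4}(\ttwo^{3}u_{8\sigma_2}a_{\sigma_2})$ by Proposition~\ref{prop:BPC4twotwod19}(3).

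Granting this, the differential is computed by the Leibniz rule. Since $2b^5(\dtwo^2u_{6\sigma}a_{6\lambda})^{k}$ lies in the image of multiplication by $2$, its restriction to $C_2$ vanishes — the restricted $C_2$-localized slice spectral sequence of Section~\ref{subsec:C2SliceSSBPCfourtwotwo} is $\F_2$-linear — so by Frobenius reciprocity the cross term $\big(2b^5(\dtwo^2u_{6\sigma}a_{6\lambda})^{k}\big)\cdot d_{19}(b^9)$ is a transfer of $0$, hence $0$. Therefore $d_{19}\big(2b^{14}(\dtwo^2u_{6\sigma}a_{6\lambda})^{k}\big)=d_{19}\big(2b^5(\dtwo^2u_{6\sigma}a_{6\lambda})^{k}\big)\cdot b^{9}$, and since that differential is itself a transfer, a second application of Frobenius reciprocity with $i_{C_2}^\ast b^{9}=u_{18\sigma_2}a_{-18\sigma_2}$ identifies the answer with $\tr_{C_2}^{C_4}\big(\ttwo^{3+4k}u_{18\sigma_2}a_{(12k-9)\sigma_2}\big)$ after collecting the powers of $\ttwo$, $u_{\sigma_2}$ and $a_{\sigma_2}$; here one also uses $i_{C_2}^\ast\dtwo=\ttwo\,\gamma\ttwo$ together with the relation $\ttwo=\gamma\ttwo$ valid after the $d_7$-stage. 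The four values of $k$ differ only in this bookkeeping.

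It remains to verify the two survival statements this argument rests on: that each source $2b^{14}(\dtwo^2u_{6\sigma}a_{6\lambda})^{k}$ reaches the $E_{19}$-page as the asserted order-two class, and that each target is still nonzero there. Source survival I would extract from the $d_{13}$-analysis of Section~\ref{subsec:d13Diff} — using that $2b^5$ survives to $E_{19}$ by Proposition~\ref{prop:BPC4twotwod19}(1) and that the Leibniz error terms $2b^5\cdot d_{13}\big((\dtwo^2u_{6\sigma}a_{6\lambda})^k\big)$ vanish at $E_{13}$ — and from naturality of restriction to the $C_2$-picture, where $b^{14}$ carries a $\Z/4$, so one must argue with its order-two subclass rather than with the $\Z/4$ generator. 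Target nonvanishing I would check against the recorded $d_{13}$-differentials and a stem-by-stem degree count, noting that these are genuine transfer classes because the underlying $C_2$-classes become $\gamma$-fixed after $E_7$ and hence do not lie in the image of restriction.

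The hard part will be this survival bookkeeping. Because of the $\Z/4$'s sitting on the $b$-tower one must consistently work with the order-two subclasses, and one must check that the Leibniz cross terms vanish on the nose — which is where the $\F_2$-linearity of the restricted $C_2$ spectral sequence is used — rather than merely land in higher filtration. Once the sources and targets are confirmed to be alive on $E_{19}$, the transfer computation above forces the differentials, and nothing further is needed.
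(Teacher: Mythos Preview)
Your approach is genuinely different from the paper's, but it has a real gap that you should be aware of.

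The gap is in your use of the Leibniz rule. The localized slice spectral sequence for $a_\lambda^{-1}\BPCfourtwotwo$ is \emph{not} multiplicative: by Proposition~\ref{prop:notaring} the spectrum $\BPCfourtwotwo$ is not even a ring in the homotopy category. What you have is only a module structure over the ring spectral sequence for $a_\lambda^{-1}\BPCfour$ (or $\MUCfour$). The correct Leibniz rule is therefore
\[
d_r(a\cdot m)=d_r^{\,\text{ring}}(a)\cdot m + a\cdot d_r^{\,\text{mod}}(m),
\]
with $a$ in the ring spectral sequence and $m$ in the module spectral sequence. Your argument substitutes the module differential $d_{19}^{\,\text{mod}}(b^9)=\tr(\ttwo^3 u_{8\sigma_2}a_{\sigma_2})$ from Proposition~\ref{prop:BPC4twotwod19}(3) where the ring differential $d_{19}^{\,\text{ring}}(b^9)$ is required, and these need not agree. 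More seriously, for $k\ge 1$ your ring factor $(\dtwo^2 u_{6\sigma}a_{6\lambda})^k$ does not even survive to $E_{19}$ in the ring spectral sequence for $\BPCfour$: as the paper notes explicitly in Section~\ref{subsec:d13Diff}, the class $\dtwo^2 u_{6\sigma}a_{6\lambda}$ supports a $d_5$-differential there. So for $k\ge 1$ your factorization cannot be fed into the module Leibniz rule at the $E_{19}$-page at all. Your Frobenius-reciprocity computation of the cross term is fine on its own, but it is attached to a Leibniz identity that is not available.

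The paper takes a completely different route. It first passes to the $\alpha$-multiple: the class $\tr(\ttwo^{19}u_{18\sigma_2}a_{39\sigma_2})$ at $(75,39)$ is $\alpha$ times the desired target at $(27,-9)$, and by the Vanishing Theorem (Theorem~\ref{thm:VanishingThm}) it must die on or before $E_{61}$. A short case analysis leaves only three possibilities for how it dies --- supporting a $d_{31}$, or being hit by a $d_{19}$ or a $d_{43}$ --- and the paper rules out the $d_{31}$ via naturality of transfer and the $d_{43}$ because its would-be source is itself a transfer target that must die by $E_{31}$. This forces the $d_{19}$ at $(76,20)\to(75,39)$, and then one divides by the permanent cycle $\alpha$ (a legitimate use of module Leibniz, since $\alpha$ is a $d_r$-cycle in the ring for all $r$) to obtain (1). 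Differentials (2)--(4) are handled by the identical vanishing-and-elimination argument at the appropriate $\alpha$-translates. This avoids any appeal to multiplicativity of the $\BPCfourtwotwo$ spectral sequence.
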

\begin{proof}
We will prove differential (1) first.  Consider the class $tr(\bar{t}_2^{19}u_{18\sigma_2}a_{39\sigma_2})$ $(75, 39)$.  This class must die on or before the $E_{61}$-page.  There are three possible ways for this class to die.  It can support a $d_{31}$-differential hitting the class $\dtwo^{12}u_{\lambda}u_{36\sigma}a_{35\lambda}$ $(74, 70)$; it can be the target of a $d_{19}$-differential from the class $2\dtwo^{8}u_{14\lambda}u_{24\sigma}a_{10\lambda}$ $(76, 20)$; or it can be the target of a $d_{43}$-differential from the class $2\dtwo^{6}u_{20\lambda}u_{18\sigma}a_{-2\lambda}$ $(76, -4)$.  

It is impossible for this class to support a $d_{31}$-differential because it is the transfer of a class that supports a $d_{31}$-differential in the $C_2$-slice spectral sequence, and the target does not transfer to the class $\dtwo^{12}u_{\lambda}u_{36\sigma}a_{35\lambda}$ $(74, 70)$.  

The $d_{43}$-differential also cannot happen because the class $2\dtwo^{6}u_{20\lambda}u_{18\sigma}a_{-2\lambda}$ $(76, -4)$ is the transfer of $\bar{t}_2^{12}u_{40\sigma_2}a_{-4\sigma_2}$, which is the target of a $d_{31}$-differential in the $C_2$-spectral sequence.  Therefore it must be killed by a differential of length at most 31.  

It follows that the $d_{19}$-differential 
\[d_{19}(2\dtwo^{8}u_{14\lambda}u_{24\sigma}a_{10\lambda}) = tr(\bar{t}_2^{19}u_{18\sigma_2}a_{39\sigma_2}) \,\,\, (d_{19}(76, 20) = (75, 39))\]
exists.  Applying Leibniz with respect to the class $ \dtwo^4 u_{24\sigma} a_{24\lambda}$ $(48, 48)$ proves (1).  

Differentials (2), (3), (4) are proven by the exact same method.  
\end{proof}

Now, we can propagate the $d_{19}$-differentials that we have proven by the classes $b^{16}$ $(32,-32)$ and $\dtwo^4 u_{24\sigma} a_{24\lambda}$ $(48, 48)$ to obtain the rest of the $d_{19}$-differentials. 

\begin{proposition}
The $d_{19}$-differentials are
\begin{enumerate}
\item $d_{19}\left(2b^{i + 8j} \cdot (\dtwo^2u_{6\sigma}a_{6\lambda})^{k} \right) = tr(\bar{t}_2^3 a_{9\sigma_2}) \cdot b^{i-5+8j}(\dtwo^2u_{6\sigma}a_{6\lambda})^{k}$ \\
$i = 5,6$, $j, k \geq 0$
\item $d_{19}\left(b^{9+16i} \cdot (\dtwo^2u_{6\sigma}a_{6\lambda})^{k}\right) = tr(\bar{t}_2^3 u_{8\sigma_2}a_{\sigma_2})\cdot b^{16i} (\dtwo^2u_{6\sigma}a_{6\lambda})^{k}$\\
$i, k \geq 0$.
\end{enumerate}
They are shown in Figure~\ref{fig:C4E19page}.  
\end{proposition}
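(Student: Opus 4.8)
The plan is to obtain the full list of $d_{19}$-differentials in three stages: assemble a finite set of ``generating'' $d_{19}$-differentials, spread them out by multiplication with suitable permanent cycles via the Leibniz rule, and then argue that the resulting families exhaust all $d_{19}$-differentials.

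For the generators I would take the four $d_{19}$-differentials of Proposition~\ref{prop:BPC4twotwod19} (on $2b^5$, $2b^6$, $b^9$, $2b^{13}$), the twelve obtained from them by the same restriction/transfer arguments after multiplying source and target with $\dtwo^{2}u_{6\lambda}a_{6\lambda}$, $\dtwo^{4}u_{12\lambda}a_{12\lambda}$, $\dtwo^{6}u_{18\lambda}a_{18\lambda}$, and the four $d_{19}$-differentials on $2b^{14}\cdot(\dtwo^2u_{6\sigma}a_{6\lambda})^{k}$, $0\le k\le 3$, produced by the ``must die before $E_{61}$'' vanishing argument. For the propagation I would use that $b^{8}$ at $(16,-16)$, $b^{16}$ at $(32,-32)$, and $\alpha=\dtwo^{4}u_{24\sigma}a_{24\lambda}$ at $(48,48)$ are permanent cycles: by Lemma~\ref{lem:bmultiplicationinjective} multiplication by $b^{32}$, hence by $b^{8}$ and $b^{16}$, is injective below the line of slope $1$, and differentials are $\alpha$-linear since $\alpha$ is a permanent cycle, so the Leibniz rule gives $d_{19}(b^{8}x)=b^{8}d_{19}(x)$, $d_{19}(b^{16}x)=b^{16}d_{19}(x)$, and $d_{19}(\alpha x)=\alpha d_{19}(x)$ in the range where this is relevant. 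Tracking $b$-exponents, the $b^{8}$-, $b^{16}$-, $\alpha$-, and $(\dtwo^2u_{6\sigma}a_{6\lambda})^{k}$-translates of $2b^5,2b^6,2b^{13},2b^{14}$ fill out family~(1) (indices $i=5,6$ and $j,k\ge 0$), and the translates of $b^9$ fill out family~(2); one checks each target is nonzero on $E_{19}$ from the description of the $E_{19}$-page after the $d_{13}$'s in Figure~\ref{fig:C4E13page}.

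For completeness I would invoke the reduction of Section~\ref{sec:strategy}: every differential is an $\alpha$- or $b^{32}$-translate of one whose source lies in the rectangular strategy region of Figure~\ref{fig:StrategyCartoon}, so it suffices to exclude any further $d_{19}$ with source in that region. For each class surviving to $E_{14}$ in the region that could a priori support a $d_{19}$, I would use one of: naturality of restriction to the $C_2$-slice spectral sequence of Section~\ref{subsec:C2SliceSSBPCfourtwotwo}, where the $d_{19}$-pattern is completely known, which either forces the stated differential or rules one out because no target restricts correctly; $\alpha$-freeness (Proposition~\ref{prop:alphafreeres}), which forbids a differential whose $\alpha$-translate has already been resolved; or the plain absence of any class in the putative target bidegree on $E_{19}$. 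I expect this completeness step to be the main obstacle, since it is the only part requiring a genuine stem-by-stem inspection of the $E_{14}$-page together with a careful interplay of restriction, transfer, and the $\alpha$- and $b^{32}$-periodicities; once the generators of Proposition~\ref{prop:BPC4twotwod19} and the vanishing argument are in hand, producing the differentials in the stated list is essentially bookkeeping.
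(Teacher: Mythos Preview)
Your approach is essentially the paper's: assemble the twenty generating $d_{19}$'s already established (Proposition~\ref{prop:BPC4twotwod19}, its twelve $(12,12)$-, $(24,24)$-, $(36,36)$-translates, and the four differentials on $2b^{14}\cdot(\dtwo^2u_{6\sigma}a_{6\lambda})^{k}$), then propagate by $b^{16}$ and by $\alpha$ at $(48,48)$.  Since $\alpha=(\dtwo^2u_{6\sigma}a_{6\lambda})^{4}$, the four values $k=0,1,2,3$ together with $\alpha$-linearity yield all $k\ge 0$, and $b^{16}$-propagation of the pairs $\{2b^{5},2b^{13}\}$, $\{2b^{6},2b^{14}\}$, $\{b^{9}\}$ fills out the stated $b$-exponents.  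The paper records exactly this propagation in the sentence preceding the proposition and leaves the completeness check to the figure.

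Two small corrections.  First, $\alpha=\dtwo^{8}u_{24\sigma}a_{24\lambda}$, not $\dtwo^{4}u_{24\sigma}a_{24\lambda}$; the bidegree $(48,48)$ forces the exponent $8$.  Second, neither $b^{8}$ nor $b^{16}$ is a \emph{permanent} cycle here: $b^{16}$ supports a $d_{31}$ (and $b^{8}$ supports a $d_{15}$ in the $\MUCfour$ spectral sequence).  What you actually need, and what holds, is that $b^{16}$ is a $d_{19}$-cycle, so that Leibniz gives $d_{19}(b^{16}x)=b^{16}d_{19}(x)$.  You do not need $b^{8}$ at all, since the generating set already contains the $b^{8}$-translates $2b^{13}$, $2b^{14}$ of $2b^{5}$, $2b^{6}$; invoking $b^{8}$ only creates an unnecessary obligation to check that $d_{19}(b^{8})=0$.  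Likewise, Lemma~\ref{lem:bmultiplicationinjective} concerns $b^{32}$, and injectivity of multiplication by $b^{32}$ does not by itself imply injectivity of multiplication by $b^{8}$ or $b^{16}$, so that inference should be dropped.
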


\begin{center}
\begin{figure}
\includegraphics[width=\textwidth]{BPC422SliceSSE19}
\caption{The $E_{19}$-page of the $a_\lambda$-localized slice spectral sequence of $a_\lambda^{-1}\BPCfourtwotwo$.}
\label{fig:C4E19page}
\end{figure}
\end{center}

\subsubsection{The vanishing theorem}\label{subsec:VanishingThm}

\begin{theorem}[Vanishing Theorem] \label{thm:VanishingThm}
In the $a_\lambda$-localized slice spectral sequence for $a_\lambda^{-1}\BPCfourtwotwo$, we have the $d_{61}$-differential 
\[d_{61}(\dtwo^3 u_{16\lambda} u_{8\sigma} a_{-7\lambda} a_{\sigma}) = \dtwo^8u_{24\sigma} a_{24\lambda} \,\,\, (d_{61}(49,-13) = (48,48)). \]
Furthermore, any class of the form $(\dtwo^8u_{24\sigma} a_{24\lambda}) \cdot x$ must die on or before the $E_{61}$-page. 
\end{theorem}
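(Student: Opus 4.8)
The plan is to pin down this single long differential by naturality together with a bidegree count, and then deduce the ``furthermore'' clause for free. It is convenient to split the argument into three steps: (i) $\alpha:=\dtwo^8 u_{24\sigma}a_{24\lambda}$ supports no differential; (ii) $\alpha$ does not survive to $E_\infty$; (iii) the unique differential that can hit $\alpha$ is the asserted $d_{61}$ on $\dtwo^3 u_{16\lambda}u_{8\sigma}a_{-7\lambda}a_\sigma$. For (i), recall that $\dthree^8 u_{24\sigma}a_{24\lambda}$ is a permanent cycle in the $a_\lambda$-localized slice spectral sequence of $a_\lambda^{-1}\MUCfour$; since the localized slice spectral sequence is natural in the $C_4$-spectrum and the $\MUCfour$-module map $\MUCfour\to\BPCfourtwotwo$ carries this class to $\alpha$, which is a nonzero class on the $E_{13}$-page (Figure~\ref{fig:C4E13page}), naturality of the differentials forces $\alpha$ to be a permanent cycle in our spectral sequence as well.

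For (ii), I would first localize further along $a_\lambda^{-1}\BPCfourtwotwo\to a_\sigma^{-1}\BPCfourtwotwo$. Using $\bar f_2=a_\sigma^3 a_\lambda^3\dtwo$ and $b=u_{2\sigma}/a_\sigma^2$, the image of $\alpha$ in the $a_\sigma$-localized slice spectral sequence is $\bar f_2^{\,8}b^{12}$, which by Example~\ref{ex:asigmaBPC422} is killed by the $d_{13}$ on $\bar f_2^{\,7}b^{14}$ and so is zero at $E_\infty$ there. This alone does not kill $\alpha$ upstairs, since the source $\bar f_2^{\,7}b^{14}$ lies strictly below the line of slope $1$, where the Slice Recovery Theorem~\ref{thm:sliceRecovery} gives no control. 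What it does show, together with the corresponding statement for all powers, is that the $\alpha$-free family $\{\alpha^i\}_{i\ge 0}$ generated by the unit (Lemma~\ref{lem:alphastuff}, Proposition~\ref{prop:alphafreeres}, and the discussion following it) cannot survive in its entirety: the homotopy of $a_\lambda^{-1}\BPCfourtwotwo$, which equals $\pi_\ast\bigl(\Phi^{C_2}\BPCfourtwotwo\bigr)^{C_2}$ and is governed by the relative Adams spectral sequence of Sections~\ref{sec:relSS}--\ref{sec:compute} with its vanishing line of slope $1/(2^2-1)$, contains no polynomial tower on a class of slope $1$. Hence $\{\alpha^i\}$ is paired with an $\alpha$-free family that truncates it; by minimality of the first truncated power, together with the $\alpha$-divisibility region of Lemma~\ref{lem:alphastuff}\eqref{lem:alphastuff3} and the injectivity of $\alpha$-multiplication on every page up to the one on which $\alpha$ itself dies, the first power that is hit is $\alpha^1=\alpha$.

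For (iii), the identification is a bidegree count. A class in bidegree $(t-s,s)$ hitting $\alpha$ at $(48,48)$ under a $d_r$ must satisfy $t-s=49$ and $s=48-r$. Going through the $E_2$-page of Section~\ref{sec:BPC422}, the only class in stem $49$ that can carry such a differential and is not already destroyed by the lower differentials already established---the $d_{13}$-, $d_{19}$-, $d_{31}$- and the subsequent differentials, propagated by $\alpha$- and $b^{32}$-linearity---is $w:=\dtwo^3 u_{16\lambda}u_{8\sigma}a_{-7\lambda}a_\sigma$ at $(49,-13)$, and the differential it can support with target $\alpha$ has length $61$. Since step (ii) guarantees that $\alpha$ is hit and this is the only possibility, $d_{61}(w)=\alpha$.

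Finally, for the ``furthermore'' clause: once $d_{61}(w)=\alpha$, the class $\alpha$ is zero on the $E_{62}$-page, so by multiplicativity of the spectral sequence $\alpha\cdot x$ is zero on $E_{62}$ for every class $x$; hence $\alpha x$ cannot survive past $E_{61}$, which is exactly the asserted statement and supplies the vanishing line used throughout Section~\ref{sec:BPC422} (cf.\ Remark~\ref{rem:vanishingpreview}). I expect step (ii)---ruling out the survival of $\alpha$---to be the genuine obstacle: the differential that kills $\alpha$ is invisible to the Slice Recovery Theorem, so one must assemble the $\alpha$-free family bookkeeping, the behavior under $a_\lambda^{-1}\BPCfourtwotwo\to a_\sigma^{-1}\BPCfourtwotwo$, the module structure over $a_\lambda^{-1}\MUCfour$, and the relative Adams vanishing line all at once. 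Once $\alpha$ is known to die, locating $w$ and the length $61$ is a bounded, if fiddly, chart-chase.
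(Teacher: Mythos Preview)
Your proposal has two genuine gaps, and both stem from the same underlying issue: the spectral sequence for $\BPCfourtwotwo$ is \emph{not} multiplicative, since by Proposition~\ref{prop:notaring} the spectrum $\BPCfourtwotwo$ is not even a ring in the homotopy category. It is only a module over $\BPCfour$ (or $\MUCfour$).

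The first gap is in your ``furthermore'' argument. You write that once $d_{61}(w)=\alpha$, ``by multiplicativity of the spectral sequence $\alpha\cdot x$ is zero on $E_{62}$.'' But $w$ lives in the $\BPCfourtwotwo$ spectral sequence, so $w\cdot x$ is not defined; you cannot run Leibniz. The paper avoids this by working upstairs in $\BPCfour$: the normed-up $C_2$-differential $d_{31}(u_{16\sigma_2})=\bar v_4 a_{31\sigma_2}$ predicts $d_{61}(u_{16\lambda}a_\sigma)=N(\bar v_4)u_{16\sigma}a_{31\lambda}$ in $a_\lambda^{-1}\BPCfour$, and after multiplying by a permanent cycle one gets a class at $(96,96)$ that must die by $E_{61}$ \emph{in the ring spectral sequence} for $\BPCfour$. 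Its image in $\BPCfourtwotwo$ is $\alpha^2$, and the Leibniz rule for the \emph{module} action then forces every $\alpha^2\cdot x$ to die by $E_{61}$. One then descends from $\alpha^2$ to $\alpha$ using the injectivity of $\alpha^2$-multiplication and the $\alpha^2$-divisibility of the relevant region.

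The second gap is in step~(ii). The relative Adams spectral sequence of Section~\ref{sec:relSS} computes the $G'$-equivariant (i.e.\ $C_2$-restricted) homotopy, not the $C_4$-level you need; its vanishing line gives no direct obstruction to the survival of the $C_4$-class $\alpha$. Your reduction to Proposition~\ref{prop:alphafreeres} also does not pin down which power $\alpha^i$ is hit, and the ``minimality'' step you gesture at again implicitly uses a product $w\cdot\alpha^{i-1}$ inside $\BPCfourtwotwo$ that is not available. The paper's route through $N(\bar v_4)$ in $\BPCfour$ sidesteps both problems simultaneously: it produces the needed differential in a genuine ring spectral sequence and then transports it by naturality and the module structure.
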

\begin{proof}
In the $a_\lambda$-localized slice spectral sequence of $a_\lambda^{-1}\BPCfour$, the class $N(\bar{v}_4)u_{16\sigma}a_{31\lambda}$ must die on or before the $E_{61}$-page because it is the target of the predicted $d_{61}$-differential
\[d_{61}\left(u_{16\lambda} a_\sigma \right) = N(\bar{v}_4)u_{16\sigma} a_{31\lambda},\]
obtained by norming up the $d_{31}$-differential $d_{31}(u_{16\sigma_2}) = \bar{v}_4 a_{31\sigma_2}$ in the $C_2$-spectral sequence.  Therefore, if we multiply the target by $\dthree^{11}u_{32\sigma}a_{32\lambda}$, the class $\dthree^{11} N(\bar{v}_4) u_{48\sigma}u_{48\lambda}$ $(96, 96)$ must die on or before the $E_{61}$-page. 

Under the map
\[a_\lambda^{-1}\SliceSS(\BPCfour) \longrightarrow a_\lambda^{-1}\SliceSS(\BPCfourtwotwo),\]
the class $\dthree^{11} N(\bar{v}_4) u_{48\sigma}u_{48\lambda}$ is sent to $\dthree^{16}u_{48\sigma}u_{48\lambda}$ $(96, 96)$.  By naturality and degree reasons, the only possibility that this class can die on or before the $E_{61}$-page is for it to be killed by a $d_{61}$-differential.  This implies that the original class must also be killed by a $d_{61}$-differential in the $a_\lambda$-localized slice spectral sequence of $a_\lambda^{-1}\BPCfour$.  Furthermore, by the module structure, any class in the $a_\lambda$-localized slice spectral sequence of $a_\lambda^{-1}\BPCfourtwotwo$ of the form $(\dthree^{16}u_{48\sigma}u_{48\lambda}) \cdot x$ must die on or before the $E_{61}$-page.  

The class $\dthree^8 u_{24\sigma} a_{24\lambda}$ $(48, 48)$ is also the target of a $d_{61}$-differential because after multiplying it by $\dthree^{16} u_{48\sigma} a_{48\lambda}$ $(96, 96)$, it must die on or before the $E_{61}$-page.  By degree reasons, the only possibility is for it to be killed by a $d_{61}$-differential.  Since multiplication by $\dthree^{16} u_{48\sigma} a_{48\lambda}$ $(96, 96)$ induces an injection on the $E_2$-page, and all the classes above the line of slope $(-1)$ with this class as the origin are all divisible by it, the claimed $d_{61}$-differential must occur.

Similarly, for any class of the form $(\dtwo^8u_{24\sigma} a_{24\lambda}) \cdot x$, we can multiply it by $\dthree^{16} u_{48\sigma} a_{48\lambda}$ $(96, 96)$ to deduce that the product must die on or before the $E_{61}$-page.  It follows from the same reasoning as the previous paragraph that the original class must also die on or before the $E_{61}$-page.
\end{proof}

\subsubsection{\texorpdfstring{$d_{31}$}{d-31}-differentials}
To prove the $d_{31}$-differentials, we will first prove the nonexistence of certain $d_{13}$-differentials. 
\begin{lemma} \label{lem:d31d13notExist}
At the $E_{13}$-page, we have
\begin{enumerate}
\item $d_{13}(\dtwo u_{8\lambda} u_{2\sigma} a_{-5\lambda}a_\sigma) \neq 2 \dtwo^2 u_{4\lambda}u_{6\sigma}a_{2\lambda}$ $(d_{13}(21, -9) \neq (20, 4))$.
\item $d_{13}(\dtwo^3 u_{8\lambda}u_{8\sigma}a_{\lambda}a_\sigma) \neq 2\dthree^4 u_{4\lambda}u_{12\sigma}a_{8\lambda}$ $(d_{13}(33,3) \neq (32,16))$.
\end{enumerate}
\end{lemma}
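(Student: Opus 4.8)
The plan is to handle the two nonexistence statements by separate mechanisms. Part~(2) should follow immediately from naturality: the source $\dtwo^3 u_{8\lambda}u_{8\sigma}a_\lambda a_\sigma$ in bidegree $(33,3)$ is precisely the class shown above to be a $d_{13}$-cycle in the slice spectral sequence of $\BPCfour$, hence also in that of $\BPCfourtwotwo$ via the quotient map $\BPCfour\to\BPCfourtwotwo$. Since $(33,3)$ lies strictly above the line $s=0$, Theorem~\ref{thm:sliceRecovery} identifies this class with its image in the $a_\lambda$-localized slice spectral sequence, where it is still a $d_{13}$-cycle; thus its $d_{13}$ vanishes, and in particular it cannot equal the nonzero class $2\dthree^4 u_{4\lambda}u_{12\sigma}a_{8\lambda}$.

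For part~(1) the plan is to compute the differential directly with the Leibniz rule. The first step is the factorization, valid on the $a_\lambda$-localized $E_2$-page and hence on $E_{13}$,
\[
\dtwo u_{8\lambda}u_{2\sigma}a_{-5\lambda}a_\sigma = \bigl(\dtwo u_{4\lambda}u_{2\sigma}a_{-\lambda}a_\sigma\bigr)\cdot b^{4},\qquad b^{4}=\frac{u_{4\lambda}}{a_{4\lambda}}.
\]
Because the only differentials preceding the $d_{13}$'s are the $d_{7}$'s, a short degree count shows that both $\dtwo u_{4\lambda}u_{2\sigma}a_{-\lambda}a_\sigma$ and $b^{4}$ survive to $E_{13}$, and the $d_{13}$-differentials established above give $d_{13}(\dtwo u_{4\lambda}u_{2\sigma}a_{-\lambda}a_\sigma)=2\dtwo^{2}u_{6\sigma}a_{6\lambda}$ and $d_{13}(b^{4})=\dtwo u_{\lambda}u_{2\sigma}a_{2\lambda}a_\sigma$. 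Applying the Leibniz rule then gives
\[
d_{13}\bigl(\dtwo u_{8\lambda}u_{2\sigma}a_{-5\lambda}a_\sigma\bigr)=2\dtwo^{2}u_{6\sigma}a_{6\lambda}\cdot b^{4}\;+\;\dtwo^{2}u_{4\lambda}u_{\lambda}u_{2\sigma}^{2}a_{\lambda}a_{\sigma}^{2}.
\]
The first summand equals $2\dtwo^{2}u_{4\lambda}u_{6\sigma}a_{2\lambda}$ after rewriting $b^{4}=u_{4\lambda}/a_{4\lambda}$; in the second summand, applying the gold relation $u_{\lambda}a_{\sigma}^{2}=2a_{\lambda}u_{2\sigma}$ to the subexpression $u_{\lambda}a_{\sigma}^{2}$ also yields $2\dtwo^{2}u_{4\lambda}u_{6\sigma}a_{2\lambda}$. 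Hence the total is $4\dtwo^{2}u_{4\lambda}u_{6\sigma}a_{2\lambda}=0$, since $\dtwo^{2}u_{4\lambda}u_{6\sigma}a_{2\lambda}$ still generates a cyclic group of order $4$ on the $E_{13}$-page (it is one of the order-four classes of Figure~\ref{fig:C4E13page}), so its double is $2$-torsion while remaining nonzero. Therefore $d_{13}(\dtwo u_{8\lambda}u_{2\sigma}a_{-5\lambda}a_\sigma)=0\neq 2\dtwo^{2}u_{4\lambda}u_{6\sigma}a_{2\lambda}$.

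The hard part will be the multiplicative bookkeeping in part~(1): one must apply the gold relation in $\pi_\star H\mZ$ carefully, possibly combining it with $2a_\sigma=0$, and must confirm from the structure of the $E_{13}$-page that $\dtwo^{2}u_{4\lambda}u_{6\sigma}a_{2\lambda}$ really does still carry a $\mathbb Z/4$ there (so that the computed differential both vanishes and genuinely differs from the asserted target). Everything else — the survival of the two factors to $E_{13}$ and the identification of their product with the source — is routine, using that no differential occurs strictly between $d_{7}$ and $d_{13}$ and that neither factor is a $d_{7}$-boundary.
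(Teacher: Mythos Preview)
Your argument for part~(2) is correct and in fact simpler than the paper's: you correctly observe that the class $\dtwo^3 u_{8\lambda}u_{8\sigma}a_\lambda a_\sigma$ at $(33,3)$ was already shown to be a $d_{13}$-cycle in an earlier proposition (by naturality from $\SliceSS(\BPCfour)$), so the nonexistence of this $d_{13}$-differential is immediate. The paper instead reproves this via the Vanishing Theorem, arguing by contradiction that the hypothetical differential would propagate (via Leibniz with $\dthree^4u_{12\sigma}a_{12\lambda}$ and $b^8$) to kill a class at $(72,24)$, leaving the class at $(71,61)$ with no way to die before $E_{62}$.

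Your argument for part~(1), however, has a genuine gap. You apply the Leibniz rule to the factorization
\[
\dtwo u_{8\lambda}u_{2\sigma}a_{-5\lambda}a_\sigma = \bigl(\dtwo u_{4\lambda}u_{2\sigma}a_{-\lambda}a_\sigma\bigr)\cdot b^{4},
\]
using the $d_{13}$-differentials on both factors as computed \emph{in the spectral sequence for $\BPCfourtwotwo$}. But $\BPCfourtwotwo$ is not a ring spectrum (indeed Proposition~\ref{prop:notaring} rules this out, since $1\in J$ while $2\notin J$), so its localized slice spectral sequence is not multiplicative: it is only a \emph{module} over the spectral sequence for $\MUCfour$ (or $\BPCfour$). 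The Leibniz rule $d_{13}(xy)=d_{13}(x)\,y+x\,d_{13}(y)$ therefore requires one factor to lie in $\BPCfour$ with its $d_{13}$-differential known \emph{there}. You have not established this: in $\BPCfour$ there are $d_5$-differentials involving $\done$, so neither the survival of $b^4$ to $E_{13}$ nor the value of $d_{13}(b^4)$ in $\BPCfour$ is available, and likewise for the other factor. The paper's uses of Leibniz in this section are always with respect to classes whose behavior in $\BPCfour$ or $\MUCfour$ is controlled (e.g.\ $b^8$, which supports a $d_{15}$ in $\MUCfour$ and is hence a $d_{13}$-cycle there; or the permanent cycle at $(7,5)$).

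The paper's proof of (1) avoids this by contradiction: assuming the $d_{13}$ occurs, it propagates by $\dthree^4u_{12\sigma}a_{12\lambda}$ and $b^8$ (both legitimate for module-Leibniz) to obtain a $d_{13}$ at $(61,-1)\to(60,12)$; then the class at $(59,49)$, which must die by $E_{61}$ by the Vanishing Theorem, has no possible source or target left, a contradiction.
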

\begin{proof}
Suppose (1) exists.  By applying the Leibniz rule with respect to the classes $\dthree^4 u_{12 \sigma} a_{12\lambda}$ $(24, 24)$ and $b^8$ $(16, -16)$, the $d_{13}$-differential 
\[d_{13}(\dtwo^5 u_{16\lambda} u_{14\sigma} a_{-\lambda} a_{\sigma}) = 2\dtwo^6u_{12\lambda}u_{18\sigma}a_{6\lambda} \quad(d_{13}(61,-1) = (60,12))\]
must also exist.  Consider the class $\dtwo^{9}u_{3\lambda}u_{26\sigma}a_{24\lambda}a_\sigma$ in $(59,49)$.  By Theorem~\ref{thm:VanishingThm}, this class must die on or before the $E_{61}$-page.  However, with the class $2\dtwo^6u_{12\lambda}u_{18\sigma}a_{6\lambda}$ $(60, 12)$ gone, there are no classes that could kill it or be killed by this class on or before the $E_{61}$-page.  Contradiction. 

Now, suppose (2) exists.  By applying the Leibniz rule with respect to the classes $\dthree^4 u_{12 \sigma} a_{12\lambda}$ $(24, 24)$ and $b^8$ $(16, -16)$, the $d_{13}$-differential 
\[d_{13}(\dtwo^{7}u_{16\lambda}u_{20\sigma}a_{5\lambda}a_{\sigma}) = 2\dtwo^{8}u_{12\lambda}u_{24\sigma}a_{12\lambda}\,\,\,(d_{13}(73,11) = (72,24))\]
must also exist.  Consider the class $\dtwo^{11}u_{3\lambda}u_{32\sigma}a_{30\lambda}a_{\sigma}$ $(71,61)$.  By Theorem~\ref{thm:VanishingThm}, this class must die on or before the $E_{61}$-page.  Just like the previous case, there is no class that could kill it or be killed by it on or before the $E_{61}$-page.  Contradiction.  
\end{proof}

\begin{proposition}\label{prop:BPCfourtwotwod31Differentials}
We have the following $d_{31}$-differentials: 
\begin{enumerate}
\item $d_{31}\left(tr(\ttwo^{11}u_{10\sigma_2}a_{23\sigma_2})\right) = \dtwo^{8}u_{18\sigma}a_{24\lambda}a_{6\sigma}$; 
\item $d_{31}\left(tr(\ttwo^{11}u_{10\sigma_2}a_{23\sigma_2})\cdot (\dtwo^2u_{6\sigma}a_{6\lambda})\right) = \dtwo^{8}u_{18\sigma}a_{24\lambda}a_{6\sigma}\cdot (\dtwo^2u_{6\sigma}a_{6\lambda})$; 
\item $d_{31}\left(tr(\ttwo^{11}u_{24\sigma_2}a_{9\sigma_2})\cdot (\dtwo^2u_{6\sigma}a_{6\lambda})^i\right) = 2\dtwo^{8}u_{4\lambda}u_{24\sigma}a_{20\lambda} \cdot (\dtwo^2u_{6\sigma}a_{6\lambda})^i$, \\
$0 \leq i \leq 3$;
\item $d_{31}\left(tr(\ttwo^{11}u_{24\sigma_2}a_{9\sigma_2})\cdot b^{16}(\dtwo^2u_{6\sigma}a_{6\lambda})^i\right) = 2\dtwo^{8}u_{4\lambda}u_{24\sigma}a_{20\lambda} \cdot b^{16} (\dtwo^2u_{6\sigma}a_{6\lambda})^i$, \\
$0 \leq i \leq 3$.
\end{enumerate}
\end{proposition}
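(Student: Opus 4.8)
The plan is to deduce all four families from a small number of ``seed'' differentials established by restriction to and transfer from the $C_2$-slice spectral sequence of $i_{C_2}^*\BPCfourtwotwo$ computed in Section~\ref{subsec:C2SliceSSBPCfourtwotwo}, and then to propagate these by the module structure over $a_\lambda^{-1}\SliceSS(\MUCfour)$: concretely, by $\alpha$-linearity of the differentials (which holds because $\alpha=\dtwo^8u_{24\sigma}a_{24\lambda}$ is a permanent cycle in $a_\lambda^{-1}\SliceSS(\MUCfour)$) and by multiplication with $b^{16}$ and $b^{32}$, using Lemma~\ref{lem:bmultiplicationinjective}. The two essential nonformal inputs are the Vanishing Theorem~\ref{thm:VanishingThm}, which forces every class divisible by $\alpha$ to die on or before the $E_{61}$-page, and Lemma~\ref{lem:d31d13notExist}, which rules out the competing $d_{13}$-differentials that would otherwise truncate the relevant classes prematurely.

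First I would treat families (1) and (2). The source $tr(\ttwo^{11}u_{10\sigma_2}a_{23\sigma_2})$ restricts to $C_2$ as the Weyl-sum of $\ttwo^{11}u_{10\sigma_2}a_{23\sigma_2}$, and from Section~\ref{subsec:C2SliceSSBPCfourtwotwo} one sees that this $C_2$-class survives through the $E_{30}$-page, since the only available $C_2$-differentials there are the listed $d_7$, $d_{19}$, and $d_{31}$; combined with Lemma~\ref{lem:d31d13notExist} and the already-established $d_{19}$-differentials, this shows the $C_4$-source survives to $E_{31}$. On the other hand the proposed target $\dtwo^8u_{18\sigma}a_{24\lambda}a_{6\sigma}$ is $\alpha$-divisible (after rewriting with the gold relation), so by Theorem~\ref{thm:VanishingThm} it must be killed on or before $E_{61}$. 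A stem-by-stem count in the region of Figure~\ref{fig:StrategyCartoon} --- using that all classes sit in even $(t-s)$ and that the longest differential is a $d_{61}$ --- then leaves the stated $d_{31}$ as the only possibility, and matching $(t-s,s)$-degrees pins down the target. Family (2) follows from (1) by multiplying source and target by the $(12,12)$-class $\dtwo^2u_{6\sigma}a_{6\lambda}$ as in the earlier propagation arguments, equivalently by running the same transfer-and-Vanishing argument one column over in the matrix~\eqref{eq:matrix}.

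Families (3) and (4) are handled by the identical template: the source $tr(\ttwo^{11}u_{24\sigma_2}a_{9\sigma_2})$ is shown to survive to $E_{31}$ through its $C_2$-restriction together with Lemma~\ref{lem:d31d13notExist} and the computed $d_{19}$'s, while the target $2\dtwo^8u_{4\lambda}u_{24\sigma}a_{20\lambda}$ is $\alpha$-divisible and hence dies by $E_{61}$ by Theorem~\ref{thm:VanishingThm}; sparseness forces length exactly $31$. The displayed range $0\le i\le 3$ is exactly one fundamental domain for multiplication by $(\dtwo^2u_{6\sigma}a_{6\lambda})^4=\alpha$, so $\alpha$-linearity of the differentials extends (3) and (4) to all $i$; multiplying by $b^{16}$ at $(32,-32)$ produces (4) from (3), and $b^{32}$-linearity (Lemma~\ref{lem:bmultiplicationinjective}) covers the remaining powers of $b$.

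The step I expect to be the main obstacle is, for each seed, showing that no strictly shorter differential ($13<r<31$) emanates from the source and that the source is not itself hit before $E_{31}$. This requires controlling the $E_{14}$- through $E_{30}$-pages precisely in the cone-and-rectangle region of Figure~\ref{fig:StrategyCartoon}, which is where the bookkeeping is densest. Here the $\alpha$-free-family discipline of Proposition~\ref{prop:alphafreeres} does the real work: $\alpha$-free families come in pairs, and once an $\alpha$-multiple of a class has been truncated by a differential that class can be discarded, so the number of cases that must actually be checked stays finite and small.
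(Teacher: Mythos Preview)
Your overall strategy is the same as the paper's: use the Vanishing Theorem together with Lemma~\ref{lem:d31d13notExist} to pin down the $d_{31}$'s, and then propagate. But two of your key steps are off.

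For (1) and (2), you assert that the target $\dtwo^{8}u_{18\sigma}a_{24\lambda}a_{6\sigma}$ at $(42,54)$ is $\alpha$-divisible ``after rewriting with the gold relation''. It is not: at $(42,54)$ the translated coordinates $(t-s-48,s-48)=(-6,6)$ fail the cone condition $s-48\le 3(t-s-48)$ of Lemma~\ref{lem:alphastuff}, and there is no $E_2$-class at $(-6,6)$ to divide by. The gold relation does not manufacture one. The paper repairs exactly this point by first multiplying the target by $\alpha^2=\dtwo^{16}u_{48\sigma}a_{48\lambda}$, placing it at $(138,150)$ where the Vanishing Theorem and degree reasons apply cleanly; $\alpha$-linearity then pulls the differential back down to $(42,54)$.

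For (3) and (4), your route via the Vanishing Theorem only bounds the killing differential by length $61$, so ``sparseness forces length exactly $31$'' would require separately excluding lengths $37$, $43$, $55$, $61$ in each case. The paper instead uses the $C_2$-differential $d_{31}(\ttwo^{11}u_{24\sigma_2}a_{9\sigma_2})=\ttwo^{16}u_{8\sigma_2}a_{40\sigma_2}$ directly: since transfer is a map of spectral sequences, the transfer of the $C_2$-target, namely $2\dtwo^{8}u_{4\lambda}u_{24\sigma}a_{20\lambda}$, must already be a boundary on the $E_{31}$-page of the $C_4$-spectral sequence. This immediately bounds the length at $31$, and then Lemma~\ref{lem:d31d13notExist} rules out the only competing $d_{13}$. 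Your restriction-based survival argument for the source is not needed, and the Vanishing Theorem is not invoked for (3)--(4) at all.
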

\begin{proof}
To prove (1), first multiply the predicted target, $\dtwo^{8}u_{18\sigma}a_{24\lambda}a_{6\sigma}$ $(42, 54)$, by $\dthree^{16} u_{48\sigma} a_{48\lambda}$ $(96, 96)$.  By Theorem~\ref{thm:VanishingThm} and degree reasons, the product must be killed by a differential of length 61.  It follows that (1) must hold. 

By Theorem~\ref{thm:VanishingThm} and degree reasons, the target of (2) must be killed by a differential of length at most 61.  The only possible differential is the ones claimed.

To prove (3), note that in the $a_{\sigma_2}$-localized slice spectral sequence of $i_{C_2}^*\BPCfourtwotwo$, we have the differential 
\[d_{31}(\ttwo^{11}u_{24\sigma_2}a_{9\sigma_2}) =\ttwo^{16} u_{8\sigma_2}a_{40\sigma_2} \,\,\, (d_{31}(57,9) = (56,40)).\]
Applying transfer to the target shows that the image of the target under the transfer map must be killed on or before the $E_{31}$-page.  There are only two possibilities.  Either the claimed $d_{31}$-differential exists, or it is killed by a $d_{13}$-differential from $\dtwo^{7} u_{8\lambda}u_{20\sigma}a_{13\lambda}a_{\sigma}$ $(57,27)$.  By Lemma~\ref{lem:d31d13notExist}, the $d_{13}$-differential does not exist.  Therefore the claimed $d_{31}$-differential happens for $i = 0$.  The rest of the differentials in (3) and all the differentials in (4) are proven by the same method.
\end{proof}

We can propagate the differentials in Proposition~\ref{prop:BPCfourtwotwod31Differentials} with respect to the classes $\dtwo^8u_{24\sigma}a_{24\lambda}$ $(48, 48)$ and $b^{32}$ $(64, -64)$ to obtain the rest of the $d_{31}$-differentials.  

\begin{proposition}
The $d_{31}$-differentials are 
\begin{enumerate}
\item $d_{31}\left(tr(\ttwo^{11}u_{10\sigma_2}a_{23\sigma_2})\cdot (\dtwo^2u_{6\sigma}a_{6\lambda})^{i+4j}\right) = \dtwo^{8}u_{18\sigma}a_{24\lambda}a_{6\sigma}\cdot (\dtwo^2u_{6\sigma}a_{6\lambda})^{i+4j}$,\\
$i = 0, 1$, $j \geq 0$; 
\item $d_{31}\left(tr(\ttwo^3u_{24\sigma_2}a_{-15\sigma_2}) \cdot b^{16i}(\dtwo^2u_{6\sigma}a_{6\lambda})^{j} 
\right) = 2 \dtwo^4u_{4\lambda}u_{12\sigma}a_{8\lambda} \cdot b^{16i}(\dtwo^2u_{6\sigma}a_{6\lambda})^{j}$,\\
$i, j \geq 0$.
\end{enumerate}
They are shown in Figure~\ref{fig:C4E31page}.  

\end{proposition}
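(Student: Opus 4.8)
The plan is to derive the complete list of $d_{31}$-differentials from the finitely many cases established in Proposition~\ref{prop:BPCfourtwotwod31Differentials} by propagating along the two structural classes
\[
\alpha = \dtwo^8 u_{24\sigma} a_{24\lambda} = (\dtwo^2 u_{6\sigma} a_{6\lambda})^4 \ \ (48,48), \qquad b^{32} = u_{32\lambda}/a_{32\lambda} \ \ (64,-64).
\]
The first point is that both classes are $d_{31}$-cycles: $\alpha$ is a permanent cycle in $a_\lambda^{-1}\SliceSS(\MUCfour)$, hence by naturality a $d_{31}$-cycle in $a_\lambda^{-1}\SliceSS(\BPCfourtwotwo)$, and $b^{32}$ is a $d_{61}$-cycle by the input from \cite{HSWX}. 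Therefore multiplication by $\alpha$ and by $b^{32}$ commutes with $d_{31}$ by the Leibniz rule. First I would apply this to the differentials Proposition~\ref{prop:BPCfourtwotwod31Differentials}(1)--(2): multiplying by $\alpha^{j} = (\dtwo^2 u_{6\sigma} a_{6\lambda})^{4j}$ upgrades the exponent $i \in \{0,1\}$ to $i+4j$ with $j \geq 0$, and this is exactly statement (1).

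For statement (2) I would start from Proposition~\ref{prop:BPCfourtwotwod31Differentials}(3)--(4), which furnish the $b^{16}$-parity base cases (the $b^{16i}$ with $i=0$ and $i=1$) together with the four $(\dtwo^2 u_{6\sigma} a_{6\lambda})$-exponents $0 \leq i \leq 3$. The transfer sources occurring there are rewritten in the form appearing in statement (2) by Frobenius reciprocity $\tr(x)\,y = \tr(x\cdot i^*_{C_2}y)$, using that $i^*_{C_2}\dtwo = \ttwo\gamma\ttwo$, $i^*_{C_2}\lambda = 2\sigma_2$, and the $d_7$-identification $\ttwo \sim \gamma\ttwo$ on the $C_2$-level (Section~\ref{subsec:C2SliceSSBPCfourtwotwo}), so that products of restricted $C_4$-classes can be pulled out of the transfer. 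Multiplying the resulting base differentials by powers of $\alpha$ extends the $(\dtwo^2 u_{6\sigma} a_{6\lambda})$-exponent to all of $\N$, and multiplying by powers of $b^{32}$ extends the two $b^{16i}$-parities to every $b^{16i}$, $i \geq 0$; this produces precisely the family of statement (2).

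It then remains to check that these are \emph{all} the $d_{31}$-differentials, and here I would invoke the reduction of Section~\ref{sec:strategy}: every differential of the spectral sequence is determined by the $d_{13}$'s with source on or above the line of slope $1$ (which carry only $d_{13}$'s, by Example~\ref{ex:asigmaBPC422}), together with differentials on sources lying in the bounded rectangular region described there, propagated by $\alpha$ and $b^{32}$. Using the injectivity and divisibility statements (Lemma~\ref{lem:bmultiplicationinjective}, Lemma~\ref{lem:alphastuff}, Proposition~\ref{prop:alphafreeres}), every class in that region is $\alpha$- or $b^{32}$-divisible through the $E_{13}$-stage, so it suffices to scan the $E_{31}$-page inside the region stem by stem: one checks that no bidegree other than those listed supports a non-trivial $d_{31}$, that the only candidate receivers are the $\dtwo^8 u_{18\sigma} a_{24\lambda} a_{6\sigma}$- and $2\dtwo^4 u_{4\lambda} u_{12\sigma} a_{8\lambda}$-families, that competing $d_{13}$-differentials into those bidegrees are excluded by Lemma~\ref{lem:d31d13notExist}, and that any longer potential source has already been killed by the Vanishing Theorem~\ref{thm:VanishingThm}.

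I expect the main obstacle to be exactly this last, chart-level bookkeeping step: confirming that Leibniz propagation by $\alpha$ and $b^{32}$ genuinely sweeps out every class in the strategic region that could participate in a $d_{31}$, so that the $E_{31}$-page really decomposes as $\alpha$- and $b^{32}$-towers over a small explicit base, and that no sporadic $d_{31}$ is hidden in a bidegree straddling the slope-$1$ line, where the $b^{32}$-module structure degenerates and the source may be $b^{32}$-torsion-free while the target is not. The $\alpha$-free pairing of Proposition~\ref{prop:alphafreeres} and the $d_{61}$-vanishing for $\alpha$-multiples (Theorem~\ref{thm:VanishingThm}) make this a finite check, but it is the step that requires the careful case analysis summarized in Figure~\ref{fig:C4E31page}.
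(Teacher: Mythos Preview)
Your approach is essentially the paper's: the paper's entire argument is the one sentence ``propagate the differentials in Proposition~\ref{prop:BPCfourtwotwod31Differentials} with respect to the classes $\dtwo^8u_{24\sigma}a_{24\lambda}$ and $b^{32}$,'' and you flesh this out correctly, including the Frobenius-reciprocity rewriting and the completeness check via Section~\ref{sec:strategy}. One small point to tighten: after rewriting, the base cases from Proposition~\ref{prop:BPCfourtwotwod31Differentials}(3)--(4) land at exponents $j\in\{2,3,4,5\}$ in the final indexing (not $\{0,1,2,3\}$), so forward $\alpha$-multiplication alone yields only $j\ge 2$; the $j=0,1$ cases require the backward direction, which follows from the injectivity of $\alpha$-multiplication (Lemma~\ref{lem:alphastuff} and the $\alpha$-free pairing of Proposition~\ref{prop:alphafreeres}) together with Leibniz, exactly as the paper tacitly uses.
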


\begin{center}
\begin{figure}
\includegraphics[width=\textwidth, page =1]{BPC422SliceSSE31}
\caption{The $E_{31}$-page of the $a_\lambda$-localized slice spectral sequence of $a_\lambda^{-1}\BPCfourtwotwo$.}
\label{fig:C4E31page}
\end{figure}
\end{center}

\subsubsection{\texorpdfstring{$d_{37}$}{d-37}-differentials}
To prove the $d_{37}$-differentials, we will first prove the nonexistence of certain $d_{13}$-differentials.

\begin{lemma}\label{lem:d37d13notExist} 
At the $E_{13}$-page, we have
\begin{enumerate}
\item $d_{13}\left(\dtwo^{12}u_{\lambda}u_{36\sigma}a_{35\lambda}\right) \neq \dtwo^{13}u_{34\sigma}a_{39\lambda}a_{5\sigma}$; 
\item $d_{13}\left(\dtwo^{12}u_{\lambda}u_{36\sigma}a_{35\lambda}\cdot (\dtwo^2u_{6\sigma}a_{6\lambda})\right) \neq \dtwo^{13}u_{34\sigma}a_{39\lambda}a_{5\sigma}\cdot (\dtwo^2u_{6\sigma}a_{6\lambda})$;
\item $d_{13}\left(\dtwo^{11}u_{3\lambda}u_{32\sigma}a_{30\lambda}a_{\sigma}\right) \neq \dtwo^{12}u_{34\sigma}a_{36\lambda}a_{2\sigma}$; 
\item $d_{13}\left(\dtwo^{11}u_{3\lambda}u_{32\sigma}a_{30\lambda}a_{\sigma}\cdot (\dtwo^2u_{6\sigma}a_{6\lambda})\right) \neq \dtwo^{12}u_{34\sigma}a_{36\lambda}a_{2\sigma}\cdot (\dtwo^2u_{6\sigma}a_{6\lambda})$;
\item $d_{13}\left(\dtwo^{9}u_{19\lambda}u_{26\sigma}a_{8\lambda}a_{\sigma}\cdot (\dtwo^2u_{6\sigma}a_{6\lambda})^i\right) \neq 2\dtwo^{10}u_{15\lambda}u_{30\sigma}a_{15\lambda}\cdot (\dtwo^2u_{6\sigma}a_{6\lambda})^i$,\\
$0 \leq i \leq 3$.

\end{enumerate}
\end{lemma}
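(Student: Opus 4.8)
The plan is to argue by contradiction in each case, following verbatim the template of Lemma~\ref{lem:d31d13notExist}. Assuming one of the listed $d_{13}$-differentials exists, I would first propagate it by the Leibniz rule against the permanent cycles $\dthree^4 u_{12\sigma}a_{12\lambda}$ in bidegree $(24,24)$ and $b^8$ in bidegree $(16,-16)$ (both of which act as $d_{13}$-cycles against sources below the line of slope $1$, as observed before the $d_{13}$ computation), and where needed also against $\dtwo^2 u_{6\sigma}a_{6\lambda}$, $\dtwo^8 u_{24\sigma}a_{24\lambda}=\alpha$, and $b^{32}$. This produces a longer family of hypothetical $d_{13}$-differentials whose targets sit in the cone above the line of slope $-1$ through a high power of $\alpha$ (equivalently through $\dthree^{16}u_{48\sigma}a_{48\lambda}$ in $(96,96)$). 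I would then exhibit a specific class of the form $\dtwo^{a}u_{b\lambda}u_{c\sigma}a_{d\lambda}a_\sigma$, lying just above the line of slope $-1$ emanating from $(48,48)$ and hence an $\alpha$-multiple, which by the Vanishing Theorem~\ref{thm:VanishingThm} must die on or before the $E_{61}$-page; once the hypothetical $d_{13}$ has consumed its target, this class has no surviving partner in the relevant bidegrees — neither a source for a differential of length $\le 61$ landing on it, nor a target for a differential of length $\le 61$ out of it — yielding the contradiction.

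Concretely, in (1) and (2) the assumed target is $\dtwo^{13}u_{34\sigma}a_{39\lambda}a_{5\sigma}$ (and its $(\dtwo^2 u_{6\sigma}a_{6\lambda})$-multiple); propagating by $\dthree^4 u_{12\sigma}a_{12\lambda}$ and $b^8$ kills off classes of the shape $\dtwo^{13+4k}u_{\cdot\sigma}a_{\cdot\lambda}a_{5\sigma}b^{8\ell}$, and the trapped $\alpha$-multiple is then found as a class $\dtwo^{\cdot}u_{\cdot\lambda}u_{\cdot\sigma}a_{\cdot\lambda}a_\sigma$ in a stem near $t-s\approx 59$–$71$ with filtration near $49$–$61$. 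Cases (3) and (4), with assumed target $\dtwo^{12}u_{34\sigma}a_{36\lambda}a_{2\sigma}$, run identically. In (5) the assumed target $2\dtwo^{10}u_{15\lambda}u_{30\sigma}a_{15\lambda}$ lies below the line of slope one, so I would in addition invoke $b^{32}$-linearity (Lemma~\ref{lem:bmultiplicationinjective}) and $\alpha$-freeness (Proposition~\ref{prop:alphafreeres}) to push the hypothetical differential along the $(\dtwo^2 u_{6\sigma}a_{6\lambda})^i$-tower for $0\le i\le 3$, and then produce the trapped $\alpha$-multiple as before.

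The main obstacle will be the bidegree bookkeeping: one has to know the $E_{13}$-page (Figure~\ref{fig:C4E13page}) — together with the $d_{19}$- and $d_{31}$-differentials already established — precisely in a band of stems roughly $t-s\in[48,96]$, in order to certify that after removing the propagated $d_{13}$-target there is genuinely no other class available to absorb the trapped $\alpha$-multiple via a differential of length at most $61$. A secondary (but routine) point is checking that each Leibniz propagation is legitimate, i.e.\ that the products of the hypothetical $d_{13}$-target with the other differentials' sources all vanish so that no cross-terms obstruct the argument, and that the multiplying classes indeed behave as $d_{13}$-cycles in the relevant range. Everything else reduces to the finite stem-wise check enabled by Theorem~\ref{thm:VanishingThm}.
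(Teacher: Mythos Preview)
Your plan follows the template of Lemma~\ref{lem:d31d13notExist} (propagate the hypothetical $d_{13}$ upward and trap an $\alpha$-multiple via the Vanishing Theorem), and in principle this can be made to work, but the paper takes a much shorter route that avoids the bookkeeping you flag as the main obstacle. For (1)--(4), rather than propagating \emph{up}, the paper divides the source \emph{down} by $\dthree^4 u_{12\sigma}a_{12\lambda}$ at $(24,24)$: writing the source at $(74,70)$ as the $(24,24)$-multiple of $\dtwo^{8}u_{\lambda}u_{24\sigma}a_{23\lambda}$ at $(50,46)$, Leibniz forces the latter to support a $d_{\leq 13}$, which is impossible since there is no available target at $(49,59)$ (the only candidate there already supports a known $d_{13}$ from the above-slope-$1$ pattern). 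Parts (2)--(4) go identically, and (5) is then deduced from (1)--(4) by Leibniz with $(24,24)$ and $b^{16}$, rather than by a separate Vanishing argument. So your strategy is not wrong, but it is considerably heavier: the paper's division trick replaces the global $E_{61}$-vanishing input and the stem-by-stem audit with a single local degree check.
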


\begin{proof}
To prove (1), note that if the class $\dtwo^{12}u_{\lambda}u_{36\sigma}a_{35\lambda}$ $(74, 70)$ supports a $d_{13}$-differential, then applying the Leibniz rule with respect to the class $\dthree^4 u_{12 \sigma} a_{12\lambda}$ $(24, 24)$ would show that the class $\dtwo^{8}u_{\lambda}u_{24\sigma}a_{23\lambda}$ $(50, 46)$ must support a differential of length at most 13.  This is a contradiction because there are no possible targets.  The nonexistence of differentials (2), (3), and (4) can be proven by the same method.  The differentials in (5) follows from (1)-(4) by applying the Leibniz rule with respect to $\dthree^4 u_{12 \sigma} a_{12\lambda}$ $(24, 24)$ and $b^{16}$ $(32, -32)$.  
\end{proof}

\begin{proposition}\label{prop:BPCfourtwotwod37Differentials1}
We have the following $d_{37}$-differentials for $i = 0, 1$: 
\[d_{37}(\dtwo^{5}u_{27\lambda}u_{14\sigma}a_{-12\lambda}a_{\sigma} \cdot (\dtwo^2u_{6\sigma}a_{6\lambda})^i) = \dtwo^{8}u_{17\lambda}u_{24\sigma}a_{7\lambda}\cdot (\dtwo^2u_{6\sigma}a_{6\lambda})^i.\]
\end{proposition}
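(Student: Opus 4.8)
The plan is to obtain both $d_{37}$-differentials as the unique way, compatible with the Vanishing Theorem (Theorem~\ref{thm:VanishingThm}), of killing a suitable target class, after eliminating the competing $d_{13}$-differentials with Lemma~\ref{lem:d37d13notExist}. Retain the notation $\alpha=\dtwo^8u_{24\sigma}a_{24\lambda}$ and $b=u_\lambda/a_\lambda$. The first step is the observation that the proposed target factors exactly as $\dtwo^8u_{17\lambda}u_{24\sigma}a_{7\lambda}\cdot(\dtwo^2u_{6\sigma}a_{6\lambda})^i=\alpha\cdot b^{17}(\dtwo^2u_{6\sigma}a_{6\lambda})^i$ for $i=0,1$, so that by Theorem~\ref{thm:VanishingThm} each of these classes must die on or before the $E_{61}$-page. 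Next I would check, using the explicit lists of $d_{13}$-, $d_{19}$-, and $d_{31}$-differentials already established (Figures~\ref{fig:C4E13page}, \ref{fig:C4E19page}, \ref{fig:C4E31page}), that $b^{17}(\dtwo^2u_{6\sigma}a_{6\lambda})^i$ is a permanent cycle and is not $\alpha$-divisible (Lemma~\ref{lem:alphastuff}); since $\alpha$ is itself a permanent cycle, it then follows that $\alpha\cdot b^{17}(\dtwo^2u_{6\sigma}a_{6\lambda})^i$ cannot support a differential and hence must be hit by a differential of length at most $61$.

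The core of the proof is to show that the only differential available to hit $\dtwo^8u_{17\lambda}u_{24\sigma}a_{7\lambda}\cdot(\dtwo^2u_{6\sigma}a_{6\lambda})^i$ is the claimed $d_{37}$ from $\dtwo^{5}u_{27\lambda}u_{14\sigma}a_{-12\lambda}a_\sigma\cdot(\dtwo^2u_{6\sigma}a_{6\lambda})^i$. I would enumerate the classes in the relevant source bidegrees that survive to the $E_{31}$-page, reading them off the $E_2$-chart (Figure~\ref{fig:BP22C4E2}) together with all previously established differentials, and dispose of each non-$d_{37}$ possibility as follows. An incoming $d_{13}$ is exactly the type excluded by Lemma~\ref{lem:d37d13notExist} (after, if needed, propagating that lemma by $\dtwo^4u_{12\sigma}a_{12\lambda}$ and $b^{16}$ as in its proof). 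An incoming $d_{19}$ or $d_{31}$ is ruled out by restriction and transfer to the $C_2$-slice spectral sequence of Section~\ref{subsec:C2SliceSSBPCfourtwotwo}: such a source either restricts to a class whose behaviour in the $C_2$-spectral sequence is, by naturality, incompatible with hitting this particular target, or it is a transfer that has already supported a shorter differential and is gone by $E_{31}$. Differentials of length $r\ge 43$ are impossible because, by the pairing of $\alpha$-free families (Proposition~\ref{prop:alphafreeres}) together with the fact that our target is $\alpha$ times a non-$\alpha$-divisible permanent cycle, any such incoming differential would have to issue from an $\alpha$-free class, whereas a bidegree comparison with Lemma~\ref{lem:alphastuff} shows that every class remaining in the relevant source bidegrees after $E_{31}$ is $\alpha$-torsion. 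This leaves the single $d_{37}$-differential, which therefore must occur; propagating it through its $\alpha$-free family by $\alpha$-linearity then gives the stated formula for all $\alpha$-multiples.

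The case $i=1$ cannot be obtained from the case $i=0$ by a direct application of the Leibniz rule, since $\dtwo^2u_{6\sigma}a_{6\lambda}$ itself supports a $d_{13}$-differential; instead I would simply re-run the preceding argument with every class multiplied by $\dtwo^2u_{6\sigma}a_{6\lambda}$, using $b^{32}$-linearity and $\alpha$-linearity (Lemmas~\ref{lem:bmultiplicationinjective} and~\ref{lem:alphastuff}) to confine the verification to the finite fundamental rectangle of Figure~\ref{fig:StrategyCartoon}, where all the relevant bidegrees have already been analyzed.

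The hard part is the exhaustive elimination in the second paragraph: one must be certain that the list of surviving classes in each relevant source bidegree on the $E_{13}$-, $E_{19}$-, and $E_{31}$-pages is complete, which rests on carrying the full $E_2$-chart along with every previously established differential and knowing exactly which classes have already died. Once the incoming $d_{37}$ is pinned down as the unique possibility consistent with the Vanishing Theorem, nothing further is needed.
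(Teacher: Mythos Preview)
Your overall plan—use the Vanishing Theorem to force the target to die, then eliminate competing differentials—is exactly the paper's strategy, but there is a real gap in your first paragraph. You assert that $b^{17}(\dtwo^2u_{6\sigma}a_{6\lambda})^i$ is a permanent cycle, checked ``using the explicit lists of $d_{13}$-, $d_{19}$-, and $d_{31}$-differentials.'' Those lists only show it is a $d_{31}$-cycle; they say nothing about $d_{37}$. And in fact the competing possibility you are silently discarding is that the target $\alpha\cdot b^{17}=\dtwo^{8}u_{17\lambda}u_{24\sigma}a_{7\lambda}$ itself \emph{supports} a $d_{37}$, hitting the class $\dtwo^{3}u_{8\lambda}u_{8\sigma}a_\lambda a_\sigma$ at $(33,3)$ (times $\alpha$). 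That class survives to $E_{37}$ and is $\alpha$-free there, so nothing in Proposition~\ref{prop:alphafreeres} obstructs this. Your argument that the target ``cannot support a differential'' is therefore circular: it presupposes exactly the $d_{37}$-level statement you are trying to establish.

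The paper closes this gap not by analyzing the target but by analyzing the \emph{source}. Working at the $\alpha^{2}$-multiple, one considers the target $(130,62)$ and explicitly allows both possibilities: either it is hit by the claimed $d_{37}$ from $(131,25)$, or it supports a $d_{37}$ to $(129,99)$. One then rules out the second option by showing that, if it held, the class at $(131,25)$ would itself have no way to die on or before $E_{61}$ (its only candidate is a $d_{13}$, and that $d_{13}$ is excluded by a Leibniz argument against a forced $d_{37}$ at $(107,1)$), contradicting the Vanishing Theorem. Once that is done the rest of your outline is fine. So your write-up needs one more case: rule out that the target supports a $d_{37}$, and the cleanest way is to pivot to the source's $\alpha$-multiple as above.
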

\begin{proof}

To prove the differential when $i = 0$, we will show that the $d_{37}$-differential 
\[d_{37}(\dtwo^{13}u_{27\lambda}u_{38\sigma}a_{12\lambda}a_{\sigma})=\dtwo^{16}u_{17\lambda}u_{48\sigma}a_{31\lambda} \,\,\, (d_{37}(131,25) = (130,62))\]
exists.  Propagating with respect to the class $\dtwo^8u_{24\sigma}a_{24\lambda}$ $(48, 48)$ would then prove the desired differential.  Note that by Theorem~\ref{thm:VanishingThm}, the class $\dtwo^{16}u_{17\lambda}u_{48\sigma}a_{31\lambda}$ $(130, 62)$ must die on or before the $E_{61}$-page.  There are two possibilities: either it supports a $d_{37}$-differential hitting $\dtwo^{19}u_{8\lambda}u_{56\sigma}a_{49\lambda}a_{\sigma}$ $(129, 99)$, or the claimed differential exists.  Suppose the first case happens, then we claim there is no possibility for the class $\dtwo^{13}u_{27\lambda}u_{38\sigma}a_{12\lambda}a_{\sigma}$ $(131, 25)$ to die on or before the $E_{61}$-page.  This is because if the class does die, then the only possibility is for it to support a $d_{13}$-differential hitting $2\dtwo^{14}u_{23\lambda}u_{42\sigma}a_{19\lambda}$ $(130, 38)$.  However, if this $d_{13}$-differential exists, then by applying the Leibniz rule with respect to the class $\dthree^4 u_{12 \sigma} a_{12\lambda}$ $(24, 24)$, the class $\dtwo^{9}u_{27\lambda}u_{26\sigma}a_{\sigma}$ $(107, 1)$ must also support a differential of length at most 13.  This is a contradiction because we must have the $d_{37}$-differential
\[d_{37}(\dtwo^{9}u_{27\lambda}u_{26\sigma}a_{\sigma}) = \dtwo^{12}u_{17\lambda}u_{36\sigma}a_{19\lambda} \,\,\, (d_{37}(107,1) = (106,38))\]
by the Vanishing Theorem and degree reasons (Theorem~\ref{thm:VanishingThm}).  It follows that the class $\dtwo^{9}u_{27\lambda}u_{26\sigma}a_{\sigma}$ $(107, 1)$ supports a $d_{37}$-differential.  

The second differential, when $i=1$, is proven by the same method. 
\end{proof}

\begin{proposition}
The $d_{37}$-differentials are
\begin{enumerate}
\item $d_{37}\left(\dtwo^{4}u_{8\lambda}u_{12\sigma}a_{4\lambda} \cdot (\dtwo^2u_{6\sigma}a_{6\lambda})^{i+4j} \right) =\dtwo^{7}u_{18\sigma}a_{21\lambda}a_{3\sigma} \cdot (\dtwo^2u_{6\sigma}a_{6\lambda})^{i+4j}$,\\ $i = 0, 1$, $j \geq 0$;
\item $d_{37}\left(\dtwo u_{8\lambda}u_{2\sigma}a_{-5\lambda}a_{\sigma}\cdot (\dtwo^2u_{6\sigma}a_{6\lambda})^{i + 4j} \right) = \dtwo^{4}u_{8\sigma}a_{12\lambda}a_{4\sigma} \cdot (\dtwo^2u_{6\sigma}a_{6\lambda})^{i + 4j}$, \\
$i = 0, 3$, $j \geq 0$;
\item $d_{37}\left(2\dtwo^{2}u_{7\lambda}u_{6\sigma}a_{-\lambda} \cdot (\dtwo^2u_{6\sigma}a_{6\lambda})^{i+4j} \right) =\dtwo^{5}u_{10\sigma}a_{15\lambda}a_{5\sigma} \cdot (\dtwo^2u_{6\sigma}a_{6\lambda})^{i+4j}$, \\
$i = 0, 1$, $j \geq 0$;
\item $d_{37}\left(2b^{12+16i} \cdot (\dtwo^2u_{6\sigma}a_{6\lambda})^{j} \right) =\dtwo^{3}u_{3\lambda}u_{8\sigma}a_{6\lambda}a_{\sigma} \cdot b^{16i}(\dtwo^2u_{6\sigma}a_{6\lambda})^{j}$,\\
$i, j \geq 0$;
\item $d_{37}\left(\dtwo u_{11\lambda}u_{2\sigma}a_{-8\lambda}a_{\sigma} \cdot b^{16i}(\dtwo^2u_{6\sigma}a_{6\lambda})^{j} \right) = \dtwo^{4}u_{\lambda}u_{12\sigma}a_{11\lambda}\cdot b^{16i}(\dtwo^2u_{6\sigma}a_{6\lambda})^{j}$,\\
$i, j \geq 0$.

\end{enumerate}
They are shown in Figure~\ref{fig:C4E37page}.  
\end{proposition}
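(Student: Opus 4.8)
The plan is to follow the template already used for the $d_{13}$, $d_{19}$, and $d_{31}$ families in this section: establish a small set of \emph{seed} $d_{37}$-differentials by hand, propagate them across the whole page using the periodicity classes and the Leibniz rule, and then rule out any further $d_{37}$-differentials using the region reduction of Section~\ref{sec:strategy}. The seeds are precisely the two differentials of Proposition~\ref{prop:BPCfourtwotwod37Differentials1}, together with the auxiliary differential $d_{37}(\dtwo^{9}u_{27\lambda}u_{26\sigma}a_{\sigma}) = \dtwo^{12}u_{17\lambda}u_{36\sigma}a_{19\lambda}$ produced in the course of its proof; all three come from combining the Vanishing Theorem~\ref{thm:VanishingThm} with a stem-wise count of possible targets, once the nonexistence statements of Lemma~\ref{lem:d37d13notExist} have eliminated the only competing $d_{13}$-differentials.

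First I would check that each of the five displayed families is obtained from one of these seeds by multiplying both source and target by a monomial in the classes $\dthree^4 u_{12\sigma}a_{12\lambda}$ at $(24,24)$, $b^{16}$ at $(32,-32)$, $\dtwo^8 u_{24\sigma}a_{24\lambda}$ at $(48,48)$, and $b^{32}$ at $(64,-64)$. For $\dthree^4 u_{12\sigma}a_{12\lambda}$ one argues as before that, although it supports a $d_{13}$ in $\SliceSS(\BPCfour)$, it behaves as a $d_{\le 37}$-cycle against the relevant sources because the product of its $d_{13}$-target with each such source vanishes for degree reasons, so the Leibniz rule still applies; the classes $b^{16}$, $b^{32}$, and $\dtwo^8 u_{24\sigma}a_{24\lambda}$ are genuine permanent cycles. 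Injectivity of multiplication by $\alpha=\dthree^8 u_{24\sigma}a_{24\lambda}$ on $E_{13}$ (Lemma~\ref{lem:alphastuff}) and by $b^{32}$ below the line of slope one (Lemma~\ref{lem:bmultiplicationinjective}), together with Proposition~\ref{prop:alphafreeres}, then guarantee that the propagated differentials are nonzero and that once one member of an $\alpha$-free family is hit the remaining members cannot support any later differential; this is what allows the families to be read off uniformly rather than bidegree by bidegree.

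Next I would treat the differentials that do not literally descend from a seed by propagation, using naturality under restriction and transfer. Many of the listed sources restrict, in the $C_2$-slice spectral sequence of $i^*_{C_2}\BPCfourtwotwo$ computed in Section~\ref{subsec:C2SliceSSBPCfourtwotwo}, to classes whose fate there is known, or their targets lie in the image of the transfer from such classes; so by naturality of $\Res$ and $\tr$ and a count of possible targets in the $C_4$-page the claimed differential is forced. Items (4) and (5), whose sources involve small $b$-powers, are the ones most amenable to this, and the Vanishing Theorem is then invoked once more: any class of the form $(\dtwo^8 u_{24\sigma}a_{24\lambda})\cdot x$ must die by $E_{61}$, and after the $d_{13}$, $d_{19}$, and $d_{31}$ families of the preceding propositions (through Proposition~\ref{prop:BPCfourtwotwod31Differentials}) have been removed, inspection of the finite cone-plus-rectangle region of Figure~\ref{fig:StrategyCartoon} shows that in each relevant bidegree a $d_{37}$ is the unique surviving possibility.

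The main obstacle I anticipate is exactly this last bookkeeping step: certifying that in every bidegree of the finite region there is no room for a competing differential of length $13$, $19$, $25$, or $31$ that would preempt a $d_{37}$, and conversely that the asserted sources are not themselves hit by something shorter. Here Lemma~\ref{lem:d37d13notExist} does the heavy lifting for the $d_{13}$'s, and an argument in the style of the $d_{31}$-analysis (Lemma~\ref{lem:d31d13notExist} with Proposition~\ref{prop:BPCfourtwotwod31Differentials}) handles the intermediate lengths; the delicate part is chasing the Leibniz consequences of a hypothetical short differential---multiplying by $\dthree^4 u_{12\sigma}a_{12\lambda}$ to push it into a bidegree with no possible target---to derive a contradiction, exactly as in the proof of Proposition~\ref{prop:BPCfourtwotwod37Differentials1}. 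Once the region is cleared, $\alpha$-linearity and $b^{32}$-linearity propagate everything else and the proposition follows.
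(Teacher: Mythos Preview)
Your overall strategy---seed differentials from Proposition~\ref{prop:BPCfourtwotwod37Differentials1}, the Vanishing Theorem, Lemma~\ref{lem:d37d13notExist}, and propagation---matches the paper's, and the paper's proof is in fact nothing more than a one-sentence appeal to exactly these ingredients plus propagation by $\alpha$ at $(48,48)$ and $b^{32}$ at $(64,-64)$.

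However, your propagation step contains a genuine error. You assert that $b^{16}$ is a permanent cycle, but the paper explicitly records (in the discussion of Section~\ref{sec:strategy}) that $b^{16}=u_{16\lambda}/a_{16\lambda}$ supports a $d_{31}$-differential in the $a_\lambda^{-1}\MUCfour$ spectral sequence. Hence on the $E_{37}$-page $b^{16}$ is already dead, and you cannot use the Leibniz rule with it to propagate $d_{37}$-differentials. The same problem afflicts your use of $\dthree^4 u_{12\sigma}a_{12\lambda}$ at $(24,24)$: this class supports a $d_{13}$, so it is zero on $E_{14}$ and beyond; the trick of treating it as a ``virtual cycle'' via vanishing of cross-terms works only on the $E_{13}$-page, not on $E_{37}$. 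The paper avoids both issues by propagating solely with the genuine $d_{61}$-cycles $\alpha$ and $b^{32}$, which means that within each $b^{32}$-period the several cases indexed by $i$ (e.g.\ $i=0,1$ in item (1), or $b^{16i}$ with $i=0,1$ in items (4)--(5)) must each be established directly from the Vanishing Theorem and a bidegree count, not deduced from one another by multiplying by $(24,24)$ or $(32,-32)$.

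Your restriction/transfer paragraph for items (4) and (5) is also more elaborate than needed: the paper simply observes that, once Lemma~\ref{lem:d37d13notExist} has ruled out the competing short differentials, Theorem~\ref{thm:VanishingThm} forces each target to die by $E_{61}$ and in the relevant bidegrees the claimed $d_{37}$ is the only remaining option.
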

\begin{proof}
All the differentials can be proven immediately from the Vanishing Theorem and degree reasons (Theorem~\ref{thm:VanishingThm} and Lemma~\ref{lem:d37d13notExist}), Proposition~\ref{prop:BPCfourtwotwod37Differentials1}, and propagation with respect to the classes $\dtwo^8u_{24\sigma}a_{24\lambda}$ $(48, 48)$ and $b^{32}$ $(64, -64)$. 
\end{proof}

\begin{center}
\begin{figure}
\includegraphics[width=\textwidth]{BPC422SliceSSE37}
\caption{The $E_{37}$-page of the $a_\lambda$-localized slice spectral sequence of $a_\lambda^{-1}\BPCfourtwotwo$.}
\label{fig:C4E37page}
\end{figure}
\end{center}

\subsubsection{\texorpdfstring{$d_{43}$}{d-43}-differentials}
\begin{proposition}\label{prop:BPCfourtwotwod43Differentials1}
The following $d_{43}$-differentials exist for $i = 0, 1$: 
\[d_{43}(\dtwo^{12}u_{40\lambda}u_{36\sigma}a_{-4\lambda} \cdot (\dtwo^2 u_{6\sigma}a_{6\lambda})^i) = tr(\ttwo^{31}u_{58\sigma_2}a_{35\sigma_2})\cdot (\dtwo^2 u_{6\sigma}a_{6\lambda})^i.\]
\end{proposition}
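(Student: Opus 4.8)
The plan is to establish the two stated $d_{43}$-differentials (the cases $i=0$ and $i=1$) by the same mechanism used for the $d_{37}$-differentials in Proposition~\ref{prop:BPCfourtwotwod37Differentials1}: combine the Vanishing Theorem~\ref{thm:VanishingThm} with naturality of the transfer from the $C_2$-slice spectral sequence, and then eliminate all competing shorter differentials. Once these two base cases are in hand, the full family of $d_{43}$-differentials is obtained by propagating with the permanent cycles $\dtwo^8u_{24\sigma}a_{24\lambda}$ $(48,48)$ and $b^{32}$ $(64,-64)$, exactly as in the previous subsections.

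For $i=0$, I would first multiply the predicted target $\tr(\ttwo^{31}u_{58\sigma_2}a_{35\sigma_2})$ by $\dtwo^8u_{24\sigma}a_{24\lambda}$ (and, if necessary, once more by $\dthree^{16}u_{48\sigma}a_{48\lambda}$ $(96,96)$, as in the proof of the Vanishing Theorem), so that by Theorem~\ref{thm:VanishingThm} the resulting class is forced to die on or before the $E_{61}$-page. Next I would observe that $\tr(\ttwo^{31}u_{58\sigma_2}a_{35\sigma_2})$ is the image under the transfer of the class $\ttwo^{31}u_{58\sigma_2}a_{35\sigma_2}$ in the $a_{\sigma_2}$-localized slice spectral sequence of $i_{C_2}^*\BPCfourtwotwo$, whose fate there is pinned down by the relative-Adams computation of Section~\ref{sec:compute} (it is the target of a differential coming from the corresponding $u$-class, of length matching a slice $d_{43}$ under the correspondence of Summary~\ref{sum:relvsslice}); hence by naturality of the transfer its image in the $C_4$-spectral sequence must also be hit on or before that same page. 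Then I would enumerate the classes in the appropriate bidegree that could support a differential landing on $\tr(\ttwo^{31}u_{58\sigma_2}a_{35\sigma_2})$: the source $\dtwo^{12}u_{40\lambda}u_{36\sigma}a_{-4\lambda}$ supplies the claimed $d_{43}$, and I would check (as in the $d_{19}$ and $d_{37}$ arguments) that this source is still present on the $E_{43}$-page, typically by first proving a longer $d_{43}$ at higher degree of the form $d_{43}\big(\dtwo^{?}u_{?\lambda}u_{?\sigma}a_{?\lambda}a_\sigma\big)=\tr(\ttwo^{?}u_{?\sigma_2}a_{?\sigma_2})$ and propagating back down by $\dtwo^8u_{24\sigma}a_{24\lambda}$.

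The main obstacle will be ruling out the competing shorter differentials — in particular a possible $d_{13}$, $d_{19}$, $d_{31}$, or $d_{37}$ hitting the target — which is where the genuine work lies, just as in Lemmas~\ref{lem:d31d13notExist} and \ref{lem:d37d13notExist}. The decisive tool is the Leibniz rule against the slice-differential class $\dthree^4u_{12\sigma}a_{12\lambda}$ $(24,24)$: by the Hill--Hopkins--Ravenel Slice Differential Theorem it supports a $d_{13}$ whose target annihilates every source below the line of slope $1$, so it behaves as a $d_{13}$-cycle there; assuming a spurious short differential into our target, multiplying by $\dthree^4u_{12\sigma}a_{12\lambda}$ and/or by $b^{16}$ would force a differential of length $\leq 13$ out of a class with no available target, a contradiction. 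Short differentials hitting the transfer class can additionally be excluded by naturality of $\tr$ from the $C_2$-computation of Section~\ref{subsec:C2SliceSSBPCfourtwotwo}. Combined with the $\alpha$-freeness bookkeeping of Proposition~\ref{prop:alphafreeres}, which pairs $\alpha$-free families uniquely, this leaves the asserted $d_{43}$ as the only possibility. The case $i=1$ is handled in exactly the same way, with the source and target replaced by their $(\dtwo^2u_{6\sigma}a_{6\lambda})$-multiples; the remaining members of the full family then follow by $\dtwo^8u_{24\sigma}a_{24\lambda}$- and $b^{32}$-linearity.
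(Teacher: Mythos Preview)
Your proposal has a genuine gap. The transfer-naturality step is where it fails: in the $a_{\sigma_2}$-localized $C_2$-slice spectral sequence of $i_{C_2}^*\BPCfourtwotwo$ there are only $d_7$-, $d_{19}$-, and $d_{31}$-differentials (Section~\ref{subsec:C2SliceSSBPCfourtwotwo}); there is no differential ``of length matching a slice $d_{43}$'' for the correspondence of Summary~\ref{sum:relvsslice} to produce. In fact the class $\ttwo^{31}u_{58\sigma_2}a_{35\sigma_2}$ is the \emph{source} of a $d_{19}$ there (it equals $\ttwo^{31}b^{29}a_{93\sigma_2}$ with $29\equiv 5\pmod 8$), not a target. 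So naturality of the transfer does not tell you that $\tr(\ttwo^{31}u_{58\sigma_2}a_{35\sigma_2})$ must be \emph{hit} in the $C_4$-spectral sequence.

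Because that step fails, you never address the real competing possibility. The Vanishing Theorem forces the class at $(151,35)$ to die by $E_{61}$, but ``die'' allows it either to be hit or to \emph{support} a differential. The paper's proof reduces to exactly two options: the claimed $d_{43}$, or a $d_{55}$ from $(151,35)$ hitting $2\dtwo^{20}u_{15\lambda}u_{60\sigma}a_{45\lambda}$ at $(150,90)$. The latter is then excluded by a second use of the Vanishing Theorem: the class at $(150,90)$ is the only possible source of a $d_{61}$ killing $\dtwo^{25}u_{74\sigma}a_{75\lambda}a_{\sigma}$ at $(149,151)$, so it cannot already have been consumed by a $d_{55}$. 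Your outline, which is organized entirely around excluding shorter differentials \emph{into} the target via Leibniz against $\dthree^4u_{12\sigma}a_{12\lambda}$, never engages with this $d_{55}$ alternative, and the $\alpha$-free bookkeeping of Proposition~\ref{prop:alphafreeres} does not by itself pair these particular families. The fix is to replace the transfer argument with the paper's two-step Vanishing Theorem argument at $(151,35)$ and $(149,151)$; the case $i=1$ then follows by the same method.
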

\begin{proof}
When $i = 0$, note that by the Vanishing Theorem (Theorem~\ref{thm:VanishingThm}), the class $tr(\ttwo^{31}u_{58\sigma_2}a_{35\sigma_2})$ $(151, 35)$ must die on or before the $E_{61}$-page.  There are two possibilities.  Either the claimed differential occurs, or it supports a $d_{55}$-differential hitting $2\dtwo^{20}u_{15\lambda}u_{60\sigma}a_{45\lambda}$ $(150, 90)$.  The second case does not occur because the class $2\dtwo^{20}u_{15\lambda}u_{60\sigma}a_{45\lambda}$ $(150, 90)$ needs to support a $d_{61}$-differential killing the class $\dtwo^{25}u_{74\sigma}a_{75\lambda}a_{\sigma}$ $(149, 151)$, or else no class would be able to kill $\dtwo^{25}u_{74\sigma}a_{75\lambda}a_{\sigma}$ $(149, 151)$ on or before the $E_{61}$-page and we would reach a contradiction with Theorem~\ref{thm:VanishingThm}.  

The second differential is proven by the same method.  
\end{proof}

\begin{proposition}
The $d_{43}$-differentials are 
\begin{enumerate}
 \item $d_{43}\left(b^{16+32i} \cdot (\dtwo^2u_{6\sigma}a_{6\lambda})^{j+4k}\right) =tr(\ttwo^7u_{10\sigma_2}a_{11\sigma_2}) \cdot b^{32i}(\dtwo^2u_{6\sigma}a_{6\lambda})^{j+4k}$, \\
 $i, k \geq 0$, $j = 0, 3$;
 \item $d_{43}\left(b^{24+32i} \cdot (\dtwo^2u_{6\sigma}a_{6\lambda})^{j+4k}(\dtwo^2u_{8\lambda}u_{6\sigma}a_{-2\lambda})^\ell\right) \\= tr(\ttwo^7u_{26\sigma_2}a_{-5\sigma_2})\cdot b^{32i}(\dtwo^2u_{6\sigma}a_{6\lambda})^{j+4k}(\dtwo^2u_{8\lambda}u_{6\sigma}a_{-2\lambda})^\ell$, \\
$i, k \geq 0$, $j = 0, 1$, $\ell = 0, 1, 2$.
\end{enumerate}
They are shown in Figure~\ref{fig:C4E43-61page}.  
\end{proposition}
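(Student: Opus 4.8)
The plan is to obtain both families of $d_{43}$-differentials from the seed differentials of Proposition~\ref{prop:BPCfourtwotwod43Differentials1}, together with naturality of the transfer from the $C_2$-slice spectral sequence of Section~\ref{subsec:C2SliceSSBPCfourtwotwo}, and then to spread them across the $E_{43}$-page by multiplying with the two permanent cycles that control the computation: $\alpha=\dtwo^8u_{24\sigma}a_{24\lambda}$ at $(48,48)$, which equals $(\dtwo^2u_{6\sigma}a_{6\lambda})^4$, and $b^{32}$ at $(64,-64)$. Granting the seeds, the $32i$-shifts in (1) and (2) are immediate from $b^{32}$-linearity, using Lemma~\ref{lem:bmultiplicationinjective} for the injectivity of $b^{32}$-multiplication below the line of slope one, and the $4k$-shifts are immediate from $\alpha$-linearity.

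First I would pin down the seed in family (1). The target $tr(\ttwo^7u_{10\sigma_2}a_{11\sigma_2})$ is the transfer of a class that does not survive the $C_2$-slice spectral sequence, so by naturality of the transfer its image in the $C_4$-spectral sequence must be hit by a differential no longer than the corresponding $C_2$-differential; meanwhile, after multiplication by $\alpha$, the Vanishing Theorem (Theorem~\ref{thm:VanishingThm}) forces it to die on or before $E_{61}$. A stem-by-stem inspection of the $E_{13}$ through $E_{43}$ pages then leaves $b^{16}$, and its $j=3$ variant, as the only admissible source, giving the seed $d_{43}$. Competing $d_{13}$'s are excluded exactly as in Lemmas~\ref{lem:d31d13notExist} and \ref{lem:d37d13notExist}: a hypothetical $d_{13}$ is propagated with $\dtwo^4u_{12\sigma}a_{12\lambda}$ at $(24,24)$ and $b^8$, and one reads off a contradiction with the Vanishing Theorem.

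Family (2) is handled the same way: $tr(\ttwo^7u_{26\sigma_2}a_{-5\sigma_2})$ is again the transfer of a class killed in the $C_2$-spectral sequence, the seed $d_{43}$ on $b^{24}$ and its $j$-variant is forced by the Vanishing Theorem together with degree reasons — the $i=1$ case of Proposition~\ref{prop:BPCfourtwotwod43Differentials1} supplies one of the $j$-values — and the three-step family in $\ell$ comes from multiplication by $\dtwo^2u_{8\lambda}u_{6\sigma}a_{-2\lambda}$, which is a $d_{\le43}$-cycle and whose fourth power is $\alpha\, b^{32}$, so that only $\ell=0,1,2$ contribute differentials not already produced by $\alpha$- and $b^{32}$-periodicity. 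Finally, to see that (1) and (2) are exhaustive, I would run the $\alpha$-free family bookkeeping of Proposition~\ref{prop:alphafreeres}: any class surviving to $E_{43}$ that is not listed lies in an $\alpha$-free family already truncated on an earlier page, or has no admissible $d_{43}$-target for bidegree reasons.

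The hard part will be the seed step, and in particular ruling out the competing differentials of lengths $13,19,31,37$ and $43$ that could interfere with the relevant stems — the same kind of intricate, case-by-case analysis as in the preceding subsections, which I expect will require a few new ``nonexistence of $d_r$'' lemmas in the spirit of Lemmas~\ref{lem:d31d13notExist} and \ref{lem:d37d13notExist}. Once those exclusions and the two seed differentials are in hand, the propagation by $\alpha$ and $b^{32}$, together with Proposition~\ref{prop:alphafreeres}, is purely mechanical and introduces nothing conceptually new.
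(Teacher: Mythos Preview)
Your plan is essentially the paper's own argument: the proof in the paper is one sentence, invoking the Vanishing Theorem (Theorem~\ref{thm:VanishingThm}) together with degree reasons, the seed differentials of Proposition~\ref{prop:BPCfourtwotwod43Differentials1}, and propagation by $\alpha=\dtwo^8u_{24\sigma}a_{24\lambda}$ and $b^{32}$. So the backbone of your proposal --- seeds plus $\alpha$- and $b^{32}$-linearity --- is exactly right.

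Where you diverge is in overestimating the remaining work. You anticipate needing transfer-naturality arguments to pin down the seed for family~(1), and ``a few new nonexistence of $d_r$ lemmas'' in the spirit of Lemmas~\ref{lem:d31d13notExist} and~\ref{lem:d37d13notExist}. The paper needs neither. By this stage of the computation the earlier pages have been completely determined, so once Proposition~\ref{prop:BPCfourtwotwod43Differentials1} is in hand, the Vanishing Theorem plus a direct bidegree check on the surviving $E_{43}$-page leaves exactly one way for each relevant $\alpha$-free family to pair off; there are no competing shorter differentials left to rule out. The $\alpha$-free family bookkeeping you mention is a fine way to organize this, but it is not additional input --- it is precisely what ``degree reasons'' means here. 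In short, your proof would work, but you should expect the ``hard part'' to evaporate once you actually look at the $E_{43}$-page.
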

\begin{proof}
All the differentials can be proven immediately from the Vanishing Theorem and degree reasons (Theorem~\ref{thm:VanishingThm}), Proposition~\ref{prop:BPCfourtwotwod43Differentials1}, and propagation with respect to the classes $\dtwo^8u_{24\sigma}a_{24\lambda}$ $(48, 48)$ and $b^{32}$ $(64, -64)$. 
\end{proof}

\subsubsection{\texorpdfstring{$d_{55}$}{d-55}-differentials}
\begin{proposition}\label{prop:BPCfourtwotwod55Differentials}
The $d_{55}$-differentials are 
\begin{enumerate}
\item $d_{55}\left(tr(\ttwo^{3}u_{26\sigma_2}a_{-17\sigma_2}) \cdot b^{32i}(\dtwo^2u_{6\sigma}a_{6\lambda})^{j+4k}\right) = \dtwo^{6}u_{16\sigma}a_{18\lambda}a_{2\sigma} \cdot b^{32i}(\dtwo^2u_{6\sigma}a_{6\lambda})^{j+4k}$, \\
$i, k \geq 0$, $j = 0, 3$;
\item $d_{55}\left(tr(\ttwo^{3}u_{26\sigma_2}a_{-17\sigma_2}) \cdot b^{8+32i}(\dtwo^2u_{6\sigma}a_{6\lambda})^{j+4k}(\dtwo^2u_{8\lambda}u_{6\sigma}a_{-2\lambda})^\ell\right) \\= \dtwo^{6}u_{16\sigma}a_{18\lambda}a_{2\sigma}\cdot b^{8+32i}(\dtwo^2u_{6\sigma}a_{6\lambda})^{j+4k}(\dtwo^2u_{8\lambda}u_{6\sigma}a_{-2\lambda})^\ell$, \\
$i, k \geq 0$, $j = 0, 1$, $\ell = 0, 1, 2$.
\end{enumerate}
They are shown in Figure~\ref{fig:C4E43-61page}.  
\end{proposition}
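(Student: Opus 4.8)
The plan is to run the same two-step argument used for the $d_{31}$-, $d_{37}$-, and $d_{43}$-differentials: first pin down a short list of ``base-case'' $d_{55}$-differentials, and then propagate them by multiplication with the permanent cycle $b^{32}$ at $(64,-64)$ and with $\dtwo^8 u_{24\sigma} a_{24\lambda}$ at $(48,48)$, whose fate is governed by the Vanishing Theorem (Theorem~\ref{thm:VanishingThm}).

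For the base cases I would argue exactly as in Proposition~\ref{prop:BPCfourtwotwod43Differentials1}. For each base-case pair $(x,y)$, one of $x$ or $y$ lies in the $\alpha$-divisible region of Lemma~\ref{lem:alphastuff}(2), so after multiplying by a suitable power of $\dtwo^8 u_{24\sigma} a_{24\lambda}$ the Vanishing Theorem and the module structure force that class, and hence (by $\alpha$-freeness and Proposition~\ref{prop:alphafreeres}) $x$ and $y$ themselves, to have died by the $E_{61}$-page. I would then enumerate, stem by stem using Figure~\ref{fig:C4E43-61page}, the ways this can happen: a shorter differential hitting $y$ is excluded because the $d_{13}$-, $d_{19}$-, $d_{31}$-, $d_{37}$-, and $d_{43}$-differentials already computed account for every class that could map onto $y$ (and for the transfer classes $tr(\bar{t}_2^{\bullet} u_{\bullet\sigma_2} a_{\bullet\sigma_2})$ appearing as sources, naturality of the transfer against the $C_2$-localized slice spectral sequence of Section~\ref{subsec:C2SliceSSBPCfourtwotwo} further restricts the admissible differential lengths); a differential on $x$ longer than $d_{55}$ is excluded on degree grounds; and a differential on $x$ of length at most $43$ is excluded either by degree or by a nonexistence lemma of the type of Lemmas~\ref{lem:d31d13notExist} and \ref{lem:d37d13notExist}, obtained by feeding a hypothetical short differential through the Leibniz rule with $\dtwo^4 u_{12\sigma} a_{12\lambda}$ at $(24,24)$ and $b^{16}$ at $(32,-32)$ and deriving a contradiction with Theorem~\ref{thm:VanishingThm}. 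This leaves $d_{55}(x)=y$ as the only possibility.

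Having established the base cases, I would propagate. The families in Proposition~\ref{prop:BPCfourtwotwod55Differentials} indexed by $i,j,k,\ell$ are precisely the $b^{32}$- and $\dtwo^8 u_{24\sigma} a_{24\lambda}$-multiples of the base cases, with sources and targets on or below the line of slope $1$ through the origin; Lemma~\ref{lem:bmultiplicationinjective} together with $\alpha$-linearity of the differentials then carries the base-case differentials across these families. Finally, to confirm that no $d_{55}$ occurs outside this list, it suffices by the reduction in Section~\ref{sec:strategy} to inspect the finite rectangular region of Figure~\ref{fig:StrategyCartoon}, where by this stage every remaining class is a permanent cycle, is already killed, or is one of the sources or targets above.

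The main obstacle will be the exclusion step. As in the $d_{31}$ and $d_{37}$ cases, the delicate part is ruling out stray short differentials, particularly $d_{13}$'s, on the relevant sources; this forces one to track hypothetical differentials through Leibniz relations and play them off against the Vanishing Theorem. I expect one more auxiliary nonexistence lemma of this flavour to be required, and verifying its hypotheses by a careful $E_{13}$-page computation will be the most error-prone bookkeeping in the argument.
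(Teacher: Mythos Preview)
Your plan is essentially the paper's: use the Vanishing Theorem (Theorem~\ref{thm:VanishingThm}) to force the targets to die by $E_{61}$, narrow the possibilities by degree, and then propagate by $\dtwo^8 u_{24\sigma}a_{24\lambda}$ and $b^{32}$. The paper's proof is literally one sentence to this effect.

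Where you diverge is in the anticipated difficulty of the exclusion step. You expect to need another nonexistence lemma in the style of Lemmas~\ref{lem:d31d13notExist} and \ref{lem:d37d13notExist}, and flag the associated $E_{13}$ bookkeeping as the main obstacle. The paper indicates this is unnecessary: by the time the $d_{13}$, $d_{19}$, $d_{31}$, $d_{37}$, and $d_{43}$ differentials have been run, the $E_{55}$-page is sparse enough that for each target $\dtwo^{6}u_{16\sigma}a_{18\lambda}a_{2\sigma}\cdot(\text{monomial})$ the Vanishing Theorem plus pure degree reasons leave the stated $d_{55}$ as the only option. No stray $d_{13}$'s on the sources need to be separately excluded, and no Leibniz-plus-contradiction argument is invoked. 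So your outline is correct but over-engineered; the actual work was front-loaded into the earlier pages, and at this stage the argument collapses to a direct inspection.
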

\begin{proof}
All the differentials can be deduced from the Vanishing Theorem and degree reasons (Theorem~\ref{thm:VanishingThm}), and propagation with respect to the classes $\dtwo^8u_{24\sigma}a_{24\lambda}$ $(48, 48)$ and $b^{32}$ $(64, -64)$. 
\end{proof}

\subsubsection{\texorpdfstring{$d_{61}$}{d-61}-differentials}
\begin{proposition}\label{prop:BPCfourtwotwod61Differentials}
We have the following $d_{61}$-differentials: 
\begin{enumerate}
\item $d_{61}\left(\dtwo u_{16\lambda}u_{2\sigma}a_{-13\lambda}a_{\sigma} \cdot (\dtwo^2u_{8\lambda}u_{6\sigma}a_{-2\lambda})^i (\dtwo^2u_{6\sigma}a_{6\lambda})^{j+4k}b^{32\ell}\right) \\
= \dtwo^{6}u_{18\sigma}a_{18\lambda} \cdot (\dtwo^2u_{8\lambda}u_{6\sigma}a_{-2\lambda})^i (\dtwo^2u_{6\sigma}a_{6\lambda})^{j+4k}b^{32\ell}$, \\
$(i, j) = (0,0), (0,1), (1,0), (1,1), (2,0), (2,1), (3,-3), (3,0)$, $k, \ell \geq 0$;
\item $d_{61}\left(2\dtwo^{4}u_{15\lambda}u_{12\sigma}a_{-3\lambda} \cdot (\dtwo^2u_{8\lambda}u_{6\sigma}a_{-2\lambda})^i (\dtwo^2u_{6\sigma}a_{6\lambda})^{j+4k}b^{32\ell}\right) \\
= \dtwo^{9}u_{26\sigma}a_{27\lambda}a_{\sigma}\cdot (\dtwo^2u_{8\lambda}u_{6\sigma}a_{-2\lambda})^i (\dtwo^2u_{6\sigma}a_{6\lambda})^{j+4k}b^{32\ell}$, \\
$(i, j) = (0,0), (0, 1), (1,-3), (1,0), (2, -4), (2, -3), (3, -4), (3, -3)$, $k, \ell \geq 0$.
\end{enumerate}
They are shown in Figure~\ref{fig:C4E43-61page}. 
\end{proposition}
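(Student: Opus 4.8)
The plan is to proceed exactly as in the proofs of Propositions~\ref{prop:BPCfourtwotwod31Differentials}, \ref{prop:BPCfourtwotwod37Differentials1}, \ref{prop:BPCfourtwotwod43Differentials1}, and \ref{prop:BPCfourtwotwod55Differentials}: use the Vanishing Theorem (Theorem~\ref{thm:VanishingThm}) together with degree reasons to pin down a small number of seed $d_{61}$-differentials, and then propagate along multiplication by $\alpha = \dtwo^8 u_{24\sigma}a_{24\lambda}$ at $(48,48)$ and by the permanent cycle $b^{32}$ at $(64,-64)$ to recover the full families indexed by $(i,j,k,\ell)$.

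First I would isolate the seeds, one for each of the two families, say $d_{61}(\dtwo u_{16\lambda}u_{2\sigma}a_{-13\lambda}a_\sigma) = \dtwo^6 u_{18\sigma}a_{18\lambda}$ and $d_{61}(2\dtwo^4 u_{15\lambda}u_{12\sigma}a_{-3\lambda}) = \dtwo^9 u_{26\sigma}a_{27\lambda}a_\sigma$. For each claimed target $y$, multiplying by $\dthree^{16}u_{48\sigma}a_{48\lambda}$ (equivalently, by a sufficiently high power of $\alpha$) lands it in the region controlled by Theorem~\ref{thm:VanishingThm}, so $y$ — which the preceding pages show is still $\alpha$-free — must be hit by a differential of length at most $61$. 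By this point in the computation the $E_{61}$-page is very sparse, since the $d_{13}$ through $d_{55}$ differentials have already removed most classes (Figures~\ref{fig:C4E37page}, \ref{fig:C4E43-61page}), and a stem-wise check shows that the only surviving class in the relevant bidegree mapping onto $y$ by a differential of length $\le 61$ is the claimed source $x$; since the spectral sequence has no differential longer than $d_{61}$ (the vanishing line forced by $\alpha$ being a $d_{61}$-source, cf. Remark~\ref{rem:vanishingpreview}), this forces $d_{61}(x) = y$. For the $2$-divisible seed in item~(2) I would additionally invoke the Mackey functor structure, checking that the source and target restrict to, or lie in the image of the transfer from, classes in the $C_2$-slice spectral sequence of Section~\ref{subsec:C2SliceSSBPCfourtwotwo} whose fate is already known, thereby ruling out competing shorter differentials.

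Having established the seeds, the propagation step is formal: $\alpha$ is a permanent cycle by the Slice Differential Theorem of Hill--Hopkins--Ravenel \cite[Theorem~9.9]{HHR}, so $\alpha$-multiples of a $d_{61}$-differential are again $d_{61}$-differentials as long as the $\alpha$-multiple of the target survives to $E_{61}$; and by Lemma~\ref{lem:bmultiplicationinjective} together with the fact that $b^{32}$ is a permanent cycle, $d_{61}(x)=y$ holds if and only if $d_{61}(b^{32}x)=b^{32}y$ holds, for source and target on or below the line of slope one. The parameters $(i,j,k,\ell)$ in the statement record precisely the $(\dtwo^2 u_{8\lambda}u_{6\sigma}a_{-2\lambda})$-, $(\dtwo^2 u_{6\sigma}a_{6\lambda})$-, and $b^{32}$-translates; the truncation of the ranges of $(i,j)$ is dictated by when the target $\alpha$-family has been exhausted by the preceding families of differentials, which can be read directly off the earlier pages.

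The main obstacle will be the bookkeeping in the seed step: one must verify that after the $d_{13},\dots,d_{55}$ differentials each $\alpha$-free family forced to die by $E_{61}$ has a \emph{unique} possible partner, i.e.\ that no class in the relevant bidegrees admits two candidate differentials of length $\le 61$ and that no class is forced to survive past $E_{61}$. This is the $\alpha$-free pairing mechanism of Proposition~\ref{prop:alphafreeres} combined with the vanishing line, but confirming it in each of the finitely many bidegrees — especially the delicate ones straddling the line of slope one, where a $b^{32}$-torsion target may be hit from a $b^{32}$-free source — is where the real work lies. As in the earlier cases, I expect that ruling out competing short differentials will require one or two further ``no $d_{13}$'' lemmas in the spirit of Lemmas~\ref{lem:d31d13notExist} and \ref{lem:d37d13notExist}, each proved by applying the Leibniz rule against $\dthree^4 u_{12\sigma}a_{12\lambda}$ at $(24,24)$ (and $b^{16}$ at $(32,-32)$) to derive a contradiction with Theorem~\ref{thm:VanishingThm}.
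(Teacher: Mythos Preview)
Your proposal is correct and follows essentially the same approach as the paper: the Vanishing Theorem (Theorem~\ref{thm:VanishingThm}) plus degree reasons forces the seed differentials, and the rest is propagation by $\alpha=\dtwo^8u_{24\sigma}a_{24\lambda}$ and $b^{32}$. The paper's proof is in fact a single sentence to this effect, identical in form to its proof of the $d_{55}$-differentials; your anticipated need for further ``no $d_{13}$'' lemmas does not arise, since by the $E_{61}$-page all the relevant classes have been dealt with in Lemmas~\ref{lem:d31d13notExist} and~\ref{lem:d37d13notExist} and the page is sparse enough that the degree-reason checks are immediate.
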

\begin{proof}
All the differentials can be deduced from the Vanishing Theorem and degree reasons (Theorem~\ref{thm:VanishingThm}), and propagation with respect to the classes $\dtwo^8u_{24\sigma}a_{24\lambda}$ $(48, 48)$ and $b^{32}$ $(64, -64)$. 
\end{proof}

\begin{center}
\begin{figure}
\includegraphics[width=\textwidth, page =1]{BPC422SliceSSE435561}
\caption{The $d_{43}$ ({\color{blue} blue}), $d_{55}$ ({\color{magenta} magenta}), and $d_{61}$-differentials (black) in the $a_\lambda$-localized slice spectral sequence of $a_\lambda^{-1}\BPCfourtwotwo$.}
\label{fig:C4E43-61page}
\end{figure}
\end{center}

\newpage
\begin{center}
\begin{figure}
\includegraphics[width=\textwidth, page =2]{BPC422SliceSSE435561}
\caption{The $E_\infty$-page of the $a_\lambda$-localized slice spectral sequence of $a_\lambda^{-1}\BPCfourtwotwo$.}
\label{fig:C4Einftypage}
\end{figure}
\end{center}

\bibliographystyle{plain}
\bibliography{KTheory}

\end{document}